\documentclass[11pt, letterpaper, hidelinks]{article}

\usepackage[margin=1in]{geometry}
\usepackage{authblk}

\usepackage{url}
\usepackage{mathtools}
\usepackage[dvipsnames]{xcolor}
\usepackage{amsmath,amsthm, amsfonts, amssymb}
\renewcommand{\proofname}{\textit{\textbf{Proof}}}

\makeatletter
\renewenvironment{proof}[1][\proofname]{\par
  \pushQED{\qed}%
  \normalfont \topsep6\p@\@plus6\p@\relax
  \trivlist
  \item[\hskip\labelsep
        #1\@addpunct{}]\ignorespaces % Modified this line
}{%
  \popQED\endtrivlist\@endpefalse
}
\makeatother
\theoremstyle{plain}
\newtheorem{theorem}{Theorem}[section]
\newtheorem{lemma}{Lemma}[section]
\newtheorem{proposition}{Proposition}[section]
\newtheorem{corollary}{Corollary}[section]
\theoremstyle{definition}
\newtheorem{definition}{Definition}[section]

\newtheorem{remark}{Remark}[section]
\usepackage[hypertexnames=false]{hyperref}
\newcommand\myshade{85}
\colorlet{mylinkcolor}{violet}
\colorlet{mycitecolor}{YellowOrange}
\colorlet{myurlcolor}{Aquamarine}
\hypersetup{
  linkcolor  = mylinkcolor!\myshade!black,
  citecolor  = mycitecolor!\myshade!black,
  urlcolor   = myurlcolor!\myshade!black,
  colorlinks = true,
}
\usepackage{enumitem, comment, xifthen}
\usepackage{graphicx}
\usepackage{etoolbox}
\usepackage{tikz}
\usetikzlibrary{math}
\usepackage{mathabx} % gives \vvver
\usepackage{subfig}
\graphicspath{{figs/}}
\DeclareMathAlphabet{\mathsf}{OT1}{qhv}{m}{n}
\DeclareMathDelimiter{\vvvert} {0}{matha}{"7E}{mathx}{"17}%

\makeatletter
\def\letterdef#1#2#3{\def\letterdef@##1{\expandafter\def\csname #1\endcsname{#2}}%
  \letterdef@@#3{?\@car{}}\@nil}
\def\letterdef@@#1{\@gobble#1\letterdef@{#1}\letterdef@@}
\makeatother

\DeclarePairedDelimiterX{\klx}[2]{(}{)}{%
  #1\;\delimsize\|\;#2%
}
\DeclarePairedDelimiterX{\quantklx}[3]{(}{)}{%
  #1\;\delimsize\|\;#2\;\delimsize\vert\;#3%
}
\DeclarePairedDelimiterX{\inner}[2]{\langle}{\rangle}{%
  #1,#2%
}

\newcommand{\R}{\mathbf R} % reals
\letterdef{c#1}{\mathcal{#1}}{ABCDEFGHIJKLMNOPQRSTUVWXYZ}
\letterdef{b#1}{\mathbb{#1}}{ABCDEFGHIJKLMNOPQRSTUVWXYZ}
\letterdef{bf#1}{\mathbf{#1}}{ABCDEFGHIJKLMNOPQRSTUVWXYZ}
\newcommand{\twomax}[2]{\ensuremath{#1 \lor #2}}
\newcommand{\twomin}[2]{\ensuremath{#1 \land #2}}
\newcommand{\ceil}[1]{\left\lceil #1 \right\rceil}
\newcommand{\floor}[1]{\left\lfloor #1 \right\rfloor}

\newcommand{\e}{\mathrm{e}}
\DeclareMathOperator{\sign}{\bf sign}
\newcommand{\ud}[0]{\mathrm{d}}  % upright d for diff., int.
\newcommand{\1}{\mathbf 1} % ones
\let\ones\1
\newcommand{\half}{\frac12} 
\let\epsilon\varepsilon
\newcommand{\eps}{\varepsilon}

\let\subseteq\subset

\renewcommand{\le}{\leqslant}
\renewcommand{\ge}{\geqslant}
\renewcommand{\leq}{\leqslant}
\renewcommand{\geq}{\geqslant}

\newcommand{\argmin}{\mathop{\rm arg\,min}}
\newcommand{\argmax}{\mathop{\rm arg\,max}}

\DeclareMathOperator{\diam}{diam}
\DeclareMathOperator{\dist}{\bf dist}

\newcommand{\ind}{\textnormal{ind.}}
\newcommand{\Var}{\mathrm{Var}}

\makeatletter
\newcommand{\E}{\operatorname*{\mathbf{E}}\ilimits@}
\makeatother
\makeatletter
\renewcommand{\P}{\operatorname*{\mathbf{P}}\ilimits@}
\makeatother

\newcommand{\ie}{\textit{i}.\textit{e}., }

\newcounter{algorithmctr}
\renewcommand{\thealgorithmctr}{\arabic{algorithmctr}}
   {\refstepcounter{algorithmctr}\begin{list}{}{%
       \setlength{\rightmargin}{0\linewidth}%
       \setlength{\leftmargin}{0\linewidth}}%
       \rmfamily\small
       \item[]{\setlength{\parskip}{0ex}\hrulefill\par%
        \nopagebreak{\bfseries\textsf{Algorithm \thealgorithmctr~}}}}%
   {{\setlength{\parskip}{-1ex}\nopagebreak\par\hrulefill} \end{list}}

\makeatletter
\long\def\@makecaption#1#2{
        \vskip 0.8ex
        \setbox\@tempboxa\hbox{\small {\bf #1.} #2}
        \parindent 1.5em 
        \dimen0=\hsize
        \advance\dimen0 by -3em
        \ifdim \wd\@tempboxa >\dimen0
                \hbox to \hsize{
                        \parindent 0em
                        \hfil 
                        \parbox{\dimen0}{\def\baselinestretch{0.96}\small
                                {\bf #1.} #2
                                } 
                        \hfil}
        \else \hbox to \hsize{\hfil \box\@tempboxa \hfil}
        \fi
        }
\makeatother

\newcommand{\Normal}[2]{\mathsf{N}\left(#1, #2\right)}

\newcommand{\dimension}{d}
\DeclareMathOperator{\rad}{rad}

\newcommand{\thetastar}{\theta^\star}
\newcommand{\etastar}{{\eta^\star}}
\newcommand{\MetricChar}{\overline{\eps}}
\newcommand{\SeriesFixedPoint}{\cS}
\newcommand{\TruncIntegral}{\cJ}
\newcommand{\LocEnt}{h^{\rm loc}}
\newcommand{\GlobEnt}{h}

\newcommand{\Minoration}{\Psi}

\newcommand{\ChatFixed}{r}
\newcommand{\BoundOne}{\cT}

\let\hat\widehat
\newcommand{\GenPackSet}[1]{\cM_{#1}}
\newcommand{\NoiseVec}{\xi}
\let\phi\varphi

\newcommand{\BallProj}{\Pi_{B^d_1}}
\newcommand{\lambdastar}{\lambda^\star}

\DeclareMathOperator{\inrad}{inrad}
\newcommand{\PeakIndex}[1]{i^\star_{#1}}

\renewcommand{\ind}[1]{\ensuremath{\1\left[\,#1\,\right]}}
\newcommand{\defn}{=}
\usepackage{cleveref}
\usepackage{autonum}
\renewcommand{\Pr}{\P}
\title{\Large \bfseries Gaussian Width of Convex Sets via Integral Decompositions, Projections, and the Distribution of Intrinsic Volumes}

\author[1,2]{Reese Pathak}
\author[1]{Nikita Zhivotovskiy}
\affil[1]{Department of Statistics, University of California, Berkeley}
\affil[2]{School of Operations Research and Information Engineering (ORIE), Cornell University}

\date{\today}

\begin{document}
\maketitle
\begin{abstract}
We revisit the problem of bounding the expected supremum of a canonical Gaussian process indexed by a convex set \(T \subset \R^d\).
We develop two decompositions for the Gaussian width, based on the geometry of the index set.
The first decomposition involves metric projections of Gaussians onto rescaled copies of $T$. The second involves fixed points arising from a quadratically penalized variant of the local width. Neither decomposition directly invokes generic chaining constructions.

Our results make use of recent work in geometric analysis and Gaussian processes. 
The work of Chatterjee~[\emph{Ann.\ Statist.}, %\textbf{42}(6):2340--2381, 
2014] characterizes the behavior of the metric projection of a Gaussian random vector onto rescaled copies of $T$ with a variational problem involving localized Gaussian widths. We use these bounds to develop decompositions of the Gaussian width using the local metric structure of $T$.
Second, we leverage the work of Vitale [\emph{Ann.\ Probab.}, %\textbf{24}(4):2172--2178
1996] to form a connection between the Wills functional (and hence the intrinsic volumes of $T$) and the first terms that appear in our decompositions. 
Finally, invoking recent work by Mourtada~[\emph{J.\ Eur.\ Math.\ Soc.}, 2025] on the logarithm of the Wills functional, we show that the width is controlled by a single, ``peak index'' of the intrinsic volumes.
In the worst case, our bound recovers a local form of the classical Dudley integral.
\end{abstract}

\tableofcontents 

\section{Introduction}

Bounding the Gaussian width of a set $T \subset \R^d$ is an intensively studied problem with applications across probability, geometry, signal processing, statistics, and machine learning. Recall that the Gaussian width of $T$ is defined as
\[
w(T) = \E\sup_{x \in T} \; \langle x, g\rangle,
\]
where $g \sim \Normal{0}{I_d}$ is a standard Gaussian vector in $\R^d$. Talagrand's majorizing measures theorem \cite[Theorem 2.10.1]{Talagrand2021} implies that
\begin{equation}
\label{eq:majorizingmeasure}
w(T) \asymp \gamma_2(T),
\end{equation}
where $a \asymp b$ means that there are absolute constants $c, C > 0$ such that $ca \le b \le Ca$, and
\[
\gamma_2(T)
= \inf_{(\mathcal A_n)} \; \sup_{x \in T}\sum_{n=0}^\infty 2^{n/2}\,\diam\bigl(A_n(x)\bigr).
\]
Here the infimum is taken over all admissible, nested sequences of partitions $(\mathcal A_n)_{n\ge0}$ of $T$ in the sense of \cite[Definition 2.7.1]{Talagrand2021}. In particular, $\mathcal A_0=\{T\}$, $\mathcal A_{n+1}$ refines $\mathcal A_n$, and $|\mathcal A_n|\le 2^{2^n}$. Moreover, $A_n(x)$ denotes the unique element of the partition $\mathcal A_n$ that contains $x$, and $\diam(A)=\sup_{u,v\in A}\|u-v\|_2$.
Despite providing a complete characterization of the Gaussian width, exhibiting an optimal admissible sequence of partitions for a general $T$ can be extremely challenging. In fact, only a few such explicit constructions are known. As Talagrand notes in his monograph \cite{Talagrand2021}, even for ellipsoids, constructing an optimal admissible sequence is ``surprisingly non-trivial.'' These observations motivate developing alternative characterizations of $w(T)$ that avoid explicit generic chaining constructions.

This paper pursues such an alternative by developing connections with the Gaussian sequence model, as studied in statistics. For $\sigma > 0$ and $g \sim \Normal{0}{I_d}$, we observe
\begin{equation}
\label{eq:gaussianseqmodel}
Y = \theta^\star + \sigma g.
\end{equation}
Above, $\theta^\star$ is an unknown target vector belonging to a known parameter set $T \subset \R^d$. Three central objects of study within the Gaussian sequence model are the least squares estimator (LSE), its variance, and the optimal statistical rate of estimation. Geometrically, the LSE---defined in~\cref{eqn:def-LSE}---is the metric projection of $Y$ onto a closed, convex constraint set $T$:
\begin{subequations}
\label{eqn:key-objects-GSM}
\begin{equation}
\label{eqn:def-LSE}
\Pi_T(Y) = \argmin_{\vartheta \in T} \|Y - \vartheta\|_2^2.
\end{equation}
When $T$ is additionally centrally symmetric---\ie if $T = -T$---and $\theta^\star = 0$, the projection is mean-zero and thus the variance of the LSE simplifies to
\begin{equation}
\label{eqn:variance-of-LSE}
\E_{Y \sim \Normal{0}{\sigma^2 I_d}}
\bigl\|\Pi_T(Y) - \E\Pi_T(Y)\bigr\|_2^2 = \sigma^2 \E\bigl\|\Pi_{T/\sigma}( g)\bigr\|_2^2.
\end{equation}
Notably, the latter quantity $\E\bigl\|\Pi_{T/\sigma}( g)\bigr\|_2^2$, in the special case that $T$ is a closed convex cone, is referred to as the \emph{statistical dimension} and was previously studied~\cite{amelunxen2014living,bellec2018sharp,lotz2020concentration,han2023noisy}. Finally, the optimal rate of estimation within the model~\eqref{eq:gaussianseqmodel}, also commonly referred to as the \emph{minimax rate}, is given by
\begin{equation}
\label{eqn:def-minimax-rate-intro}
\big(\eps_\star(\sigma)\big)^2
=
\inf_{\hat \theta} \sup_{\theta \in T}
\E_{Y \sim \Normal{\theta}{\sigma^2 I_d}}
\Big[\|\hat \theta(Y) - \theta\|_2^2\Big].
\end{equation}
\end{subequations}
The three objects in eqns.~\eqref{eqn:key-objects-GSM} will play a key role in our decompositions of the Gaussian width.

It is well-understood that the difficulty of estimating $\theta^\star$ from $Y$ is related to the geometry of $T$.
In particular, if $T$ is closed and convex, then the risk and thus the variance of the least squares estimator (LSE), e.g.,~\eqref{eqn:variance-of-LSE}, are tightly connected with fixed points of the localized Gaussian width, as shown by Chatterjee \cite{Cha14}. Note that fixed-point descriptions of optimal estimation rates are prevalent; see~\cite{birge1993rates,barron1999risk,vandegeer2000empirical,massart2000some}. More generally, the optimal estimation rate~\eqref{eqn:def-minimax-rate-intro} in the model~\eqref{eq:gaussianseqmodel} is essentially determined by the metric structure of $T$, via an appropriately defined local packing number of $T$ (see \Cref{sec:info-and-stat}).

Gaussian width bounds are prevalent in statistical analysis of particular estimators. A main message of this paper, however, is that the ``reverse direction'' is also fruitful: statistical rates and information-theoretic bounds can produce new insights into the behavior of the Gaussian width itself. 
Our approach consists in varying the noise level $\sigma$, and leveraging the profile of the risk of the LSE and the optimal statistical rate, to yield a decomposition of the width itself. Surprisingly, this approach leads to new identities for the width and, in particular, sharp characterizations that bypass generic chaining constructions.

Our analysis of $w(T)$ leverages several deep results regarding canonical Gaussian processes:
\begin{itemize}
    \item \textbf{Wills functional and the distribution of intrinsic volumes.}
    Connections between the Wills functional and suprema of Gaussian processes go back to Vitale \cite{vitale1996wills}.
    More recently, Mourtada \cite{mourtada2025universal} characterized the logarithm of the Wills functional via fixed point equations involving localized Gaussian widths and covering number quantities closely related to those that appear in this paper.
    In our approach, the Wills functional enters as an intermediate geometric object for bounding the Gaussian width: combining our width decomposition with the observation from \cite{mourtada2025universal} that this functional is effectively governed by a single peak intrinsic volume index yields bounds on $w(T)$ in terms of the intrinsic volume profile of suitable rescalings of $T$.
    We exploit these results to obtain a sharp characterization of $w(T)$; see~\Cref{thm:indexstarbound}.
    \item \textbf{Chatterjee’s analysis of Gaussian projections.}
    Chatterjee \cite{Cha14} analyzes the least squares estimator in the Gaussian sequence model and shows, in particular, that fixed points of localized Gaussian width govern LSE performance.
    We exploit two outcomes of this analysis: fixed points control the risk of the LSE across noise levels, moreover, the variance of the LSE can be bounded using only the local metric structure of $T$, bypassing direct appeals to $w(T)$.
    In the worst case, the bounds developed by this technique include the Dudley integral upper bound on $w(T)$, in a fashion that neither invokes  $\gamma_2(T)$ nor any related generic chaining constructions.
\end{itemize}

We note that in the finite-dimensional setting in which $T \subset \R^d$, the $\gamma_2$ functional can be simplified. Specifically, it can be split into two parts, indexed by any integer $p \ge 1$. 
Indeed, for any admissible sequence of partitions $(\mathcal A_n)$,
\[
\gamma_2(T)
\leq
\underbrace{\sup_{x \in T} \sum_{n=0}^{p-1} 2^{n/2}\,\diam\bigl( A_n(x)\bigr)}_{\text{``head'' part}} 
\; + \; 
\underbrace{\sup_{x \in T} \sum_{n=p}^{\infty} 2^{n/2}\,\diam\bigl( A_n(x)\bigr)}_{\text{``tail'' part}}
,
\]
where $A_n(x) \in \mathcal{A}_n$ is the unique set containing $x$. 
For bounded sets in $\R^d$, and for $p$ with $2^p \gtrsim d\log d$, the ``tail'' part can be controlled by standard volumetric and entropy estimates:
\[
 \inf\limits_{(\mathcal{A}_n)}\sup_{x \in T} \sum_{n \ge p} 2^{n/2}\,\diam\bigl(A_n(x)\bigr)
\lesssim  \diam(T)
\lesssim w(T).
\]
One interpretation of our main results, \Cref{thm:decomp-of-gauss-width,thm:decomp-via-projections,thm:indexstarbound}, is that in many cases, it is possible to make earlier, geometry-driven splits, with $p \ll \log_2(d\log d)$. The 
``tail'' part is then controlled by geometric or information-theoretic tools, such as the intrinsic volumes or the Wills functional structure. Note that this is precisely in the regime where covering numbers alone do not correctly capture the scale of the Gaussian width. The ``head'' part is then correspondingly handled by statistical risk bounds, for instance from the LSE or the minimax rate, which are never worse than the classical Dudley entropy integral estimates.
A comparison of our results with generic chaining is provided in \Cref{sec:generic-chaining}.

For the remainder of the paper, we primarily focus on the case where $T$ is a nonempty, compact, convex set containing the origin. In many places we additionally assume that $T$ is centrally symmetric\footnote{Throughout, we say that $T\subset\R^d$ is \emph{centrally symmetric} if $T=-T$.}. This is because this case essentially subsumes all others; for instance, by standard properties of Gaussian width \cite[Proposition 7.5.2]{vershynin2018high}, for any bounded set $T \subset \R^d$,
\[
w(T) = w(\operatorname{cl\,conv}(T)) = \frac{1}{2} \, w(\operatorname{cl\,conv}(T-T)),
\]
with $\operatorname{cl \, conv}(T-T)$ being convex, closed, and centrally symmetric. Additionally, $w(T)$ is finite if and only if $T$ is bounded. 
Although the majorizing measures theorem does not directly invoke convexity, our approach is phrased in terms of convexity, which we exploit to obtain simplifications via the resulting geometry of metric projections and risk identities in the Gaussian sequence model.
Notably, this is not the first paper to leverage convexity to obtain substitutes for the generic chaining. Recent related work includes~\cite{van2018chaining,van2018chainingtwo}, which develop contraction and interpolation principles that yield entropy-based upper bounds on chaining functionals in terms of local ``neighborhoods,'' offering an alternative to constructing admissible partitions.

As a byproduct of our analysis, we obtain several results that may be of independent interest:
\begin{itemize}
\item \textbf{Local and global Dudley and Sudakov bounds are equivalent up to constants.}
A basic lesson from statistical estimation is that, due to the difference between ``local'' and ``global'' forms of the packing and covering entropies, optimal estimation generally depends sensitively on which form is being used to construct estimators or to establish lower bounds~\cite{bshouty2009using,Mendelson2017LocalGlobal,zhivotovskiy2018localization,Ney23}. In contrast, for Dudley-type upper bounds and Sudakov-type lower bounds the resulting expressions remain unchanged, up to universal constants, when global entropy is replaced by its local analogue. In fact, this holds without any structural assumptions on $T$; see~\Cref{sec:localandglobal}.

\item \textbf{Process lower bounds via information-theoretic tools.}
Combining our local-global comparison for Sudakov minoration with Fano-type arguments, we obtain information-theoretic proofs of both the classical and dual Sudakov inequalities; see \Cref{sec:sudakov}. While for the dual version we additionally use a Gaussian shift-type result, namely Anderson's lemma, for the proof of the primal version no Gaussian comparison inequalities are used; the Gaussian input enters only through bounding the Kullback-Leibler divergence for Gaussian measures. In fact, this observation also leads to a simplified proof and generalized statement of the lower bound in the majorizing measures theorem; see~\Cref{rem:KL-properties,rem:generalized-mmt}. These approaches may be of interest in extending such minorations and process lower bounds to the non-Gaussian setting, as studied in~\cite{Latala2014Sudakov,mmn2019,bednorz2023sudakov,liu2022minoration}. Related, but different, work on information-theoretic approaches to Sudakov inequalities appeared in~\cite{liu2025simple,liu2022minoration}.

\item \textbf{A statistical explanation of when Dudley is loose.}
We relate the potential suboptimality of the Dudley entropy integral to the discrepancy between the risk of the LSE at a fixed target (in particular $\theta^\star=0$ for centrally symmetric bodies) and the minimax risk $\eps_\star(\sigma)^2$ at the worst target in $T$; see \Cref{thm:upper-bound-minimax}. Of some statistical interest, we show that the LSE over a suitably chosen global covering is already minimax rate-optimal.
\end{itemize}

As with generic chaining, particular examples pose their own challenges; our methods are not always the simplest. Nonetheless, we illustrate some of our approaches in two basic examples: the standard crosspolytope (see~\Cref{sec:examples}) and an ellipsoid (see~\Cref{rem:width-of-the-ellipse}), both in $\R^d$.
Although the Dudley entropy integral gives an inaccurate estimate for these two bodies, our approaches are able to produce sharp estimates. 

\section{Decompositions of suprema of stochastic processes}

Our first main result is a decomposition of the suprema of stochastic processes. For a random vector 
$\xi \in \R^\dimension$ we define for a nonempty set $T \subset \R^\dimension$,
\[
w_\xi(T) = \E_{\xi}\Big[ \sup_{x \in T} \; \langle \xi, x \rangle \Big].
\]
We will refer to this as the $\xi$-width of the set $T$. In the case $\xi \sim \Normal{0}{I_\dimension}$, this quantity is typically referred to as the Gaussian width (or complexity) of the set $T$. In this case we simply write $w(T)$. 

We often make the assumption that $T$ is closed, convex, and contains the origin, which is essentially no restriction.

\begin{remark}[Convexity and containment of the origin]
The assumption that $T$ is convex and contains the origin is essentially without loss of generality. Indeed, for nonempty $T \subset \R^d$, define $T_0 = \mathrm{cl\,conv}(T - t_0)$ where $t_0 \in T$ is fixed. Then, we clearly have 
\[
w_\xi(T) = w_\xi(T_0) + \E_\xi~\langle t_0, \xi \rangle,
\]
provided that $\E \, \langle t_0, \xi \rangle > -\infty$. Hence, in this way, it is typically possible to reduce to the setting in which the process is indexed by a closed, convex set $T_0$ containing the origin.
\end{remark}

Our first decomposition involves an auxiliary variational problem, indexed by $\sigma > 0$:
\begin{equation}
\label{def:chat-fixed-point}
\ChatFixed(\sigma) 
\defn 
\argmax_{r \geq 0} \Big\{\, 
w_\xi\big(T \cap r B^d_2\big) 
- 
\frac{r^2}{2\sigma} \, \Big\}.
\end{equation}
It is not \emph{a priori} clear that $r(\sigma)$ is well-defined; hence, we will tacitly assume that this is the case in the remainder of the paper. The next result characterizes precisely when this occurs. 

\begin{proposition} 
\label{prop:fixed-points-exist-unique}
For any random vector $\xi \in \R^\dimension$, and any convex set $T \subset \R^\dimension$ containing the 
origin, the following are equivalent: 
\begin{enumerate}[label=(\roman*)]
\item 
\label{item:existence-and-uniqueness}
the fixed point $r(\sigma)$ exists and is 
unique for all $\sigma > 0$; and, 
\item 
\label{item:finiteness-at-one-radius}
there exists $r_0 > 0$ such that 
$w_\xi(T \cap r_0 B^d_2) \in [0, \infty)$; and, 
\item 
\label{item:finiteness-at-all-radii}
for every $r > 0$, it holds that 
$w_\xi(T \cap r B^d_2) \in [0, \infty)$.
\end{enumerate}
\end{proposition}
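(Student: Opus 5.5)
The plan is to study the function $f(r) \defn w_\xi(T \cap r B^d_2)$ on $[0,\infty)$ and to show that it has three properties: it is nonnegative, concave, and satisfies a linear growth bound coming from the star-shapedness of $T$. Strict concavity of the penalty $-r^2/(2\sigma)$ then gives existence and uniqueness of the maximizer.

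\emph{Well-posedness.} Since $0 \in T \cap rB^d_2$, we have $\sup_{x \in T\cap rB^d_2}\langle \xi, x\rangle \ge 0$ pointwise. Hence the expectation defining $f(r)$ is always well defined in $[0,\infty]$, and $f(0)=0$. Moreover, $f$ is nondecreasing because the sets $T \cap rB^d_2$ are nested.

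\emph{Concavity.} For $r = \lambda r_1 + (1-\lambda) r_2$, convexity of $T$ and of the ball gives
\[
\lambda(T\cap r_1B^d_2) + (1-\lambda)(T \cap r_2 B^d_2) \subset T \cap rB^d_2 .
\]
The support function is additive over Minkowski sums, so pointwise in $\xi$ the supremum over $T\cap rB^d_2$ is at least $\lambda$ times the supremum over $T\cap r_1B^d_2$ plus $(1-\lambda)$ times the supremum over $T\cap r_2B^d_2$. Taking expectations, which is legitimate because all terms are nonnegative, shows that $f$ is concave as an extended-valued function.

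\emph{Scaling bound.} For $r > r_0 > 0$ and $x \in T \cap rB^d_2$, the point $(r_0/r)x$ lies in $T \cap r_0 B^d_2$, since $T$ is convex and contains $0$. Therefore
\[
T\cap rB^d_2 \subset (r/r_0)\,(T \cap r_0B^d_2),
\qquad\text{so}\qquad
f(r) \le (r/r_0)\, f(r_0).
\]

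This scaling bound settles the equivalence of (ii) and (iii). For (ii)$\Rightarrow$(iii), finiteness at $r_0$ propagates to $r > r_0$ by the bound and to $r\le r_0$ by monotonicity. The direction (iii)$\Rightarrow$(ii) is trivial. The contrapositive of the same argument shows that if $f(r)=\infty$ for some $r>0$, then $f \equiv \infty$ on $(0,\infty)$.

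(i)$\Rightarrow$(ii). If (ii) failed, the objective in \eqref{def:chat-fixed-point} would equal $+\infty$ at every $r>0$. The argmax would then either fail to exist or be non-unique, since every $r>0$ attains the value $+\infty$, and this contradicts (i).

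(iii)$\Rightarrow$(i). Fix $\sigma>0$ and set $g(r) = f(r) - r^2/(2\sigma)$. We check three things in turn.
\begin{enumerate}[label=(\alph*)]
\item $g$ is finite and strictly concave on $[0,\infty)$, because $f$ is finite and concave.
\item $g$ is continuous on $[0,\infty)$. On $(0,\infty)$ this holds because $f$ is a finite concave function. At $0$, the scaling bound gives $0 \le f(r) \le (r/r_0) f(r_0) \to 0$ as $r \downarrow 0$.
\item $g(r) \to -\infty$ as $r\to\infty$, since $f$ grows at most linearly while the penalty is quadratic.
\end{enumerate}
By (b) and (c), the supremum of $g$ equals its supremum over some compact interval $[0,R]$, and it is attained there. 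Strict concavity from (a) then makes the maximizer unique.

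The main obstacle is the extended-value bookkeeping: making sure $w_\xi$ is well defined without integrability assumptions on $\xi$, which is handled by the nonnegativity remark. A second point is fixing how ``the argmax exists and is unique'' is interpreted when the objective is identically $+\infty$, which matters for the (i)$\Rightarrow$(ii) direction. Continuity at $r=0$ is the one place where concavity alone is insufficient, and it is handled by the scaling bound.
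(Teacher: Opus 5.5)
Your overall route is the paper's: the inclusion $T\cap rB^d_2\subset \tfrac{r}{r_0}(T\cap r_0B^d_2)$ plus monotonicity gives (ii)$\Leftrightarrow$(iii), strong concavity of $f(r)-r^2/(2\sigma)$ gives (iii)$\Rightarrow$(i), and non-uniqueness when the objective is $+\infty$ gives the converse. Your Minkowski-sum proof that $f$ is concave and your coercivity step (c) fill in details the paper leaves implicit.

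There is one step that fails as written: continuity at $r=0$ in (b). You derived the scaling bound $f(r)\le (r/r_0)f(r_0)$ only for $r>r_0$, because it uses that $T$ is closed under scaling \emph{down}. For $r<r_0$ the inequality reverses. Applying your bound with the roles swapped gives $f(r_0)\le (r_0/r)f(r)$, that is, $f(r)\ge (r/r_0)f(r_0)$; indeed $f(r)/r$ is nonincreasing. So the scaling bound gives no upper bound on $f(r)$ as $r\downarrow 0$.

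This is not cosmetic, because the abstract properties you collected do not imply continuity at $0$. Take $f(0)=0$ and $f\equiv 1$ on $(0,\infty)$. It is nonnegative, nondecreasing, concave on $[0,\infty)$, has $f(0)=0$, and grows at most linearly. Yet $f(r)-r^2/(2\sigma)$ is finite and strongly concave on $[0,\infty)$ while its supremum $1$ is not attained.

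The missing input has to come from $f$ being an expectation. Set $X_r=\sup_{x\in T\cap rB^d_2}\langle \xi,x\rangle$. For $r\le r_0$ you have pointwise $0\le X_r\le \min\{r\|\xi\|_2,\,X_{r_0}\}$, and $\E X_{r_0}<\infty$ by (iii). Dominated convergence then gives $f(r)\to 0=f(0)$. This is the argument the paper uses in \Cref{lem:smoothed-local-width}\ref{item:limit-at-zero-smoothed-width}. The paper's one-line ``finite, strongly concave, hence it achieves its maximum'' in this proposition tacitly relies on it too. With this replacement in (b), your proof is complete.
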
 
\begin{proof}
We use the shorthand $\omega(r) = w_\xi(T \cap r B^d_2)$. Note that $\omega(r) \geq \omega(0) = 0$ 
for all $r \geq 0$.
Assume~\ref{item:finiteness-at-one-radius}. 
Then, for $r \geq r_0$, 
we have from the inclusion $T \cap r B^d_2 
\subset \tfrac{r}{r_0}(T\cap r_0 B^d_2)$ that 
$\omega(r) \leq \tfrac{r}{r_0} \omega(r_0) < \infty$. On the other hand, 
for $r\leq r_0$, the inclusion $T \cap r B^d_2 \subset T \cap r_0 B^d_2$ implies $\omega(r) \leq \omega(r_0)$, thereby establishing~\ref{item:finiteness-at-all-radii}. Assume~\ref{item:finiteness-at-all-radii} holds. Then, 
$r \mapsto \omega(r) - \tfrac{r^2}{2\sigma}$ is a 
finite, $\tfrac{1}{\sigma}$-strongly concave function 
on $[0, \infty)$ and hence it achieves its maximum uniquely, yielding~\ref{item:existence-and-uniqueness}. On the other hand, assume~\ref{item:finiteness-at-all-radii} fails. That is, for some $r_0 > 0$, it holds that $\omega(r_0) = \infty$. Then, $\omega(r) = \infty$ for all $r \geq r_0$, and hence $r(\sigma)$ is not unique, and therefore~\ref{item:existence-and-uniqueness} fails, as needed.
\end{proof}

Note that, although we assume that $r(\sigma)$ exists and is unique for all $\sigma > 0$, a simpler sufficient condition is that $\E \|\xi\|_2 < \infty$, which, for instance, holds in the Gaussian setting.
\begin{theorem} [Decomposition via fixed points]
\label{thm:decomp-of-gauss-width}
Let $\xi$ denote a random vector in $\R^\dimension$ and let $T \subset \R^\dimension$ be a convex set containing the origin.\footnote{In particular, $0 \in T$ ensures $w_\xi(T) \in \R_+ \cup \{+\infty\}$.} Assume that $r(\sigma)$ exists and is unique for all $\sigma > 0$. Then, for each $\sigma > 0$,
\begin{subequations}
\begin{equation}
\label{eqn:fp-decomposition-sig-pos}
w_{\xi}(T) = \Big\{ w_\xi\big(T \cap \ChatFixed(\sigma) B^d_2\big) - \frac{\ChatFixed(\sigma)^2}{2\sigma} \Big\} + \frac{1}{2}\int_{\sigma}^{\infty}\frac{r(\nu)^2}{\nu^2}  \ud \nu.
\end{equation}
In particular, it holds that 
\begin{equation}
\label{eqn:fp-decomposition-sig-zero}
w_{\xi}(T) = \frac{1}{2}\int_{0}^{\infty}\frac{r(\nu)^2}{\nu^2} \, \ud \nu.
\end{equation}
\end{subequations}
\end{theorem}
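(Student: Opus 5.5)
We view this as an envelope-theorem identity. Define the value function
\[
F(\sigma) = \sup_{r \ge 0}\Big\{ \omega(r) - \frac{r^2}{2\sigma}\Big\}, \qquad \omega(r) = w_\xi(T\cap rB^d_2).
\]
By the definition of $r(\sigma)$, the bracketed term in \eqref{eqn:fp-decomposition-sig-pos} is exactly $F(\sigma)$. It therefore suffices to show two things: first, that $F$ is locally absolutely continuous with $F'(\nu) = r(\nu)^2/(2\nu^2)$ for almost every $\nu$; second, that $F(\sigma)\to w_\xi(T)$ as $\sigma\to\infty$ and $F(\sigma)\to 0$ as $\sigma \to 0$. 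Integrating $F'$ from $\sigma$ to $S$ and letting $S\to\infty$ then gives \eqref{eqn:fp-decomposition-sig-pos}. Letting $\sigma\downarrow 0$ in \eqref{eqn:fp-decomposition-sig-pos} gives \eqref{eqn:fp-decomposition-sig-zero}, by monotone convergence of the integral since the integrand is nonnegative.

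\textbf{Derivative of $F$.} Substitute $t = 1/(2\sigma)$ and set $G(t) = \sup_{r\ge0}\{\omega(r) - t r^2\}$. As a supremum of affine functions of $t$, $G$ is convex, and it is finite on $(0,\infty)$ by \Cref{prop:fixed-points-exist-unique}, because $\omega(r) \le (r/r_0)\,\omega(r_0)$ grows at most linearly. By Danskin's theorem, the one-sided derivatives of $G$ at $t$ are given by $-r^2$ over maximizers $r$. Since the maximizer is unique (strong concavity, as in \Cref{prop:fixed-points-exist-unique}), $G$ is differentiable with $G'(t) = -r(\sigma)^2$. The chain rule with $\ud t/\ud\sigma = -1/(2\sigma^2)$ then gives $F'(\sigma) = r(\sigma)^2/(2\sigma^2)$. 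Because $G$ is convex, hence locally Lipschitz on $(0,\infty)$, the fundamental theorem of calculus applies on every compact $[\sigma,S]$.

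\textbf{Endpoint limits.} As $\sigma\to\infty$, the function $F$ is nondecreasing and satisfies $F(\sigma)\le \sup_r \omega(r) = w_\xi(T)$. For the reverse inequality, fix $R$ and use $F(\sigma)\ge \omega(R) - R^2/(2\sigma)$, so $\liminf_{\sigma\to\infty} F(\sigma) \ge \omega(R)$. It remains to check $\sup_R \omega(R) = w_\xi(T)$. This follows from monotone convergence: $\sup_{x\in T\cap RB_2^d}\langle \xi,x\rangle$ increases pointwise in $R$ to $\sup_{x\in T}\langle\xi,x\rangle$, and the sequence is bounded below by $0$ since $0\in T$. The argument covers the case $w_\xi(T)=\infty$ as well. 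As $\sigma\to0$, the bound $\omega(r)\le \omega(1)\max(r,1)$ gives
\[
F(\sigma)\le \sup_r\{\omega(1)\max(r,1) - r^2/(2\sigma)\},
\]
and taking $r\le1$ versus $r>1$ shows the right-hand side tends to $\omega(0^+)$. Right-continuity of $\omega$ at $0$ holds by dominated convergence, using the domination $\sup_{T\cap rB}\langle\xi,x\rangle \le$ the $r=1$ supremum, which is integrable. Hence $F(0^+) = 0$, since $F\ge \omega(0)=0$.

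\textbf{Main obstacle.} The delicate step is the regularity needed for the envelope argument. Differentiability comes cleanly from convexity of $G$ together with uniqueness of the maximizer, but we must make sure no measure-theoretic issue arises. In particular, the integral must be well defined: $r(\cdot)$ is monotone nondecreasing, because $G'$ is monotone, and hence measurable. When $w_\xi(T)=\infty$, both sides of \eqref{eqn:fp-decomposition-sig-pos} are $+\infty$, and this case should be handled separately, by the same monotone limit.
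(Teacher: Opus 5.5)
Your $\sigma\to0^+$ limit fails as written, and that limit is exactly what \eqref{eqn:fp-decomposition-sig-zero} needs. The majorant $\sup_{r\ge0}\{\omega(1)\max(r,1)-r^2/(2\sigma)\}$ is at least $\omega(1)$ for every $\sigma$ (take $r=0$). For small $\sigma$ it actually equals $\omega(1)$, so it does not tend to $\omega(0^+)$. Replacing $\omega(r)$ by $\omega(1)$ on $[0,1]$ discards precisely the smallness you need.

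The fix is to split at an arbitrary scale $\delta>0$ instead of at $1$. Use $\omega(r)\le\omega(\delta)$ for $r\le\delta$, and $\omega(r)\le (r/\delta)\,\omega(\delta)$ for $r\ge\delta$ (star-shapedness of $T$ about $0$). This gives
\[
F(\sigma)\le\max\Big\{\omega(\delta),\ \frac{\sigma}{2}\Big(\frac{\omega(\delta)}{\delta}\Big)^2\Big\},
\]
so $\limsup_{\sigma\to0^+}F(\sigma)\le\omega(\delta)$. Letting $\delta\to0^+$ and using your dominated-convergence argument for right-continuity of $\omega$ at $0$ yields $F(0^+)=0$. This is the paper's argument for \Cref{lem:smoothed-local-width}\ref{item:limit-at-zero-smoothed-width}.

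The rest follows the paper's route: value function, envelope derivative $r(\sigma)^2/(2\sigma^2)$, both endpoint limits, then the fundamental theorem of calculus. Your differentiability step differs from the paper's lower-$C^1$ envelope theorem. You use convexity of $G(t)=\sup_{r\ge0}\{\omega(r)-tr^2\}$ in $t=1/(2\sigma)$ plus Danskin's theorem. This variant buys local Lipschitz continuity and monotonicity of $r(\cdot)$ for free. To invoke Danskin legitimately you still need two checks. First, maximizers for $t$ near $t_0$ must stay in a fixed compact interval. This follows from the linear growth of $\omega$ against the quadratic penalty; the paper instead uses $[0,r(3\sigma/2)]$. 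Second, $\omega$ must be continuous on $[0,\infty)$, which follows from concavity of $\omega$ together with your right-continuity at $0$.
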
 
The proof is deferred to~\Cref{sec:proof-fixed-point-decomposition}.
At this stage, \Cref{thm:decomp-of-gauss-width} should be viewed as a general decomposition identity: it represents the expected supremum $w_\xi(T)$ via a family of ``local'' variational problems, indexed by the scalar parameter $\nu > 0$.

Decompositions similar to those in~\Cref{thm:decomp-of-gauss-width} appeared previously in empirical process theory, mainly via the so-called peeling device, due to K. Alexander~\cite{alexander1987rates} and popularized in statistical literature~\cite{vandegeer2000empirical, massart2000some, gine2006concentration}. The original peeling device splits the index set into certain level sets and the process is computed for each level separately, upper bounding the expected supremum (a concrete example of this is given in~\Cref{lem:peeling}). These peeling decompositions, in general, are not sharp.
In contrast, the decompositions in~\Cref{thm:decomp-of-gauss-width} crucially exploit the convexity of the index set, using the fixed points $r(\sigma)$ to define level sets which depend on $T$, and lead to an exact decomposition of the underlying stochastic process.  

Specializing to the case where $\xi$ is Gaussian, one convenient (although, not in general tight) interpretation of~\Cref{thm:decomp-of-gauss-width} is via the following form of the Dudley integral upper bound. For a centrally symmetric convex body \(T\subset\R^\dimension\), for any \(\delta>0\) (see, e.g., \cite[Theorem 5.22]{Wai19} for explicit constants):
\[
w(T)
\;\le\;
2\,w\Big(T\cap \tfrac{\delta}{2}B_2^\dimension\Big)
\;+\;
16\int_{\delta/4}^{\infty}
\sqrt{\log N\big(T,\varepsilon B_2^\dimension\big)} \, \ud\varepsilon,
\]
where \(N\big(T,\varepsilon B_2^\dimension\big)\) is the Euclidean covering number of \(T\) at scale \(\varepsilon\).
Taking \(\delta=2r(\sigma)\) gives
\[
w(T)
\;\le\;
2\underbrace{\left(w\Big(T\cap r(\sigma)B_2^\dimension\Big) - \frac{r(\sigma)^2}{2\sigma}\right)}_{\text{First term in~\cref{eqn:fp-decomposition-sig-pos}}}
\;+\;
\underbrace{\frac{r(\sigma)^2}{\sigma} \;+\; 16\int_{r(\sigma)/2}^{\infty}
\sqrt{\log N\big(T,\varepsilon B_2^\dimension\big)} \, \ud\varepsilon}_{\text{Loose upper bound on }\;\frac{1}{2}\int\nolimits_{\sigma}^{\infty}\frac{r(\nu)^2}{\nu^2} \, \ud \nu}.
\]

\medskip 

Our next decomposition involves the metric projections onto a nonempty, closed, convex set $T \subset \R^d$. We recall that it is defined at $x\in\R^d$ by
\[
\Pi_T(x) \defn \argmin_{t \in T} \, \|x - t\|^2_2.
\]

\begin{theorem} [Decomposition via metric projections]
\label{thm:decomp-via-projections}
Let $\xi$ denote a random vector in $\R^\dimension$ and let $T \subset \R^\dimension$ be a closed, convex set containing the origin. Then, 
for every $\sigma > 0$, it holds that
\begin{subequations}
\begin{equation}
\label{eqn:projection-decomposition-sig-pos}
w_{\xi}(T) = \E \bigg[\,\sup_{t \in T} \; \Big\{\langle t, \xi\rangle-\frac{\|t\|_2^2}{2\sigma} \Big\}\, \bigg] \; + \; 
\frac{1}{2}
\int_{\sigma}^{\infty} 
\E\|\Pi_{T/\nu}(\xi)\|_2^2 \, \ud\nu.
\end{equation}
In particular, it holds that 
\begin{equation}
\label{eqn:projection-decomposition-sig-zero}
w_{\xi}(T) = \half \int_{0}^{\infty} 
\E\|\Pi_{T/\nu}(\xi)\|_2^2 \, \ud\nu.
\end{equation}
\end{subequations}
\end{theorem}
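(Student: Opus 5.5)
Define, for each fixed realization of $\xi$, the function
\[
F_\xi(\sigma) \defn \sup_{t \in T} \Big\{\langle t, \xi\rangle - \frac{\|t\|_2^2}{2\sigma}\Big\}, \qquad \sigma > 0.
\]
The plan is to obtain both identities pathwise, by differentiating $F_\xi$ in $\sigma$, and then to take expectations.

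\emph{The maximizer is a projection.} Completing the square gives
\[
\langle t,\xi\rangle - \frac{\|t\|_2^2}{2\sigma} = \frac{\sigma}{2}\|\xi\|_2^2 - \frac{1}{2\sigma}\|t-\sigma\xi\|_2^2 .
\]
Hence the supremum is attained uniquely at $t_\sigma = \Pi_T(\sigma\xi) = \sigma\,\Pi_{T/\sigma}(\xi)$, which exists because $T$ is closed, convex and nonempty.

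\emph{Differentiate in $\sigma$.} Substitute $s = 1/(2\sigma)$. Then $F_\xi$ becomes $\phi(s) = \sup_t\{\langle t,\xi\rangle - s\|t\|_2^2\}$, a supremum of affine functions of $s$. So $\phi$ is convex and nonincreasing. By Danskin's theorem, using uniqueness of the maximizer and continuity of $s \mapsto t_s$ (the projection is $1$-Lipschitz), $\phi'(s) = -\|t_s\|_2^2$. Converting back to $\sigma$,
\[
\frac{\ud}{\ud\sigma} F_\xi(\sigma) = \frac{\|t_\sigma\|_2^2}{2\sigma^2} = \frac12\,\|\Pi_{T/\sigma}(\xi)\|_2^2 \ge 0 .
\]
Since $\phi$ is convex, it is locally absolutely continuous. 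Therefore for $\sigma < M$,
\[
F_\xi(M) = F_\xi(\sigma) + \frac12 \int_\sigma^M \|\Pi_{T/\nu}(\xi)\|_2^2 \,\ud\nu .
\]

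\emph{The two limits.} As $M\to\infty$, $F_\xi(M)$ increases to $\sup_{t\in T}\langle t,\xi\rangle$; this holds even when the supremum is $+\infty$, by letting the penalty vanish for each fixed $t$. As $\sigma \downarrow 0$, $F_\xi(\sigma) \to 0$. Indeed $0\in T$ gives $F_\xi\ge 0$, and $F_\xi(\sigma)\le \sup_{t\in\R^d}\{\cdot\} = \sigma\|\xi\|_2^2/2 \to 0$.

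\emph{Take expectations.} Every term is nonnegative: $F_\xi \ge 0$ and the integrand is a squared norm. Monotone convergence (in $M$) and Tonelli (to swap $\E$ and $\int\ud\nu$) therefore apply with no integrability assumptions. This yields \eqref{eqn:projection-decomposition-sig-pos}, with both sides possibly equal to $+\infty$.

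For \eqref{eqn:projection-decomposition-sig-zero}, let $\sigma\downarrow0$. The first term tends to $0$ by monotone convergence, since $F_\xi(\sigma)\downarrow 0$ pointwise. Alternatively, apply the pathwise identity directly on $(0,\infty)$ and then Tonelli, which avoids needing $\E F_\xi(\sigma)<\infty$.

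\emph{Main obstacle.} The delicate step is justifying the derivative formula and the absolute continuity rigorously. I would handle it through the convex reparametrization $s=1/(2\sigma)$, which gives absolute continuity for free, together with the standard envelope argument. The remaining care is measure-theoretic: the limits must be handled through nonnegativity and monotonicity so that infinite widths are allowed, and measurability of $(\nu,\xi)\mapsto\|\Pi_{T/\nu}(\xi)\|_2^2$ follows from its joint continuity.
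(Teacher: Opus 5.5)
Your proof is correct and follows essentially the paper's route: identify the maximizer as $\Pi_T(\sigma\xi)$ by completing the square, obtain the derivative $\tfrac12\|\Pi_{T/\sigma}(\xi)\|_2^2$ of $\sigma\mapsto h_\sigma(\xi)$ by an envelope argument, integrate, take the pathwise limits at $0$ and $\infty$, and then apply Fubini--Tonelli. The differences are only technical. You justify the derivative through convexity in $s=1/(2\sigma)$ and Danskin's theorem (or directly from the sandwich $-\|t_s\|_2^2\le(\phi(s')-\phi(s))/(s'-s)\le-\|t_{s'}\|_2^2$ for $s'>s$, together with continuity of $s\mapsto t_s$), whereas the paper uses the lower-$C^1$ envelope theorem of Rockafellar--Wets on a compact truncation of $T$. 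For the $\sigma\to0^+$ identity you are right to prefer the pathwise-then-Tonelli route, since the decreasing monotone-convergence version would need $\E F_\xi(\sigma)<\infty$.
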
 
\noindent The proof is presented in~\Cref{sec:proof-projection-decomposition-thm}.

\medskip

The results appearing in~\Cref{thm:decomp-of-gauss-width,thm:decomp-via-projections}, though related, are not the same. Indeed, the first terms appearing on the right-hand sides in~\cref{eqn:fp-decomposition-sig-pos,eqn:projection-decomposition-sig-pos}
are related as follows:
\begin{equation}\label{eqn:upper-bound-between-first-terms}
w_\xi\big(T \cap \ChatFixed(\sigma) B^d_2\big) - \frac{\ChatFixed(\sigma)^2}{2\sigma} \; \le \; 
 \E \bigg[\,\sup_{t \in T} \; \Big\{\langle t, \xi\rangle-\frac{\|t\|_2^2}{2\sigma} \Big\}\, \bigg].
\end{equation}
Notably, in the Gaussian case, for centrally symmetric convex $T$ the bound can be reversed:
\begin{equation}
\label{eq:mourtadapeeling}
w\big(T \cap \ChatFixed(\sigma) B^d_2\big) - \frac{\ChatFixed(\sigma)^2}{2\sigma} \gtrsim \E\left[\sup\limits_{x \in T }\left(\langle x, g\rangle-\frac{\|x\|^2}{2\sigma}\right)\right].
\end{equation}
Above, $g \sim \Normal{0}{I_d}$. The proof of~\cref{eq:mourtadapeeling}, which appears in~\Cref{sec:equivalenceoffirstterms}, exploits Gaussian concentration and a peeling argument; see also the recent paper~\cite{mourtada2025universal} for a related result. 

A similar comparison can be drawn between the second terms in~\cref{eqn:fp-decomposition-sig-pos,eqn:projection-decomposition-sig-pos}. Combining~\Cref{thm:decomp-of-gauss-width,thm:decomp-via-projections} and the inequality~\eqref{eqn:upper-bound-between-first-terms}, we immediately obtain 
\begin{equation}
\label{eq:upperboundonintegrals}
\int_{\sigma}^{\infty} \, 
\E \|\Pi_{T/\nu}(\xi)\|_2^2 \, \ud \nu \;
\le\;
\int_{\sigma}^{\infty}\frac{r(\nu)^2}{\nu^2}\, \ud\nu.
\end{equation}
Again, in the Gaussian case a more precise characterization is available. Letting $\diam(T)$ denote the Euclidean diameter of $T$, it can be shown (see~\Cref{sec:chaterjeecomparison}) that for any $\sigma \ge 0$,
\begin{equation}
\label{eq:chattejeeintegrals}
\int_{\sigma}^{\infty} \, 
\E \|\Pi_{T/\nu}(g)\|_2^2 \, \ud \nu + \diam(T) \asymp \int_{\sigma}^{\infty}\frac{r(\nu)^2}{\nu^2} \, \ud\nu + \diam(T), 
\end{equation}
where $g \sim \Normal{0}{I_d}$.
This follows via a result of Chatterjee~\cite{Cha14}, which implies that
\begin{equation}
\label{eq:chatterjeeidentity}
r(\sigma)^2\ \asymp\ \E\|\Pi_{T}(\sigma g)\|^2,
\qquad\text{whenever } r(\sigma)\gtrsim \sigma,
\end{equation}
for any centrally symmetric convex body $T \subset \R^d$. It should be noted that the above relation does not hold, in general, without the constraint $r(\sigma) \gtrsim \sigma$.

\medskip

For the rest of the paper we will primarily focus on the Gaussian case.
Summarizing the discussion above, the results in \cref{eq:mourtadapeeling,eq:chattejeeintegrals} mean that, disregarding multiplicative constant factors, in the Gaussian case the two decompositions of \Cref{thm:decomp-of-gauss-width} and \Cref{thm:decomp-via-projections} are essentially equivalent. Nonetheless, we typically exploit one of the decompositions in order to obtain sharper leading constants and more streamlined arguments.

\subsection{Gaussian width via the index of the largest intrinsic volume}

Another consequence of \Cref{thm:decomp-of-gauss-width} links the Gaussian width to a purely geometric feature of the convex body, complementing the Gaussian projection structure. In this section convexity is sufficient; central symmetry is not needed. Let us first recall the classical definition of intrinsic volumes. For a convex body $T \subset \R^d$, one can use the Steiner formula
\begin{equation}
\label{eq:intrvolumesformulas}
\operatorname{Vol}_d(T + rB_2^d) = \sum_{i = 0}^d \kappa_{d - i}\, V_i(T)\, r^{d - i},
\end{equation}
where $\operatorname{Vol}_d(\cdot)$ denotes the volume in $\R^d$, $r > 0$, $B_2^d$ denotes the Euclidean unit ball in $\R^d$, $\kappa_j = \operatorname{Vol}_j(B_2^j)$ and $V_j(T)$ are the \emph{intrinsic volumes} of $T$. Some of the intrinsic volumes have a particularly simple structure: namely,
\begin{equation}
\label{eq:intrvolumes}
V_0(T) = 1, \quad V_1(T) = \sqrt{2\pi}\,w(T),\quad V_{d - 1}(T) = \tfrac{1}{2}\operatorname{Sf}_{d-1}(\partial T),\quad V_d(T) = \operatorname{Vol}_d(T),
\end{equation}
where $\operatorname{Sf}_{d-1}(\partial T)$ denotes the surface area of $T$. The direct connection between $V_{1}(T)$ and $w(T)$ due to Sudakov~\cite{Sudakov1976} reduces our problem to understanding the first intrinsic volume.

A scale-free aspect of the Gaussian width that will be central in the context of \Cref{thm:decomp-of-gauss-width} is closely related to the distribution of the intrinsic volumes $\{V_i(T)\}_{i=0}^d$. This sequence is unimodal~\cite{lotz2020concentration, AravindaMarsigliettiMelbourne2022ULC}.\footnote{A closely related question, namely the distribution of the normalized intrinsic volumes, is studied in~\cite{lotz2020concentration}.}  We will only need the integer index of the largest intrinsic volume; namely, we set
\begin{equation}
\label{eq:istardef}
\PeakIndex{\sigma} \in \argmax_{i \in \{0, 1, \ldots, d\}}\left\{ V_i\left(\frac{T}{\sigma\sqrt{2\pi}}\right)\right\},
\end{equation}
which will be called the \emph{peak intrinsic index}. We assume in what follows that $\PeakIndex{\sigma}$ is the smallest maximizer if there is more than one maximizer. 
Of course, finding the peak intrinsic index is a nontrivial problem in general. However, from \eqref{eq:intrvolumes} and unimodality we have the following limiting cases:
\begin{equation}
\label{eq:peakindexprops}
\PeakIndex{\sigma} = d \quad \text{iff}\quad \sigma < \sqrt{\frac{2}{\pi}}\frac{\operatorname{Vol}_d(T)}{\operatorname{Sf}_{d-1}(\partial T)},
\qquad\text{and}\qquad
\PeakIndex{\sigma} = 0 \quad \text{iff}\quad \sigma \ge w(T).
\end{equation}
Thus, the interesting regime $\PeakIndex{\sigma}>0$ corresponds to $\sigma \in (0, w(T))$. It is straightforward to argue that $\sigma \mapsto \PeakIndex{\sigma}$ is a nonincreasing, integer-valued map that changes only at scales where two consecutive intrinsic volumes of $\frac{T}{\sigma\sqrt{2\pi}}$ are equal.

Before stating the main result, let us recall a basic comparison between the Gaussian width of a set and its diameter. For a bounded set $T \subset \R^d$, we have (see \cite[Proposition~7.5.2]{vershynin2018high})
\begin{equation}
\label{eq:vershyninrelation}
    \frac{1}{\sqrt{2\pi}}\diam(T)\le w(T) \le \sqrt{d}\,\diam(T).
\end{equation}
Based on these bounds, it is natural to wonder if, up to the diameter, 
$[w(T)]^2$ behaves as some form of an ``effective dimension,'' which takes values between a constant and $d$, depending on the geometry of the underlying set. Our next result shows
that the mode of the intrinsic volumes plays precisely this role.

\begin{theorem}
\label{thm:indexstarbound}
For a compact convex set $T$ in $\R^d$ containing the origin, and for any $\sigma \in (0, w(T))$, it holds that
\[
w(T) \asymp \sigma \PeakIndex{\sigma}+\int_{\sigma}^{\infty}\E\|\Pi_{T/\nu}(g)\|_2^2 \, \, \ud\nu, \quad \text{and} \quad w(T) \asymp \sigma \PeakIndex{\sigma}+\int_{\sigma}^{\infty}\frac{r(\nu)^2}{\nu^2}\ud\nu,
\]
where the peak intrinsic index $\PeakIndex{\sigma}$ is given by \eqref{eq:istardef}. In particular, assuming $\diam(T) \neq 0$, define
\begin{equation}
\label{eq:istar}
i^\star \in \argmax_{i \in \{1,\ldots,d\}}\left\{ V_i\left(\frac{T}{\diam(T)}\right)\right\}.
\end{equation}
Then, it holds that
\begin{equation}
\label{eq:istarbound}
w(T) \asymp i^\star\diam(T).
\end{equation}
\end{theorem}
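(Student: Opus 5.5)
The plan is to prove the first (projection) equivalence from \Cref{thm:decomp-via-projections}, transfer it to the fixed-point version, and then specialize $\sigma$ to obtain \eqref{eq:istarbound}. By \cref{eqn:projection-decomposition-sig-pos} with $\xi=g$,
\[
w(T)=F(\sigma)+\tfrac12\int_\sigma^\infty \E\|\Pi_{T/\nu}(g)\|_2^2\,\ud\nu,
\qquad
F(\sigma)=\E Z_\sigma,
\]
where $Z_\sigma=\sup_{t\in T}\{\langle t,g\rangle-\|t\|_2^2/(2\sigma)\}$. So it suffices to show that, up to absolute constants and up to terms already dominated by the right-hand side, $F(\sigma)\asymp \sigma\,\PeakIndex{\sigma}$ for $\sigma\in(0,w(T))$.

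\textbf{Step 1: link $Z_\sigma$ to the Wills functional.} Recall Vitale's identity $W(K)=\sum_i V_i(K)=\E\exp\sup_{x\in K}\{\sqrt{2\pi}\langle x,g\rangle-\pi\|x\|_2^2\}$. Applying it with $K=T/(\sigma\sqrt{2\pi})$ and substituting $x=s/(\sigma\sqrt{2\pi})$ gives
\[
\log W\big(T/(\sigma\sqrt{2\pi})\big)=\log \E e^{Z_\sigma/\sigma}.
\]

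\textbf{Step 2: show $\log\E e^{Z_\sigma/\sigma}\asymp \E Z_\sigma/\sigma$, up to an additive constant.} Jensen's inequality gives $\E Z_\sigma/\sigma\le \log\E e^{Z_\sigma/\sigma}$. For the reverse direction, $Z_\sigma$ is convex in $g$ with gradient equal to the maximizer $\hat t$. The first-order condition gives $\|\hat t\|_2^2\le 2\sigma Z_\sigma+\|\hat t\|_2^2$-type relations; more precisely, optimality along the ray $\lambda\hat t$, $\lambda\in[0,1]$, yields $\|\hat t\|_2^2\le 2\sigma Z_\sigma$. Hence $Z_\sigma/\sigma$ is self-bounding in the sense $\|\nabla (Z_\sigma/\sigma)\|_2^2\le 2Z_\sigma/\sigma$. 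A Herbst/log-Sobolev argument for such functions then gives $\log \E e^{Z_\sigma/\sigma}\le C(\E Z_\sigma/\sigma+1)$.

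\textbf{Step 3: invoke Mourtada.} Next I would invoke \cite{mourtada2025universal}, which shows that $\log W(K)\asymp \max\{1,\text{peak index of }K\}$; this rests on the ultra-log-concavity of intrinsic volumes, which concentrates $W(K)$ around its peak term. Combining Steps 1--3 gives $F(\sigma)+\sigma\asymp \sigma\max\{1,\PeakIndex{\sigma}\}$. Since $\sigma<w(T)$, \eqref{eq:peakindexprops} gives $\PeakIndex{\sigma}\ge1$, so $F(\sigma)+\sigma\asymp\sigma\PeakIndex{\sigma}$.

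\textbf{Step 4: absorb the additive $\sigma$.} It remains to ensure $F(\sigma)\gtrsim\sigma$, or else that $\sigma$ is dominated by the right-hand side. The upper bound $w\lesssim \sigma\PeakIndex{\sigma}+\int$ is then immediate. For the lower bound, $\sigma\le w(T)$ handles the $\sigma\PeakIndex{\sigma}$ term via $F(\sigma)\le w(T)$ together with the additive $\sigma<w(T)$, and $\int\le 2w(T)$ handles the integral. The second displayed equivalence then follows from \cref{eq:chattejeeintegrals} and \cref{eq:mourtadapeeling}, with the extra $\diam(T)$ absorbed as follows. For $\nu\ge\diam(T)$, taking the projection onto a segment $[0,u]\subset T$ with $\|u\|_2\asymp\diam(T)$ shows $\E\|\Pi_{T/\nu}(g)\|_2^2\gtrsim \diam(T)^2/\nu^2$. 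Therefore $\int_\sigma^\infty\gtrsim \diam(T)$ when $\sigma\le\diam(T)$, while $\sigma\PeakIndex{\sigma}\ge\sigma>\diam(T)$ otherwise.

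\textbf{Step 5: derive \eqref{eq:istarbound}.} Take $\sigma=\diam(T)/\sqrt{2\pi}$, so that $T/(\sigma\sqrt{2\pi})=T/\diam(T)$; by \eqref{eq:vershyninrelation} we have $\sigma\le w(T)$, and the edge case is handled by continuity or a constant-factor adjustment. Because $0\in T$, we have $\|\Pi_{T/\nu}(g)\|_2\le\diam(T)/\nu$, so the integral is at most $\diam(T)^2/\sigma\lesssim\diam(T)\le \diam(T)\,i^\star$. Restricting the argmax in \eqref{eq:istar} to $i\ge1$ changes the index only when the true peak is $0$, which is excluded here; in any case $\max\{1,\cdot\}$ has already been accounted for in Step 3.

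\textbf{Main obstacle.} I expect the main difficulty to be Step 2 together with the precise form of Mourtada's peak-index estimate, which is needed to get $\log W\asymp \PeakIndex{\sigma}$ rather than $\PeakIndex{\sigma}+\log d$. I would rely on the ultra-log-concavity concentration result there.
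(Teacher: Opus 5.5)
There is a genuine gap. The reverse inequality in Step 2 and the two-sided claim in Step 3 are both false, so you cannot reach $F(\sigma)+\sigma\asymp\sigma\PeakIndex{\sigma}$ by passing through $\log W$.

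Take $d=1$, $T=[-1,1]$ and $\sigma\to0^+$, which lies in the allowed range $\sigma<w(T)=\sqrt{2/\pi}$. Then $K=T/(\sigma\sqrt{2\pi})$ is a segment of length $\sqrt{2/\pi}/\sigma$, so $\log W(K)=\log(1+\sqrt{2/\pi}/\sigma)\to\infty$. On the other hand, $Z_\sigma/\sigma\le g^2/2$ gives $\E Z_\sigma/\sigma\le 1/2$, and $\PeakIndex{\sigma}=1$. More generally, for a convex body, $V_d(K)=\operatorname{Vol}_d(T)(\sigma\sqrt{2\pi})^{-d}$ forces $\log W(K)\gtrsim d\log(1/\sigma)$, while $\E Z_\sigma/\sigma\le d/2$ and $\PeakIndex{\sigma}\le d$.

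So $\log\E e^{Z_\sigma/\sigma}\le C(\E Z_\sigma/\sigma+1)$ fails. This is consistent with your Herbst argument: under $\|\nabla F\|_2^2\le 2F$ it only gives $\log\E e^{\lambda F}\le\frac{\lambda}{1-\lambda}\E F$ for $\lambda<1$, which degenerates exactly at the exponent $\lambda=1$ you need.

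Likewise, $\log W(K)\asymp\max\{1,\PeakIndex{\sigma}\}$ is not what Mourtada proves. His result, \eqref{eq:upperandlowerboundwills}, is $\log W(K)\asymp\log\max_{i\ge1}V_i(K)$, the logarithm of the peak \emph{value}. By Poisson log-concavity this is bounded only by $\PeakIndex{\sigma}\log(\e V_1(K)/\PeakIndex{\sigma})$. The discrepancy is therefore an unbounded multiplicative logarithmic factor, not the additive $\log d$ you anticipated.

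The missing idea is that you never need $\log W\lesssim\PeakIndex{\sigma}$. Since $V_1(K)=w(T)/\sigma$, the logarithmic loss involves the unknown $w(T)$ itself and can be absorbed into the left-hand side.

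\textbf{Upper bound.} If $w(T)<2\sigma$, then trivially $w(T)<2\sigma\PeakIndex{\sigma}$, since $\PeakIndex{\sigma}\ge1$. Otherwise, combine:
\begin{itemize}
\item the decomposition;
\item the (correct) Jensen step $F(\sigma)\le\sigma\log W(K)$;
\item \eqref{eq:upperandlowerboundwills};
\item the bound $V_i(K)\le V_1(K)^i/i!$.
\end{itemize}
With $i=\PeakIndex{\sigma}$, $x=w(T)/\sigma$, and $y$ the integral term divided by $2\sigma$, this gives $x\le 8(i\log x-\log i!)+y$. Applying $\log s\le s-1$ at $s=x/(16i)$ and $\log i!\ge i\log i-i$ yields $x\le 45i+2y$.

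\textbf{Lower bound.} This needs no Wills functional. The ratio bound $V_i(K)/V_{i-1}(K)\le V_1(K)/i$ gives $\PeakIndex{\sigma}\le V_1(K)$, i.e.\ $\sigma\PeakIndex{\sigma}\le w(T)$. The integral is at most $2w(T)$ by the decomposition itself. With these replacements your Step 5 goes through essentially as written.

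\textbf{Step 4.} The transfer is unnecessary and, as written, not sound. First, \eqref{eq:mourtadapeeling} needs central symmetry, which is not assumed. Second, the segment comparison for $\E\|\Pi_{T/\nu}(g)\|_2^2$ is unjustified, because projection norms are not monotone under inclusion. Instead, either run the same argument on the fixed-point decomposition, whose first term $\BoundOne(\sigma)$ is also at most $\sigma\log W(K)$, or use $\int_\sigma^\infty\E\|\Pi_{T/\nu}(g)\|_2^2\,\ud\nu\le\int_\sigma^\infty r(\nu)^2\nu^{-2}\,\ud\nu\le 2w(T)$.
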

The full proof of this result is deferred to \Cref{sec:wills}. 
The upper bound in \Cref{thm:indexstarbound} only requires an approximate index-location $\PeakIndex{\sigma}$, that is, up to multiplicative constant factors. 

\begin{remark}[Probabilistic interpretation of~\Cref{thm:indexstarbound}]
The distribution of intrinsic volumes was studied in~\cite{lotz2020concentration}. There, the authors consider the discrete random variable $Z_T$ supported on $\{0,\ldots,d\}$ with probability mass function
\[
\Pr\big\{Z_T=i\big\}=\frac{V_i(T)}{W(T)},\quad \mbox{for}~i=0,1,\ldots, d.
\]
The work \cite{lotz2020concentration} establishes concentration of $Z_T$ and analyzes particular choices of the set $T$. On the other hand, \Cref{thm:indexstarbound} 
shows that $w(T)$ depends on the {mode} of $Z_T$. In this sense, more fine-grained information regarding the behavior of $Z_T$ is not necessary to obtain a characterization of the width.
\end{remark}

From the proof of~\Cref{thm:indexstarbound}, the peak intrinsic index $i^\star$ in~\eqref{eq:istar} satisfies
\[
i^\star 
\le \left\lfloor \sqrt{2\pi d}\right\rfloor.
\]
 The next result, which follows quickly from~\cref{eq:istarbound}, shows that when $T$ is additionally in the correct position, it must at least scale polylogarithmically in dimension. 
\begin{corollary}
\label{cor:T-lowner}
Suppose that $T \subset \R^d$ is a centrally symmetric convex body which is in the Löwner position (\ie the minimum volume enclosing ellipsoid is the ball $\rad(T) B^d_2$). Then, the peak intrinsic index $i^\star$ satisfies
\[
\sqrt{\log(\e d)} \lesssim i^\star \lesssim \sqrt{d}.
\]
\end{corollary}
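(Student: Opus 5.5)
The plan is to use \eqref{eq:istarbound}, which says $w(T) \asymp i^\star \diam(T)$. This turns the corollary into a two-sided estimate on the ratio $w(T)/\diam(T)$ for bodies in Löwner position. Write $R = \rad(T)$, so that $T \subset R B^d_2$. Central symmetry together with the definition of $\rad(T)$ as the radius of the John/Löwner ellipsoid gives $\diam(T) = 2R$: the enclosing ball touches $T$ at contact points, and these occur in antipodal pairs by symmetry.

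\emph{Upper bound.} We have $w(T) \le w(R B^d_2) = R\,\E\|g\|_2 \le R\sqrt d$. Hence
\[
i^\star \lesssim \frac{w(T)}{\diam(T)} \le \frac{\sqrt d}{2}.
\]
This also follows from \eqref{eq:vershyninrelation}, and it does not use the Löwner position. The paper's explicit bound $i^\star \le \lfloor\sqrt{2\pi d}\rfloor$ is consistent with it.

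\emph{Lower bound.} We need $w(T) \gtrsim R\sqrt{\log(\e d)}$. I would use John's theorem for the minimal enclosing ellipsoid $R B^d_2$. It provides contact points $u_1,\dots,u_m \in \partial T \cap R S^{d-1}$, with $m \ge d$, and weights $c_j>0$ satisfying $\sum_j c_j u_j u_j^\top = R^2 I_d$. Taking traces gives $\sum_j c_j = d$. Since each $u_j$ lies on $\partial T$ and on the boundary of the enclosing ball, $T \supset \mathrm{conv}\{\pm u_j\}$. Therefore
\[
w(T) \ge \E \max_j |\langle g, u_j\rangle|.
\]

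The main obstacle is showing that these contact points contain many nearly orthogonal directions. It suffices to find $k \gtrsim d$ contact points, or even only $k \ge d^{c}$, whose pairwise Euclidean distances are $\gtrsim R$. Sudakov minoration (or Slepian's inequality) then gives $\E\max_j\langle g,u_j\rangle \gtrsim R\sqrt{\log k} \gtrsim R\sqrt{\log d}$.

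To extract such a separated family, I would use the decomposition of the identity greedily. Fix a maximal $R/2$-separated subset $S$ of the contact points. Every $u_j$ lies within $R/2$ of some $s \in S$. For unit-scale vectors, this closeness gives $\langle u_j, s\rangle \ge R^2/2$ or the analogous bound for $-s$, because antipodal contact points are also present. Suppose $|S|$ were small, say $|S| \le d^{1/2}$. Then the quadratic form $\sum_j c_j\langle x,u_j\rangle^2$ would be large only in directions close to the span of $S$. Choosing a unit $x \perp S$ gives
\[
R^2 = \sum_j c_j \langle x,u_j\rangle^2 \le \sum_j c_j \|u_j - \pi_j\|^2 \le d R^2/4,
\]
where $\pi_j$ is the associated element of $S$. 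This alone is not contradictory, so I would refine it as follows. Consider the orthogonal projection onto $S^\perp$, which has rank at least $d - |S|$, and compare traces: $\sum_j c_j \|P_{S^\perp}u_j\|^2 = R^2(d-|S|)$, while each term is at most $c_j R^2/4$. Summing gives $d-|S| \le d/4$, so $|S| \ge 3d/4$. This yields $\gtrsim d$ separated points, and hence $w(T) \gtrsim R\sqrt{\log d}$.

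The $\e$ inside the logarithm, together with $i^\star \ge 1$, handles small $d$. Combining these estimates with \eqref{eq:istarbound} gives $\sqrt{\log(\e d)} \lesssim i^\star \lesssim \sqrt d$.
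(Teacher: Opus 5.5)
Your proof is correct. The upper bound and the reduction to $w(T)/\diam(T)$ via \eqref{eq:istarbound} match the paper, but your lower bound takes a different route from the paper's main argument.

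The paper passes to the polar body. Löwner position of $T$ means John position of $T^\circ$, so $w(T)=\E\|g\|_{T^\circ}$. The paper then cites the Schechtman--Schmuckenschl\"ager comparison with the cube, $\E\|g\|_{T^\circ}\ge \E\|g\|_\infty/\inrad(T^\circ)$. This gives the sharp estimate $w(T)\ge \rad(T)\,w(B^d_1)$ in one line, but it rests on a nontrivial external theorem.

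You work with $T$ itself. Take $S$ a maximal $R/2$-separated subset of the contact points and $P$ the projection onto $\mathrm{span}(S)^\perp$. John's decomposition then gives $R^2(d-|S|)\le R^2\operatorname{rank}(P)=\sum_j c_j\|Pu_j\|^2\le \tfrac{R^2}{4}\sum_j c_j=\tfrac{R^2 d}{4}$. Hence $|S|\ge 3d/4$, and Sudakov minoration yields $w(T)\gtrsim R\sqrt{\log d}$.

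This is exactly the ``contact points, worse constant'' alternative the paper mentions without carrying out. Under polarity, $u\mapsto u/R^2$ sends your contact points of $T$ with $RS^{d-1}$ to the contact points of $T^\circ$ with its John ball. Your version is elementary and self-contained given John's theorem and Sudakov minoration (which the paper reproves in Section~\ref{sec:sudakov}), at the cost of non-sharp constants.

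Two small cleanups. First, the trace identity is $\sum_j c_j\|Pu_j\|^2=R^2\operatorname{rank}(P)\ge R^2(d-|S|)$; it is an equality with $d-|S|$ only when $S$ is linearly independent. Second, the exploratory step assuming $|S|\le d^{1/2}$, and the remark about $\langle u_j,\pm s\rangle$, are unnecessary and can be removed.
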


The proof of \Cref{cor:T-lowner} is presented in \Cref{sec:T-lowner}. Note that both inequalities are sharp: by considering the case when $T = B^d_1$ or $T = B^d_2$, up to scaling. 

\begin{remark}[Failure of~\Cref{cor:T-lowner} without position]
The assumption of position is critical for the lower bound in Corollary~\ref{cor:T-lowner}. For instance, consider the case of the ellipsoid $\cE_\lambda = \{x \in \R^d : x_1^2 + \tfrac{d-1}{\lambda}(x_2^2 + \cdots + x_d^2)\leq 1\}$ for $\lambda > 0$. It is not hard to verify that $\rad(\cE_\lambda) = \max\{1, \sqrt{\lambda/(d-1)}\}$, while $w(\cE_\lambda) \asymp\sqrt{1 + \lambda}$, and hence by~\Cref{thm:indexstarbound}, we have $i^\star \asymp 1$ as $\lambda \to 0^+$.
\end{remark}

\subsection{Proof of~\Cref{thm:decomp-of-gauss-width}}
\label{sec:proof-fixed-point-decomposition}

The remainder of this section completes the proofs of \Cref{thm:decomp-of-gauss-width}; applications and further developments of the identity appear in subsequent sections. Throughout, we make use of the following shorthand notation for any $\sigma > 0$:
\[
\psi(\sigma) = \sup_{r \geq 0 }\Big\{\,
w_\xi(T \cap r B^d_2) - \frac{r^2}{2\sigma}
\,\Big\}.
\]
The following lemma, proved in~\Cref{sec:proof-lem-smoothed-local-width}, is a consequence of convex analysis. 
\begin{lemma}
\label{lem:smoothed-local-width}
Suppose $T$ is a convex set containing the origin. Then, 
\begin{enumerate}[label=(\roman*)]
\item \label{item:limit-at-infinity-smoothed-width}
the limit relation $\lim_{\sigma \to \infty} \psi(\sigma) = w_{\xi}(T)$ holds; and, 
\item \label{item:limit-at-zero-smoothed-width} the limit relation $\lim_{\sigma \to 0^+} \psi(\sigma) = 0$ holds; and, 
\item \label{item:monotone-r-sigma}
the map $\sigma \mapsto r(\sigma)$ is nondecreasing on $(0, \infty)$; and,
\item \label{item:derivative-of-smooth-width}
the function $\psi$ is differentiable on $(0, \infty)$ with $\psi'(\sigma) = \frac{r^2(\sigma)}{2\sigma^2}$.
\end{enumerate}
\end{lemma}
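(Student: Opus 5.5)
We work throughout with $\omega(r) = w_\xi(T\cap rB^d_2)$. We first record its structure. By convexity of $T$ and $0\in T$, the set $T\cap rB^d_2$ is contained in $\tfrac{r}{r_0}(T\cap r_0 B^d_2)$ for $r\ge r_0$, and the same kind of convex-combination inclusion gives $(1-\lambda)(T\cap r_1B) + \lambda(T\cap r_2 B)\subset T\cap((1-\lambda)r_1+\lambda r_2)B$. Since the support function is additive under Minkowski sums and monotone under inclusion, $\omega$ is concave and nondecreasing on $[0,\infty)$, with $\omega(0)=0$ and $\omega\ge 0$. By the standing assumption (via \Cref{prop:fixed-points-exist-unique}) it is finite. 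Finally, $\sup_r\omega(r)=w_\xi(T)$ by monotone convergence, because $\sup_{x\in T\cap rB}\langle\xi,x\rangle$ increases in $r$ to $\sup_{x\in T}\langle \xi,x\rangle$, and this supremum is bounded below by $0$.

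We then write $\psi(\sigma)=\sup_{r\ge0}\{\omega(r)-\tfrac{1}{2\sigma}r^2\}$. Setting $s=1/(2\sigma)$, this is $\Phi(s)=\sup_r\{\omega(r)-s r^2\}$, a supremum of affine functions of $s$. Hence $\Phi$ is convex and nonincreasing in $s$, so $\psi$ is nondecreasing in $\sigma$.

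For \ref{item:limit-at-infinity-smoothed-width}, the upper bound is $\psi(\sigma)\le \sup_r\omega(r)=w_\xi(T)$. For the lower bound, fix any $r$; then $\psi(\sigma)\ge\omega(r)-r^2/(2\sigma)\to\omega(r)$ as $\sigma\to\infty$, and taking the supremum over $r$ gives the claim, including the case $w_\xi(T)=\infty$.

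For \ref{item:limit-at-zero-smoothed-width}, we have $\psi\ge\omega(0)=0$. Concavity gives $\omega(r)\le \omega(1)\max(r,1)$, hence $\omega(r)\le\omega(1)(1+r)$. Therefore $\psi(\sigma)\le \omega(1)+\sup_r\{\omega(1)r-r^2/(2\sigma)\}=\omega(1)+\sigma\omega(1)^2/2$. This bound is not quite enough, since it does not tend to $0$. We sharpen it using concavity and finiteness: for each $\epsilon>0$, the right derivative $\omega'(\epsilon)$ is finite and $\omega(r)\le\omega(\epsilon)+\omega'(\epsilon)r$. This yields $\limsup_{\sigma\to0}\psi(\sigma)\le\omega(\epsilon)$. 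Then $\omega(\epsilon)\to0$ as $\epsilon\to0^+$, because $\omega(\epsilon)\le\epsilon\,\omega(1)$ for $\epsilon\le 1$ by the scaling inclusion.

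For \ref{item:monotone-r-sigma}, we use the standard monotone comparative statics argument. Let $\sigma_1<\sigma_2$ with maximizers $r_1,r_2$. Add the two optimality inequalities
\[
\omega(r_1)-\tfrac{r_1^2}{2\sigma_1}\ge\omega(r_2)-\tfrac{r_2^2}{2\sigma_1},\qquad \omega(r_2)-\tfrac{r_2^2}{2\sigma_2}\ge\omega(r_1)-\tfrac{r_1^2}{2\sigma_2}.
\]
This gives $(r_2^2-r_1^2)(\tfrac1{2\sigma_1}-\tfrac1{2\sigma_2})\ge0$, so $r_2\ge r_1$.

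For \ref{item:derivative-of-smooth-width}, which is the main point, we apply a Danskin/envelope argument to $\Phi$. The maximizer $r(s)$ is unique, and the sup can be restricted to a compact range of $r$ locally uniformly in $s$. This follows from monotonicity of $r(\sigma)$ on any interval $[\sigma_1,\sigma_2]$, which confines maximizers to $[0,r(\sigma_2)]$, and the objective there is continuous in $r$ since $\omega$ is concave and finite. Danskin's theorem then gives that $\Phi$ is differentiable with $\Phi'(s)=-r(s)^2$.

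If one prefers to avoid compactness subtleties, we can argue directly. Optimality of $r(\sigma)$ and $r(\sigma+h)$ gives the sandwich
\[
\Big(\tfrac{1}{2\sigma}-\tfrac{1}{2(\sigma+h)}\Big)r(\sigma)^2\le\psi(\sigma+h)-\psi(\sigma)\le\Big(\tfrac{1}{2\sigma}-\tfrac{1}{2(\sigma+h)}\Big)r(\sigma+h)^2 .
\]
This yields the derivative once $r$ is continuous. Continuity of $r(\sigma)$ follows from uniqueness of the maximizer of the strongly concave objective together with a standard closed-graph argument: limits of maximizers are maximizers, by continuity of $\omega$ and boundedness from \ref{item:monotone-r-sigma}. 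The chain rule with $s=1/(2\sigma)$, $ds/d\sigma=-1/(2\sigma^2)$, then gives $\psi'(\sigma)=r(\sigma)^2/(2\sigma^2)$. We expect this continuity of $r$ to be the only delicate step.
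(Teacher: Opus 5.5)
Parts (i) and (iii) match the paper's arguments. Your Danskin route in (iv) is also the paper's: it invokes the lower-$C^1$ envelope theorem of Rockafellar--Wets on the compact set $[0,r(3\sigma/2)]$. Your sandwich-plus-continuity-of-$r$ alternative is a valid, more elementary substitute.

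The genuine problem is the last step of (ii). The inequality $\omega(\epsilon)\le\epsilon\,\omega(1)$ for $\epsilon\le 1$ is backwards. Convexity and $0\in T$ give $\epsilon(T\cap B^d_2)\subset T\cap\epsilon B^d_2$, hence $\omega(\epsilon)\ge\epsilon\,\omega(1)$. Equivalently, $\omega(r)/r$ is nonincreasing, which is the direction you used correctly a few lines earlier to get $\omega(r)\le r\,\omega(1)$ for $r\ge 1$. Moreover, no argument using only concavity, monotonicity and finiteness can yield $\omega(0^+)=0$: the function equal to $0$ at $0$ and to $1$ on $(0,\infty)$ has all three properties. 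The point is essential. If $\omega(0^+)=c>0$, then $\psi(\sigma)\ge\lim_{r\to 0^+}\{\omega(r)-r^2/(2\sigma)\}\ge c$ for every $\sigma$, and (ii) would fail.

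The missing input is measure-theoretic, and it is what the paper uses. Write $X_r=\sup_{x\in T\cap rB^d_2}\langle\xi,x\rangle$. Then $0\le X_\epsilon\le\epsilon\|\xi\|_2$ pointwise, so $X_\epsilon\to 0$ as $\epsilon\to0^+$. Also $X_\epsilon\le X_{r_0}$ for $\epsilon\le r_0$, with $\E X_{r_0}<\infty$ by the standing assumption that $r(\sigma)$ exists. Dominated convergence then gives $\omega(\epsilon)\to 0$. When $\E\|\xi\|_2<\infty$, e.g.\ in the Gaussian case, the bound $\omega(\epsilon)\le\epsilon\,\E\|\xi\|_2$ suffices directly.

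With this in hand, your tangent-line bound $\omega(r)\le\omega(\epsilon)+\omega'_+(\epsilon)\,r$ completes (ii), just as the paper's chord bound $\omega(s)\le\tfrac{s}{r}\,\omega(r)$ for $s\ge r$ does. The same right-continuity at $0$ also justifies your claim in (iv) that $\omega$ is continuous on $[0,r(\sigma_2)]$. You need this both for Danskin's hypotheses and for your closed-graph argument, and concavity plus finiteness only gives continuity on $(0,\infty)$.
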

\noindent By the fundamental theorem of calculus and~\Cref{lem:smoothed-local-width}\ref{item:limit-at-infinity-smoothed-width}~and~\ref{item:derivative-of-smooth-width}, for any $\sigma > 0$ it holds that 
\[
w_{\xi}(T) = \psi(\sigma) + \lim_{\nu \to \infty}\big[\, \psi(\nu) - \psi(\sigma)\,\big] = 
\psi(\sigma) + \half \int_\sigma^\infty \frac{r^2(\nu)}{\nu^2} \, \ud \nu,
\]
thereby establishing~\eqref{eqn:fp-decomposition-sig-pos}. Moreover, 
if $0 \in T$, we take the limit as $\sigma \to 0^+$ and using~\Cref{lem:smoothed-local-width}\ref{item:limit-at-zero-smoothed-width}, 
we obtain~\eqref{eqn:fp-decomposition-sig-zero}.

\subsection{Proof of~\Cref{thm:decomp-via-projections}}
\label{sec:proof-projection-decomposition-thm}
For any $x \in \R^d$ and $\sigma > 0$, we define the shorthand notations 
\[
h(x) \equiv h_T(x) = \sup_{t \in T} \, \langle x, t \rangle, 
\quad
h_\sigma(x) = 
\sup_{t \in T} \, \Big\{ \langle x, t \rangle - \frac{\|t\|_2^2}{2\sigma} \Big\}, 
\quad \mbox{and} \quad 
\dot{h}_\sigma(x)=\frac{\ud}{\ud \nu} h_\nu(x) \Big|_{\nu = \sigma}.
\]
The following lemma, proved in~\Cref{sec:proof-lem-smoothed-support-function}, can be established using convex analysis. 
\begin{lemma}
Suppose $T$ is a closed, convex set containing the origin. Then for any $x \in \R^d$,
\begin{enumerate}[label=(\roman*)]
\item 
\label{item:derivative-of-smoothed-support}
the map $\sigma \mapsto h_\sigma(x)$ is differentiable on $(0, \infty)$, with $\dot{h}_\sigma(x) = \tfrac{1}{2} \|\Pi_{T/\sigma}(x)\|_2^2$; and,
\item 
\label{item:limit-relation-support-inf}
the limit relation $\lim_{\sigma \to \infty} h_\sigma(x) = h(x)$ holds; and, 
\item 
\label{item:limit-relation-support-0}
the limit relation 
$\lim_{\sigma \to 0^+} h_\sigma(x) = 0$ holds. 
\end{enumerate}
\label{lem:smoothed-support-function}
\end{lemma}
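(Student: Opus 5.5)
The plan is to write $h_\sigma$ as a Moreau envelope and read off all three claims from standard facts about that envelope. Completing the square gives, for every $x$ and $\sigma>0$,
\[
h_\sigma(x) = \sup_{t\in T}\Big\{\tfrac{\sigma}{2}\|x\|_2^2 - \tfrac{1}{2\sigma}\|t-\sigma x\|_2^2\Big\} = \tfrac{\sigma}{2}\|x\|_2^2 - \tfrac{1}{2\sigma}\dist(\sigma x, T)^2 .
\]
The supremum is attained exactly at $t_\sigma := \Pi_T(\sigma x) = \sigma \Pi_{T/\sigma}(x)$. This uses that $T$ is closed and convex, so the projection exists and is unique.

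\emph{Differentiability, item (i).} Put $\nu = 1/(2\sigma)$ and write $h_\sigma(x) = \sup_{t\in T} f(\nu,t)$ with $f(\nu,t) = \langle x,t\rangle - \nu\|t\|_2^2$. Each $f(\cdot,t)$ is affine in $\nu$, so $\nu \mapsto h$ is convex. Because the maximizer $t_\sigma$ is unique, Danskin's theorem should give differentiability with derivative $-\|t_\sigma\|_2^2$ in $\nu$. Danskin needs a compactness device since $T$ may be unbounded. The fix is that $\|t\|$ is a priori bounded near the maximizer: $f \ge f(\nu,0)=0$ forces $\|t\|\le \|x\|/\nu$. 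Locally in $\nu$ we can therefore restrict to a compact ball.

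An alternative I would use to be safe is the direct sandwich. For $\sigma,\sigma'>0$,
\[
h_{\sigma'}(x)-h_\sigma(x) \ \ge\ \Big(\tfrac{1}{2\sigma}-\tfrac{1}{2\sigma'}\Big)\|t_\sigma\|_2^2 ,
\]
and the symmetric bound holds with $t_{\sigma'}$. Continuity of $\sigma\mapsto t_\sigma = \Pi_T(\sigma x)$, which holds because the projection is $1$-Lipschitz, then yields
\[
\dot h_\sigma(x) = \tfrac{1}{2\sigma^2}\|t_\sigma\|_2^2 = \tfrac12\|\Pi_{T/\sigma}(x)\|_2^2 .
\]
This is the main, though routine, step.

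\emph{Limit at infinity, item (ii).} The family $h_\sigma(x)$ is nondecreasing in $\sigma$ and bounded above by $h(x)$. For any fixed $t\in T$, $h_\sigma(x) \ge \langle x,t\rangle - \|t\|^2/(2\sigma) \to \langle x,t\rangle$. Taking the supremum over $t$ gives $\lim_{\sigma\to\infty} h_\sigma(x) = h(x)$, including the case $h(x)=+\infty$.

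\emph{Limit at zero, item (iii).} Since $0\in T$, we have $h_\sigma(x)\ge 0$. By Cauchy--Schwarz and maximizing over $\|t\|$,
\[
h_\sigma(x) \le \sup_{s\ge 0}\big\{\|x\| s - s^2/(2\sigma)\big\} = \sigma\|x\|^2/2 \to 0 .
\]
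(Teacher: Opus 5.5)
Your proposal is correct. Items (ii) and (iii) coincide with the paper's arguments. Your observation that $\sigma\mapsto h_\sigma(x)$ is nondecreasing also justifies the existence of the limit in (ii), which the paper uses tacitly.

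For (i), your Danskin route is essentially the paper's. The paper invokes the Rockafellar--Wets envelope theorem for lower-$C^1$ functions and identifies the unique maximizer $\Pi_T(\sigma x)$ by the same completion of the square. It gets compactness from nonexpansiveness, $\|\Pi_T(\nu x)\|_2\le\|\Pi_T(0)\|_2+\nu\|x\|_2$, rather than from your superlevel-set bound $\|t\|_2\le 2\sigma\|x\|_2$ (which uses $0\in T$).

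Your sandwich argument is a genuinely more elementary route. Testing $t_\sigma$ at level $\sigma'$ and $t_{\sigma'}$ at level $\sigma$ traps the difference quotient between $\|t_\sigma\|_2^2/(2\sigma\sigma')$ and $\|t_{\sigma'}\|_2^2/(2\sigma\sigma')$, for either sign of $\sigma'-\sigma$. The $\|x\|_2$-Lipschitz continuity of $\sigma\mapsto\Pi_T(\sigma x)$ then closes the gap. This is the envelope theorem proved by hand in this special case, so it needs neither compactness nor an external citation. The paper's route buys uniformity instead: the same cited lemma also handles the derivative of $\psi$ in the fixed-point decomposition.
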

\noindent By the fundamental theorem of calculus and~\Cref{lem:smoothed-support-function}\ref{item:derivative-of-smoothed-support}, for each $x \in \R^d$ and any $\eta \geq \sigma > 0$,
\[
h_\eta(x) = h_\sigma(x) +  \int_{\sigma}^{\eta} \dot{h}_\nu(x) \, \ud \nu = h_\sigma(x) + \half \int_{\sigma}^{\eta} \|\Pi_{T/\nu}(x)\|_2^2 \, \ud \nu. 
\]
Passing to the limit as $\eta \to \infty$ (resp.,   $\sigma \to 0^+$) and applying~\Cref{lem:smoothed-support-function}\ref{item:limit-relation-support-inf}
(resp., \Cref{lem:smoothed-support-function}\ref{item:limit-relation-support-0}), it holds for any $x \in \R^d$ and every $\sigma > 0$  that 
\begin{subequations}
\label{subeqns:pointwise-relations-support}
\begin{align}
\label{eqn:pointwise-integral-with-sigma}
h(x) &= h_\sigma(x) + \frac{1}{2} \int_\sigma^\infty 
\|\Pi_{T/\nu}(x)\|_2^2 \, \ud \nu, 
\quad\mbox{and,} \\
\label{eqn:pointwise-integral-no-sigma}
h(x) &= \frac{1}{2} \int_0^\infty 
\|\Pi_{T/\nu}(x)\|_2^2 \, \ud \nu, 
\quad \mbox{provided $0 \in T$}.
\end{align}
\end{subequations}
The claim~\eqref{eqn:projection-decomposition-sig-pos} (resp., \eqref{eqn:projection-decomposition-sig-zero}) now follows by taking $x = \xi$ in the pointwise relation~\eqref{eqn:pointwise-integral-with-sigma} (resp., \eqref{eqn:pointwise-integral-no-sigma}), integrating with respect to the law of $\xi$, and applying Fubini's theorem. \qed

\subsection{Proofs of~\Cref{lem:smoothed-local-width,lem:smoothed-support-function}}

For the proofs of the following lemmas, we need
to recall the notion of \emph{lower-$C^1$ continuity}~\cite[Definition 10.29]{RocWets98}. A map $f \colon (0, \infty) \to \R$ 
is said to be \emph{lower-$C^1$}, provided that 
for each $\sigma > 0$, there exists an open interval $I_\sigma \subset (0, \infty)$, a compact space $T_\sigma$, and map $g \colon T_\sigma \times I_\sigma \to \R$ for which it holds that: 
\begin{enumerate}[label=(\alph*)]
\item for every $\nu \in I_\sigma$ we 
can write 
\begin{equation}\label{eqn:representation-of-lower-C1}
f(\nu) = \max_{t \in T_\sigma} g(t, \nu); \qquad \mbox{and,}
\end{equation}
\item for each $t \in T_\sigma$, the map $g(t, \cdot) \colon I_\sigma \to \R$ is of class $C^1$; and, 
\item the mappings $(t, \sigma) \mapsto g(t, \sigma)$ and $(t, \sigma) \mapsto \partial_\sigma g(t, \sigma)$
are continuous on $T_\sigma \times I_\sigma$.
\end{enumerate}

We then recall the following result, which is a form of an envelope theorem. It is a special case of~\cite[Theorem~10.30]{RocWets98}.
\begin{lemma} [{\cite{RocWets98}}]
\label{lem:differentiability-lower-c1}
Let $f \colon (0, \infty) \to \R$ be lower-$C^1$. 
If, in the representation~\eqref{eqn:representation-of-lower-C1}, the maximum is attained uniquely at some $t^\star_\sigma \in T_\sigma$, then $f'(\sigma) = \partial_\sigma g(t^\star_\sigma, \sigma)$.
\end{lemma}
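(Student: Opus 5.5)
We would prove this envelope statement directly, in the style of Danskin's theorem, by squeezing the difference quotients of $f$ at $\sigma$ between two expressions involving $\partial_\sigma g$. Fix $\sigma$ and work inside $I_\sigma$ with the representation~\eqref{eqn:representation-of-lower-C1}. Throughout, write $t^\star = t^\star_\sigma$.

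\emph{Easy half.} Since $f(\nu) \ge g(t^\star,\nu)$ for all $\nu \in I_\sigma$, with equality at $\nu=\sigma$, we have for $h>0$
\[
\frac{f(\sigma+h)-f(\sigma)}{h} \ge \frac{g(t^\star,\sigma+h)-g(t^\star,\sigma)}{h}.
\]
For $h<0$ the same inequality holds with $\le$. Because $g(t^\star,\cdot)$ is $C^1$, the right-hand side tends to $\partial_\sigma g(t^\star,\sigma)$. Hence the lower right Dini derivative of $f$ at $\sigma$ is at least $\partial_\sigma g(t^\star,\sigma)$, and the upper left Dini derivative is at most this value.

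\emph{Hard half.} For small $h \neq 0$, choose a maximizer $t_h \in T_\sigma$ of $g(\cdot,\sigma+h)$; it exists by compactness of $T_\sigma$ and continuity of $g$. Then $f(\sigma) \ge g(t_h,\sigma)$, and the mean value theorem gives
\[
f(\sigma+h)-f(\sigma) \le g(t_h,\sigma+h)-g(t_h,\sigma) = h\,\partial_\sigma g(t_h,\sigma+\theta_h h)
\]
for some $\theta_h\in(0,1)$. Dividing by $h$, with the inequality reversed when $h<0$, it suffices to show that $\partial_\sigma g(t_h,\sigma+\theta_h h)\to \partial_\sigma g(t^\star,\sigma)$ as $h\to0$. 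Combining this with the easy half gives $f'(\sigma)=\partial_\sigma g(t^\star,\sigma)$.

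\emph{Main obstacle: localization of the maximizers $t_h$ near $t^\star$.} This is the step I expect to require the most care, and I will avoid metrizability assumptions on $T_\sigma$ by arguing with neighborhoods. First, I claim $f$ is continuous at $\sigma$. By joint continuity of $g$ and compactness of $T_\sigma$ (a tube-lemma argument), $\sup_{t\in T_\sigma}|g(t,\nu)-g(t,\sigma)|\to 0$ as $\nu\to\sigma$, and therefore $f(\nu)\to f(\sigma)$.

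Next, let $U$ be any open neighborhood of $t^\star$. The set $T_\sigma\setminus U$ is compact and, by uniqueness of the maximizer, does not contain $t^\star$. Hence
\[
\max_{T_\sigma\setminus U} g(\cdot,\sigma) = f(\sigma)-\eta \quad \text{for some } \eta>0.
\]
By the uniform convergence just noted, for $|h|$ small every $t\notin U$ satisfies $g(t,\sigma+h) < f(\sigma)-\eta/2 < f(\sigma+h)$. Such a $t$ cannot maximize $g(\cdot,\sigma+h)$, so $t_h\in U$.

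Finally, by joint continuity of $\partial_\sigma g$ at $(t^\star,\sigma)$, for every $\varepsilon>0$ there are a neighborhood $U$ of $t^\star$ and $\delta>0$ such that $|\partial_\sigma g(t,\nu)-\partial_\sigma g(t^\star,\sigma)|<\varepsilon$ for all $t\in U$ and $|\nu-\sigma|<\delta$. Taking $|h|$ small enough that both $t_h\in U$ and $|\theta_h h|<\delta$ yields the required convergence, and completes the argument.
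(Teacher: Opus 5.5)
Your proof is correct and complete, but it takes a different route from the paper, which gives no argument of its own.

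\textbf{What you do.} The easy half gives the right one-sided inequalities for the difference quotients. The mean value theorem step is legitimate once $[\sigma,\sigma+h]\subset I_\sigma$. Uniqueness of the maximizer is used exactly where it is needed: it produces the strict gap $\eta>0$ on the compact set $T_\sigma\setminus U$, and the tube-lemma uniform convergence then forces $t_h\in U$ (if $T_\sigma\setminus U$ is empty there is nothing to show). The argument is elementary and self-contained: it uses only compactness, joint continuity of $g$ and $\partial_\sigma g$, and the mean value theorem. Because you argue with neighborhoods, it needs no metrizability of $T_\sigma$.

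\textbf{What the paper does.} The paper states the lemma as a special case of the theory of lower-$C^1$ functions in \cite{RocWets98}. There one shows that such functions are strictly continuous (locally Lipschitz) and subdifferentially regular. Their subgradient set at $\sigma$ is the convex hull of $\{\partial_\sigma g(t,\sigma)\}$ over all maximizers $t$. A unique maximizer collapses this hull to a singleton, and for a locally Lipschitz function a singleton subgradient set means strict differentiability. This citation route is shorter on the page and gives more. You get strict differentiability and local Lipschitz continuity, in $\R^n$ rather than on an interval, and a directional-derivative formula that holds even without uniqueness. The price is importing nontrivial variational-analysis machinery.

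\textbf{A free extension of your argument.} The local Lipschitz property is what makes the paper's later fundamental-theorem-of-calculus steps immediate, and your approach gives it at no extra cost. For $\nu,\nu'$ in a compact subinterval $J\subset I_\sigma$, one has $|f(\nu)-f(\nu')|\le\sup_{t\in T_\sigma}|g(t,\nu)-g(t,\nu')|\le|\nu-\nu'|\sup_{T_\sigma\times J}|\partial_\sigma g|$. The last supremum is finite by joint continuity on a compact set.
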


\subsubsection{Proof of Lemma~\ref{lem:smoothed-local-width}}
\label{sec:proof-lem-smoothed-local-width}

For $r > 0$, we use the following shorthands for the local width and the corresponding random variable,
\[
\omega(r) = w_\xi(T \cap r B^d_2), \quad \mbox{and} \quad 
X_r = h_{T \cap r B^d_2}(\xi).
\]

\paragraph{Claim~\ref{item:limit-at-infinity-smoothed-width}:}

For every $\sigma > 0$, it clearly holds that 
$\psi(\sigma) \leq w_\xi(T)$. On the other hand, for each $r > 0$, we have 
\[
\lim_{\sigma \to \infty} \psi(\sigma) \geq 
\lim_{\sigma \to \infty} \Big\{ \, 
\omega(r) - \frac{r^2}{2\sigma}\, \Big\} = 
\omega(r). 
\]
Therefore, passing to the limit as $r \to \infty$, 
\[
\lim_{\sigma \to \infty} 
\psi(\sigma) \geq \lim_{r \to \infty} \omega(r)  = \lim_{r\to \infty} \E X_r = w_\xi(T).
\]
The last equality holds by the monotone convergence theorem, as the nonnegative sequence of random variables $X_r$ is nondecreasing in $r$, and $X_r \to h(\xi)$ almost surely as $r \to \infty$.

\paragraph{Claim~\ref{item:limit-at-zero-smoothed-width}:}

Using $0 \in T$, $\psi(\sigma) \geq 0$ for each $\sigma > 0$. Note that $\omega(r) \to 0$ as $r \to 0^+$. Indeed, this follows by applying the dominated convergence theorem to $\{X_r\}_{r \geq 0}$, and the fact that for some $r_0 > 0$, it holds that $\E X_{r_0} <\infty$ under the assumption that $r(\sigma)$ exists for all $\sigma > 0$; see~\Cref{prop:fixed-points-exist-unique}.

Let $\eps > 0$. Then, we fix $r > 0$ such that $\omega(r) \leq \eps$. As $0 \in T$, convexity yields that $T \cap s B^d_2 \subset \tfrac{s}{r} (T \cap r B^d_2)$ for all $s \geq r$. 
Consequently, for all sufficiently small $\sigma > 0$, 
\begin{equation}
\psi(\sigma) 
= \sup_{s \geq 0} \, \Big\{\, \omega(s) - \frac{s^2}{2\sigma} \,\Big\}
\leq \max\Big\{\omega(r),\; \sup_{s \geq r} \Big( s\frac{\omega(r)}{r} - \frac{s^2}{2\sigma}\Big)\Big\} \leq 
\max\Big\{\omega(r),\; \frac{\sigma}{2} \Big(\frac{\omega(r)}{r}\Big)^2\Big\} \leq \eps,
\end{equation} 
The inequality above completes the proof, as $\eps > 0$ was arbitrary.

\paragraph{Claim~\ref{item:monotone-r-sigma}:}
The monotonicity follows by noting that for $\nu \geq \sigma > 0$ we have, by definition, 
\[
\omega(r(\nu)) - \frac{r^2(\nu)}{2\sigma} \leq 
\omega(r(\sigma)) - \frac{r^2(\sigma)}{2\sigma}
\quad \mbox{and} \quad 
\omega(r(\sigma)) - \frac{r^2(\sigma)}{2\nu} \leq 
\omega(r(\nu)) - \frac{r^2(\nu)}{2\nu}.
\]
Adding these two inequalities yields 
\[
(\sigma^{-1} - \nu^{-1}) r^2(\sigma) \leq 
(\sigma^{-1} - \nu^{-1}) r^2(\nu),
\]
which obviously implies $r(\sigma) \leq r(\nu)$, as required.

\paragraph{Claim~\ref{item:derivative-of-smooth-width}:}

We can write 
\[
\psi(\sigma) = 
\sup_{r \geq 0} g(r, \sigma)
\quad \mbox{where}\quad 
g(r, \sigma) = \omega(r) - \frac{r^2}{2\sigma}.
\]
Let $\dot{g}(r, \sigma) = \partial_\sigma g(r, \sigma) = \frac{r^2}{2\sigma^2}$. Clearly 
$g, \dot{g} \colon \R_+ \times (0, \infty)$ are continuous; the maps $g(r,\cdot)$ are $C^1$ on $(0, \infty)$ for $r \geq 0$. Moreover, by the existence of $r(\sigma)$, we can write $\psi(\sigma) = g(r(\sigma), \sigma)$. 
Let $I_\sigma = (\tfrac{\sigma}{2}, \tfrac{3\sigma}{2})$.
By claim~\ref{item:monotone-r-sigma}, the map $\sigma \mapsto r(\sigma)$ is nondecreasing in $\sigma$. Thus we may take $T_\sigma = [0, r(3\sigma/2)] \subset [0, \infty)$, and we have $\psi(\nu) = \sup_{r \in T_\sigma} g(r, \nu)$ for each $\nu \in I_\sigma$. Hence, the 
claim now follows from~\Cref{lem:differentiability-lower-c1}, which yields 
\[
\psi'(\sigma) = \dot{g}(r(\sigma), \sigma) = \frac{1}{2} \frac{r^2(\sigma)}{\sigma^2},
\]
as needed.

\subsubsection{Proof of~\Cref{lem:smoothed-support-function}}
\label{sec:proof-lem-smoothed-support-function}

\paragraph{Claim~\ref{item:derivative-of-smoothed-support}:}

For a fixed $x \in \R^d$, we can write 
\[
f(\sigma) \equiv h_\sigma(x) = \sup_{t \in T} g(t, \sigma), 
\quad \mbox{where} 
\quad 
g(t, \sigma) \defn \langle t, x \rangle - \frac{\|t\|_2^2}{2\sigma}.
\]
Note that $\dot{g}(t, \sigma) \defn 
\partial_\sigma g(t, \sigma) = \frac{\|t\|_2^2}{2\sigma^2}$. 
Clearly, $g, \dot{g} \colon T \times (0, \infty) \to \R$ are continuous; moreover the maps $g(t, \cdot)$ are $C^1$ on $(0, \infty)$ for $t \in T$. 
Moreover, we may write 
\[
g(t, \sigma) = \frac{\sigma}{2} \|x\|_2^2 - \frac{1}{2\sigma}\|t - \sigma x\|_2^2, 
\quad \mbox{and, therefore} 
\quad 
f(\sigma) = g(t^\star_\sigma, \sigma),~t^\star_\sigma = \Pi_T(\sigma x).
\]
The maximizer is unique, as $T$ is a nonempty, closed and convex set. Thus, now set $\sigma > 0$ and set $I_\sigma = (\tfrac{\sigma}{2}, \tfrac{3\sigma}{2})$. Since projections are nonexpansive, $\|\Pi_T(\nu x) - \Pi_T(0)\|_2 \leq \nu \|x\|_2 \leq \tfrac{3\sigma}{2} \|x\|_2$, for each $\nu \in I_\sigma$. Therefore, if we set 
$R_\sigma = \|\Pi_T(0)\|_2 +\tfrac{3\sigma}{2} \|x\|_2$, then it follows that the representation~\eqref{eqn:representation-of-lower-C1} holds with the compact set $T_\sigma = T \cap R_\sigma B^d_2$, and by the triangle inequality $t^\star_\nu \in T_\sigma$ for each $\nu \in I_\sigma$. Hence, $\sigma \mapsto h_\sigma(x)$ is lower-$C^1$, with a unique maximizer, whence the claim follows by~\Cref{lem:differentiability-lower-c1}: 
\[
f'(\sigma)=\dot{g}(t^\star_\sigma, \sigma)=
\half \frac{\|\Pi_T(\sigma x)\|_2^2}{\sigma^2} = 
\half \|\Pi_{T/\sigma}(x)\|_2^2.
\]

\paragraph{Claim~\ref{item:limit-relation-support-inf}:}

Fix $x \in \R^d$. Clearly $h_\sigma(x) \leq h(x)$ for each $\sigma > 0$. 
On the other hand, for any $t_0 \in T$, 
\[
\lim_{\sigma \to \infty} h_\sigma(x)
\geq 
\lim_{\sigma \to \infty} \Big\{\, \langle x, t_0 \rangle - \frac{\|t_0\|_2^2}{2\sigma}\, \Big\} = \langle x, t_0 \rangle.
\]
Passing to the supremum over $t_0 \in T$ yields $\lim_{\sigma \to \infty} h_\sigma(x) \geq h(x)$, as needed. 

\paragraph{Claim~\ref{item:limit-relation-support-0}:}

Fix $x \in \R^d$. As $0 \in T$, 
$h_\sigma(x) \geq 0$ for all $\sigma > 0$. On the other hand, 
\[
h_\sigma(x) = 
\sup_{t \in T} \Big\{\, \langle x, t\rangle - \frac{\|t\|_2^2}{2\sigma}\, \Big\} 
\leq 
\sup_{t \in \R^d} \Big\{\, \langle x, t\rangle - \frac{\|t\|_2^2}{2\sigma}\, \Big\} 
= \frac{\sigma}{2} \|x\|_2^2.
\]
Passing to the limit as $\sigma \to 0^+$ yields the claim.

\section{Equivalence of~\Cref{thm:decomp-of-gauss-width,thm:decomp-via-projections} in the Gaussian case}

In this section, we compare the decompositions which appeared in \Cref{thm:decomp-of-gauss-width,thm:decomp-via-projections} in the Gaussian case.

\subsection{Equivalence of the first terms of decompositions}
\label{sec:equivalenceoffirstterms}
We start with the following definitions.
\begin{definition}[Critical radius]
\label{def:critical-radius}
    For a convex set $T \subseteq \R^d$ containing the origin, define the \emph{critical radius}  
    \[
    r^\star(\sigma) = \sup\left\{r > 0: w\big(T \cap r B^d_2\big) \ge \frac{r^2}{\sigma}\right\},
    \]
    for each $\sigma > 0$.
\end{definition}

The next definition is a shorthand for the first term which appeared in~\Cref{thm:decomp-of-gauss-width}.

\begin{definition}
\label{def:first-term}
    For a convex set $T \subset \R^d$ containing the origin, we define 
    \[
    \BoundOne(\sigma) = w\big(T \cap \ChatFixed(\sigma) B^d_2\big) - \frac{\ChatFixed(\sigma)^2}{2\sigma} = \sup_{r \geq 0} \left\{\, 
w\big(T \cap r B^d_2\big) 
- 
\frac{r^2}{2\sigma} \right\},
    \]
    for each $\sigma > 0$.
\end{definition}

Note that although both~\Cref{def:critical-radius,def:first-term} depend on the underlying set $T \subset \R^d$, we suppress this from the notation as it will always be clear from context when we use these notions.
Versions of $r^\star(\sigma)$ and $\BoundOne(\sigma)$ play a prominent role in the analysis of least squares estimators in statistical literature (see \cite[Chapter 13]{Wai19} for an account of related results). Note that the critical radius need not satisfy $r^\star(\sigma) \le \rad(T)$, in contrast to the fixed point $r(\sigma)$. The next result quantifies some relations (and in particular~\cref{eq:mourtadapeeling}) between the quantities $r, r^\star$, and $\cT$, more formally.

\begin{proposition}[First terms of decompositions in the Gaussian case]
\label{prop:equivalentfixedpoints}
    Let $T$ be a bounded, nonempty, centrally symmetric, convex set in $\R^d$. Fix any $\sigma > 0$. Then, it holds that
    \begin{equation}
    \label{ineq:elementary-relations-bound-one}
    \BoundOne(\sigma) \le \E\left[\sup\limits_{x \in T }\left(\langle x, g\rangle-\frac{\|x\|^2}{2\sigma}\right)\right] \le 140 \; \frac{(r^\star(\sigma))^2}{\sigma} \le 280 \; \BoundOne(\sigma).
    \end{equation}
    Furthermore, it holds that 
    \begin{equation}
    \label{eq:relationonrs}
    r(\sigma) \le \min\left\{r^\star(2\sigma),\, \rad(T) \right\} \le \min\left\{2\sqrt{\sigma\BoundOne(2\sigma)},\, \rad(T) \right\}.
    \end{equation}
\end{proposition}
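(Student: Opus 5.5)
The chain \eqref{ineq:elementary-relations-bound-one} splits into three inequalities, which I will prove in order.

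\textbf{First inequality.} This is immediate. For every $r \ge 0$ and every $x \in T \cap rB^d_2$ we have $\langle x,g\rangle - \|x\|^2/(2\sigma) \ge \langle x,g\rangle - r^2/(2\sigma)$. Taking the supremum over $x$, then expectations, then the supremum over $r$ gives $\BoundOne(\sigma)$ on the left.

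\textbf{Second inequality.} This is the main obstacle, and I would use a peeling argument over dyadic shells. Write $r^\star = r^\star(\sigma)$ and $\omega(r) = w(T\cap rB^d_2)$. By convexity and $0 \in T$, the map $r\mapsto \omega(r)/r$ is nonincreasing. Hence for $r \ge r^\star$ we have $\omega(r) \le r^2/\sigma$ by the definition of the supremum, and in fact $\omega(r) \le r\, r^\star/\sigma$. I would bound
\[
\sup_{x\in T}\big(\langle x,g\rangle - \|x\|^2/(2\sigma)\big) \le \max_{k\ge 0} Z_k,
\]
where $Z_0 = \sup_{T\cap r^\star B_2}\langle x,g\rangle$ and, for $k\ge1$, $Z_k = \sup_{x\in T,\ 2^{k-1}r^\star<\|x\|\le 2^k r^\star}\langle x,g\rangle - 4^{k-1}(r^\star)^2/(2\sigma)$.

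Each supremum is $2^k r^\star$-Lipschitz in $g$. Gaussian concentration around its mean $\omega(2^k r^\star) \le 2^k (r^\star)^2/\sigma$ then gives
\[
Z_k \le 2^k(r^\star)^2/\sigma - 4^{k-1}(r^\star)^2/(2\sigma) + 2^k r^\star u_k
\]
with probability at least $1-e^{-u_k^2/2}$. For large $k$ the quadratic penalty dominates, and I would choose $u_k$ proportional to $2^k r^\star/\sigma$ so that the tail probabilities are summable. Integrating the tails should give $\E\max_k Z_k \lesssim (r^\star)^2/\sigma + \sigma$-type terms.

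The troublesome $\sigma$ term must be absorbed. Since $T$ is symmetric and nonempty, $\omega(r) \ge r\,\E|g_1| \cdot$ (the direction component) for small $r$, which yields $r^\star \gtrsim \sigma$ whenever $T \ne \{0\}$. Alternatively, I would handle $\E\max_k$ by a union bound on $\E(Z_k)_+$ and sum the series explicitly. Tracking the constants is what produces $140$.

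\textbf{Third inequality.} Take $r$ slightly below $r^\star$ with $\omega(r) \ge r^2/\sigma$. Then $\BoundOne(\sigma) \ge \omega(r) - r^2/(2\sigma) \ge r^2/(2\sigma)$, and letting $r\uparrow r^\star$ gives $(r^\star)^2 \le 2\sigma\BoundOne(\sigma)$. This is at most a factor $2$, consistent with $280 = 2\cdot 140$. If $r^\star=\infty$ were possible this would need care, but boundedness of $T$ gives $\omega(r)\le w(T)<\infty$, so $r^\star<\infty$.

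\textbf{The bound \eqref{eq:relationonrs}.} For $r(\sigma)\le\rad(T)$: $\omega$ is constant for $r\ge\rad(T)$, so the strict penalty forces the maximizer below $\rad(T)$. For $r(\sigma)\le r^\star(2\sigma)$: at $r=r(\sigma)$ optimality against $r=0$ gives $\omega(r) \ge r^2/(2\sigma) = r^2/(2\sigma)$, which is exactly the defining condition of $r^\star(2\sigma)$. Finally, the third inequality applied at $2\sigma$ gives $r^\star(2\sigma) \le \sqrt{2\cdot 2\sigma\,\BoundOne(2\sigma)} = 2\sqrt{\sigma\BoundOne(2\sigma)}$.
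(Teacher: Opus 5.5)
Your treatment of the first and third inequalities and of \eqref{eq:relationonrs} is the same as the paper's. For the middle inequality the paper runs no peeling argument of its own. It quotes Mourtada's Proposition~4.1 (peeling plus Gaussian concentration, which is where the constant $140$ comes from), and uses his Lemma~8.2 to pass to the origin-localized $r^\star$ for centrally symmetric $T$.

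Reproving it by dyadic peeling is a reasonable plan, but the step that closes your argument is false. It is not true that $r^\star(\sigma)\gtrsim\sigma$ whenever $T\neq\{0\}$. Writing $\omega(r)=w(T\cap rB^d_2)$, the bound $\omega(r)\ge\sqrt{2/\pi}\,r$ holds only for $r<\rad(T)$. Take the segment $T=[-\epsilon e_1,\epsilon e_1]$ with $\epsilon\ll\sigma$. Then $\omega(r)=\sqrt{2/\pi}\min\{r,\epsilon\}$, so $r^\star(\sigma)=(\sqrt{2/\pi}\,\epsilon\sigma)^{1/2}\ll\sigma$ and $(r^\star)^2/\sigma=\sqrt{2/\pi}\,\epsilon$. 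A generic bound $(r^\star)^2/\sigma+O(\sigma)$ is then far too weak. So is $(r^\star)^2/\sigma+O(r^\star)$, which is what the Lipschitz constants $2^k r^\star$ of the inner shells produce, whether you integrate tails or sum $\E(Z_k)_+$. The absorption you propose therefore fails exactly in the small-body regime.

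The repair is a case split, and it uses emptiness of the outer shells. From $\omega(r)\ge\sqrt{2/\pi}\,r$ for $r<\rad(T)$ you get $r^\star(\sigma)\ge\min\{\rad(T),\sqrt{2/\pi}\,\sigma\}$.

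\emph{Case $r^\star\ge\rad(T)$.} Every shell with $k\ge1$ is empty, so directly
$\E\sup_{x\in T}\bigl(\langle x,g\rangle-\|x\|^2/(2\sigma)\bigr)\le w(T)=\omega(r^\star)\le (r^\star)^2/\sigma$.
The last step follows by letting $r\downarrow r^\star$ in the definition of the supremum, as you did elsewhere.

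\emph{Case $r^\star<\rad(T)$.} Then $r^\star\ge\sqrt{2/\pi}\,\sigma$, so both $\sigma$ and $r^\star$ are at most constant multiples of $(r^\star)^2/\sigma$, and your tail estimates close.

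With this split your route is complete and self-contained. It uses only convexity and $0\in T$ (without symmetry, replace $\sqrt{2/\pi}$ by $1/\sqrt{2\pi}$), whereas the paper's route is a citation.

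One further point: ``tracking the constants produces $140$'' is asserted, not shown. As written you get an unspecified absolute constant $C$ in place of $140$, and $2C$ in place of $280$. The explicit value in the statement comes from Mourtada's result.
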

The inequalities~\eqref{ineq:elementary-relations-bound-one} show that, up to multiplicative constants, the first terms of \Cref{thm:decomp-of-gauss-width,thm:decomp-via-projections} coincide. The second inequality,~\eqref{eq:relationonrs} should be viewed as a crude relation between the two fixed points $r, r^\star$, which in general cannot be reversed. A significantly sharper form of these bounds will be developed in~\Cref{cor:improvedfixedpointformula}. 
There, we show that the gap between $r(\sigma)$ and $r^\star(\sigma)$ is related to the difference between the Sudakov minoration and the local Gaussian width of $T$.

\begin{proof} 
First, observe that
\begin{equation}
\label{eq:rstarfixedpoint}
r^\star(\sigma) = \sigma\sup\{r > 0: w\big(T/\sigma \cap r B^d_2) \ge r^2\}.
\end{equation}
    We also have the following inequality
    \begin{equation}
\label{eq:boundontauone}
                \BoundOne(\sigma) =  \max_{r > 0}\left\{w\big(T \cap r B^d_2)-\frac{r^2}{2\sigma}\right\} \le \E\left[\sup\limits_{x \in T }\left(\langle x, g\rangle-\frac{\|x\|^2}{2\sigma}\right)\right]
        =\sigma\E\bigg[\sup\limits_{x \in T/\sigma } \Big(\langle x, g\rangle-\frac{\|x\|^2}{2}\Big)\bigg]
    \end{equation}
Now, Proposition 4.1 in \cite{mourtada2025universal} shows via the peeling argument combined with the Gaussian concentration inequality that for a convex set $T/\sigma$ it holds that
\[
\E\bigg[\sup_{x \in T/\sigma }\; \Big(\langle x, g\rangle-\frac{\|x\|^2}{2}\Big)\bigg] \le 140\left(\sup\{r > 0: w\big(T/\sigma \cap r B^d_2) \ge r^2\}\right)^2.
\]
Note that Proposition 4.1 is stated for a slightly different notion of $r^\star$, which does not require central symmetry. However, by Lemma 8.2 in \cite{mourtada2025universal}, in the case of centrally symmetric convex bodies the result holds as claimed above.
Using the above chain of inequalities and \eqref{eq:rstarfixedpoint}, we get
$
\BoundOne(\sigma) \le \frac{140 (r^\star(\sigma))^2}{\sigma}. 
$ Finally, by the definition we have
\[
\frac{(r^\star(\sigma))^2}{2\sigma} \le w\big(T \cap r^\star(\sigma) B^d_2\big)- \frac{(r^\star(\sigma))^2}{2\sigma} \le \sup\limits_{r > 0}\left\{w\big(T \cap r B^d_2)- \frac{r^2}{2\sigma}\right\} = \BoundOne(\sigma). 
\]

To show the second claim we note that  $w\big(T \cap \ChatFixed(\sigma) B^d_2\big) - \frac{\ChatFixed(\sigma)^2}{2\sigma} \ge 0$, and hence by definition $r(\sigma) \le r^\star(2\sigma)$. Finally, we use again $\frac{(r^\star(2\sigma))^2}{4\sigma} \le \BoundOne(2\sigma)$ and $r(\sigma) \le \operatorname{rad}(T)$. The claim follows.
\end{proof}

\subsection{Projections, inradius and Stein's identity}

This subsection collects a few elementary estimates when $\sigma$
is small. Throughout, we have $g\sim\Normal{0}{I_d}$. For a set $T \subset \R^d$ we define the \emph{inradius} of $T$ by
\[
\inrad(T)=\sup\{r>0:\ rB_2^d\subseteq T\}.
\]
Above, we interpret $\sup \emptyset = 0$.

\begin{proposition}
\label{prop:balltype-small-sigma}
Let $T\subset\R^d$ be a centrally symmetric convex body.\footnote{By \emph{convex body}, we mean a compact, convex set with nonempty interior.} Then, the following hold.
\begin{enumerate}[label=(\roman*)]
\item For every $\sigma>0$,
\begin{equation}\label{eq:ball-upper-bounds}
\BoundOne(\sigma)\le \frac{\sigma d}{2},
\qquad
r^2(\sigma)\le 4\sigma^2 d,
\qquad
\E\|\Pi_T(\sigma g)\|^2\le \sigma^2 d.
\end{equation}
\item If $\sigma\le 2\inrad(T)/\sqrt d$, then
\begin{equation}\label{eq:ball-lower-bound-B1}
\BoundOne(\sigma)\ge \frac{\sigma d}{8}.
\end{equation}
\item If $\sigma\le \inrad(T)/(2\sqrt{d})$, then
\begin{equation}\label{eq:ball-lower-bound-proj}
\E\|\Pi_T(\sigma g)\|^2 \gtrsim \sigma^2d,
\qquad\text{and}\qquad
r^2(\sigma)\gtrsim \sigma^2d.
\end{equation}
\end{enumerate}
\end{proposition}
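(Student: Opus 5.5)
Part (i) uses only comparison with the whole space. Since $T \cap rB_2^d \subset rB_2^d$, we have $w(T\cap rB_2^d) \le r\,\E\|g\| \le r\sqrt d$. Hence
\[
\BoundOne(\sigma) \le \sup_{r\ge0}\bigl\{r\sqrt d - r^2/(2\sigma)\bigr\} = \sigma d/2 .
\]
For $r(\sigma)$, the maximizing property and $\omega(0)=0$ give $r(\sigma)^2/(2\sigma) \le \omega(r(\sigma)) \le r(\sigma)\sqrt d$, so $r(\sigma)\le 2\sigma\sqrt d$. For the projection, nonexpansiveness with $\Pi_T(0)=0$ (since $0\in T$) gives $\|\Pi_T(\sigma g)\|\le \sigma\|g\|$; squaring and taking expectations yields $\sigma^2 d$.

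For part (ii), we lower bound $\BoundOne(\sigma)$ by evaluating the variational problem at a good radius. For $r\le \inrad(T)$ we have $T\cap rB_2^d = rB_2^d$, so $\omega(r)=r\,\E\|g\|$. We use $\E\|g\|\ge \sqrt{d}/\sqrt2$ (for instance via $\E\|g\|\ge d/\sqrt{d+1}$, or the standard bound $\E\|g\|\ge \sqrt{d-1/2}$ together with a check at $d=1$). Take $r = \sigma\sqrt d/2$, which is at most $\inrad(T)$ under the hypothesis $\sigma \le 2\inrad(T)/\sqrt d$. This gives
\[
\BoundOne(\sigma)\ge \frac{\sigma d}{2\sqrt2} - \frac{\sigma d}{8} \ge \frac{\sigma d}{8}.
\]
The constant $1/8$ requires $\E\|g\|\ge \sqrt d/2$ only, since $\sigma d/4-\sigma d/8=\sigma d/8$. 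So the crude bound $\E\|g\|\ge\sqrt d/2$, which follows from $\E\|g\|\ge \sqrt{d}\cdot\sqrt{2/\pi}$ via Jensen on the $\ell_1$ norm or similar, suffices.

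For part (iii), the projection bound: on the event $\|\sigma g\| \le \inrad(T)$ we have $\sigma g\in T$, so $\Pi_T(\sigma g)=\sigma g$. Thus
\[
\E\|\Pi_T(\sigma g)\|^2 \ge \sigma^2\,\E\bigl[\|g\|^2\ind{\|g\|\le \inrad(T)/\sigma}\bigr] \ge \sigma^2\,\E\bigl[\|g\|^2\ind{\|g\|\le 2\sqrt d}\bigr],
\]
using $\inrad(T)/\sigma \ge 2\sqrt d$. It remains to show $\E[\|g\|^2;\|g\|\le 2\sqrt d]\gtrsim d$. One route is $\E[\|g\|^2;\|g\|>2\sqrt d]\le \sqrt{\E\|g\|^4\,\Pr(\|g\|^2>4d)}\le \sqrt{(d^2+2d)/4}$ by Cauchy–Schwarz and Markov; this is roughly $d/2$ or less for $d\ge2$, leaving a fraction of $d$. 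Alternatively, use $\E[\|g\|^2;\|g\|^2\le 4d] \ge \E\|g\|^2 - \E\|g\|^4/(4d)$, which needs care at small $d$. A direct check handles $d=1$.

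For $r(\sigma)$, the plan is to avoid Chatterjee and argue directly from strong concavity. The function $\phi(r)=\omega(r)-r^2/(2\sigma)$ is $1/\sigma$-strongly concave with maximizer $r(\sigma)$, so
\[
\phi(r(\sigma)) - \phi(0) \le \frac{\phi'(0^+)^2\,\sigma}{2}\quad\text{or similar.}
\]
It is simpler to compare with part (ii). Concavity of $\omega$ together with $\omega(0)=0$ gives $\omega(r)\le \frac{r}{r_1}\omega(r_1)$ for $r\ge r_1$. Also $\omega(r)\le r\sqrt d$ always. Hence $\BoundOne(\sigma)=\omega(r(\sigma))-r(\sigma)^2/(2\sigma)\le r(\sigma)\sqrt d$. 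Combined with $\BoundOne(\sigma)\ge \sigma d/8$, which holds since the hypothesis of (iii) implies that of (ii), this gives $r(\sigma)\ge \sigma\sqrt d/8$, i.e.\ $r^2(\sigma)\gtrsim\sigma^2 d$.

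The main (mild) obstacle is the truncated second moment in (iii) with explicit absolute constants valid for all $d\ge1$. I would handle it by Paley–Zygmund style Cauchy–Schwarz as above, treating small $d$ separately if needed.
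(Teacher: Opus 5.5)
Your proposal is correct. Parts (i), (ii) and the projection bound in (iii) follow the paper's argument essentially verbatim. The paper also uses $w(T\cap rB_2^d)\le r\,\E\|g\|$, evaluates at $r'=\sigma\sqrt d/2$, and truncates on $\{\|\sigma g\|\le \inrad(T)\}$ with Markov plus Cauchy--Schwarz. Your direct bound $r(\sigma)\le 2\sigma\sqrt d$ from $\omega(r(\sigma))-r(\sigma)^2/(2\sigma)\ge 0$ is exactly the observation behind the paper's appeal to $r(\sigma)\le r^\star(2\sigma)$. The paper applies Cauchy--Schwarz to first moments after a Jensen step, whereas you truncate the second moment directly using $\E\|g\|^4=d^2+2d$. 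One small correction there: $\tfrac12\sqrt{d^2+2d}$ is slightly \emph{larger} than $d/2$, not at most $d/2$. However, since $d^2+2d\le 3d^2$, you get $\E\bigl[\|g\|^2\ind{\|g\|\le 2\sqrt d}\bigr]\ge (1-\tfrac{\sqrt3}{2})\,d$ for every $d\ge 1$, so no separate check at $d=1$ is needed.

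The genuinely different step is the lower bound on $r(\sigma)$ in (iii). The paper identifies the maximizer exactly. It dominates the objective by $q(r)=r\,\E\|g\|-r^2/(2\sigma)$, observes that the maximizer $\widetilde r=\sigma\E\|g\|$ of $q$ lies below $\inrad(T)$, where the objective and $q$ agree, and concludes $r(\sigma)=\sigma\E\|g\|$, hence $r^2(\sigma)\ge \sigma^2 d/2$. You instead transfer the lower bound from (ii) through the chain $\sigma d/8\le \BoundOne(\sigma)\le \omega(r(\sigma))\le r(\sigma)\sqrt d$, which gives $r^2(\sigma)\ge \sigma^2 d/64$. Your route has a worse constant and no exact formula. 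On the other hand, it needs no computation of the maximizer and holds under the weaker hypothesis of (ii). It also rests on an inequality valid for every $\sigma$, namely $r(\sigma)\ge \BoundOne(\sigma)/\E\|g\|$, which converts any lower bound on the first term of the decomposition into one on the fixed point. The abandoned strong-concavity line and the unused homogeneity remark $\omega(r)\le \tfrac{r}{r_1}\omega(r_1)$ can simply be dropped from a write-up.
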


\begin{proof}
We use the standard bounds $\sqrt{d-\tfrac12}\le \E\|g\|\le \sqrt d$.
For \eqref{eq:ball-upper-bounds}, note that $w(T\cap rB_2^d)\le w(rB_2^d)=r\E\|g\|$, hence
\[
\BoundOne(\sigma)
=\max_{r>0}\Big\{w(T\cap rB_2^d)-\frac{r^2}{2\sigma}\Big\}
\le \max_{r>0}\Big\{r\E\|g\|-\frac{r^2}{2\sigma}\Big\}
=\frac{\sigma(\E\|g\|)^2}{2}
\le \frac{\sigma d}{2}.
\]
The bounds for $r^2(\sigma)$ follow by combining $r^2(\sigma)\le 4\sigma\,\BoundOne(2\sigma)$ from
\Cref{prop:equivalentfixedpoints} with $\BoundOne(2\sigma)\le (2\sigma)d/2$. Finally,
$\E\|\Pi_T(\sigma g)\|^2\le \E\|\sigma g\|^2=d\sigma^2$ follows from $\|\Pi_T(y)\|\le \|y\|$. 

\noindent For \eqref{eq:ball-lower-bound-B1}, set $r'=\frac{\sigma\sqrt d}{2}$. If $r'\le \inrad(T)$, then
$T\cap r'B_2^d=r'B_2^d$ and
\[
\BoundOne(\sigma)\ge r'\,w(B_2^d)-\frac{(r')^2}{2\sigma}
= r'\E\|g\|-\frac{(r')^2}{2\sigma}
\ge \frac{r'\sqrt d}{2}-\frac{(r')^2}{2\sigma}
=\frac{\sigma d}{8}.
\]
For \eqref{eq:ball-lower-bound-proj}, assume $\sigma\le \inrad(T)/(2\sqrt{d})$ and set
$A=\{\|\sigma g\|_2\le \inrad(T)\}$. By Markov's inequality,
\[
\Pr(A^c)=\Pr(\|\sigma g\|_2\ge \inrad(T))\le \frac{\E\|\sigma g\|_2^2}{\inrad(T)^2}
=\frac{d\sigma^2}{\inrad(T)^2}\le \frac14.
\]
Since $\inrad(T) B_2^d\subseteq T$ and $0\in T$, we have $\Pi_T(\sigma g)=\sigma g$ on $A$, hence
\[
\E\|\Pi_T(\sigma g)\|_2^2\ge \E\big[\|\sigma g\|_2^2\ind{A}\big]
\ge \big(\E[\|\sigma g\|_2\ind{A}]\big)^2
\]
by Jensen's inequality. Moreover, by the Cauchy-Schwarz inequality, it holds that
\[
\E[\|\sigma g\|_2\ind{A}]
\ge \E\|\sigma g\|_2-\E[\|\sigma g\|_2\ind{A^c}]
\ge \sigma\,\E\|g\|-\big(\E\|\sigma g\|_2^2\big)^{1/2}\Pr(A^c)^{1/2}
\ge \sigma\Big(\sqrt{d-\tfrac12}-\frac{\sqrt d}{2}\Big),
\]
which implies $\E[\|\sigma g\|_2\ind{A}] \gtrsim \sigma\sqrt d$ for all $d\ge 1$. Squaring yields
$\E\|\Pi_T(\sigma g)\|^2\gtrsim \sigma^2 d$. 

\noindent Finally, for the second inequality in~\eqref{eq:ball-lower-bound-proj}, note again that for any $r\le \inrad(T)$ we have $w(T\cap rB_2^d)=r\E\|g\|$.
Let $q(r) = r\,\E\|g\|-\frac{r^2}{2\sigma}$, which is maximized at
$\widetilde{r}=\sigma\E\|g\|$.
If $\widetilde{r}\le \inrad(T)$ (in particular, if $\sigma\le \inrad(T)/(2\sqrt d)$ since $\E\|g\|\le \sqrt d$), then
$q(\widetilde{r})=w(T\cap \widetilde{r}B_2^d)-\frac{(\widetilde{r})^2}{2\sigma}$.
Moreover, for all $r\ge 0$,
\[
w(T\cap rB_2^d)-\frac{r^2}{2\sigma}\le w(rB_2^d)-\frac{r^2}{2\sigma}=q(r)\le q(\widetilde{r}),
\]
so $r(\sigma)=\widetilde{r}$. Therefore,
$
r(\sigma)^2=\sigma^2(\E\|g\|)^2 \ge \sigma^2\Bigl(d-\tfrac12\Bigr)\ge \tfrac12 d\sigma^2.
$
The claim follows.
\end{proof}
 
It is useful to compare~\Cref{prop:balltype-small-sigma} to the intrinsic volume threshold in~\Cref{thm:indexstarbound}. 
It is easy to see that $\PeakIndex{\sigma}=d$ is characterized by the inequality 
$
\sigma<\sqrt{\tfrac{2}{\pi}}\,
\tfrac{\operatorname{Vol}_d(T)}{\operatorname{Sf}_{d-1}(\partial T)}.$
On the other hand, by isoperimetric considerations (e.g., \cite[Lemma~2.1]{giannopoulos2018inequalities}) it holds that 
\begin{equation}\label{eq:inrad-vol-sf}
\frac{\operatorname{Vol}_d(T)}{\operatorname{Sf}_{d-1}(\partial T)}\ge \frac{\inrad(T)}{d}.
\end{equation}
Thus, if $\sigma<\sqrt{2} \inrad(T)/(d \sqrt{\pi})$, then
$\PeakIndex{\sigma} = d$.
Consequently, the regime $\sigma \lesssim \inrad(T)/\sqrt d$ considered in~\Cref{prop:balltype-small-sigma}
is sufficient for the estimates in that proposition; it is generally weaker than (and does not imply) the condition
$\sigma<\sqrt{2}\,\inrad(T)/(d\sqrt{\pi})$ that guarantees $\PeakIndex{\sigma}=d$.

\begin{remark}[Stein's identity and divergence of $\Pi_T$]
\label{rem:stein-div}
Let $F \colon \R^d\to\R^d$ be an almost differentiable map in the sense of Stein~\cite{stein1981estimation}. Then, Stein's lemma, which is also commonly referred to as Gaussian integration by parts, implies
\[
\E \, \langle g, F(g)\rangle=\E\big[\operatorname{div}F(g)\big],
\]
whenever the expectations are finite. Applying this with $F(x)=\Pi_T(\sigma x)$---see below for a discussion regarding the differentiability---we obtain
\begin{equation}\label{eq:stein-div-form}
\E \, \langle \sigma g,\Pi_T(\sigma g)\rangle
=\sigma^2\E\big[\operatorname{div}\Pi_T(\sigma g)\big].
\end{equation}
Combining \eqref{eq:stein-div-form} with the projection identity
$\langle \Pi_T(y),\Pi_T(y)-y\rangle\le 0$ (valid for all $y$ when $0\in T$) yields
\begin{equation}
\E\|\Pi_T(\sigma g)\|^2
=\E \, \langle \Pi_T(\sigma g),\sigma g\rangle
+\E \, \langle \Pi_T(\sigma g),\Pi_T(\sigma g)-\sigma g\rangle 
\leq \sigma^2\E\big[\operatorname{div}\Pi_T(\sigma g)\big]. 
\label{eq:stein-upper-proj}
\end{equation}
Additionally, it holds that,
\begin{equation}\label{eq:sure-form}
\E\|\Pi_T(\sigma g)\|^2
= \E\|\sigma g-\Pi_T(\sigma g)\|^2 - d\sigma^2 + 2\sigma^2\,\E\big[\operatorname{div}\Pi_T(\sigma g)\big].
\end{equation}

The map $\Pi_T$ is $1$-Lipschitz, and hence almost everywhere differentiable. Denoting the Jacobian by $J\Pi_T(y)\in\R^{d\times d}$, it immediately follows that $\|J \Pi_T(y)\|_{\rm op} \leq 1$ at any point of differentiability $y \in \R^d$. 
Moreover, the matrix $J\Pi_T(y)$ is 
symmetric at all points of differentiability~\cite[Cor. 13.54]{RocWets98}, and by basic properties of 
closed convex sets, the projection $\Pi_T$ is a monotone operator~\cite[Cor. 12.20 and pp.~612]{RocWets98}, and thus 
$J\Pi_T$ is symmetric positive semidefinite.

From these observations, it follows that 
$0\le \operatorname{div}\Pi_T(y)=\operatorname{tr}(J\Pi_T(y))\le d$, which by \eqref{eq:stein-upper-proj} recovers the trivial bound $\E\|\Pi_T(\sigma g)\|^2\le d\sigma^2$.
More importantly, for some classes of bodies (e.g., polyhedra \cite{meyer2000degrees} or smooth bodies
\cite{kato2009degrees}) one can compute $\E[\operatorname{div}\Pi_T(\sigma g)]$ explicitly. Through
\eqref{eq:stein-upper-proj}, \eqref{eq:sure-form} and the fixed-point equivalences in
\Cref{prop:equivalentfixedpoints}, such formulas give geometric control of $\E\|\Pi_T(\sigma g)\|^2$ and
$r(\sigma)$, and therefore at least in some cases provide an alternative route to bounding localized widths that does not rely
on chaining arguments.
\end{remark}

\subsection{On the integrated terms and Chatterjee's analysis of least squares}
\label{sec:chaterjeecomparison}

In this section we discuss the connection between $r(\sigma)$ and the behavior of metric projections under Gaussian measure, in more detail. Specifically, given a closed, convex set $T \subset \R^d$ containing the origin, we study
$\E\|\Pi_T(\sigma g)\|^2$, where 
$g \sim \Normal{0}{I_d}$.
A result of Chatterjee \cite{Cha14} (see also \cite[Lemma 2.1]{prasadan2025some} for a more general statement we use here) implies that for an absolute constant $c > 0$, we have
\begin{equation}
\label{eq:projectionlowerbound}
r^2(\sigma) \le c\max\left\{\sigma^2, \E\|\Pi_T(\sigma g)\|^2\right\}, 
\quad \mbox{and} \quad 
\E\|\Pi_T(\sigma g)\|^2 \le c \, \max\{r^2(\sigma), \sigma^2\}.
\end{equation}
The following result shows that up to a small additive diameter term, which by \eqref{eq:vershyninrelation} never dominates the Gaussian width, both integral terms in \Cref{thm:decomp-of-gauss-width} are of the same order in the Gaussian case. Note that we always have a one-sided bound. That is, by \eqref{eq:upperboundonintegrals} for any $\sigma \ge 0$, we have 
\[
\int_{\sigma}^{\infty} \, 
\E \|\Pi_{T/\nu}(g)\|_2^2 \, \ud \nu \;
\le\;
\int_{\sigma}^{\infty}\frac{r(\nu)^2}{\nu^2}\, \ud\nu.
\]
\begin{proposition}
\label{pr:two-sidedgaussianwidth}
Let $T \subset \R^d$ be a closed, convex set, which contains the origin. For any $\sigma \ge 0$, it holds that
\[
\int_{\sigma}^{\infty}\frac{r(\nu)^2}{\nu^2}\, \ud\nu
\lesssim \int_{\sigma}^{\infty} \, 
\E \|\Pi_{T/\nu}(g)\|_2^2 \, \ud \nu + \rad(T).
\]
\end{proposition}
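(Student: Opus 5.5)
The plan is to integrate the pointwise Chatterjee comparison \eqref{eq:projectionlowerbound} in $\nu$ and to handle the region where the additive $\sigma^2$ term dominates separately. By \eqref{eq:projectionlowerbound} applied at noise level $\nu$,
\[
\frac{r(\nu)^2}{\nu^2} \le c\max\Big\{1,\ \frac{\E\|\Pi_T(\nu g)\|_2^2}{\nu^2}\Big\} = c\max\big\{1,\ \E\|\Pi_{T/\nu}(g)\|_2^2\big\}.
\]
A bound of $\max\{1,\cdot\}$ cannot be integrated over $[\sigma,\infty)$ directly, since the constant $1$ is not integrable. The key extra input is the trivial bound $r(\nu)\le \rad(T)$: the map $\rho\mapsto w(T\cap \rho B_2^d)$ is constant for $\rho\ge \rad(T)$, so the strictly concave objective defining $r(\nu)$ is maximized at some $\rho\le\rad(T)$. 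Hence, for every $\nu>0$,
\[
\frac{r(\nu)^2}{\nu^2}\le \min\Big\{\frac{\rad(T)^2}{\nu^2},\ c\max\big\{1,\E\|\Pi_{T/\nu}(g)\|_2^2\big\}\Big\}\le c\,\E\|\Pi_{T/\nu}(g)\|_2^2 + \min\Big\{\frac{\rad(T)^2}{\nu^2},\ c\Big\}.
\]
This uses $\min\{a,c\max\{1,b\}\}\le cb+\min\{a,c\}$: if $b\ge1$ the left side is at most $cb$, and otherwise it equals $\min\{a,c\}$.

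Integrating over $[\sigma,\infty)$ and extending the last term to $[0,\infty)$ gives
\[
\int_\sigma^\infty\min\Big\{\frac{\rad(T)^2}{\nu^2},c\Big\}\ud\nu\le \int_0^{\rad(T)/\sqrt c} c\,\ud\nu+\int_{\rad(T)/\sqrt c}^\infty\frac{\rad(T)^2}{\nu^2}\ud\nu = 2\sqrt{c}\,\rad(T).
\]
Combined with the first term, this yields the claim with constant $\max\{c,2\sqrt c\}$. If $\rad(T)=\infty$ the statement is vacuous, so we may assume $T$ is bounded. The case $\sigma=0$ is covered by the same computation.

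The main point to check is that \eqref{eq:projectionlowerbound}, as quoted from \cite{Cha14,prasadan2025some}, applies to every $\nu>0$ for a closed convex $T\ni 0$ without central symmetry. The paper states it in exactly this generality, so I will take it as given. The remaining work is the elementary verification that $r(\nu)\le\rad(T)$, which follows from the definition \eqref{def:chat-fixed-point} and the uniqueness in \Cref{prop:fixed-points-exist-unique}. The key idea is that the crude cap $r(\nu)^2\le\rad(T)^2$ absorbs the non-integrable constant from Chatterjee's bound at large $\nu$, at the cost of only a $\rad(T)$ term.
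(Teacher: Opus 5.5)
Your proof is correct and follows essentially the same route as the paper. Both arguments combine the Chatterjee comparison \eqref{eq:projectionlowerbound} with the cap $r(\nu)\le\rad(T)$ through the elementary inequality $\min\{\max\{u,v\},w\}\le v+\min\{u,w\}$, and then integrate $\min\{1,\rad(T)^2/\nu^2\}$ to get an $O(\rad(T))$ term; your explicit constant $2\sqrt{c}\,\rad(T)$ and your justification of $r(\nu)\le\rad(T)$ spell out details the paper leaves implicit.
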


\begin{proof}
Since $0 \in T$, we have $\max\{r(\nu), \|\Pi_T(y)\|_2\}\le \rad(T)$, for all $y\in\R^d$.
The elementary inequality
$\min\{\max\{u,v\},w\}\le v+\min\{u,w\}$ for $u,v,w\ge 0$, and the bound~\eqref{eq:projectionlowerbound} yield
\begin{equation}\label{eq:int-B-by-A}
\int_{\sigma}^{\infty}\frac{r(\nu)^2}{\nu^2}\ud\nu 
\lesssim \int_{\sigma}^{\infty} \, 
\E \|\Pi_{T/\nu}(g)\|_2^2 \, \ud \nu +
\int_{\sigma}^{\infty}\min\Big\{1,\frac{\rad(T)^2}{\nu^2}\Big\}\ud\nu 
\lesssim 
\int_{\sigma}^{\infty} \, 
\E \|\Pi_{T/\nu}(g)\|_2^2 \, \ud \nu +
\rad(T),
\end{equation}
as required.
\end{proof}

\section{Wills functional and the distribution of intrinsic volumes}
\label{sec:wills}
One of the remarkable properties of the first term in both decompositions of \Cref{thm:decomp-of-gauss-width} is that it admits a bound closely connected to the Wills functional from convex geometry \cite{wills1973gitterpunktanzahl}. In our setting, this bound is never loose: it provably yields an estimate of the Gaussian width within a universal multiplicative factor of the true Gaussian width (see \Cref{cor:gausstowills} below). At the same time, passing to the Wills functional unlocks geometric tools that are not directly available for bounding $w(T)$, which we believe highlights one of the main strengths of the decomposition in \Cref{thm:decomp-of-gauss-width} and in \Cref{thm:decomp-via-projections}. For a detailed exposition and a series of results on the Wills functional, we refer to the recent work by Mourtada \cite{mourtada2025universal}.

As before, given a nonempty convex set $T$ in $\R^d$ let $\{V_j(T)\}_{j = 0}^d$ denote the sequence of its intrinsic volumes.
The Wills functional \cite{wills1973gitterpunktanzahl, hadwiger1975will} is the sum of intrinsic volumes 
\[
W(T) = \sum_{j = 0}^dV_j(T).
\]
The connection between the first terms of decompositions in \Cref{thm:decomp-of-gauss-width} and the Wills functional arises from inequality~\eqref{eq:boundontauone}.
Applying Jensen's inequality and Vitale's representation of the Wills functional~\cite{vitale1996wills}, it holds that
\begin{multline}
w\left(T \cap \ChatFixed(\sigma) B_2^d\right)-\frac{\ChatFixed(\sigma)^2}{2\sigma}
\le \sigma\,\E\left[\sup_{x \in T/\sigma}\left\{\langle x,g\rangle-\frac{\|x\|^2}{2}\right\}\right] 
\\\le \sigma \log \E\left[\sup_{x \in T/\sigma}\exp\left(\langle x,g\rangle-\frac{\|x\|^2}{2}\right)\right] 
= \sigma \log W\left(\frac{T}{\sigma\sqrt{2\pi}}\right).
\label{eq:willsbound}
\end{multline}
We emphasize that, for the purposes of controlling the Gaussian width---which is the context of \Cref{thm:decomp-of-gauss-width}---we have the following complementary result of McMullen \cite{mcmullen1991inequalities}:
\begin{equation}
\label{eq:mcmullen}
\sigma\log\left(W\left(\frac{T}{\sigma\sqrt{2\pi}}\right)\right) \le \sigma V_1\left(\frac{T}{\sigma\sqrt{2\pi}}\right) = w(T),
\end{equation}
where the last relation is due to \eqref{eq:intrvolumes}. Hence, passing through $\sigma\log\left(W\left(\frac{T}{\sigma\sqrt{2\pi}}\right)\right)$ does not lead to any loss, for the purposes of controlling $w(T)$.
\subsection{Bounding Gaussian width using Wills functional}
As a precise statement we have the following version of \Cref{thm:decomp-of-gauss-width}.
\begin{corollary}\label{cor:gausstowills} For a compact convex set $T \subset \R^\dimension$ containing the origin and any $\sigma > 0$ it holds that 
\[
 w(T) \asymp \sigma\log\left(W\left(\frac{T}{\sigma\sqrt{2\pi}}\right)\right) + \int_{\sigma}^{\infty}\E \|\Pi_{T/\nu}(g)\|_2^2\ud\nu,
\]
and
\[
w(T) \asymp \sigma\log\left(W\left(\frac{T}{\sigma\sqrt{2\pi}}\right)\right) + \int_{\sigma}^{\infty}\frac{r(\nu)^2}{\nu^2}\ud\nu.
\]
\end{corollary}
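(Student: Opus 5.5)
Both equivalences follow from sandwiching the Wills term between the exact decompositions of \Cref{thm:decomp-of-gauss-width,thm:decomp-via-projections} and McMullen's inequality~\eqref{eq:mcmullen}. Write $L(\sigma)=\sigma\log W\bigl(\tfrac{T}{\sigma\sqrt{2\pi}}\bigr)$, $I_\Pi(\sigma)=\int_\sigma^\infty\E\|\Pi_{T/\nu}(g)\|_2^2\,\ud\nu$, and $I_r(\sigma)=\int_\sigma^\infty r(\nu)^2\nu^{-2}\,\ud\nu$. Compactness gives $w(T)<\infty$, so $r(\sigma)$ is well defined and both theorems apply.

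\emph{Lower bounds on the right-hand sides.} By \Cref{thm:decomp-via-projections} and the chain~\eqref{eq:willsbound} (the first inequality there, $\E\sup_{x\in T}\{\langle x,g\rangle-\|x\|^2/(2\sigma)\}\le L(\sigma)$, is Jensen plus Vitale's representation and does not use symmetry),
\[
w(T)=\E\Big[\sup_{t\in T}\Big\{\langle t,g\rangle-\tfrac{\|t\|_2^2}{2\sigma}\Big\}\Big]+\tfrac12 I_\Pi(\sigma)\le L(\sigma)+I_\Pi(\sigma).
\]
In the same way, \Cref{thm:decomp-of-gauss-width} together with~\eqref{eqn:upper-bound-between-first-terms} and~\eqref{eq:willsbound} gives $w(T)\le \BoundOne(\sigma)+\tfrac12 I_r(\sigma)\le L(\sigma)+I_r(\sigma)$.

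\emph{Upper bounds on the right-hand sides.} McMullen's inequality~\eqref{eq:mcmullen} gives $L(\sigma)\le w(T)$. From~\eqref{eqn:fp-decomposition-sig-pos} and $\BoundOne(\sigma)\ge 0$ (take $r=0$, using $0\in T$) we get $\tfrac12 I_r(\sigma)\le w(T)$. From~\eqref{eqn:projection-decomposition-sig-pos} and the nonnegativity of the first term (take $t=0$) we get $\tfrac12 I_\Pi(\sigma)\le w(T)$. Hence $L(\sigma)+I_\Pi(\sigma)\le 3w(T)$ and $L(\sigma)+I_r(\sigma)\le 3w(T)$, which completes both equivalences with explicit constants.

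The only step that needs care is the middle inequality in~\eqref{eq:willsbound}, namely $\E\sup\le\log\E\sup\exp$. This is Jensen's inequality applied to $\log$ together with monotonicity of $\exp$, so that $\sup\exp(\cdot)=\exp(\sup\cdot)$. We also need Vitale's identity $\E\sup_{x\in K}\exp(\langle x,g\rangle-\|x\|^2/2)=W(K/\sqrt{2\pi})$, applied to the compact convex set $K=T/\sigma$. Since \Cref{prop:equivalentfixedpoints} is not invoked, neither central symmetry nor full dimensionality is needed, consistent with the hypotheses of the statement. I expect no real obstacle beyond these measure-theoretic routine points.
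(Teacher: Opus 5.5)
Your proposal is correct and is essentially the paper's argument: bound $w(T)$ from above using the exact decompositions together with the Jensen--Vitale chain \eqref{eq:willsbound}, and bound the right-hand sides from above using McMullen's inequality \eqref{eq:mcmullen} and the nonnegativity of the first terms (take $r=0$, respectively $t=0$). The paper's one-line proof cites only \Cref{thm:decomp-of-gauss-width}, but the projection version of the upper bound does require \Cref{thm:decomp-via-projections}, exactly as you use it, since \eqref{eq:upperboundonintegrals} points the wrong way for that purpose.
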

\begin{proof}
Both upper bounds follow from \Cref{thm:decomp-of-gauss-width} and \eqref{eq:willsbound}. The lower bound is a combination of \Cref{thm:decomp-of-gauss-width} and McMullen's inequality \eqref{eq:mcmullen}.
\end{proof}

The results of \Cref{cor:gausstowills} provide a natural interplay between two limiting regimes. For example, by \cite[Proposition 3.3]{mourtada2025universal} we have
\[
\lim_{\sigma \to \infty}\sigma\log\left(W\left(\frac{T}{\sigma\sqrt{2\pi}}\right)\right) = w(T), \quad \text{while} \quad \lim_{\sigma \to \infty}\int_{\sigma}^{\infty}\frac{r(\nu)^2}{\nu^2}\ud\nu = \lim_{\sigma \to \infty}\int_{\sigma}^{\infty}\E\|\Pi_{T/\nu}(g)\|^2\ud\nu = 0.
\]
When $\sigma \to 0$, the Gaussian width is fully governed by decompositions of \Cref{thm:decomp-of-gauss-width} and \Cref{thm:decomp-via-projections}. 

Another corollary of \eqref{eq:willsbound} and \Cref{thm:decomp-of-gauss-width} is the improvement on the recent lower bound in \cite[Theorem 1.2]{AHY21}, which states that for any convex body $T$, such that $T \subset rB_{2}^d$ it holds that
\begin{equation}
\label{eq:ahy21}
\log W\left(\frac{T}{\sqrt{2\pi}}\right) \ge w(T) - \frac{r^2}{2}.
\end{equation}
An immediate consequence of \Cref{thm:decomp-via-projections} and~\eqref{eq:willsbound} is the following lower bound.
\begin{corollary}
\label{cor:lowerwills}
For a compact convex set $T \subset \R^\dimension$ it holds that 
\[
\log W\left(\frac{T}{\sqrt{2\pi}}\right) 
\geq w(T) - \frac{1}{2}\int_{1}^{\infty}\E\|\Pi_{T/\nu}( g)\|^2\ud\nu.
\]
\end{corollary}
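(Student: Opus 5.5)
The idea is to take the projection decomposition of \Cref{thm:decomp-via-projections} at $\sigma = 1$ with $\xi = g$, and then bound its first term by the Wills functional using \eqref{eq:willsbound}. That theorem gives the exact identity
\[
w(T) = \E\bigg[\sup_{t\in T}\Big\{\langle t,g\rangle - \frac{\|t\|_2^2}{2}\Big\}\bigg] + \frac12\int_1^\infty \E\|\Pi_{T/\nu}(g)\|_2^2\,\ud\nu .
\]
The hypotheses needed are that $T$ is closed, convex and contains the origin. For \eqref{eq:willsbound} we use the chain of inequalities there at $\sigma=1$:
\[
\E\Big[\sup_{x\in T}\big(\langle x,g\rangle - \tfrac{\|x\|^2}{2}\big)\Big] \le \log \E\Big[\sup_{x\in T}\exp\big(\langle x,g\rangle - \tfrac{\|x\|^2}{2}\big)\Big] = \log W\Big(\frac{T}{\sqrt{2\pi}}\Big).
\]
The inequality is Jensen's inequality for the concave function $\log$, after writing the supremum inside the exponential. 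The equality is Vitale's representation of the Wills functional.

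Substituting this into the identity and rearranging gives
\[
\log W\Big(\frac{T}{\sqrt{2\pi}}\Big) \ge w(T) - \frac12\int_1^\infty \E\|\Pi_{T/\nu}(g)\|^2\,\ud\nu,
\]
which is the claim. Vitale's representation uses $\log$ of a nonnegative quantity that is at least $1$ when $0\in T$, so all terms are finite: $T$ is compact, so $w(T)<\infty$, and the integral is bounded by $2w(T)$ via \eqref{eqn:projection-decomposition-sig-zero}.

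The main point to handle is that the statement assumes only compactness and convexity, not $0\in T$, while \Cref{thm:decomp-via-projections} uses $0\in T$. The left-hand side $\log W(T/\sqrt{2\pi})$ is translation invariant, because intrinsic volumes are, and $w(T)$ is translation invariant in the Gaussian case since $\E\langle t_0,g\rangle=0$. The projection term, however, is \emph{not} obviously translation invariant. Two routes are available.

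\begin{itemize}
\item Note that \Cref{thm:decomp-via-projections} at a fixed $\sigma>0$, i.e.\ \eqref{eqn:pointwise-integral-with-sigma}, only uses \Cref{lem:smoothed-support-function}\ref{item:derivative-of-smoothed-support} and \ref{item:limit-relation-support-inf}. Neither of these actually needs $0\in T$, so the identity at $\sigma=1$ holds for any nonempty closed convex $T$. Likewise, Jensen's inequality and Vitale's formula do not need the origin.
\item Alternatively, interpret the corollary for $T$ containing the origin, as elsewhere in the section.
\end{itemize}

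I would take the first route, checking that the proof of claim \ref{item:derivative-of-smoothed-support} used $\Pi_T(0)$ rather than $0$. It did, so the argument goes through verbatim.
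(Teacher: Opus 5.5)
Your proposal is correct and is exactly the paper's argument: take \Cref{thm:decomp-via-projections} at $\sigma=1$ with $\xi=g$, then bound the first term by $\log W(T/\sqrt{2\pi})$ using Jensen's inequality and Vitale's representation, as in \eqref{eq:willsbound}. You also check that the fixed-$\sigma$ identity \eqref{eqn:pointwise-integral-with-sigma} uses only parts (i)--(ii) of \Cref{lem:smoothed-support-function}, neither of which needs $0\in T$; this justifies the corollary as stated for arbitrary compact convex $T$, a point the paper leaves implicit.
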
 
\medskip
Note that when $T \subseteq rB_{2}^d$, we have \[
\frac{1}{2}\int_{1}^{\infty}\E\|\Pi_{T/\nu}( g)\|^2\ud\nu = \frac{1}{2}\int_{1}^{\infty}\frac{\E\|\Pi_{T}(\nu g)\|^2}{\nu^2}\ud\nu \le \frac{r^2}{2}\int_{1}^{\infty}\frac{1}{\nu^2}\ud\nu = \frac{r^2}{2},\]
so the lower bound of \Cref{cor:lowerwills} is never worse than the bound \eqref{eq:ahy21}. 

\begin{remark}
A further improvement of \Cref{cor:lowerwills} is possible via a combination with the recent result in \cite{fernandez2025convex}, which is itself an improvement of \cite[Theorem 1.2]{AHY21}. Namely, applying \cite[Theorem 7.1]{fernandez2025convex} with \(K = T/\sqrt{2\pi}\), and combining it with the proof of \Cref{cor:lowerwills}, we immediately obtain
\[
\log W\left(\frac{T}{\sqrt{2\pi}}\right) 
\geq w(T) + \frac{1}{2}\Var\left(\sup_{x \in T}\left\{\langle x,g\rangle-\frac{\|x\|^2}{2}\right\}\right)- \frac{1}{2}\int_{1}^{\infty}\E\|\Pi_{T/\nu}( g)\|^2\,\ud\nu.
\]
We note, however, that the additional variance term cannot contribute more than the integrated projection term, since otherwise this would contradict McMullen's inequality~\eqref{eq:mcmullen}. Alternatively, by the Gaussian Poincar\'e inequality and an elementary monotonicity argument,
\[
\Var\left(\sup_{x \in T}\left\{\langle x,g\rangle-\frac{\|x\|^2}{2}\right\}\right)
\le \E\|\Pi_T(g)\|^2
= \int_{1}^{\infty}\frac{\E\|\Pi_T(g)\|^2}{\nu^2}\,\ud\nu \le \int_{1}^{\infty}\E\|\Pi_{T/\nu}(g)\|^2\,\ud\nu.
\]
\end{remark}

\subsection{Proof of \Cref{thm:indexstarbound}}
Before proceeding with the details of the proof we need several facts. The first is that the Poisson log-concavity of the intrinsic volume sequence implies
\begin{equation}
\label{eq:viupperbound}
    V_{i}(T) \le \frac{(V_{1}(T))^i}{i!}.
\end{equation}
The key technical result we need in our proof is an inequality due to Mourtada~\cite{mourtada2025universal}. Namely, by \cite[Proposition~2.1]{mourtada2025universal}, we have
\begin{equation}
\label{eq:upperandlowerboundwills}
\sigma\log \left(\max_{1\le i\le d} V_i\left(\frac{T}{\sigma\sqrt{2\pi}}\right)\right)\le
\sigma\log\left(W\left(\frac{T}{\sigma\sqrt{2\pi}}\right)\right)
\ \le\ 8\sigma\log \left(\max_{1\le i\le d} V_i\left(\frac{T}{\sigma\sqrt{2\pi}}\right)\right),
\end{equation}
whenever $w(T) \ge 2\sigma$. Let us briefly comment on this inequality. The fact that the intrinsic volume sequence is unimodal is well-known. What \eqref{eq:upperandlowerboundwills} shows is that, up to universal multiplicative constants, $\log W$ is controlled by the logarithm of a single intrinsic volume. This result follows directly from Poisson-log-concavity of the sequence of intrinsic volumes.

Now, the proof proceeds in exactly the same way for both integral terms, so we choose the decomposition of \Cref{thm:decomp-of-gauss-width} without loss of generality. Assume first that $\PeakIndex{\sigma} \neq 0$. If $w(T)<2\sigma$, then the desired bound is immediate since $w(T)<2\sigma\le 45\sigma\max\{\PeakIndex{\sigma},1\}$ (and the integral term is nonnegative). Hence we may assume $w(T)\ge 2\sigma$, so that \eqref{eq:upperandlowerboundwills} applies. By \Cref{thm:decomp-of-gauss-width}, inequalities \eqref{eq:willsbound} and \eqref{eq:upperandlowerboundwills}, and since $\sigma \le w(T)$, we have
\begin{align}
w(T)
&\le 8\sigma\log \left(\max_{1\le i\le d} V_i\left(\frac{T}{\sigma\sqrt{2\pi}}\right)\right) +\frac{1}{2}\int_{\sigma}^{\infty}\frac{r(\nu)^2}{\nu^2}\ud\nu
\\
&= 8\sigma\log \left( V_{i^{\star}_\sigma}\left(\frac{T}{\sigma\sqrt{2\pi}}\right)\right) +\frac{1}{2}\int_{\sigma}^{\infty}\frac{r(\nu)^2}{\nu^2}\ud\nu
\\
&\le 8\sigma \log \left(\frac{1}{\sigma^{i^{\star}_\sigma}}\frac{(w(T))^{i^{\star}_\sigma}}{{i^{\star}_\sigma}!}\right) + \frac{1}{2}\int_{\sigma}^{\infty}\frac{r(\nu)^2}{\nu^2}\ud\nu,
\end{align}
where in the last line we used \eqref{eq:viupperbound} for $K=\frac{T}{\sigma\sqrt{2\pi}}$ and $V_1(K)=w(T)/\sigma$. Denote $x = w(T)/\sigma$ and $y = \frac{1}{2\sigma}\int_{\sigma}^{\infty}\frac{r(\nu)^2}{\nu^2}\ud\nu$. Then, we have
\[
x \le 8\big(i^{\star}_\sigma\log x - \log(i^{\star}_\sigma!)\big) + y.
\]
Using $\log s \le s-1$ with $s=\frac{x}{16i^{\star}_\sigma}$ and $\log(i^{\star}_\sigma!) \ge i^{\star}_\sigma\log i^{\star}_\sigma - i^{\star}_\sigma$, we get
\[
x \le 8\Big(\frac{x}{16} + i^{\star}_\sigma\log 16\Big) + y,
\]
hence
\[
x \le 16i^{\star}_\sigma\log(16) + 2y \le 45i^{\star}_\sigma + 2y.
\]
Thus, we have
\[
w(T) \le 45\sigma i^{\star}_\sigma + \int_{\sigma}^{\infty}\frac{r(\nu)^2}{\nu^2}\ud\nu.
\]
Finally, the case where $i^{\star}_\sigma = 0$ is impossible since in this case it holds that $V_1\big(\frac{T}{\sigma\sqrt{2\pi}}\big)\le V_0=1$, which implies $\sigma \ge w(T)$ contradicting the assumption $\sigma < w(T)$.

We now turn to the lower bound. By the above argument, we have $i^{\star}_\sigma \ge 1$. Recall that $K = \frac{T}{\sigma\sqrt{2\pi}}$. For intrinsic volumes the Poisson-log-concavity gives, for every $i\ge 1$,
\begin{equation}\label{eq:ratio-V1}
\frac{V_i(K)}{V_{i-1}(K)} \le \frac{V_1(K)}{i}.
\end{equation}
From \eqref{eq:ratio-V1} we get $V_i(K)/V_{i-1}(K)<1$ for all $i>V_1(K)$, hence the peak intrinsic index $i^{\star}_\sigma$ satisfies
\begin{equation}
\label{eq:upperboundonistar}
i^{\star}_\sigma \le \lfloor V_1(K)\rfloor, \quad \textrm{which implies} \quad \sigma i^{\star}_\sigma \le w(T),
\end{equation}
where we used $V_1(K)=\sqrt{2\pi}w(K)$ and $w(K)=w(T)/(\sigma\sqrt{2\pi})$.
Separately, \Cref{thm:decomp-of-gauss-width} yields $w(T)\ge \frac{1}{2}\int_{\sigma}^{\infty}\frac{r(\nu)^2}{\nu^2}\ud\nu$. Taking the halved sum of these two lower bounds proves the claim.

Finally, we justify the diameter specialization stated in \Cref{thm:indexstarbound}. Take $\sigma=\diam(T)/\sqrt{2\pi}$ and set $K=\frac{T}{\sigma\sqrt{2\pi}}=\frac{T}{\diam(T)}$. If $w(T)=\sigma$, then $V_1(K)=w(T)/\sigma=1=V_0(K)$, and \eqref{eq:viupperbound} gives
$V_i(K)\le 1/i!$ for all $i\ge 1$, so we have $i^\star=1$.
Hence, in what follows we may assume $w(T)>\sigma$ (so $\sigma\in(0,w(T))$).
Since $0\in T$, we have $\|t\|\le \diam(T)$ for all $t\in T$. In particular, this implies $r(\nu)\le \diam(T)$ for all $\nu\ge \sigma$, and thus
\[
\frac{1}{2}\int_{\sigma}^{\infty}\frac{r(\nu)^2}{\nu^2}\ud\nu
\le \frac{\diam^2(T)}{2}\int_{\sigma}^{\infty}\frac{1}{\nu^2}\ud\nu
= \frac{\diam^2(T)}{2\sigma}
= \frac{\sqrt{2\pi}}{2}\diam(T).
\]
Let $i^\star$ be as in \eqref{eq:istar}. Since $V_1(K)=w(T)/\sigma\ge 1$ by \eqref{eq:vershyninrelation}, such a maximizer exists with $i^\star\ge 1$. Moreover, the same argument leading to \eqref{eq:upperboundonistar} yields
\[
i^\star \le \lfloor V_1(K)\rfloor
= \left\lfloor \sqrt{2\pi}\,w(K)\right\rfloor
\le \left\lfloor \sqrt{2\pi d}\right\rfloor,
\]
where in the last inequality we used \eqref{eq:vershyninrelation} for $K$ (note that $\diam(K)=1$). The lower bound in \Cref{thm:indexstarbound} gives
\[
w(T)\ge \sigma i^\star = \frac{1}{\sqrt{2\pi}}\,i^\star\diam(T).
\]
For the upper bound, we use the estimate already proved above and the bound on the integral term to get
\[
w(T) \le 45\sigma i^\star + \int_{\sigma}^{\infty}\frac{r(\nu)^2}{\nu^2} \, \ud\nu
\le \frac{45}{\sqrt{2\pi}}\,i^\star\diam(T) + \sqrt{2\pi}\diam(T)
\le 21\,i^\star\diam(T),
\]
where in the last step we used $i^\star\ge 1$. This proves $w(T)\asymp i^\star\diam(T)$, completing the proof. \qed

\subsubsection{Proof of~\Cref{cor:T-lowner}}
\label{sec:T-lowner}

The statement that $T$ lies in Löwner position is equivalent to the polar body $T^\circ$ lying in John's position: the maximal volume ellipsoid included within $T^\circ$ is the ball $\inrad(T^\circ) B^d_2$. 
Hence, it holds that 
\begin{equation}
\label{ineq:sandwich-under-Lowner}
\rad(T) \, w(B^d_2) \stackrel{{\rm(i)}}{\geq} w(T) 
= \E \|g\|_{T^\circ} 
\stackrel{{\rm(ii)}}{\geq}\frac{1}{\inrad(T^\circ)} \E \|g\|_\infty = \rad(T) \, w(B^d_1).
\end{equation}
Above, inequality (i) follows from the trivial inclusion $T \subset \rad(T)\,B^d_2$, while inequality (ii) follows from the result of Schechtman-Schmuckenschläger~\cite[Proposition 4.11]{SchSch95}; alternatively, one can obtain the inequality with a worse constant from the contact points of the John ellipsoid.
By~\Cref{thm:indexstarbound}, we have $w(T) \asymp i^\star \rad(T)$, from which the claim follows from the inequalities~\eqref{ineq:sandwich-under-Lowner}: simply observe $w(B^d_1) \asymp \sqrt{\log(d)}$ and $w(B^d_2) \asymp \sqrt{d}$. \qed

\section{Statistical rates and information-theoretic tools}
\label{sec:info-and-stat}
In this section, we discuss how different information-theoretic tools can be used to bound the terms in our decomposition theorems. 

\subsection{Upper bounds involving covering numbers and statistical rates}

In this section, we develop bounds on the radii $\ChatFixed(\sigma)$ that involve statistical rates of estimation and the metric entropy of the underlying constraint set. 
We make use of the statistical rate for estimating $\thetastar \in T$ when observed through Gaussian noise. This is the minimax mean squared error (MSE), and is given by
\begin{equation}\label{defn:minimax-rate}
\big(\eps_\star(\sigma)\big)^2 
\defn 
\inf_{\hat \theta} \sup_{\theta \in T}
\E_{Y \sim \Normal{\theta}{\sigma^2 I_\dimension}}
\Big[\|\hat \theta(Y) - \theta\|_2^2\Big]. 
\end{equation}
Above, the infimum is taken over estimators (\ie measurable functions) $\hat \theta \colon \R^\dimension \to \R^\dimension$. Our main result shows that the radii and statistical rate are related via $r(\sigma) \lesssim \eps_\star(\sigma)$.

\begin{theorem} \label{thm:upper-bound-minimax}
There is a universal constant $C > 0$ such that for a compact, centrally symmetric, convex set $T \subset \R^\dimension$ it holds that 
\begin{equation}
\label{eqn:results-via-minimax-rates}
\max\left\{\ChatFixed(\sigma)^2, \E \|\Pi_{T}(\sigma g)\|_2^2\right\}  \lesssim  \big(\eps_\star(\sigma)\big)^2,
\end{equation}
for any $\sigma > 0$.
\end{theorem}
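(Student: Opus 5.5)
The plan is to reduce everything to a bound on the critical radius. Then I would compare that radius with the minimax rate through a local-entropy characterization of $\eps_\star(\sigma)$.

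\emph{Step 1 (reduction to $r(\sigma)$).} By \eqref{eq:projectionlowerbound},
\[
\E\|\Pi_T(\sigma g)\|_2^2 \lesssim \max\{r(\sigma)^2,\sigma^2\}.
\]
Both $r(\sigma)$ and $\|\Pi_T(\sigma g)\|_2$ are at most $\rad(T)$, since $0\in T$. So the left side of \eqref{eqn:results-via-minimax-rates} is at most a constant times
\[
\max\bigl\{r(\sigma)^2,\ \min\{\sigma^2,\rad(T)^2\}\bigr\}.
\]
The second term I would handle with a two-point lower bound. Take $\theta=\pm\delta u$, where $\|u\|_2=1$, $\delta=\min\{\sigma,\rad(T)\}$ and $\delta u\in T$; such a $u$ exists by central symmetry and compactness, up to an arbitrarily small loss. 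Le Cam's method with $\mathrm{KL}\le 2\delta^2/\sigma^2\le 2$ then gives
\[
\eps_\star(\sigma)^2\gtrsim \min\{\sigma^2,\rad(T)^2\}.
\]
It therefore remains to prove $r(\sigma)\lesssim \eps_\star(\sigma)$.

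\emph{Step 2 (scaling and monotonicity).} By \eqref{eq:relationonrs}, $r(\sigma)\le r^\star(2\sigma)$. I would also use the scaling relation
\[
\eps_\star^{T}(A\sigma)=A\,\eps_\star^{T/A}(\sigma)\le A\,\eps_\star^{T}(\sigma)\qquad (A\ge 1),
\]
which holds because $T/A\subset T$. Together these show that it suffices to find an absolute $C$ and $\rho\asymp\eps_\star(\sigma)$ with
\[
w(T\cap\rho B^d_2)\le C\rho^2/\sigma .
\]
Indeed, since $0\in T$ and $T$ is convex, $s\mapsto w(T\cap sB^d_2)/s$ is nonincreasing. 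Hence $w(T\cap sB^d_2)\le C s^2/\sigma$ for all $s\ge \rho$, so $r^\star(\sigma/C)\le\rho$. Rescaling $\sigma$ by an absolute constant then costs only an absolute constant.

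\emph{Step 3 (local entropy versus the minimax rate).} I would use the Yang--Barron/Fano lower bound: if the local packing number $M^{\rm loc}(\delta)$ (the maximal $\delta/c$-packing of $T\cap B(\theta,\delta)$, supremized over $\theta\in T$) satisfies $\log M^{\rm loc}(\delta)\gtrsim \delta^2/\sigma^2$, then $\eps_\star(\sigma)\gtrsim\delta$. Contrapositively, for every $\delta\ge \rho:=C'\eps_\star(\sigma)$ we have $\log M^{\rm loc}(\delta)\lesssim \delta^2/\sigma^2$. Iterating local packings over the dyadic scales $\delta_j=\rho 2^{-j}$ bounds the covering numbers of $T\cap\rho B^d_2$:
\[
\log N(T\cap\rho B^d_2,\delta_k)\le\sum_{j<k}\log M^{\rm loc}(\delta_j).
\]
Feeding this into Dudley's chaining on $T\cap\rho B^d_2$, over scales down to a cutoff $\eta$, gives a contribution of order $\rho^2/\sigma$. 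This requires the entropy bound at all the scales used, not only those $\ge\rho$.

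\emph{Main obstacle.} The difficulty is the fine scales below $\eps_\star(\sigma)$, where the local-packing information no longer applies. My first attempt is the localized Dudley bound quoted after \Cref{thm:decomp-of-gauss-width}, which gives
\[
w(T\cap\rho B^d_2)\le 2w(T\cap\eta B^d_2)+O(\rho^2/\sigma).
\]
I would then take $\eta$ to be a small constant multiple of $\rho$ and absorb the residual term. To do so I would apply the argument to the minimax rate at the smaller noise level $\sigma/A$, using $\eps_\star(\sigma/A)\le\eps_\star(\sigma)$, so that the residual is itself controlled by a self-improving, geometric-series argument. If this absorption fails, the fallback is to use the LSE constructed over a $\rho$-net, as alluded to in the introduction. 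Its risk is $\lesssim \eps_\star(\sigma)^2$, and I would compare it with $\E\|\Pi_T(\sigma g)\|_2^2$ through \eqref{eq:projectionlowerbound}.
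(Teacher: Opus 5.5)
\textbf{There is a genuine gap in Step 2.} Step 1 is fine. The problem is the reduction in Step 2, and better chaining cannot repair it. You reduce $r(\sigma)\lesssim\eps_\star(\sigma)$ to finding $\rho\asymp\eps_\star(\sigma)$ with $w(T\cap\rho B^d_2)\le C\rho^2/\sigma$. This is essentially the claim $r^\star(\sigma/C)\lesssim\eps_\star(\sigma)$, and it is false in general. It fails even for centrally symmetric convex bodies, and even after capping at $\rad(T)$.

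Take $T=[-L,L]\times\eta B^m_2\subset\R^{1+m}$ with $\sigma=1$, $\eta=m^{-1/4}$ and $L=m^{1/8}$. The estimator $\hat\theta(Y)=(\Pi_{[-L,L]}(Y_1),0)$ has risk at most $1+\eta^2$, so $\eps_\star(1)^2\le 2$. Also $\eps_\star(1)\ge 1/4$ by the two-point bound, so $\rho\asymp 1\ge\eta$. For every $\rho\ge\eta$ we have $T\cap\rho B_2^{1+m}\supset\{0\}\times\eta B^m_2$, hence, with $g'\sim\Normal{0}{I_m}$,
\[
w\bigl(T\cap\rho B_2^{1+m}\bigr)\ \ge\ \eta\,\E\|g'\|_2\ \gtrsim\ m^{1/4}.
\]
So $w(T\cap\rho B_2^{1+m})\le C\rho^2$ forces $\rho\gtrsim C^{-1/2}m^{1/8}$. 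Likewise $\min\{r^\star(1/C),\rad(T)\}\gtrsim m^{1/8}\gg\eps_\star(1)$ for any fixed $C$.

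The theorem itself gives $r(1)\lesssim 1$ here, so the polynomial loss comes entirely from passing from $r$ to $r^\star$. This is consistent with the chain $r(\sigma)\lesssim\eps_\star(\sigma)\lesssim\min\{r^\star(2\sigma),\rad(T)\}$, whose second step can be far from tight. The ``fine-scale obstacle'' you flag is therefore a real obstruction, not a technicality. The local width at radius $\asymp\eps_\star(\sigma)$ sees entropy at scales far below $\eps_\star(\sigma)$, which the minimax rate at noise level $\sigma$ does not control. No absorption scheme through $\eps_\star(\sigma/A)$ can bound it.

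Your fallback does not help either. \eqref{eq:projectionlowerbound} relates $r(\sigma)$ to the risk of $\Pi_T$ itself at the origin. That an LSE over a net attains $\eps_\star(\sigma)^2$ says nothing about $\E\|\Pi_T(\sigma g)\|_2^2$.

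\textbf{The missing idea} is to bound the projection term first, using central symmetry essentially, and only then pass to $r(\sigma)$. In other words, run your Step 1 in the opposite direction.
\begin{itemize}
\item Since $T=-T$, we have $\E\,\Pi_T(\sigma g)=0$. So $\E\|\Pi_T(\sigma g)\|_2^2$ is exactly the \emph{variance} of the LSE at $\theta=0$.
\item The paper shows, following Kur--Putterman--Rakhlin (\Cref{lem:variance-bound-for-erm}), that the LSE variance is $\lesssim\eps_\star(\sigma)^2$ at every $\theta\in T$. With $\delta^2$ the variance and $\mu$ the mean, the maps $g\mapsto\|\Pi_T(\theta+\sigma g)-v\|_2$ are $\sigma$-Lipschitz. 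Pigeonholing over a $\delta/\sqrt8$-packing of $T\cap(\mu+2\delta B^d_2)$, combined with Gaussian concentration and the Poincar\'e inequality, gives a contradiction unless that local packing has $\log M\gtrsim\delta^2/\sigma^2$. A Fano argument then yields $\delta\lesssim\eps_\star(\sigma)$.
\item Chatterjee's bound \eqref{eq:projectionlowerbound} then gives $r(\sigma)^2\lesssim\max\{\sigma^2,\E\|\Pi_T(\sigma g)\|_2^2\}$. The $\sigma^2$ branch is handled by your two-point argument, since $r(\sigma)\le\rad(T)$.
\end{itemize}
Your route never uses symmetry beyond the two-point construction, which is a warning sign. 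For non-symmetric $T$ the LSE can be minimax-suboptimal at some point. Translating that point to the origin would make $r(\sigma)\gg\eps_\star(\sigma)$.
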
 

\noindent The proof is presented below in \Cref{sec:proof-of-prop-upper-minimax}. Although from~\Cref{thm:upper-bound-minimax}, it appears as though the bound on the radius $r(\sigma)$ could be difficult to compute, it turns out that the variational problem underlying $\eps_\star(\sigma)$ can be simplified when  $T \subset \R^\dimension$ is convex, using metric properties of $T$.  Recall that the packing entropy is given by
\[
M(A, B) \defn 
\sup_{\text{finite}~A_0 \subset A} \Big\{\, 
|A_0| : x - y \not \in B \quad \mbox{for all distinct}~x, y \in A_0 \,\Big\}.
\]
We define the \emph{local} packing entropy of a convex set $T$ at scale $\eps > 0$ by 
\begin{equation}\label{def:local-packing-entropy-convex}
M^{\rm loc}_T(\eps) 
\defn 
\sup_{\theta \in T} M\big(T \cap (\theta + 2\eps B^d_2), \eps B^d_2\big).
\end{equation}
Then, the next result shows that $\eps_\star(\sigma)$ is determined by a suitable fixed point of the local entropy. 

\begin{proposition} [Metric entropy characterization of statistical rate]
\label{pr:metric-characterization-of-stat-rate}
For any bounded convex set $T \subset \R^\dimension$, and any $\sigma > 0$, the statistical rate satisfies 
\begin{equation}
\label{eqn:metric-characterization}
\eps_\star(\sigma) \asymp
\sup\bigg\{\, \eps > 0 : \log M^{\rm loc}_T(\eps)
\geq \frac{\eps^2}{\sigma^2}\,\bigg\}.
\end{equation}
\end{proposition}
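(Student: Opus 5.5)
This is the Yang--Barron / Le Cam--Birgé characterization of minimax rates over convex sets. Write $\eps^\dagger(\sigma)$ for the supremum on the right-hand side of~\eqref{eqn:metric-characterization}. We prove the lower bound $\eps_\star \gtrsim \eps^\dagger$ with Fano's inequality and the upper bound $\eps_\star \lesssim \eps^\dagger$ with a Birgé--Le Cam type estimator (or a least squares estimator over a net). Convexity is used in two places. First, it makes $\eps\mapsto \log M^{\rm loc}_T(\eps)$ nonincreasing up to constants: rescaling a local ball about $\theta$ by a factor toward $\theta$ stays inside $T$. Second, it lets us pass between local and global packings at comparable scales.

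\emph{Lower bound.} Fix $\eps$ with $\log M^{\rm loc}_T(\eps)\ge \eps^2/\sigma^2$, and take a $\theta\in T$ and an $\eps$-separated set $\{\theta_1,\dots,\theta_M\}\subset T\cap(\theta+2\eps B_2^d)$ with $\log M\ge \eps^2/\sigma^2$. Shrink it toward $\theta$ by a factor $c\in(0,1)$. By convexity the points stay in $T$; they are now $c\eps$-separated, and pairwise KL divergences between $\Normal{\theta_j}{\sigma^2 I}$ are at most $\|\theta_j-\theta_k\|^2/(2\sigma^2)\le 8c^2\eps^2/\sigma^2$. Fano's inequality then gives error probability at least $1/2$ once $8c^2\eps^2/\sigma^2+\log 2\le \tfrac12\log M$. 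This holds for small $c$ when $\eps^2/\sigma^2\gtrsim 1$. In the regime $\eps\lesssim\sigma$ we use a two-point Le Cam argument instead, available as soon as $\diam(T)\gtrsim\eps$. Hence $\eps_\star\gtrsim c\,\eps$, and taking the supremum over $\eps$ yields $\eps_\star\gtrsim\eps^\dagger$. We must check the degenerate case where the packing has a single point, for instance when $\eps\gtrsim\diam T$; there the inequality $\log 1\ge\eps^2/\sigma^2$ fails, so such $\eps$ are excluded from the supremum automatically.

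\emph{Upper bound.} We use the standard chaining-of-local-packings construction (Yang--Barron/Birgé). Set $\eps_k=2^{-k}\diam(T)$ and build nested maximal $\eps_k$-packings $N_k$ of $T$. Each point of $N_k$ has at most $M^{\rm loc}_T(c\,\eps_{k+1})^{O(1)}$ children in $N_{k+1}$ within distance $2\eps_k$; this bound is what the local packing controls, via a volumetric/convexity covering argument. The estimator descends the tree: at each level it runs a pairwise-test tournament (Le Cam--Birgé tests between Gaussians) among the children, stopping at the first $k$ with $\eps_k\le \eps^\dagger$. A union bound over $\exp(O(\eps_k^2/\sigma^2))$ candidates, each test having error $\exp(-c\eps_k^2/\sigma^2)$, shows the tournament at level $k$ succeeds with failure probability $\exp(-c'\eps_k^2/\sigma^2)$. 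Summing over levels and integrating the tails gives $\E\|\hat\theta-\theta\|^2\lesssim (\eps^\dagger)^2+\sigma^2$. The additive $\sigma^2$ is absorbed because $\eps^\dagger\gtrsim\min\{\sigma,\diam T\}$, and the trivial estimator $0$ handles the case $\diam T\lesssim\sigma$.

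The main obstacle is the upper bound's bookkeeping. We need the defining inequality to hold with the right monotonicity at all scales $\eps_k>\eps^\dagger$, so that $\log M^{\rm loc}_T(\eps_k)<\eps_k^2/\sigma^2$ there. The supremum definition only gives this pointwise for $\eps>\eps^\dagger$, which is precisely what the tournament needs, but the union bound must absorb the constant-factor change between child-count scale and separation scale. This requires a doubling-type comparison $\log M^{\rm loc}_T(\eps/2)\lesssim \log M^{\rm loc}_T(\eps)+\log M^{\rm loc}_T(\eps)\cdot O(1)$, obtained from convexity and iterated local packing. I would try first to get that comparison directly from the rescaling argument above.
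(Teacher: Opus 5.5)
Your lower bound is correct and is essentially the paper's argument (\Cref{lem:equivalent-versions-of-the-minimax-rate}). Convexity lets you shrink a local packing toward its center so the Gaussian KL radius becomes a small multiple of $\log M$; you then apply Fano for large packings and Le Cam's two-point bound otherwise. The gap is in the upper bound, at exactly the step you single out as the main obstacle. The ``doubling-type comparison'' $\log M^{\rm loc}_T(\eps/2)\lesssim \log M^{\rm loc}_T(\eps)$ is false for convex sets. The rescaling argument you plan to use proves the reverse inequality, $M^{\rm loc}_T(\eps/2)\ge M^{\rm loc}_T(\eps)$. Take $T=B_2^d$. Points in a ball of radius $R$ with mutual distances $>D$ satisfy $(m-1)/m<2R^2/D^2$, so at most three points of the unit ball are pairwise more than $5/3$ apart, and $M^{\rm loc}_T(5/3)\le 3$. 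On the other hand, $M^{\rm loc}_T(5/6)\ge M(B_2^d,\tfrac56B_2^d)\ge (6/5)^d$. This is not a boundary effect: for the cylinder $[-L,L]\times B_2^{d-1}$, slicing into slabs of width $1/6$ gives $M^{\rm loc}_T(5/3)\le 120$ for every $L$, while $M^{\rm loc}_T(5/6)\ge (6/5)^{d-1}$ still holds. Local entropies at consecutive dyadic scales can thus differ by a factor of order $d$, so a proof that depends on this comparison cannot be finished. For the same reason, a ``volumetric'' count of children would bring $d$ back in.

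The comparison is never actually needed; count children at the scale of the packing itself. The candidates at level $k+1$ are $\eps_{k+1}$-separated points of $T\cap(\hat\theta_k+C\eps_{k+1}B_2^d)$ with $\hat\theta_k\in T$. Iterating local packings at a \emph{fixed} scale (\Cref{lem:packing}) bounds their number by $M^{\rm loc}_T(\eps_{k+1})^{\lceil\log_2 C\rceil}\le\exp(\lceil\log_2 C\rceil\,\eps_{k+1}^2/\sigma^2)$ whenever $\eps_{k+1}>\eps^\dagger$. Test only pairs at distance at least $D\eps_{k+1}$. Each robust Gaussian test then errs with probability at most $\exp(-cD^2\eps_{k+1}^2/\sigma^2)$, and the winner lands within $(D+1)\eps_{k+1}$ of $\theta$, so $C=O(D)$ at the next level. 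Since $D^2\gg\log D$, an absolute constant $D$ closes the union bound. Stop at the \emph{last} $k$ with $\eps_k>\eps^\dagger$, because the defining inequality can fail below $\eps^\dagger$; any fixed scale mismatch then costs only a constant factor. With this repair your tree/tournament estimator (the Yang--Barron/Birg\'e/Neykov route) works.

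The paper's route is much lighter. For any $\eps'>\eps^\dagger$ it runs least squares over a single maximal global $\eps'$-packing (\Cref{prop:erm-upper}) and controls the risk with the basic inequality plus peeling (\Cref{lem:peeling}). The same fixed-scale lemma counts the annuli: $|\cD_k|\le M^{\rm loc}_T(\eps')^{k+\ell+1}\le e^{(k+\ell+1)\eps'^2/\sigma^2}$. No additive $\sigma^2$ appears, because $M^{\rm loc}_T(\eps')\ge 2$ forces $\eps'^2\ge\sigma^2\log 2$. What the tournament buys is that it needs only tests rather than sub-Gaussian peeling. What the paper's approach buys is a single non-iterative minimax-optimal estimator with almost no bookkeeping.
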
 
This result was first claimed, to the best of our knowledge, in~\cite[Theorem~2.11]{Ney23}, building on~\cite{birge1993rates,yang1999information}.\footnote{As later announced in~\cite{yi2025nonparametric}, the original proof in~\cite{Ney23} contained a mistake. The author announced a correction in a later paper~\cite[Theorem 3.10]{prasadan2025informationtheoreticlimitsrobust}, which yields a result for the Gaussian sequence model by taking $\eps = 0, N = 1$ in the notation of that work.} 
We provide a simplified alternative argument in~\Cref{app:GSM}: we show that the upper bound in~\cref{eqn:metric-characterization} is achieved by the LSE over an appropriately constructed global $\varepsilon$-net of $T$, in contrast to the more complicated iterative procedure in~\cite{Ney23}. 

As a first immediate corollary of \Cref{thm:upper-bound-minimax} and \Cref{pr:metric-characterization-of-stat-rate}, we have the following improvement of \Cref{prop:equivalentfixedpoints}.
\begin{corollary}
\label{cor:improvedfixedpointformula}
For a compact, centrally symmetric, convex set $T$ in $\R^d$, it holds that
\[
r(\sigma) \lesssim \eps_\star(\sigma) \lesssim \min\{r^\star(2\sigma), \operatorname{rad}(T)\}.
\]
\end{corollary}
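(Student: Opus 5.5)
The first inequality, $r(\sigma) \lesssim \eps_\star(\sigma)$, is exactly the content of \Cref{thm:upper-bound-minimax}, so nothing further is needed there. The plan is therefore to prove the second inequality, $\eps_\star(\sigma) \lesssim \min\{r^\star(2\sigma), \rad(T)\}$, by bounding $\eps_\star(\sigma)$ separately by $\rad(T)$ and by $r^\star(2\sigma)$.

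\emph{Bound by $\rad(T)$.} Use the trivial estimator $\hat\theta \equiv 0 \in T$, which is available since $T$ is centrally symmetric and convex. It has risk $\sup_{\theta\in T}\|\theta\|_2^2 = \rad(T)^2$, hence $\eps_\star(\sigma) \le \rad(T)$.

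\emph{Bound by $r^\star(2\sigma)$.} By \Cref{pr:metric-characterization-of-stat-rate} it suffices to show that every $\eps$ with $\log M^{\rm loc}_T(\eps) \ge \eps^2/\sigma^2$ satisfies $\eps \lesssim r^\star(2\sigma)$. Fix such an $\eps$, and fix $\theta\in T$ together with a packing $\{x_1,\dots,x_M\}$ of $T\cap(\theta+2\eps B_2^d)$ at separation $\eps$, where $\log M \ge \eps^2/\sigma^2$.

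First, centre the packing. By central symmetry and convexity, the points $y_j = (x_j-\theta)/2$ lie in $T$, since $-\theta\in T$. They also lie in $\eps B_2^d$ and form an $\eps/2$-separated set. This yields an $\eps/2$-packing of $T\cap \eps B_2^d$ of cardinality $M$.

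Second, apply Sudakov minoration. This gives
\[
w(T\cap \eps B_2^d) \ge c\,\tfrac{\eps}{2}\sqrt{\log M} \ge c'\,\eps^2/\sigma.
\]
If $c' \ge 1/2$, this says directly that $\eps$ belongs to the set defining $r^\star(2\sigma)$, whose condition is $w(T\cap rB_2^d)\ge r^2/(2\sigma)$, and so $\eps \le r^\star(2\sigma)$.

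Third, handle a small constant $c'$ by rescaling the radius. Set $r = c''\eps$ with $c''$ small. Convexity together with $0\in T$ gives $T\cap rB_2^d \supseteq \tfrac{r}{\eps}(T\cap \eps B_2^d)$, so
\[
w(T\cap rB_2^d) \ge c'' c' \eps^2/\sigma = \frac{c'}{c''}\cdot\frac{r^2}{\sigma}.
\]
Choosing $c'' \le 2c'$ makes this at least $r^2/(2\sigma)$. Hence $r \le r^\star(2\sigma)$, that is, $\eps \lesssim r^\star(2\sigma)$.

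Taking the supremum over admissible $\eps$ and invoking \Cref{pr:metric-characterization-of-stat-rate} then yields $\eps_\star(\sigma) \lesssim r^\star(2\sigma)$.

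The main step is the second one, the local-to-global transfer: the packing is localized around an arbitrary $\theta$, while $r^\star$ is localized at the origin. Central symmetry resolves this via the halving map $x \mapsto (x-\theta)/2$. The remaining constant bookkeeping is routine because $r\mapsto w(T\cap rB_2^d)/r$ is nonincreasing.
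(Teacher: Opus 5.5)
Your proposal is correct. The first inequality (via \Cref{thm:upper-bound-minimax}) and the bound $\eps_\star(\sigma)\le\rad(T)$ (via the zero estimator) are exactly as in the paper. For the key inequality $\eps_\star(\sigma)\lesssim r^\star(2\sigma)$, however, you take a genuinely different route.

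The paper bounds $\eps_\star(\sigma)^2$ by $\sup_{\theta\in T}\E\|\Pi_T(\theta+\sigma g)-\theta\|_2^2$, the worst-case risk of the least squares estimator over $T$ itself. It then cites the standard critical-radius analysis of least squares over star-shaped classes (\cite[Ch.~13]{Wai19}). That analysis is localized at $\theta$, since it involves $w\bigl((T-\theta)\cap rB_2^d\bigr)$. So it implicitly uses the symmetry transfer $(T-\theta)\cap rB_2^d\subset 2\bigl(T\cap \tfrac r2 B_2^d\bigr)$, which the paper does not spell out.

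You instead go through the metric characterization in \Cref{pr:metric-characterization-of-stat-rate}, whose upper bound comes from least squares over a global net. This reduces the claim to comparing the entropy fixed point $\overline{\eps}(\sigma)=\sup\{\eps>0:\log M^{\rm loc}_T(\eps)\ge \eps^2/\sigma^2\}$ with $r^\star(2\sigma)$. You get that comparison from Sudakov minoration applied to the centred packing $\{(x_j-\theta)/2\}$ of $T\cap\eps B_2^d$, followed by rescaling via star-shapedness. Your halving map is exactly the explicit form of the symmetry step left implicit in the paper.

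The paper's route gives a statistical statement: the projection $\Pi_T$ itself attains risk $\lesssim r^\star(2\sigma)^2$ uniformly over $T$. The price is relying on an external theorem. Your route is self-contained given results already in the paper; Sudakov minoration is even reproved information-theoretically in \Cref{sec:sudakov}. It also yields the purely geometric inequality $\overline{\eps}(\sigma)\lesssim r^\star(2\sigma)$. That inequality makes concrete the paper's remark that the gap between $r(\sigma)$ and $r^\star(\sigma)$ reflects the gap between Sudakov's bound and the local Gaussian width.
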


\begin{proof}
The lower bound $r(\sigma)\lesssim \varepsilon_\star(\sigma)$ follows from \Cref{thm:upper-bound-minimax}.
Next, $\varepsilon_\star(\sigma)\le \rad(T)$ since the constant estimator $\hat\theta = 0$ has
$\sup_{\theta\in T}\E\|\hat\theta-\theta\|_2^2 = \sup_{\theta\in T}\|\theta\|_2^2=\rad(T)^2$. For the remaining bound, consider the projection $\hat\theta=\Pi_T(\theta+\sigma g)$. By definition of the minimax risk,
\[
\varepsilon_\star(\sigma)^2
\le \sup_{\theta\in T}\E\|\Pi_T(\theta+\sigma g)-\theta\|_2^2.
\]
The standard localized Gaussian-complexity analysis of least squares over star-shaped classes
(see, e.g., \cite[Ch.~13]{Wai19}, in particular the proof based on the critical radius)
yields
\[
\sup_{\theta\in T}\E\|\Pi_T(\theta+\sigma g)-\theta\|_2^2 \lesssim r^\star(2\sigma)^2,
\]
up to universal constants. 
Combining the displays proves $\varepsilon_\star(\sigma)\lesssim r^\star(2\sigma)$.
\end{proof}

We note a curious aspect of the gap between $r(\sigma)$ and $r^\star(2\sigma)$ highlighted by \Cref{cor:improvedfixedpointformula}.
Both $r(\sigma)$ and $r^\star(2\sigma)$ depend explicitly on the Gaussian width of $T$ at different scales and, in general, are not expected to be determined by metric entropy at a single scale.
In contrast, the intermediate quantity $\eps_\star(\sigma)$ that controls their discrepancy is purely metric: it admits a characterization in terms of local packing numbers of $T$ and does not depend on any additional structure.

A second immediate consequence of~\Cref{thm:upper-bound-minimax}~and~\Cref{pr:metric-characterization-of-stat-rate} is the following bound on the integrated terms which appeared in~\Cref{thm:decomp-of-gauss-width,thm:decomp-via-projections}.
\begin{corollary} \label{cor:bound-on-term-two}
For any compact, centrally symmetric convex set $T \subset \R^\dimension$ and any $\sigma > 0$, it holds that
\[
\max\left\{\int_\sigma^\infty \frac{r^2(\nu)}{\nu^2} \, \ud \nu, \int_\sigma^\infty \E \|\Pi_{T/\nu}(g)\|_2^2 \, \ud \nu\right\} \lesssim
\int_\sigma^\infty \frac{\eps^2_\star(\nu)}{\nu^2} \, \ud \nu. 
\]
\end{corollary}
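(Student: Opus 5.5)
The bound follows pointwise in $\nu$ from \Cref{thm:upper-bound-minimax}, so integrating against $\nu^{-2}$ over $[\sigma,\infty)$ is all that remains.

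\emph{Step 1: the pointwise bound.} Fix $\nu \ge \sigma$. \Cref{thm:upper-bound-minimax}, applied at noise level $\nu$ to the compact, centrally symmetric, convex set $T$, gives
\[
\max\bigl\{r(\nu)^2,\ \E\|\Pi_T(\nu g)\|_2^2\bigr\} \le C\,\eps_\star(\nu)^2
\]
with a universal constant $C$. The constant does not depend on $\nu$ or $T$.

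\emph{Step 2: rescaling the projection.} Since $\Pi_{T/\nu}(g) = \nu^{-1}\Pi_T(\nu g)$, we have
\[
\E\|\Pi_{T/\nu}(g)\|_2^2 = \nu^{-2}\,\E\|\Pi_T(\nu g)\|_2^2 .
\]
Dividing the bound of Step 1 by $\nu^2$ therefore gives both
\[
\frac{r(\nu)^2}{\nu^2} \le C\,\frac{\eps_\star(\nu)^2}{\nu^2}
\qquad\text{and}\qquad
\E\|\Pi_{T/\nu}(g)\|_2^2 \le C\,\frac{\eps_\star(\nu)^2}{\nu^2}
\]
for every $\nu \ge \sigma$.

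\emph{Step 3: integration.} Integrate each inequality over $\nu\in[\sigma,\infty)$. Then take the maximum of the two resulting integrals, which is bounded by the same right-hand side with the same constant $C$.

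\emph{Measurability.} The only technical point is that the integrands are measurable in $\nu$.
\begin{itemize}
\item $\nu\mapsto r(\nu)$ is monotone by \Cref{lem:smoothed-local-width}.
\item $\nu\mapsto\E\|\Pi_{T/\nu}(g)\|_2^2$ is continuous, since $\Pi_T$ is $1$-Lipschitz and dominated convergence applies.
\item $\nu\mapsto\eps_\star(\nu)$ is nondecreasing in $\nu$. Larger noise can be simulated by adding independent Gaussian noise, so the minimax risk cannot decrease.
\end{itemize}
All integrands are also nonnegative, so the integrals are well defined, possibly infinite, in which case the inequality is trivial.

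There is no real obstacle here: all the substance sits in \Cref{thm:upper-bound-minimax}, and the corollary is its integrated form.
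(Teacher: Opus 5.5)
Your proposal is correct and is essentially the paper's argument. The paper calls this corollary an immediate consequence of \Cref{thm:upper-bound-minimax}: apply its pointwise bound at each noise level $\nu \ge \sigma$, rescale via $\Pi_{T/\nu}(g)=\nu^{-1}\Pi_T(\nu g)$, and integrate against $\nu^{-2}$; the additional citation of \Cref{pr:metric-characterization-of-stat-rate} there is only needed afterwards to rewrite the right-hand side via local entropy, not for this inequality.
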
 

Combining~\Cref{cor:bound-on-term-two} with the metric entropy characterization in \Cref{pr:metric-characterization-of-stat-rate} yields inequalities solely in terms of local entropy of the underlying set $T$. To state the result, we recall the (truncated) Dudley entropy integrals, 
\[
\cJ^{\rm loc}_\delta(T) \defn \int_\delta^{\infty} \sqrt{\log M^{\rm loc}_T(\eps)} \,\ud \eps.
\]
In the case that $\delta = 0$, we simply write $\cJ^{\rm loc}(T)$. Our next result relates the right-hand side quantity in \Cref{cor:bound-on-term-two} with the Dudley integral.

\begin{proposition}
\label{prop:entropy-integral-comparison}
There exists a constant $c > 1$ such that the following inequality holds for every $\sigma > 0$ and any bounded convex set $T \subset \R^d$:
\begin{equation}
\label{eqn:relations-for-series}
\TruncIntegral^{\rm loc}_{c\eps_\star(\sigma)}(T) + 
\frac{\eps^2_\star(\sigma)}{\sigma}
\lesssim 
\int_\sigma^\infty \frac{\eps_\star(\nu)^2}{\nu^2} \, \ud \nu
\lesssim 
\TruncIntegral^{\rm loc}_{ \eps_\star(\sigma)/c}(T) + 
\frac{\eps^2_\star(\sigma)}{\sigma}
\end{equation}
\end{proposition}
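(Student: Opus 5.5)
The plan is a change of variables $\nu \leftrightarrow \eps$, using the fixed-point characterization of \Cref{pr:metric-characterization-of-stat-rate}. Write $H(\eps) = \log M^{\rm loc}_T(\eps)$. This function is nonincreasing in $\eps$; local packing numbers are monotone up to constant rescalings of $\eps$, which only move constants into $c$. Define
\[
\bar\eps(\nu) = \sup\{\eps>0 : H(\eps)\ge \eps^2/\nu^2\}.
\]
By \Cref{pr:metric-characterization-of-stat-rate}, $\eps_\star(\nu)\asymp\bar\eps(\nu)$, so it suffices to prove \eqref{eqn:relations-for-series} with $\bar\eps$ in place of $\eps_\star$, adjusting $c$. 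The function $\nu\mapsto\bar\eps(\nu)$ is nondecreasing, and $\nu\mapsto \bar\eps(\nu)/\nu$ is nonincreasing. Moreover, at the fixed point we have $H(\bar\eps(\nu))\approx \bar\eps(\nu)^2/\nu^2$, with the usual left/right-limit care at jumps of $H$. So heuristically $\nu \approx \eps/\sqrt{H(\eps)}$ along the curve.

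I would discretize dyadically rather than integrate exactly. Set $\nu_k = 2^k\sigma$ and $\eps_k=\bar\eps(\nu_k)$ for $k\ge 0$. Then
\[
\int_\sigma^\infty \frac{\bar\eps(\nu)^2}{\nu^2}\,\ud\nu \asymp \sum_{k\ge0}\frac{\eps_k^2}{\nu_k}.
\]
This holds up to constants because $\bar\eps(\nu)^2/\nu$ changes by at most a factor $2$ within each dyadic block. Both monotonicities control this: $\bar\eps$ nondecreasing and $\bar\eps/\nu$ nonincreasing sandwich $\bar\eps(\nu)^2/\nu^2 \cdot \nu$. The $k=0$ term is exactly $\bar\eps(\sigma)^2/\sigma$. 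For each $k$, the fixed-point relation gives $\eps_k/\nu_k\approx\sqrt{H(\eps_k)}$, so $\eps_k^2/\nu_k \approx \eps_k\sqrt{H(\eps_k)}$. The sequence $\eps_k$ is nondecreasing with $\eps_{k}\le\eps_{k+1}\le 2\eps_k$, and it stabilizes at $\asymp\rad(T)$, beyond which $H=0$ and terms vanish. I would then compare $\sum_k \eps_k\sqrt{H(\eps_k)}$ with the integral $\int_{\eps_0}^\infty\sqrt{H}$.

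For the upper bound, monotonicity of $H$ gives $\sqrt{H(\eps)}\ge\sqrt{H(\eps_{k+1})}$ on $[\eps_k,\eps_{k+1}]$. However, the lengths $\eps_{k+1}-\eps_k$ may be much smaller than $\eps_k$. I expect this to be \textbf{the main obstacle}. The way around it is to group indices. Along a stretch where $\eps_k$ stays within a factor $2$, say $\eps\in[a,2a]$, the terms $\eps_k^2/\nu_k$ decay geometrically in $k$, since $\nu_k$ doubles. So the stretch sums to $\lesssim$ its first term, which is $a\sqrt{H(a)}$. Each such first term is then either the $k=0$ term or is bounded by $\int_{a/2}^{a}\sqrt{H}\ge (a/2)\sqrt{H(a)}$. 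Hence the sum is $\lesssim \bar\eps(\sigma)^2/\sigma+\TruncIntegral^{\rm loc}_{\bar\eps(\sigma)/c}(T)$. The lower limit $\eps_0/2$, and the constant-factor slack in the fixed-point relation, are absorbed into $c$.

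For the lower bound, I would split $\int_{c\eps_0}^\infty\sqrt{H}$ into dyadic shells $[2^j c\eps_0, 2^{j+1}c\eps_0]$. For each shell with $H>0$, I would pick $\nu$ with $\bar\eps(\nu)\asymp$ the shell scale $b$. Such a $\nu$ exists since $\bar\eps$ increases continuously enough, by at most a factor $2$ per doubling of $\nu$, from $\eps_0$ to $\rad(T)$. The fixed-point relation then gives $b\sqrt{H(2b)}\lesssim \bar\eps(\nu)^2/\nu$. Distinct shells correspond to distinct dyadic blocks of $\nu$ because $\bar\eps$ at most doubles per block. Summing over shells bounds the truncated integral by $\sum_k\eps_k^2/\nu_k$. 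Adding the trivial $k=0$ term completes the proof.
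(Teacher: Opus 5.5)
Your argument is correct, but it takes a different route from the paper.

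\textbf{The paper's route.} The paper never discretizes. It writes $\eps(\nu)^2=\int_0^{\eps(\nu)}2\delta\,\ud\delta$ and applies Fubini, reducing the integral to $\int 2\delta\, I_\tau(\delta)\,\ud\delta$, where $I_\tau(\delta)$ is the $\nu^{-2}$-mass of $\{\nu\ge\tau:\eps(\nu)\ge\delta\}$. \Cref{lem:truncated-integral-bounds} sandwiches this quantity around $\min\{1/\tau,\sqrt{h(\delta)}/\delta\}$, using the $\ge$ and $>$ versions, which differ only on a null set of $\delta$. The result is an exact identity:
\[
\frac12\int_\tau^\infty\frac{\eps(\nu)^2}{\nu^2}\,\ud\nu=\int_0^{\diam(T)}\min\Big\{\frac{\delta}{\tau},\sqrt{h(\delta)}\Big\}\,\ud\delta=\TruncIntegral^{\rm loc}_{\eps(\tau)}(T)+\frac{\eps(\tau)^2}{2\tau}.
\]
All constant losses come only from the final substitution $\eps_\star\asymp\eps$, and jumps of $h$ need no special treatment.

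\textbf{Your route.} You replace the integral by the Dudley-type sum $\sum_k\eps_k^2/\nu_k$. This is valid because $\bar\eps$ is nondecreasing and $\bar\eps(\nu)/\nu$ is nonincreasing. You then compare the sum to the entropy integral by grouping in both directions. This makes the chaining-like structure visible, at the cost of constants and some care at jumps. For convex $T$ the local entropy is exactly nonincreasing, so your rescaling hedge is unnecessary.

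\textbf{Points to state precisely.} Do not use $\eps_k/\nu_k\approx\sqrt{H(\eps_k)}$ as a two-sided statement: $H$ is a step function, and this can fail at a jump. Your two steps actually need only two one-sided facts, and both are true:
\begin{itemize}
\item $H(\eps)\ge\bar\eps(\nu)^2/\nu^2$ for every $\eps<\bar\eps(\nu)$. This gives $\eps_k^2/\nu_k\le 2\int_{\eps_k/2}^{\eps_k}\sqrt H$, which is what the upper bound uses.
\item $H(\eps)<\eps^2/\nu^2$ for every $\eps>\bar\eps(\nu)$. This gives $b\sqrt{H(2b)}\lesssim\eps_k^2/\nu_k$ whenever $\eps_k\in(b/2,b]$, which is what the lower bound uses.
\end{itemize}
The second fact controls $\int_{2b}^{4b}\sqrt H$, not $\int_b^{2b}\sqrt H$, so your shells should be shifted by one. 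For each shell with $H(2b)>0$, take the last dyadic index $k$ with $\eps_k\le b$; since $\eps_{k+1}\le 2\eps_k$, this gives $\eps_k\in(b/2,b]$. Distinct shells then get distinct indices. Picking an arbitrary $\nu$ with $\bar\eps(\nu)\asymp b$ gives only bounded multiplicity.

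Finally, $\eps_k$ increases to $\sup\{\eps:H(\eps)>0\}\le\diam(T)$, not to $\rad(T)$. Beyond that point the terms decay geometrically rather than vanish. Neither point affects the conclusion.
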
 

\noindent The proof is presented in \Cref{sec:proof-of-entropy-characterization}. Combining \Cref{prop:entropy-integral-comparison} with~\Cref{thm:decomp-of-gauss-width},~\Cref{prop:balltype-small-sigma}(i), and~\Cref{cor:bound-on-term-two}, and the fact that $\eps_\star(\sigma) \lesssim \sigma \sqrt{d}$ (see~\Cref{lem:equivalent-versions-of-the-minimax-rate}), we recover the classical Dudley entropy integral bound,
\[
w(T) \lesssim \limsup_{\sigma \to 0^+} 
\Big\{\sigma d + \int_\sigma^\infty \frac{\eps_\star(\nu)^2}{\nu^2}\ud \nu \Big\} 
\asymp 
\TruncIntegral^{\rm loc}(T) \leq 
\int_0^\infty \sqrt{\log M(T, \eps B^d_2)} \, \ud \eps.
\]

\begin{remark}[Sharpness of Dudley's entropy integral]
Combining \Cref{cor:improvedfixedpointformula} with \Cref{prop:entropy-integral-comparison}, and leveraging the equivalence between the local and global Dudley integrals (see \Cref{prop:dudleygloballocal}), it can be seen that
\[
w(T) \ll \int_0^\infty \sqrt{\log M(T, \eps B^d_2)} \, \ud \eps, \quad \mbox{if and only if}
\quad 
\int_0^\infty \frac{\E \|\Pi_{T}(\sigma g)\|_2^2}{\sigma^2} \, \ud \sigma 
\ll 
\int_0^\infty \frac{\eps_\star(\sigma)^2}{\sigma^2} \, \ud \sigma. 
\]
In other words, the Dudley entropy integral bound is loose whenever the (integrated) statistical minimax rates are much larger than the (integrated) performance of the LSE at the origin, or equivalently, the (integrated) expected norm squared of the metric projection.
\end{remark}

\subsection{Sudakov minoration via information-theoretic tools}
\label{sec:sudakov}

In this section, we revisit the classical Sudakov minoration \cite{Sudakov1976} in the context of our results and the Gaussian sequence model viewpoint. 
Our first step is a localized inequality that is, on its face, weaker than the standard global Sudakov bound. 
Nevertheless, our key observation is that when combined with the comparison results developed below (\Cref{prop:sudakovlocalglobal}), it is already sufficient to recover the usual Sudakov lower bound up to constants.

\begin{lemma}
\label{prop:minoration-local}
For any closed convex bounded set $T \subset \R^d$, the Gaussian width satisfies
\[
w(T) \gtrsim \sup_{\eps > 0} \eps \sqrt{\log M^{\rm loc}_T(\eps)}.
\]
\end{lemma}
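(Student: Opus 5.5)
The plan is to derive this from the statistical side: a Fano-type lower bound on the minimax rate, combined with the upper bound $w(T)\gtrsim$ (integrated minimax rates) coming from the decompositions. Fix $\eps>0$ and write $N=M^{\rm loc}_T(\eps)$. Choose $\theta\in T$ and an $\eps$-separated set $\{\theta_1,\dots,\theta_N\}\subset T\cap(\theta+2\eps B^d_2)$. If $\log N$ is small, say $\log N\le C_0$, the claim follows from $w(T)\gtrsim \diam(T)\gtrsim \eps$ by \eqref{eq:vershyninrelation}. Note that $N\ge 2$ forces $\diam(T)\ge\eps$, and for $N=1$ the bound is trivial. So I will assume $\log N$ is large.

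The simplest route avoids minimax theory entirely and uses a direct Gaussian-width-free information argument. Fano's inequality applied to the experiments $\Normal{\theta_j}{\sigma^2 I_d}$ gives a testing lower bound whenever the pairwise KL divergences $\|\theta_j-\theta_k\|^2/(2\sigma^2)\le 8\eps^2/\sigma^2$ are at most $c\log N$. Choosing $\sigma=c'\eps/\sqrt{\log N}$, any estimator then incurs squared error $\gtrsim\eps^2$ at some $\theta_j$. Hence $\eps_\star(\sigma)\gtrsim\eps$ at that $\sigma$; this is exactly the lower half of \Cref{pr:metric-characterization-of-stat-rate}.

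To convert this into a width bound, I will not use $\eps_\star$ at the origin, since \Cref{thm:upper-bound-minimax} goes the wrong way. Instead I will use the uniformly-over-$\theta$ version of the LSE. Since $T$ is convex, for each target $\theta_j$ the set $T-\theta_j$ is convex and contains $0$. By \Cref{thm:decomp-of-gauss-width} applied to $T-\theta_j$ (whose width equals $w(T)$), together with \eqref{ineq:elementary-relations-bound-one} and the standard least squares analysis, this gives
\[
\E\|\Pi_T(\theta_j+\sigma g)-\theta_j\|_2^2\lesssim \sigma\, w(T).
\]
The key point is the bound $\BoundOne(\sigma)\le\sigma w$-type control: the LSE risk is at most $\sigma$ times the first term, and that term is at most $w(T)$. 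More concretely, the basic inequality $\|\hat\theta-\theta_j\|^2\le 2\sigma\langle g,\hat\theta-\theta_j\rangle\le 2\sigma\sup_{t\in T-\theta_j}\langle g,t\rangle$ yields the risk bound $\le 2\sigma w(T)$ directly, with no appeal to the theorems. The LSE is a legitimate estimator, so Fano gives $\eps^2\lesssim \sup_j\E\|\hat\theta-\theta_j\|^2\le 2\sigma w(T)$. Substituting $\sigma\asymp\eps/\sqrt{\log N}$ gives $w(T)\gtrsim \eps\sqrt{\log N}$. Taking the supremum over $\eps$ concludes the proof.

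The main obstacle is getting the Fano step with clean constants. Fano requires $\max \mathrm{KL}\le \tfrac12\log N$ minus $\log 2$, and the separation must be converted into an estimation lower bound of $(\eps/2)^2$ via a minimum-distance decoder. I would handle the small-$N$ regime separately via the diameter bound, as above, so that $\log N\ge 4\log 2$, say, in the main case. A secondary subtlety is measurability and the centrally-non-symmetric setting. The basic inequality uses only convexity and $\theta_j\in T$, so no symmetry is needed, which matches the hypotheses of the lemma.
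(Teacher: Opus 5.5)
Your proof is correct and is essentially the paper's argument: a Fano lower bound over the local packing at noise level $\sigma\asymp\eps/\sqrt{\log N}$, played against the projection/basic-inequality bound $\sup_{\theta\in T}\E\|\Pi_T(\theta+\sigma g)-\theta\|_2^2\lesssim\sigma\, w(T)$. The differences are cosmetic: you run Fano by hand and treat small $N$ via $w(T)\gtrsim\diam(T)$, whereas the paper invokes the lower half of \Cref{pr:metric-characterization-of-stat-rate} (Fano plus Le Cam). Your detour through \Cref{thm:decomp-of-gauss-width} and \eqref{ineq:elementary-relations-bound-one} is unnecessary, since those statements concern the LSE at the origin for centrally symmetric $T$, so you can simply drop it.
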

\begin{proof}
Let $\Pi_T(Y) \defn \argmin_{\vartheta \in T}\|Y-\vartheta\|_2^2$ denote the Euclidean projection onto $T$. 
By the projection inequality, for any $\theta^\star\in T$ and any $Y\in\R^d$,
\[
\|\Pi_T(Y)-\theta^\star\|_2^2 \le \langle Y-\theta^\star,\Pi_T(Y)-\theta^\star\rangle.
\]
Taking $Y=\theta^\star+\sigma g$ with $g\sim\Normal{0}{I_d}$, taking expectations, and then taking $\sup_{\theta^\star\in T}$ yields
\[
\eps_\star(\sigma)^2 \le \sup_{\theta^\star\in T}\E\|\Pi_T(\theta^\star+\sigma g)-\theta^\star\|_2^2 \le \sigma w(T).
\]
Combining this with the metric characterization of the statistical rate in \Cref{pr:metric-characterization-of-stat-rate}, we obtain for a universal constant $c>0$,
\[
w(T) \ge c \sup_{\eps>0}\sup_{\substack{\sigma>0:\\ \eps^2\le \sigma^2\log M_T^{\rm loc}(\eps)}} \frac{\eps^2}{\sigma}
\ge c \sup_{\eps>0}\eps\sqrt{\log M_T^{\rm loc}(\eps)},
\]
where in the last step we take $\sigma^2=\eps^2/\log M_T^{\rm loc}(\eps)$ (when $\log M_T^{\rm loc}(\eps)>0$).
\end{proof}

\begin{remark}[Proof method for \Cref{prop:minoration-local}]
\label{rem:KL-properties}
We emphasize that our argument is information-theoretic. The only Gaussian-specific input is the explicit form of the Kullback-Leibler divergence in the Gaussian shift model, as it enters Fano's lemma, and the proof would go through under any comparable control of the divergence at the relevant scales. This contrasts with standard proofs of Sudakov minoration, which typically proceed either via Gaussian comparison inequalities such as Sudakov-Fernique, or via Gaussian shift arguments combined with the dual Sudakov inequality and entropy duality; see \cite{vershynin2018high,ledoux2013probability,vershynin_gfa_notes,Talagrand2021}.
\end{remark}

\begin{remark}
\label{rem:generalized-mmt}
In fact, following~\Cref{rem:KL-properties}, the the quadratic behavior of the KL, combined with~\Cref{thm:decomp-via-projections} and the rate-distortion integral of J.\ Liu~\cite{liu2025simple} leads to a simplified proof and generalized statement of the lower bound in the majorizing measures theorem. 
This is further developed in a short note that the current authors recently announced~\cite{PatZhi26}.
The paper of I.\ Zadik~\cite{Zad26} also uses a form of~\Cref{thm:decomp-via-projections} (for finite $T$) along with~\cite{liu2025simple}
to obtain the lower bound for the majorizing measures theorem, in the Gaussian setting.
\end{remark}

\begin{remark}[On the convexity assumption in  \Cref{prop:minoration-local}]
\label{rem:sudakov}
The same approach yields the lower bound
\[
\eps_\star(\sigma)\gtrsim \sup\Big\{\eps>0:\ \eps \le \sigma\sqrt{\log M_T^{\rm loc}(\eps)}\Big\}
\]
for any $T\subset\R^\dimension$. Convexity is only used to obtain the complementary upper bound $\eps_\star(\sigma)^2 \le \sigma w(T)$.
For a general closed set $T$, the projection estimator still satisfies
$
\|\Pi_T(Y)-\theta\|_2^2 \le 2\langle Y-\theta,\Pi_T(Y)-\theta\rangle,
$
and therefore $\eps_\star(\sigma)^2 \le 2\sigma w(T)$, leading to an analog of \Cref{prop:minoration-local} with slightly worse constants.
\end{remark}

Now, using information-theoretic arguments based on Anderson's lemma, we obtain a dual form of Sudakov minoration.
Here we show that the same comparison principle, combined with a Fano-type argument, yields the following dual Sudakov bound.
We note that the dual Sudakov inequality is classical and is often proved via Gaussian shift and volumetric arguments
(see \cite{ledoux2013probability, vershynin_gfa_notes, Talagrand2021}), whereas in our approach the Gaussian shift is combined with the information-theoretic method.

\begin{proposition}
\label{prop:global-minoration}
Let $T \subset \R^\dimension$ be a centrally symmetric convex body.
Then the Gaussian width satisfies
\[
w(T) \gtrsim \sup_{\eps > 0} \eps \sqrt{\log M(B_2^\dimension, \eps T^\circ)}.
\]
\end{proposition}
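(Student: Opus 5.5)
The plan is a Fano-type argument in a Gaussian shift model whose parameters are a packing of $B_2^\dimension$ in the norm of $T^\circ$. Write $\|\cdot\|_{T^\circ}$ for the gauge of $T^\circ$. Since $T$ is a centrally symmetric convex body, $\|y\|_{T^\circ} = h_T(y) = \sup_{t\in T}\langle t,y\rangle$, so $w(T) = \E\|g\|_{T^\circ}$. Fix $\eps>0$ and let $M = M(B_2^\dimension, \eps T^\circ)$. Choose $x_1,\dots,x_M \in B_2^\dimension$ with $x_i - x_j \notin \eps T^\circ$ for $i\neq j$, that is, $\|x_i - x_j\|_{T^\circ} > \eps$.

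\emph{Small $M$.} If $\log M$ is below a fixed absolute constant (and $M\ge 2$), the bound is elementary. Some pair satisfies $h_T(x_i-x_j)>\eps$ with $\|x_i-x_j\|_2\le 2$. Hence there is $t\in T$ with $\|t\|_2>\eps/2$. By \eqref{eq:vershyninrelation}, $w(T)\ge \diam(T)/\sqrt{2\pi}\gtrsim \eps \gtrsim \eps\sqrt{\log M}$. If $M=1$ there is nothing to prove. So I will assume $\log M$ is large.

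\emph{Fano step.} For a scale $\lambda>0$ to be chosen, consider the shift model $Y = \lambda x_i + g$ with $i$ uniform on $\{1,\dots,M\}$. Pairwise KL divergences satisfy $\tfrac{\lambda^2}{2}\|x_i-x_j\|_2^2 \le 2\lambda^2$. Fano's lemma then shows that any decoder $\hat\imath(Y)$ has $\P(\hat\imath\neq i)\ge 1/2$ as soon as $2\lambda^2 + \log 2 \le \tfrac12\log M$. This holds, for instance, with $\lambda = \tfrac{1}{4}\sqrt{\log M}$ once $\log M$ is large.

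\emph{Decoder.} Use minimum-distance decoding in the $T^\circ$-gauge: $\hat\imath \in \argmin_j \|Y/\lambda - x_j\|_{T^\circ}$. Since $T^\circ$ is symmetric and convex, its gauge is a norm. By the triangle inequality, on the event $\{\|g\|_{T^\circ} < \lambda\eps/2\}$ we have $\|Y/\lambda - x_i\|_{T^\circ} < \eps/2$ and $\|Y/\lambda - x_j\|_{T^\circ} > \eps/2$ for $j\ne i$, so decoding is correct. The noise law does not depend on $i$. Combining with Fano gives $\P(\|g\|_{T^\circ}\ge \lambda\eps/2)\ge 1/2$, and by Markov's inequality
\[
w(T)=\E\|g\|_{T^\circ}\ge \tfrac{\lambda\eps}{4}\gtrsim \eps\sqrt{\log M}.
\]
Taking the supremum over $\eps$ finishes the proof.

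The main obstacle is making sure Fano's error bound translates into a lower bound on the noise tail that is uniform over hypotheses. This is where a Gaussian-shift input such as Anderson's lemma would enter if one instead decoded with a general estimator. For example, one could argue that $\P(g \in \tfrac{\lambda\eps}{2} T^\circ - \lambda x_i)\le \P(g\in \tfrac{\lambda\eps}{2}T^\circ)$ to reduce to the centered event. I would first try the direct triangle-inequality decoder above, which avoids this issue. I would fall back on Anderson's lemma only if a version with general estimators is needed. The remaining work is tracking the absolute constants in the threshold for $\log M$.
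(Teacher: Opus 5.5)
Your proposal is correct and is essentially the paper's argument. Both use Fano's inequality over a $T^\circ$-separated packing of $B_2^\dimension$ in a Gaussian shift model at noise level of order $1/\sqrt{\log M}$, plus the same elementary radius bound when $M$ is small.

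The one difference is in how the Fano bound is turned into a statement about $w(T)$. The paper lower bounds the minimax $\|\cdot\|_{T^\circ}$-risk via Fano and compares it with the identity \eqref{eq:anderson-minimax-width}, which it derives from Anderson's lemma. You instead unpack the reduction to testing (min-distance decoding in the $T^\circ$-gauge, then Markov) for the single estimator $Y$ itself.

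The paper's comparison only uses the direction $\inf_{\hat\theta}\sup_{\theta}\E_\theta\|\hat\theta(Y)-\theta\|_{T^\circ}\le \sigma\, w(T)$, and $\hat\theta(Y)=Y$ already witnesses this. Your version therefore shows that Anderson's lemma is not actually needed here. The only Gaussian input is then the KL computation, just as in the primal minoration of \Cref{prop:minoration-local}.
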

\noindent Together with the duality of entropy \cite{artstein2004duality}, \Cref{prop:global-minoration} implies the usual Sudakov minoration
\[
w(T) \gtrsim \sup_{\eps > 0} \eps \sqrt{\log M(T, \eps B_2^\dimension)}.
\]
For an elementary argument relating the primal and dual forms of Sudakov minoration, see \cite[Lemma~2.5]{vershynin_gfa_notes}.

\begin{proof}
Fix $\sigma>0$ and consider the Gaussian location model $Y\sim\Normal{\theta}{\sigma^2 I_\dimension}$.
Since $T$ is centrally symmetric, $h_T=\|\cdot\|_{T^\circ}$ is a norm.
By the minimax identity for Gaussian shifts based on Anderson's lemma
(see \cite[Theorem~28.7]{polyanskiy2025information} and \cite[Lemmas~28.9--28.10]{polyanskiy2025information}),
\begin{equation}\label{eq:anderson-minimax-width}
\inf_{\hat\theta}\sup_{\theta\in\R^\dimension}\E_\theta\|\hat\theta(Y)-\theta\|_{T^\circ}
=\E\|\sigma g\|_{T^\circ}
=\sigma\,\E\|g\|_{T^\circ}
=\sigma\,w(T),
\end{equation}
where $g\sim\Normal{0}{I_\dimension}$. Fix $\eps>0$ and set $N\defn M(B_2^\dimension,\eps T^\circ)$.
Choose corresponding $\{\theta_1,\dots,\theta_N\}\subset B_2^\dimension$ such that
$\|\theta_i-\theta_j\|_{T^\circ}>\eps$ for all $i\neq j$.
A standard application of Fano's inequality (see e.g., \cite[Proposition 15.12]{Wai19}) yields
\[
\inf_{\hat\theta}\max_{1\le i\le N}\E_{\theta_i}\|\hat\theta(Y)-\theta_i\|_{T^\circ}
\ge \frac{\eps}{2}\Big(1-\frac{1}{2\sigma^2\log N}-\frac{\log 2}{\log N}\Big).
\] If $N\ge 6$, take $\sigma^2=(\log N)^{-1}$ in the previous display and combine with
\eqref{eq:anderson-minimax-width} to obtain
\[
w(T)\ge \frac{\eps}{2}\sqrt{\log N}\Big(\frac12-\frac{\log 2}{\log N}\Big)
\ge \frac{1}{20}\,\eps\sqrt{\log N}.
\]
If $2\le N\le 5$, then $\eps\le \|\theta_1-\theta_2\|_{T^\circ}\le
2\rad(T)$, so $\rad(T)\ge \eps/2$.
Let $t_0\in T$ satisfy $\|t_0\|_2=\rad(T)$. Then
\[
w(T)=\E\sup_{t\in T} \, \langle g,t\rangle
\ge \E\max\{\langle g,t_0\rangle,0\}
= \frac{1}{\sqrt{2\pi}}\|t_0\|_2
= \frac{1}{\sqrt{2\pi}}\rad(T)
\ge \frac{1}{2\sqrt{2\pi}}\,\eps
\ge \frac{1}{20}\,\eps\sqrt{\log N},
\]
where we used $\sqrt{\log N}\le \sqrt{\log 5}$ when $N\le 5$. Combining the two cases, for every $\eps>0$ we have
\[
w(T)\gtrsim \eps\sqrt{\log M(B_2^\dimension,\eps T^\circ)}.
\]
Taking the supremum over $\eps>0$ gives the claim.
\end{proof}

Similarly, one can use Fano's inequality, combined with the properties of the Wills functional, to obtain the usual form of the Sudakov minoration.
Again, the Sudakov minoration is classical, but is typically proved via comparison arguments, and so we emphasize the interest is primarily in the combination of information-theoretic and geometric tools in our proof.

\begin{proposition}
\label{prop:global-minoration-v2}
For any convex body $T \subset \R^\dimension$, the Gaussian width satisfies
\[
w(T) \gtrsim \sup_{\eps > 0} \eps \sqrt{\log M(T, \eps B^d_2)}.
\]
\end{proposition}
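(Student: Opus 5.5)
The plan is to run a Fano argument in the Gaussian location model $Y\sim\Normal{\theta}{\sigma^2 I_d}$, using the Wills functional as the geometric input. Fix $\eps>0$ and let $\{\theta_1,\dots,\theta_N\}\subset T$ be an $\eps$-separated set with $N=M(T,\eps B^d_2)$. The estimator will be the Bayes-type selector built from the Gaussian likelihood restricted to $T$. The Wills functional appears through Vitale's representation
\[
W\Big(\tfrac{T}{\sigma\sqrt{2\pi}}\Big)=\E\sup_{x\in T/\sigma}\exp\Big(\langle x,g\rangle-\tfrac{\|x\|^2}{2}\Big),
\]
which by McMullen's inequality \eqref{eq:mcmullen} satisfies $\sigma\log W(T/(\sigma\sqrt{2\pi}))\le w(T)$.

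\emph{Step 1: a lower bound on $W$ from the packing.} Since the supremum dominates the sum over the packing divided by $N$... that direction is wrong. Instead, bound the sup from below by the maximum over the $N$ packing points:
\[
W\ge \E\max_i \exp\big(\langle \theta_i/\sigma,g\rangle-\|\theta_i\|^2/(2\sigma^2)\big).
\]
Note that $\exp(\langle\theta_i/\sigma,g\rangle-\|\theta_i\|^2/2\sigma^2)$ is the likelihood ratio $dP_{\theta_i}/dP_0$ evaluated at $Y=\sigma g$. Consequently $\E_0\max_i dP_{\theta_i}/dP_0=\int \max_i p_{\theta_i}$. By the standard identity, $\int\max_i p_{\theta_i}=N\,(1-\text{Bayes error of the $N$-ary test})$.

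\emph{Step 2: apply Fano.} Fano's inequality with $\mathrm{KL}(P_{\theta_i}\|P_{\theta_j})\le \diam(T)^2/(2\sigma^2)$ is too weak globally. So first localize: by \Cref{prop:sudakovlocalglobal} (the local/global Sudakov comparison announced in the paper) it suffices to prove the bound with $M^{\rm loc}_T(\eps)$. The points can then be taken inside a ball of radius $2\eps$, where each KL is at most $2\eps^2/\sigma^2$. Choose $\sigma^2=c\,\eps^2/\log N$ with a small constant $c$. Fano then gives Bayes success probability at least $1/2$, hence
\[
\int\max_i p_{\theta_i}\ge N/2.
\]
This requires shifting the centre $\theta$ of the local ball to the origin. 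Translation invariance of intrinsic volumes lets us recentre $T$, and $w$ is unchanged up to the additive term handled in the convexity remark.

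\emph{Step 3: conclude.} Combining the steps gives $\sigma\log(N/2)\le w(T)$, i.e.
\[
\sqrt{c}\,\eps\sqrt{\log N}\,(1-\log 2/\log N)\lesssim w(T).
\]
Small $N$ is handled exactly as in \Cref{prop:global-minoration}, via $w(T)\gtrsim\rad(T)\gtrsim\eps$. Taking the supremum over $\eps$ yields the claim.

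The main obstacle is the bookkeeping in Step 1. The correct comparison is $\E_0\max_i L_i=\int\max_i p_{\theta_i}$, where the $p_{\theta_i}$ are densities for the noise level $\sigma$ applied to $Y=\sigma g$, and the scaling must match $T/\sigma$ in Vitale's formula. A second concern is the localization step: translating $T$ moves the origin out of $T$ only harmlessly, since $W$ is translation-invariant while McMullen's bound uses $V_1$, which is also translation invariant.
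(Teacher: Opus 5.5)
Your proposal has a genuine gap in Step~2. Fano's inequality is a converse bound: it gives a \emph{lower} bound $P_e \ge 1-(I+\log 2)/\log N$ on the error of every test. It therefore can never show that the Bayes success probability is at least $1/2$. The inequality you need, $\int\max_i p_{\theta_i}\ge N/2$, says the $N$ hypotheses are reliably distinguishable, which is an \emph{achievability} statement.

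Your parameters show the mismatch. With $\sigma^2=c\,\eps^2/\log N$ and $c$ small, the divergence bound you would feed into Fano is $2\eps^2/\sigma^2=2\log N/c\gg\log N$, so even the converse bound is vacuous. The localization meant to make the divergences small also works against you. Small divergences relative to $\log N$ are exactly the confusable regime, where Fano itself forces $\int\max_i p_{\theta_i}\le N(I+\log 2)/\log N\ll N$.

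The step can be repaired, and then localization and recentering become unnecessary. Decode by nearest neighbour among the $\eps$-separated points $\theta_1,\dots,\theta_N\in T$. Under $P_{\theta_i}$, an error requires $\langle \sigma g,u_{ij}\rangle\ge\|\theta_i-\theta_j\|_2/2>\eps/2$ for some $j\ne i$, where $u_{ij}$ is the unit vector along $\theta_j-\theta_i$. A union bound gives error at most $(N-1)e^{-\eps^2/(8\sigma^2)}<1/2$ for $\sigma=\eps/\sqrt{8\log(2N)}$. Since the MAP rule does at least as well, $\int\max_i p_{\theta_i}\ge N/2$.

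Your Steps~1 and~3 (Vitale's formula, restriction to the packing, McMullen) then give $w(T)\ge\sigma\log(N/2)=\eps\log(N/2)/\sqrt{8\log(2N)}\gtrsim\eps\sqrt{\log N}$ for $N\ge4$. For $N\le 3$, use $w(T)\ge\diam(T)/\sqrt{2\pi}\ge\eps/\sqrt{2\pi}$ rather than $\rad(T)$, since $T$ need not contain the origin.

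This repaired route differs from the paper's proof, which uses Fano in its correct direction. The paper lower bounds the minimax risk $\eps_\star(\sigma)^2$ by Fano and plays it against $\eps_\star(\sigma)^2\le\sigma w(T)$ from the projection estimator, with $\sigma=\eps^2/(8w(T))$. It bounds the KL term by $\log W(T/(\sigma\sqrt{2\pi}))\le w(T)/\sigma$, using the reference density proportional to $e^{-\dist(y,T)^2/(2\sigma^2)}$ (Hadwiger) together with McMullen. So in the paper the Wills functional controls information on the converse side. In the repaired version of your argument it enters through Vitale's formula on the achievability side, and Fano and the least-squares estimator are not needed.
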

\begin{proof}
Arguing as in the proof of~\Cref{prop:global-minoration}, by adjusting constants, it suffices to show that $w(T) \gtrsim \eps \sqrt{\log M(T, \eps B^d_2)}$ for $\eps > 0$ such that $M(T, \eps B^d_2) \geq 16$. 
Let us fix such $\eps > 0$ and define
\[
\sigma = \frac{\eps^2}{8 w(T)} 
\quad \mbox{and} \quad P_\theta = \Normal{\theta}{\sigma^2 I_d}.
\]
Fano's inequality~\cite[Ch.~15]{Wai19} gives the 
following lower bound on the statistical minimax rate,
\begin{equation}\label{ineq:Fano-lower-bound}
\eps_\star(\sigma)^2 = 
\inf_{\widehat{\theta}}
\sup_{\theta\in T}
\E_{Y \sim P_\theta} \|\widehat{\theta}(Y)-\theta\|_2^2 \geq \frac{\eps^2}{4}\left(
1-\frac{\inf_Q \max_{j}
\mathcal{KL}\left(P_{\theta_j} \big\|\,Q\right)+\log 2}{\log M(T, \eps B^d_2)}\right).
\end{equation}
Above, the maximum ranges over $\theta_j$ in a maximal packing set corresponding to the packing number $M(T, \varepsilon B^d_2)$, and the infimum ranges over all probability measures $Q$ on $\R^d$. 
Arguing as in~\Cref{prop:minoration-local}, it holds that $\eps_\star(\sigma)^2 \leq \sigma w(T) \leq \eps^2/8$. Combined with~\cref{ineq:Fano-lower-bound}, we obtain,
\begin{equation}
\label{ineq:for-the-best-Q}
\frac{\eps^2}{4} \log M(T, \eps B^d_2) 
\leq \eps^2 \Big(\half \log M(T, \eps B^d_2)  - \log 2\Big) \leq 
 8 \sigma w(T) \, 
 \inf_Q \max_{j}\mathcal{KL}
 \left(P_{\theta_j} \big\|\,Q\right).
\end{equation}
Now, denoting by $\dist(y, T) = \inf_{x \in T} \|x - y\|_2$, we take $Q$ with Lebesgue density 
\[
q(y) = \frac{\exp(-\dist(y, T)^2/(2\sigma^2))}{(\sigma \sqrt{2\pi})^{d} Z_T(\sigma)}, \quad \mbox{where} \quad 
Z_T(\sigma) \defn \frac{1}{(\sigma \sqrt{2\pi})^d}\int_{\R^d} \exp(-\dist(x, T)^2/(2\sigma^2)) \, \ud x.
\]
By construction $\sup_{y \in \R^d} \tfrac{\ud P_{\theta_i}}{\ud Q}(y) \leq Z_T(\sigma)$, and hence 
\begin{equation}
\label{ineq:upper-bound-on-KL-term}
\mathcal{KL}
 \left(P_{\theta_j} \big\|\,Q\right) 
 =\E_{Y \sim P_{\theta_j}}
 \log \frac{\ud P_{\theta_j}}{\ud Q}(Y)
 \leq
 \log Z_T(\sigma) = \log W\Big(\frac{T}{\sigma\sqrt{2\pi}}\Big) 
 \leq \frac{w(T)}{\sigma}.
\end{equation}
Above, the penultimate equality arises by Hadwiger's formula for the Wills functional~\cite{hadwiger1975will}, while the final inequality is due to McMullen~\cite{mcmullen1991inequalities}.
Combining~\cref{ineq:for-the-best-Q,ineq:upper-bound-on-KL-term}, we obtain
\[
w(T) \geq \frac{1}{4\sqrt{2}} \, 
\eps \sqrt{\log M(T, \eps B^d_2)},
\]
as required.
\end{proof}

\subsection{Equivalence of ``local'' and ``global'' parameters}
\label{sec:localandglobal}

In this section, we discuss the relationship between the ``local'' versions of the results obtained in the previous sections (for instance, the local entropy integral bound in \Cref{prop:entropy-integral-comparison} and the local minoration lower bound in \Cref{prop:minoration-local}) and their more standard ``global'' counterparts, such as the usual Sudakov minoration and the usual Dudley entropy integral.
Although even for the crosspolytope local and global covering numbers are known to differ at some scales, we show that the Sudakov minoration and Dudley entropy integrals remain of the same order, for any set $T$, when the global entropy is replaced by the local entropy.
A related one-sided observation appears in~\cite[Section~4.2]{van2018chainingtwo}, where a ``local Dudley'' inequality is proved in which the generic chaining functional is upper bounded by a Dudley-type sum built from entropy numbers of local neighborhoods.

Throughout this section, it will be convenient to introduce the following notation.
Given a bounded set $T \subset \R^\dimension$, the local and global packing entropies of $T$ at scale $\eps > 0$ are defined, respectively, as
\[
\LocEnt_T(\eps) \defn \sup_{\delta \geq \eps} \sup_{x \in T}
\log M\bigl(T \cap (x + 2\delta B_2^\dimension), \delta B_2^\dimension\bigr),
\qquad\text{and}\qquad
\GlobEnt_T(\eps) \defn \log M(T, \eps B_2^\dimension),
\]
for any $\eps > 0$.
When $T$ is convex, the supremum over $\delta \geq \eps$ in the definition of $\LocEnt_T(\eps)$ is attained at $\delta=\eps$, so this definition coincides with \eqref{def:local-packing-entropy-convex}.
The starting point of our analysis is the following basic observation, which essentially follows from Yang-Barron~\cite{yang1999information}.

\begin{lemma}\label{lem:dyadic-local-entropy-ineq}
Let $T \subset \R^\dimension$ be a bounded set, and let $\Delta \defn \diam(T)$. Then, for any integer $k \geq 1$, it holds that
\begin{equation}\label{ineq:dyadic-local-entropy-bound}
\LocEnt_T(\Delta 2^{-k})
\leq \GlobEnt_T(\Delta 2^{-k}) \leq
\sum_{j=1}^k \LocEnt_T(\Delta 2^{-j}).
\end{equation}
\end{lemma}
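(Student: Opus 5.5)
The left inequality $\LocEnt_T(\Delta 2^{-k}) \le \GlobEnt_T(\Delta 2^{-k})$ and the right inequality need separate arguments. I will prove both for an arbitrary bounded $T$, using only monotonicity of packing numbers and a greedy multiscale refinement of packings; convexity plays no role.

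\emph{Lower bound.} Fix $\eps = \Delta 2^{-k}$, any $\delta \ge \eps$ and any $x \in T$. A $\delta$-separated subset of $T \cap (x + 2\delta B_2^d)$ is also a $\delta$-separated subset of $T$. Since $\delta \ge \eps$, it is also $\eps$-separated. Hence
\[
M\bigl(T\cap(x+2\delta B_2^d),\delta B_2^d\bigr) \le M(T,\delta B_2^d) \le M(T,\eps B_2^d),
\]
where the last step uses that $M(T,\cdot B_2^d)$ is nonincreasing in the scale. Taking the supremum over $x$ and over $\delta\ge\eps$ and then logarithms gives $\LocEnt_T(\eps)\le \GlobEnt_T(\eps)$.

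\emph{Upper bound.} I will argue by induction on $k$, with the invariant
\[
M(T,\Delta 2^{-k}B_2^d) \le \prod_{j=1}^k \exp\bigl(\LocEnt_T(\Delta 2^{-j})\bigr).
\]

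\emph{Base case $k=1$.} Any point $x\in T$ satisfies $T\subset x+\Delta B_2^d = x + 2(\Delta/2)B_2^d$. So any $\Delta/2$-separated subset of $T$ lies in $T\cap(x+2(\Delta/2)B_2^d)$, and therefore $M(T,\tfrac{\Delta}{2}B_2^d)\le \exp\LocEnt_T(\Delta/2)$.

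\emph{Inductive step.} Let $P$ be a maximal $\Delta 2^{-k}$-separated subset of $T$. By maximality it is a $\Delta 2^{-k}$-net of $T$, and by the induction hypothesis $|P|\le \prod_{j\le k}\exp\LocEnt_T(\Delta 2^{-j})$. Now take any $\Delta 2^{-(k+1)}$-separated set $S\subset T$ and assign each $s\in S$ to a point $p\in P$ with $\|s-p\|_2\le \Delta 2^{-k}$. With $\delta=\Delta 2^{-(k+1)}$, this gives $\Delta 2^{-k} = 2\delta$. The points assigned to a fixed $p$ therefore form a $\delta$-separated subset of $T\cap(p+2\delta B_2^d)$, so there are at most $\exp\LocEnt_T(\Delta 2^{-(k+1)})$ of them. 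Hence $|S|\le |P|\cdot\exp\LocEnt_T(\Delta 2^{-(k+1)})$. Taking the supremum over $S$ and then logarithms closes the induction.

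\emph{Main obstacle.} The delicate point is matching the radius $2\delta$ in the definition of $\LocEnt_T$ with the covering radius of the previous-scale net. This works because a maximal packing at scale $\Delta 2^{-k}$ is a net at exactly that radius, which equals $2\cdot\Delta2^{-(k+1)}$. Because that radius matches exactly, no factor is lost. Two conventions need care: whether separation is strict ($x-y\notin B$ means $\|x-y\|_2>\eps$), and whether the centers $p$ lie in $T$, which is needed because $\LocEnt_T$ takes its supremum over $x \in T$. Both are satisfied here, since $P\subset T$ and the definitions use strict separation consistently.
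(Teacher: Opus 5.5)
Your proof is correct and is essentially the paper's argument. The paper proves the one-step bound $M(T,\delta B_2^d)\le N(T,2\delta B_2^d)\cdot\sup_{\theta\in T}M\bigl(T\cap(\theta+2\delta B_2^d),\delta B_2^d\bigr)$, passes to $N(T,2\delta B_2^d)\le M(T,2\delta B_2^d)$, and telescopes down from $\GlobEnt_T(\Delta)=0$; this is exactly your induction, with your maximal packing at scale $2\delta$ serving directly as the net, and your base case $k=1$ being that same step applied from $M(T,\Delta B_2^d)=1$.
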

\begin{proof}
The first inequality in~\eqref{ineq:dyadic-local-entropy-bound} is immediate, so we focus on the second.
We recall the relationship between covering and packing numbers~\cite[Ch.~5]{Wai19}:
\begin{equation}\label{eqn:covering-to-packing}
N(A, \eps B_2^\dimension) \leq M(A, \eps B_2^\dimension) \leq N(A, \tfrac{\eps}{2} B_2^\dimension),
\qquad \mbox{for any } A \subset \R^\dimension \mbox{ and } \eps > 0.
\end{equation}
Fix $\delta>0$, and let $\cM$ be a maximal $\delta$-packing of $T$ with respect to $\ell_2^\dimension$.
Let $\{\theta^{(i)}\}_{i \in [N]} \subset T$ be a $2\delta$-net for $T$ with respect to $\ell_2^\dimension$ of minimal cardinality, so that
$N = N(T,2\delta B_2^\dimension)$.
Then
\begin{equation}
|\cM|
\leq \sum_{i=1}^N \big|\cM \cap (\theta^{(i)} + 2\delta B_2^\dimension)\big|
\leq N \max_{i \in [N]} \big|\cM \cap (\theta^{(i)} + 2\delta B_2^\dimension)\big| 
\leq N \sup_{\theta \in T} M\bigl(T \cap (\theta + 2\delta B_2^\dimension), \delta B_2^\dimension\bigr),
\end{equation}
where the last inequality uses that $\cM \cap (\theta^{(i)} + 2\delta B_2^\dimension)$ is $\delta$-separated, and hence its cardinality is at most the corresponding local packing number.
Taking the supremum over all $\delta$-packings $\cM$ yields
\[
M(T,\delta B_2^\dimension)
\leq N(T,2\delta B_2^\dimension)\cdot \sup_{\theta \in T} M\bigl(T \cap (\theta + 2\delta B_2^\dimension), \delta B_2^\dimension\bigr).
\]
Since $N(T,2\delta B_2^\dimension)\le M(T,2\delta B_2^\dimension)$ by~\eqref{eqn:covering-to-packing}, we obtain
\begin{equation}\label{ineq:global-to-local-entropy}
\GlobEnt_T(\delta)-\GlobEnt_T(2\delta)\le \LocEnt_T(\delta),
\qquad \mbox{for any } \delta>0.
\end{equation}
Applying~\eqref{ineq:global-to-local-entropy} with $\delta=\Delta 2^{-j}$ for $j\ge 1$ and summing over $j\in[k]$ gives
\[
\GlobEnt_T(\Delta 2^{-k})-\GlobEnt_T(\Delta)
\le \sum_{j=1}^k \LocEnt_T(\Delta 2^{-j}).
\]
Finally, we have $M(T,\Delta B_2^\dimension)=1$ and hence $\GlobEnt_T(\Delta)=0$, which completes the proof.
\end{proof}

We can now use \Cref{lem:dyadic-local-entropy-ineq} to show that the ``local'' and ``global'' versions of the Dudley entropy integral and Sudakov minorations are equivalent up to universal constant factors. To state the first result, we recall the local and global Dudley entropy integrals, which are respectively given by
\[
\TruncIntegral_\delta^{\rm loc}(T) \defn \int_\delta^{\diam(T)} \sqrt{\LocEnt_T(\eps)} \,\ud \eps,
\quad \mbox{and} \quad
\TruncIntegral_\delta(T) \defn \int_\delta^{\diam(T)} \sqrt{\GlobEnt_T(\eps)} \, \ud \eps,
\]
for $\delta \in [0,\diam(T)]$.

\begin{proposition}[Equivalence of local and global Dudley integrals]
\label{prop:dudleygloballocal}
For any bounded set $T \subset \R^\dimension$, it holds that
\[
\TruncIntegral_\delta^{\rm loc}(T) \leq \TruncIntegral_\delta(T)
\leq 4\; \TruncIntegral_{\delta/4}^{\rm loc}(T),
\]
for any $\delta \in [0,\diam(T)]$.
\end{proposition}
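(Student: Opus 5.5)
The lower bound $\TruncIntegral_\delta^{\rm loc}(T)\le\TruncIntegral_\delta(T)$ follows directly from the pointwise inequality $\LocEnt_T(\eps)\le\GlobEnt_T(\eps)$. To check it, fix $\eps>0$, a radius $\delta'\ge\eps$ and a point $x\in T$. Any $\delta'$-separated subset of $T\cap(x+2\delta'B_2^d)$ is in particular $\eps$-separated in $T$. Hence its log-cardinality is at most $\GlobEnt_T(\eps)$, and taking suprema gives the claim. Integrating the square roots over $[\delta,\diam(T)]$ finishes this half.

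For the upper bound I would discretize dyadically and apply \Cref{lem:dyadic-local-entropy-ineq}. Set $\Delta=\diam(T)$, let $h_k=\GlobEnt_T(\Delta2^{-k})$ and $\ell_j=\LocEnt_T(\Delta2^{-j})$. Both $\GlobEnt_T$ and $\LocEnt_T$ are nonincreasing in $\eps$; for $\LocEnt_T$ this holds because of the built-in $\sup_{\delta\ge\eps}$. Let $K\ge1$ be the integer with $\Delta2^{-K}<\delta\le\Delta2^{-K+1}$ (take $K=\infty$ when $\delta=0$). Splitting $[\delta,\Delta]$ into the dyadic pieces $[\Delta2^{-k},\Delta2^{-k+1}]$ and using monotonicity gives
\[
\TruncIntegral_\delta(T)\le\sum_{k=1}^{K}\Delta2^{-k}\sqrt{h_k}.
\]
The lemma gives $h_k\le\sum_{j\le k}\ell_j$, so subadditivity of the square root yields $\sqrt{h_k}\le\sum_{j\le k}\sqrt{\ell_j}$. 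Exchanging the order of summation,
\[
\sum_{k=1}^K\Delta2^{-k}\sum_{j=1}^k\sqrt{\ell_j}\le\sum_{j=1}^K\sqrt{\ell_j}\sum_{k\ge j}\Delta2^{-k}=2\sum_{j=1}^K\Delta2^{-j}\sqrt{\ell_j}.
\]
This geometric resummation is the key point: the growth of global entropy is absorbed into a constant factor.

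It remains to convert the sum back into an integral. Monotonicity of $\LocEnt_T$ gives $\Delta2^{-j-1}\sqrt{\ell_j}\le\int_{\Delta2^{-j-1}}^{\Delta2^{-j}}\sqrt{\LocEnt_T(\eps)}\,\ud\eps$. Summing over $j\le K$ covers $[\Delta2^{-K-1},\Delta/2]$. Since $\Delta2^{-K-1}\ge\delta/4$, the whole sum is at most $2\,\TruncIntegral^{\rm loc}_{\delta/4}(T)$. Combining the displays yields $\TruncIntegral_\delta(T)\le4\,\TruncIntegral^{\rm loc}_{\delta/4}(T)$.

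The main obstacle is bookkeeping of endpoints: the dyadic cut-off must lose at most a factor $4$ in the lower limit and a factor $4$ overall. One detail needs care. If $\TruncIntegral^{\rm loc}$ is integrated only up to $\diam(T)$, then the pieces used must lie in $[\delta/4,\Delta]$; they do, since $j\ge1$. The case $\delta=0$ is handled by letting $K\to\infty$ with monotone convergence.
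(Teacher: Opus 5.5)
Your proof is correct and follows essentially the same route as the paper: both apply \Cref{lem:dyadic-local-entropy-ineq} with subadditivity of the square root, exchange the order of summation/integration to gain a factor $2$, and then use monotonicity of $\LocEnt_T$ on $[\Delta 2^{-j-1},\Delta 2^{-j}]$ for a second factor $2$. The only difference is cosmetic: you discretize the global integral dyadically before applying the lemma, so your first factor $2$ comes from a geometric series, whereas the paper applies the lemma pointwise in $\eps$ and integrates the indicator $\ind{\eps\le 2^{1-j}}$.
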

\begin{proof}
By rescaling, assume without loss of generality that $\diam(T)=1$.
The first inequality is immediate.
For the second, using that $\GlobEnt_T$ is nonincreasing, fix $\eps\in(0,1]$ and let $k=\ceil{\log_2(1/\eps)}$.
Then \Cref{lem:dyadic-local-entropy-ineq} and subadditivity of the square root give
\begin{equation}\label{ineq:eps-global-to-local-ineq}
\sqrt{\GlobEnt_T(\eps)}
\le \sqrt{\GlobEnt_T(2^{-k})}
\le \sum_{j=1}^k \sqrt{\LocEnt_T(2^{-j})}
\le \sum_{j=1}^\infty \ind{\eps\le 2^{1-j}}\sqrt{\LocEnt_T(2^{-j})}.
\end{equation}
Integrating~\eqref{ineq:eps-global-to-local-ineq} over $\eps\in[\delta,1]$ yields
\[
\TruncIntegral_\delta(T)
\le \sum_{j=1}^\infty (2^{1-j}-\delta)_+\,\sqrt{\LocEnt_T(2^{-j})}.
\]
For each $j\ge 1$, monotonicity of $\LocEnt_T$ implies
\[
2^{-(j+1)}\sqrt{\LocEnt_T(2^{-j})}
\le \int_{2^{-(j+1)}}^{2^{-j}} \sqrt{\LocEnt_T(\eps)}\,\ud\eps,
\]
and hence $(2^{1-j}-\delta)_+\sqrt{\LocEnt_T(2^{-j})}\le 4\int_{2^{-(j+1)}}^{2^{-j}}\sqrt{\LocEnt_T(\eps)}\,\ud\eps$.
Summing over $j$ for which $2^{1-j}>\delta$ gives
\[
\TruncIntegral_\delta(T)
\le 4 \int_{2^{-(N_\delta+1)}}^{1}\sqrt{\LocEnt_T(\eps)}\,\ud\eps
\le 4 \int_{\delta/4}^{1}\sqrt{\LocEnt_T(\eps)}\,\ud\eps
=4\,\TruncIntegral_{\delta/4}^{\rm loc}(T),
\]
where $N_\delta\defn \floor{\log_2(2/\delta)}$.
\end{proof}

We now introduce two versions of the minoration,
\[
\Minoration^{\rm loc}(T) \defn \sup_{\eps>0}\; \eps \sqrt{\LocEnt_T(\eps)},
\qquad\text{and}\qquad
\Minoration(T) \defn \sup_{\eps>0}\; \eps \sqrt{\GlobEnt_T(\eps)}.
\]

\begin{proposition}[Equivalence of local and global Sudakov minoration]
\label{prop:sudakovlocalglobal}
For any bounded set $T \subset \R^\dimension$, we have
\[
\Minoration^{\rm loc}(T) \le \Minoration(T) \le 4\,\Minoration^{\rm loc}(T).
\]
\end{proposition}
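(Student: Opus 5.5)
The first inequality $\Minoration^{\rm loc}(T) \le \Minoration(T)$ is immediate. For every $\eps>0$ we have $\LocEnt_T(\eps) \le \GlobEnt_T(\eps)$, which is the first inequality of \Cref{lem:dyadic-local-entropy-ineq}. Concretely, for $\delta \ge \eps$, any $\delta$-separated subset of $T\cap(x+2\delta B_2^d)$ is also $\eps$-separated in $T$. Taking suprema over $\eps$ gives the claim.

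For the reverse inequality I would use \Cref{lem:dyadic-local-entropy-ineq}, as in the proof of \Cref{prop:dudleygloballocal}, but now summing a geometric series rather than integrating. By rescaling, assume $\Delta = \diam(T) = 1$. If $\eps \ge 1$, then $\GlobEnt_T(\eps) = 0$ and there is nothing to prove. Otherwise fix $\eps \in (0,1)$ and set $k = \ceil{\log_2(1/\eps)} \ge 1$, so that $2^{-k} \le \eps < 2^{1-k}$. Monotonicity of $\GlobEnt_T$ and \Cref{lem:dyadic-local-entropy-ineq} give
\[
\GlobEnt_T(\eps) \le \GlobEnt_T(2^{-k}) \le \sum_{j=1}^k \LocEnt_T(2^{-j}).
\]
By the definition of $\Minoration^{\rm loc}$, each term satisfies $\LocEnt_T(2^{-j}) \le 4^{j}\,\Minoration^{\rm loc}(T)^2$. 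Summing the geometric series yields
\[
\sum_{j=1}^k 4^j = \tfrac{4}{3}(4^k-1) \le \tfrac43\, 4^k,
\]
and hence $\GlobEnt_T(\eps) \le \tfrac43\, 4^{k}\,\Minoration^{\rm loc}(T)^2$.

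Multiplying by $\eps^2 < 4^{1-k}$ gives
\[
\eps^2\GlobEnt_T(\eps) \le \tfrac{16}{3}\,\Minoration^{\rm loc}(T)^2,
\]
so $\eps\sqrt{\GlobEnt_T(\eps)} \le \tfrac{4}{\sqrt3}\,\Minoration^{\rm loc}(T) \le 4\,\Minoration^{\rm loc}(T)$. Taking the supremum over $\eps$ completes the argument.

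The only delicate point is the bookkeeping of dyadic scales: the choice of $k$ must make $\eps$ comparable to $2^{-k}$ from both sides. The upper side $2^{-k}\le\eps$ is what allows monotonicity of $\GlobEnt_T$ to pass to the dyadic scale, and the lower side $\eps<2^{1-k}$ is what controls $\eps^2 4^k$. In contrast to the Dudley comparison, the sum over scales costs nothing beyond a constant, because each term is dominated geometrically by the finest scale. One should also note that using the sup-over-$\delta\ge\eps$ form of $\LocEnt_T$ makes it nonincreasing for arbitrary bounded $T$. This monotonicity is used only implicitly through the lemma, so no convexity is needed.
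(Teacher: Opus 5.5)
Your proof is correct and follows the paper's argument: rescale to $\diam(T)=1$, apply \Cref{lem:dyadic-local-entropy-ineq} with $k=\ceil{\log_2(1/\eps)}$, bound each dyadic term by $\Minoration^{\rm loc}(T)$, and sum a geometric series dominated by the finest scale. The only difference is that you sum $\LocEnt_T(2^{-j})\le 4^{j}\,\Minoration^{\rm loc}(T)^2$ before taking square roots, whereas the paper first uses subadditivity of $\sqrt{\cdot}$ and sums $2^{j}$; your version yields the marginally better constant $4/\sqrt{3}$.
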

In particular, this result combined with \Cref{prop:minoration-local} immediately implies the usual Sudakov minoration inequality for any bounded (not necessarily convex due to \Cref{rem:sudakov}) closed set $T \subset \R^\dimension$:
\[
w(T) \gtrsim \sup_{\eps > 0} \eps \sqrt{\log M(T, \eps B_2^\dimension)}.
\]
\begin{proof}
By rescaling, assume without loss that $\diam(T)=1$.
The first inequality is immediate.
For the second, using~\eqref{ineq:eps-global-to-local-ineq}, for $\eps\in(0,1]$ and $k=\ceil{\log_2(1/\eps)}$,
\[
\eps \sqrt{\GlobEnt_T(\eps)}
\le \eps \sum_{j=1}^{k} \sqrt{\LocEnt_T(2^{-j})}
\le \eps \sum_{j=1}^{k} 2^j \cdot \underbrace{2^{-j}\sqrt{\LocEnt_T(2^{-j})}}_{\le \Minoration^{\rm loc}(T)}
\le \eps(2^{k+1}-1)\Minoration^{\rm loc}(T)
\le 4\Minoration^{\rm loc}(T).
\]
Taking the supremum over $\eps>0$ yields the claim.
\end{proof}

\subsection{Comparisons with generic chaining}
\label{sec:generic-chaining}
One of the most natural questions is to relate Talagrand's $\gamma_2$ functional to the decompositions implied by \Cref{thm:decomp-of-gauss-width}. Recall that by the Fernique-Talagrand majorizing measures theorem \cite[Theorem 2.10.1]{Talagrand2021} we have $w(T) \asymp \gamma_2(T)$,
where we use the formulation in terms of admissible {partitions}.
Concretely, an admissible sequence of partitions of $T$ is a nested family
$\mathcal{A}=(\mathcal{A}_n)_{n\ge 0}$ such that $\mathcal{A}_0=\{T\}$, $\mathcal{A}_{n+1}$ refines $\mathcal{A}_n$ for every $n$, and $|\mathcal{A}_n|\le 2^{2^n}$ for all $n\ge 0$.
For $x\in T$, let $A_n(x)\in \mathcal{A}_n$ denote the unique cell containing $x$. 
Then (\cite[Definition 2.7.3]{Talagrand2021}),
\[
\gamma_2(T)
= \inf_{\mathcal{A}} \; \sup_{x \in T}\sum_{n=0}^\infty 2^{n/2}\,\diam\bigl(A_n(x)\bigr),
\]
where the infimum is over all admissible sequences of partitions $\mathcal{A}$ of $T$. We now introduce truncated generic-chaining quantities in the language of admissible partitions.
Fix an admissible sequence of partitions $\mathcal{A}=(\mathcal{A}_n)_{n\ge 0}$ and an integer $p\ge 1$, and define the corresponding \emph{first $p$ levels} and \emph{tail} contributions by
\[
\gamma_{2,<p}(T;\mathcal{A})
= \sup_{x\in T}\sum_{n=0}^{p-1} 2^{n/2}\diam\bigl(A_n(x)\bigr),
\quad \textrm{and}\quad
\gamma_{2, \ge p}(T;\mathcal{A})
= \sup_{x\in T}\sum_{n=p}^{\infty} 2^{n/2}\diam\bigl(A_n(x)\bigr).
\]
We set $\gamma_{2,<0}(T;\mathcal{A}) = 0$. Finally, we define the corresponding {optimized} truncated functionals
\[
\gamma_{2,<p}(T) = \inf_{\mathcal{A}} \gamma_{2,<p}(T;\mathcal{A}),
\qquad
\gamma_{2, \ge p}(T) = \inf_{\mathcal{A}} \gamma_{2, \ge p}(T;\mathcal{A}),
\]
where the infima are over admissible sequences of partitions of $T$. We remark that corresponding truncated functionals appear naturally in the literature involving the analysis of generic chaining functionals \cite{dirksen2015tail,mendelson2016upper, mourtada2025universal}. It appears that the format of our result and corresponding derivations simplify significantly if we focus on a decomposition of \Cref{cor:gausstowills}, namely
\begin{equation}
\label{eq:gaussianwidthviawills}
w(T) \asymp \sigma\log\left(W\left(\frac{T}{\sigma\sqrt{2\pi}}\right)\right) + \int_{\sigma}^{\infty}\frac{r(\nu)^2}{\nu^2}\ud\nu.
\end{equation}
Since in the context of generic chaining we can only capture the Gaussian width up to multiplicative constant factors, the bound \eqref{eq:gaussianwidthviawills} is meaningful, even though its decomposition is not exact as in \Cref{thm:decomp-of-gauss-width}.
\begin{proposition}
    Given a compact convex set $T \subset \R^d$ containing the origin and $\sigma > 0$, set 
    \[
p_{\sigma} = \argmin_{p \ge 0}\left\{\sigma(2^p-1)+\gamma_{2, \ge p}(T)\right\}.
    \]
    Then, it holds that
    \[
    w(T) \asymp \sigma\log\left(W\left(\frac{T}{\sigma\sqrt{2\pi}}\right)\right) + \gamma_{2, <p_{\sigma}}(T),
    \]
    and
    \[
    \sigma\log\left(W\left(\frac{T}{\sigma\sqrt{2\pi}}\right)\right) \asymp \gamma_{2, \ge p_{\sigma}}(T) + \sigma(2^{p_\sigma}-1).
    \]
\end{proposition}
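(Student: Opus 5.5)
The plan is to derive both displays from a single two-sided estimate
\begin{equation}
\label{eq:plan-wills-gamma}
\sigma\log W\Big(\frac{T}{\sigma\sqrt{2\pi}}\Big) \asymp \min_{p\ge 0}\Big\{\sigma(2^p-1)+\gamma_{2,\ge p}(T)\Big\},
\end{equation}
combined with the majorizing measures theorem $w(T)\asymp\gamma_2(T)$, \Cref{cor:gausstowills}, and McMullen's inequality~\eqref{eq:mcmullen}. The minimum in~\eqref{eq:plan-wills-gamma} is attained at $p_\sigma$ by definition, which gives the second display immediately. For the first display, the lower bound $w(T)\gtrsim \sigma\log W + \gamma_{2,<p_\sigma}(T)$ follows from $w(T)\ge\sigma\log W$ by~\eqref{eq:mcmullen}, together with $\gamma_{2,<p}(T)\le\gamma_2(T)\lesssim w(T)$. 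For the upper bound, fix an admissible sequence $\mathcal A$ that is near-optimal for $\gamma_{2,<p_\sigma}$ and another that is near-optimal for $\gamma_{2,\ge p_\sigma}$. Merge them by taking common refinements at level $n$ with an index shift by one; this keeps the sequence admissible, since $(2^{2^{n}})^2 = 2^{2^{n+1}}$, and costs only a factor $\sqrt2$. The result is $\gamma_2(T)\lesssim \gamma_{2,<p_\sigma}(T)+\gamma_{2,\ge p_\sigma}(T)$, and~\eqref{eq:plan-wills-gamma} then gives $\gamma_{2,\ge p_\sigma}(T)\lesssim\sigma\log W$. So everything reduces to~\eqref{eq:plan-wills-gamma}.

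\textbf{Upper bound in~\eqref{eq:plan-wills-gamma}.} I would start from Vitale's identity in~\eqref{eq:willsbound}, which gives $\sigma\log W = \sigma\log\E\sup_{x\in T/\sigma}\exp(\langle x,g\rangle-\|x\|^2/2)$. Fix $p$ and an admissible $\mathcal A$ nearly optimal for $\gamma_{2,\ge p}$, and let $\pi_p(x)$ be a representative of $A_p(x)$, so that $\pi_p$ takes at most $2^{2^p}$ values $y$. Split $\langle x,g\rangle=\langle \pi_p(x),g\rangle+\langle x-\pi_p(x),g\rangle$ and bound the exponential moment of the supremum by a union over the $2^{2^p}$ points $y$. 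Each point contributes $\E\exp(\langle y,g\rangle/\sigma-\|y\|^2/(2\sigma^2))=1$ before the chaining correction; the penalty $-\|x\|^2/2\sigma^2$ has to absorb the mismatch between $\|x\|^2$ and $\|\pi_p(x)\|^2$. For the chaining remainder $\sup_x\langle x-\pi_p(x),g\rangle$, the tail form of generic chaining yields a sub-Gaussian tail at level $\gamma_{2,\ge p}$ with variance proxy $\lesssim(\diam A_p)^2\lesssim 2^{-p}\gamma_{2,\ge p}^2$. Its exponential moment at parameter $1/\sigma$ therefore contributes $\gamma_{2,\ge p}/\sigma+\gamma_{2,\ge p}^2 2^{-p}/\sigma^2$ to the logarithm, which should be $\lesssim 2^p+\gamma_{2,\ge p}/\sigma$ after AM-GM. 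Multiplying by $\sigma$ gives $\sigma\log W\lesssim \sigma 2^p+\gamma_{2,\ge p}$, with $\log 2^{2^p}\asymp 2^p$. The case $p=0$ is covered by McMullen, since $\sigma\log W\le w(T)\lesssim\gamma_2(T)$.

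\textbf{Lower bound in~\eqref{eq:plan-wills-gamma}.} This is the step I expect to be the main obstacle. I would use \Cref{cor:gausstowills}, namely $\sigma\log W\gtrsim w(T)-C\int_\sigma^\infty r(\nu)^2\nu^{-2}\,d\nu$, or, more cleanly, the local description of $\sigma\log W$ through the fixed point $r^\star(\sigma)$ (Mourtada's Proposition 4.1 and \Cref{prop:equivalentfixedpoints}). The goal is to build an admissible sequence whose tail beyond $p=\lceil\log_2(\sigma\log W/\sigma)\rceil$ costs $\lesssim\sigma\log W$. Take $2^{p}\approx \log W$ and, at level $p$, partition $T$ into at most $2^{2^p}$ cells of radius $\approx r^\star$-scale. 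Sudakov-type entropy bounds at this scale follow from the local width $\lesssim (r^\star)^2/\sigma$ together with \Cref{prop:minoration-local}. Then refine each cell optimally using majorizing measures applied to the cell, which has width $\lesssim w(T\cap r^\star B_2^d)\lesssim (r^\star)^2/\sigma\lesssim \sigma\log W$. The delicate point is obtaining a uniform partition of $T$ into $2^{2^p}$ pieces of small local width with the right count. If this direct construction fails, the fallback is to invoke the truncated-functional characterization of $\log W$ from~\cite{mourtada2025universal}, which I expect states essentially~\eqref{eq:plan-wills-gamma}.
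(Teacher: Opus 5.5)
Your reduction of both displays to the single estimate $\sigma\log W(T/(\sigma\sqrt{2\pi}))\asymp\min_{p\ge0}\{\sigma(2^p-1)+\gamma_{2,\ge p}(T)\}$ is exactly the paper's proof. The lower bound uses McMullen's inequality \eqref{eq:mcmullen} and $\gamma_{2,<p_\sigma}(T)\le\gamma_2(T)\lesssim w(T)$, and the upper bound glues a head sequence to a tail sequence with an index shift. The paper obtains the key estimate through your fallback: \cite[Corollary~4.1]{mourtada2025universal} together with the rescaling $\sigma\gamma_{2,\ge p}(T/\sigma)=\gamma_{2,\ge p}(T)$. If you use that citation for both directions, your argument is complete and coincides with the paper's. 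The hard direction, $\gamma_{2,\ge p_\sigma}(T)+\sigma(2^{p_\sigma}-1)\lesssim\sigma\log W(T/(\sigma\sqrt{2\pi}))$, is also what bounds $w(T)$ from above. So the citation cannot be dropped unless you prove that direction, and your self-contained derivation, as written, does not.

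\textbf{Upper bound.} After splitting $\langle x,g\rangle=\langle\pi_p(x),g\rangle+\langle x-\pi_p(x),g\rangle$, the factor $\exp(\langle y,g\rangle/\sigma-\|y\|^2/(2\sigma^2))$ and the chaining remainder depend on the same $g$, so you cannot simply add their log-moments. H\"older with any exponent $q>1$ leaves $\E\exp\big(q(\langle y,g\rangle/\sigma-\|y\|^2/(2\sigma^2))\big)=\exp\big((q^2-q)\|y\|^2/(2\sigma^2)\big)$, which is unbounded because the penalty is exactly critical. The quadratic mismatch between $\|x\|^2$ and $\|\pi_p(x)\|^2$ is also left unresolved. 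Separately, the AM--GM step is false: with $a=\gamma_{2,\ge p}(T)/\sigma$ and $b=2^p$, the bound $a^2/b\lesssim a+b$ fails whenever $a\gg b$ (e.g.\ $T=RB_2^d$, $p=1$, $\sigma\to0$). Both problems disappear if, for $x$ in a cell $A\in\mathcal A_p$ with representative $y\in A$, you use the exact expansion
\[
\frac{\langle x,g\rangle}{\sigma}-\frac{\|x\|^2}{2\sigma^2}=\Big(\frac{\langle y,g\rangle}{\sigma}-\frac{\|y\|^2}{2\sigma^2}\Big)+\Big(\frac{\langle x-y,\,g-y/\sigma\rangle}{\sigma}-\frac{\|x-y\|^2}{2\sigma^2}\Big).
\]
The Cameron--Martin shift $g\mapsto g+y/\sigma$ then removes the first bracket exactly. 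Vitale's identity and McMullen's inequality applied to $\operatorname{cl\,conv}(A)$ (both translation invariant) give
\[
W\Big(\frac{T}{\sigma\sqrt{2\pi}}\Big)\le\sum_{A\in\mathcal A_p}W\Big(\frac{\operatorname{cl\,conv}(A)}{\sigma\sqrt{2\pi}}\Big)\le|\mathcal A_p|\,\max_{A\in\mathcal A_p}e^{w(A)/\sigma},
\]
with $w(A)\le\E\sup_{x\in T}\langle x-\pi_p(x),g\rangle\lesssim\gamma_{2,\ge p}(T;\mathcal A)$. This yields $\sigma\log W\lesssim\sigma(2^p-1)+\gamma_{2,\ge p}(T)$ for every $p$, with no AM--GM needed (for $p=0$ there is a single cell).

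\textbf{Lower bound.} The crux is the number of level-$p$ cells, and \Cref{prop:minoration-local} cannot supply it. Applied to localized pieces, it controls only local packing numbers, whereas you need the global count $\log N(T,c\,r^\star B_2^d)\lesssim\log W$. Passing from local to global via \Cref{lem:dyadic-local-entropy-ineq} sums over all dyadic scales between $r^\star$ and $\diam(T)$. The only information you have at those scales, $w(T\cap\eps B_2^d)\le\eps^2/\sigma$ for $\eps>r^\star(\sigma)$, gives a bound of order $\diam(T)^2/\sigma^2$. For a segment of length $D\gg\sigma$ that is $D^2/\sigma^2$, while $\log W\asymp\log(D/\sigma)$.

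In the centrally symmetric case, the paper's own tools do give the count:
\begin{itemize}
\item[(i)] The Fano argument with the Hadwiger prior in the proof of \Cref{prop:global-minoration-v2} gives $\log M(T,\eps B_2^d)\le 2\log W(T/(\sigma\sqrt{2\pi}))+2\log 2$ for $\eps\ge 2\sqrt2\,\eps_\star(\sigma)$.
\item[(ii)] \Cref{cor:improvedfixedpointformula} gives $\eps_\star(\sigma)\lesssim r^\star(2\sigma)$.
\item[(iii)] Cells of radius $\asymp r^\star(2\sigma)$ then have widths $\lesssim r^\star(2\sigma)^2/\sigma\lesssim\sigma\log W$, by \Cref{prop:equivalentfixedpoints}, \eqref{eq:willsbound}, and monotonicity of $W$ under dilation.
\end{itemize}
The level-$p$ term $2^{p/2}r^\star(2\sigma)$ is likewise $\lesssim\sigma\log W$, so your cell-wise majorizing-measures refinement works (take $p=0$ when $\log W\lesssim 1$). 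For non-symmetric $T$, which the statement allows, you would still need Mourtada's argument.
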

\begin{remark}
The proposition does not identify the integral term $\int_{\sigma}^{\infty}\frac{r(\nu)^2}{\nu^2}\ud\nu$ with $\gamma_{2,<p_{\sigma}}(T)$. Rather, it shows that, up to universal constants, the Wills term captures the large-scale (tail) contribution of generic chaining, while $\gamma_{2,<p_\sigma}(T)$ can be viewed as a small-scale complement in the sense of \eqref{eq:gaussianwidthviawills}.
\end{remark}

\begin{proof}
    First, by Corollary 4.1 in \cite{mourtada2025universal} with the only observation that $\sigma\gamma_{2, \ge p}(T/\sigma) = \gamma_{2, \ge p}(T)$ we have
    \begin{equation}
    \label{eq:willstochaining}
    \sigma\log\left(W\left(\frac{T}{\sigma\sqrt{2\pi}}\right)\right) \asymp \gamma_{2, \ge p_{\sigma}}(T) + \sigma(2^{p_\sigma}-1).
    \end{equation}
    Next, by the majorizing measures theorem we have for some absolute constant $c > 0$,
\[
w(T)
\le c \inf_{\mathcal{A}}
\sup_{x \in T} \sum_{n=0}^{\infty}
2^{n/2}\diam\bigl(A_n(x)\bigr).
\]
Now we argue that
    \[
    w(T) \le c\bigl(\gamma_{2, < p_{\sigma}}(T) + \gamma_{2, \ge p_{\sigma}}(T)\bigr).
    \]
    This step is almost obvious, but requires some technicalities to merge together two admissible sequences corresponding to $\gamma_{2, < p_{\sigma}}(T)$ and $\gamma_{2, \ge p_{\sigma}}(T)$ respectively.
Set $p=p_\sigma$. If $p=0$, this is immediate from \eqref{eq:majorizingmeasure}. Let $\mathcal A^-=(\mathcal A^-_n)_{n\ge 0}$ and $\mathcal A^+=(\mathcal A^+_n)_{n\ge 0}$ be arbitrary admissible sequences of partitions of $T$.
Define a new sequence $\mathcal A=(\mathcal A_n)_{n\ge 0}$ by
\[
\mathcal{A}_n = 
\begin{cases} 
\mathcal A^-_n, & 0\le n\le p-1,\\
\mathcal A^-_{p-1}, & n = p \\ 
\{A\cap B:A\in \mathcal A^-_{p-1},B\in \mathcal A^+_{n-1}\}, & n\ge p+1.
\end{cases}
\]
Then, $\mathcal A$ is admissible. Indeed, nesting is clear, and for $n\ge p+1$,
$|\mathcal A_n|\le |\mathcal A^-_{p-1}||\mathcal A^+_{n-1}|
\le 2^{2^{p-1}}2^{2^{n-1}}\le 2^{2^n}$.
Moreover, for $n\ge p+1$ we have $\diam(A_n(x))\le \diam(A^+_{n-1}(x))$.
Therefore, for every $x\in T$,
\begin{align}
\sum_{n=0}^\infty 2^{n/2}\diam\bigl(A_n(x)\bigr)
&= \sum_{n=0}^{p-1} 2^{n/2}\diam\bigl(A^-_n(x)\bigr)
+2^{p/2}\diam\bigl(A^-_{p-1}(x)\bigr)
+\sum_{n=p+1}^\infty 2^{n/2}\diam\bigl(A_n(x)\bigr)\\
&\le \gamma_{2,<p}(T;\mathcal A^-)
+\sqrt{2}\cdot2^{(p-1)/2}\diam\bigl(A^-_{p-1}(x)\bigr)
+\sqrt{2}\sum_{m=p}^\infty 2^{m/2}\diam\bigl(A^+_m(x)\bigr)\\
&\le (1+\sqrt{2})\gamma_{2,<p}(T;\mathcal A^-)
+\sqrt{2}\gamma_{2,\ge p}(T;\mathcal A^+). 
\end{align}
Passing to the supremum over $x \in T$, we obtain
\[
\sup_{x\in T}\sum_{n=0}^\infty 2^{n/2}\diam\bigl(A_n(x)\bigr)
\le (1+\sqrt{2})\gamma_{2,<p}(T;\mathcal A^-)
+\sqrt{2}\gamma_{2,\ge p}(T;\mathcal A^+).
\]
We emphasize that $\mathcal A$ is obtained by gluing together the two generic admissible sequences $\mathcal A^-$ and $\mathcal A^+$. Plugging this $\mathcal A$ into the infimum over admissible sequences and then taking the infimum over $\mathcal A^-$ and $\mathcal A^+$ (separately) gives
\[
\inf_{\mathcal A}\sup_{x\in T}\sum_{n=0}^\infty 2^{n/2}\diam\bigl(A_n(x)\bigr)
\le (1+\sqrt{2})\gamma_{2,<p}(T)+\sqrt{2}\gamma_{2,\ge p}(T),
\]
and hence $w(T)\le c\bigl(\gamma_{2,<p_\sigma}(T)+\gamma_{2,\ge p_\sigma}(T)\bigr)$ after adjusting absolute constants. By \eqref{eq:willstochaining} this simplifies for some $c_1 > 0$,
\[
w(T) \le c_1\left(\sigma\log\left(W\left(\frac{T}{\sigma\sqrt{2\pi}}\right)\right) + \gamma_{2, < p_{\sigma}}(T)\right).
\]
For the lower bound, from McMullen's inequality~\eqref{eq:mcmullen}, we have $\sigma\log\left(W\left(\frac{T}{\sigma\sqrt{2\pi}}\right)\right)\le c_2 \, w(T)$.  The lower bound in \eqref{eq:majorizingmeasure} yields $\gamma_{2, <p_{\sigma}}(T)\le \gamma_2(T)\le c_3 \, w(T)$. Together, these give
\[
\sigma\log\left(W\left(\frac{T}{\sigma\sqrt{2\pi}}\right)\right)+\gamma_{2, <p_{\sigma}}(T)\le c_4 \, w(T),
\]
and the proof is complete.
 \end{proof}

\subsection{Upper bounds based on the Donsker-Varadhan variational identity}

One of the notable techniques for bounding the first term in \cref{eqn:projection-decomposition-sig-pos} in the Gaussian case relies on variational methods based on the Donsker--Varadhan duality formula.
This identity appears in universal coding and sequential prediction under logarithmic loss, where it underlies normalized maximum-likelihood constructions and minimax regret \cite{shtar1987universal,polyanskiy2025information}; see also the regret viewpoint in \cite{mourtada2025universal}.
In statistics, this approach has been developed extensively by Catoni and co-authors, mainly in the context of robust estimation \cite{catoni2007pac,audibert2011robust,catoni2017dimension}.
Recently, it was shown in \cite{zhivotovskiy2024dimension} that this method yields a sharp bound on the Gaussian width of ellipsoids, a case where the Dudley entropy integral is known to be suboptimal.

What remained unclear was whether the same variational method could be extended to other convex bodies to bound the Gaussian width.
We demonstrate that, at least for canonical examples such as ellipsoids and crosspolytopes, it does provide not only the optimal bounds on the Gaussian width but a sharp bound on the smaller first term in our decompositions.
In what follows, we present a self-contained exposition of this approach.

\begin{proposition}
\label{prop:simplepb}
Let $\mu$ be any distribution on $\R^d$ and let $g\sim \Normal{0}{I_d}$. Then
\[
\E\left[\sup_{\rho \ll \mu}\left(\E_{x \sim \rho}\left[\langle x, g\rangle - \|x\|^2/2\right] - \mathcal{KL}(\rho\,\|\, \mu)\right)\right] \le 0.
\]
\end{proposition}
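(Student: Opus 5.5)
The plan is to evaluate the inner supremum in closed form via the Donsker--Varadhan (Gibbs) variational identity, and then compute the resulting expectation exactly. For a fixed realization of $g$, set $f_g(x) = \langle x, g\rangle - \|x\|^2/2$. The Donsker--Varadhan formula states that for any measurable $f$ with $\E_{x\sim\mu} e^{f(x)} < \infty$,
\[
\sup_{\rho \ll \mu}\Big(\E_{x\sim\rho} f(x) - \mathcal{KL}(\rho\,\|\,\mu)\Big) = \log \E_{x\sim\mu} e^{f(x)}.
\]
Here we interpret the supremum over those $\rho$ for which $\E_\rho f$ is well defined, possibly as $-\infty$. If $\E_\mu e^{f}$ were infinite the statement would be vacuous, but this cannot happen here because $f_g \le \|g\|^2/2$. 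Only the inequality ``$\le$'' is needed. It follows from Jensen's inequality: write $\E_\rho f - \mathcal{KL}(\rho\|\mu) = \E_\rho \log\big(e^{f}\,\ud\mu/\ud\rho\big) \le \log \E_\rho\big(e^f \,\ud\mu/\ud\rho\big) \le \log \E_\mu e^f$. Alternatively, it follows from the nonnegativity of $\mathcal{KL}(\rho\|\mu_f)$, where $\mu_f$ is the Gibbs measure $\ud\mu_f \propto e^{f}\ud\mu$.

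Applying this pointwise in $g$ gives
\[
\E\Big[\sup_{\rho \ll \mu}(\cdots)\Big] \le \E_g \log \E_{x\sim\mu} \exp\big(\langle x, g\rangle - \|x\|^2/2\big).
\]
Next I would apply Jensen's inequality to the concave $\log$, pulling $\E_g$ inside. Then I would use Fubini/Tonelli, which is justified because the integrand is nonnegative, to swap the two expectations. This gives the upper bound
\[
\log \E_{x\sim\mu}\E_g \exp\big(\langle x, g\rangle - \|x\|^2/2\big).
\]

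The key Gaussian input is the moment generating function identity $\E_g e^{\langle x, g\rangle} = e^{\|x\|^2/2}$. It yields $\E_g \exp(\langle x,g\rangle - \|x\|^2/2) = 1$ for every $x$, so the inner quantity equals $\log 1 = 0$. This completes the argument.

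The only step I expect to require care is measurability and integrability. The supremum over $\rho$ is a supremum over an uncountable family, so measurability of the random variable is not automatic. This is sidestepped by replacing the supremum with the explicit measurable function $g \mapsto \log \E_\mu e^{f_g}$, which dominates it pointwise; the expectation can be read as an outer expectation if needed. The nonnegativity of the integrands also ensures that Tonelli applies without any further assumptions on $\mu$.
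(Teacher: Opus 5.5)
Your proof is correct and uses the same ingredients as the paper: Donsker--Varadhan, Jensen's inequality for the logarithm, Fubini, and the Gaussian identity $\E_g \exp(\langle x,g\rangle - \|x\|^2/2) = 1$. The only difference is the order of steps. The paper first bounds $\E\sup(\cdot) \le \log \E \sup \exp(\cdot)$ and then evaluates the supremum, whereas you evaluate the supremum pointwise first; your added remarks on integrability and measurability are a welcome bit of extra care.
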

\begin{proof}
Using Jensen's inequality, the Donsker--Varadhan variational formula \cite[Corollary~4.14]{boucheron2013concentration}, and Fubini's theorem, we have
\begin{align}
&\E\sup_{\rho \ll \mu}\Big(\E_{x \sim \rho}[\langle x,g\rangle - \|x\|^2/2]-\mathcal{KL}(\rho\,\|\,\mu)\Big) \\
&\le\log\left(\E\sup_{\rho \ll \mu}\exp\Big(\E_{x \sim \rho}[\langle x,g\rangle - \|x\|^2/2]-\mathcal{KL}(\rho\,\|\,\mu)\Big)\right) \\
&= \log\left(\E \E_{x\sim \mu}\exp(\langle x,g\rangle - \|x\|^2/2)\right) \\
&= \log \E_{x\sim \mu}\E \exp(\langle x,g\rangle - \|x\|^2/2)
= 0.
\end{align}
The claim follows.
\end{proof}

\begin{remark}
The result of \Cref{prop:simplepb} can also be established through connections with sequential probability assignment in the Gaussian setting.
A key observation is that $\log W(\cdot)$ admits an interpretation in terms of minimax regret via Shtarkov's integral representation \cite{shtar1987universal}; see \cite{mourtada2025universal} for a detailed account in the present geometric setting.
Importantly, \Cref{thm:localpb} shows that in our context the bound of \Cref{prop:simplepb} can be systematically improved.
\end{remark}

The idea of this approach is the following: once we choose the distribution $\mu$ (depending on $T$), the Donsker--Varadhan formula implies that
\[
\sup_{\rho \ll \mu}\left(\E_{x \sim \rho}\left[\langle x, g\rangle - \|x\|^2/2\right] - \mathcal{KL}(\rho\,\|\, \mu)\right)
\]
is attained at the Gibbs posterior $\widetilde{\rho}$ given by
\[
\widetilde{\rho}(\ud x)\ \propto\ \exp(\langle x, g\rangle - \|x\|^2/2)\,\mu(\ud x).
\]
Our aim will be to choose $\mu$ so that
\begin{equation}
\label{eq:pbaim}
\sup_{x \in T}\left(\langle x,g\rangle - \|x\|^2/2\right)
\le \E_{x \sim \widetilde{\rho}}\big[\langle x,g\rangle - \|x\|^2/2\big] + \Delta,
\end{equation}
where $\Delta$ is a nonnegative complexity term. This yields the desired bound on the first term in both decompositions. To motivate our next result, we first discuss the application of this method to ellipsoids in $\R^d$,
\[
\cE_a = \Big\{\, x \in \R^d: \sum_{i = 1}^d\frac{x_i^2}{a_i^2} \le 1\,\Big\}.
\]
We note that a technically more interesting computation for the crosspolytope is given in \Cref{sec:pacbayesellone}. 

\begin{corollary}[Logarithm of the Wills functional of the ellipsoid via the variational approach]
\label{cor:ellipsoid}
Fix $\lambda > 0$. 
For the ellipsoid $\cE_a \subset \R^d$, and with the choice
$
\mu = \Normal{0}{ \sum_{i=1}^d \frac{a_i^{2}}{\lambda}e_i e_i^\top},
$ it holds that
\[
\E\left[\sup_{x \in \cE_a}\left(\langle x, g\rangle -\|x\|^2/2\right)\right]
\le
\log\left(\E\exp\left[\sup_{x \in \cE_a}\left(\langle x, g\rangle -\|x\|^2/2\right)\right]\right)
\le
\frac{\lambda}{2} + \frac{1}{2}\sum_{i = 1}^d\log\left(1 + \frac{a_i^2}{\lambda}\right).
\]
\end{corollary}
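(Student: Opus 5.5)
The first inequality is Jensen's inequality applied to the concave function $\log$: $\E Z \le \log \E e^{Z}$ with $Z=\sup_{x\in\cE_a}(\langle x,g\rangle-\|x\|^2/2)$. The substance is the second inequality. I will use the Donsker--Varadhan route of \Cref{prop:simplepb}, keeping the exponential moment rather than passing to expectations.

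For any $\rho\ll\mu$, Donsker--Varadhan gives
\[
\E_{x\sim\rho}\bigl[\langle x,g\rangle-\|x\|^2/2\bigr]-\mathcal{KL}(\rho\,\|\,\mu)\le \log \E_{x\sim\mu}\exp\bigl(\langle x,g\rangle-\|x\|^2/2\bigr).
\]
Suppose that for each fixed $x_0\in\cE_a$ I can find $\rho_{x_0}$ with
\[
\langle x_0,g\rangle-\tfrac{\|x_0\|^2}{2}\le \E_{\rho_{x_0}}\bigl[\langle x,g\rangle-\tfrac{\|x\|^2}{2}\bigr]+\Delta_1
\qquad\text{and}\qquad
\mathcal{KL}(\rho_{x_0}\,\|\,\mu)\le \Delta_2,
\]
where $\Delta_1,\Delta_2$ do not depend on $x_0$ or $g$. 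Then, taking the supremum over $x_0$, exponentiating, taking $\E$, and using Fubini (the Gaussian moment $\E\exp(\langle x,g\rangle-\|x\|^2/2)=1$) gives
\[
\log \E e^{Z}\le \Delta_1+\Delta_2 .
\]

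The natural choice is $\rho_{x_0}=\Normal{x_0}{\Sigma}$ for a covariance $\Sigma$ to be fixed. Then $\E_\rho\langle x,g\rangle=\langle x_0,g\rangle$ and $\E_\rho\|x\|^2=\|x_0\|^2+\operatorname{tr}\Sigma$, so $\Delta_1=\tfrac12\operatorname{tr}\Sigma$, which is deterministic. This is the key point: the loss from smoothing costs no randomness.

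With $\mu=\Normal{0}{D}$, $D=\mathrm{diag}(a_i^2/\lambda)$, the divergence is
\[
\mathcal{KL}(\rho_{x_0}\,\|\,\mu)=\tfrac12\Bigl[\operatorname{tr}(D^{-1}\Sigma)-d+x_0^\top D^{-1}x_0+\log\tfrac{\det D}{\det\Sigma}\Bigr].
\]
Since $x_0\in\cE_a$, the mean term satisfies $x_0^\top D^{-1}x_0=\lambda\sum_i x_{0,i}^2/a_i^2\le\lambda$; this produces the $\lambda/2$ in the bound.

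Now choose $\Sigma=\mathrm{diag}(s_i)$ to minimize
\[
\tfrac12\sum_i s_i+\tfrac12\sum_i\Bigl(\tfrac{\lambda s_i}{a_i^2}-1+\log\tfrac{a_i^2}{\lambda s_i}\Bigr).
\]
The first-order condition $1+\lambda/a_i^2=1/s_i$ gives $s_i=a_i^2/(a_i^2+\lambda)$. Substituting, each coordinate contributes
\[
\tfrac12\Bigl[s_i+\tfrac{\lambda}{a_i^2+\lambda}-1+\log\tfrac{a_i^2+\lambda}{\lambda}\Bigr]=\tfrac12\log\Bigl(1+\tfrac{a_i^2}{\lambda}\Bigr),
\]
because $s_i+\lambda/(a_i^2+\lambda)=1$. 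Adding the $\lambda/2$ mean term yields exactly the stated bound. There is no real obstacle here beyond this algebraic cancellation and the interchange of $\sup$, $\exp$ and $\E$, which is justified pointwise in $g$ before expectation.
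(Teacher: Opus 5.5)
Your proof is correct, but it certifies the bound differently from the paper.

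\textbf{The paper's route.} It uses the single $g$-dependent Gibbs posterior $\widetilde{\rho}=\Normal{\sum_i\alpha_i g_i e_i}{\sum_i\alpha_i e_ie_i^\top}$ with $\alpha_i=a_i^2/(a_i^2+\lambda)$. It computes the Donsker--Varadhan value of $\widetilde{\rho}$ as $\tfrac12\sum_i\alpha_i g_i^2-\tfrac12\sum_i\log(1+a_i^2/\lambda)$. It then handles the constraint by a Lagrangian relaxation. On $\cE_a$ it replaces the constant $\tfrac{\lambda}{2}$ by the smaller penalty $\tfrac{\lambda}{2}\sum_i x_i^2/a_i^2$, and maximizes the resulting quadratic over all of $\R^d$, which gives $\tfrac12\sum_i\alpha_i g_i^2$.

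\textbf{Your route.} You smooth each comparator $x_0\in\cE_a$ with its own posterior $\Normal{x_0}{\Sigma}$. This costs the deterministic amount $\tfrac12\operatorname{tr}\Sigma$. You then bound the KL uniformly through the mean term $\lambda\sum_i x_{0,i}^2/a_i^2\le\lambda$.

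\textbf{How they relate.} Your optimal $\Sigma$ is exactly the covariance of $\widetilde{\rho}$. Since $\widetilde{\rho}$ attains the Donsker--Varadhan supremum, both arguments amount to the same pointwise inequality $\sup_{x\in\cE_a}(\langle x,g\rangle-\|x\|^2/2)\le\tfrac{\lambda}{2}+\tfrac12\sum_i\alpha_i g_i^2$. The claim also follows directly from this inequality, since $\E\exp(\tfrac12\alpha_i g_i^2)=(1+a_i^2/\lambda)^{1/2}$.

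\textbf{What each buys.} Your argument never needs the Gibbs posterior or the unconstrained maximizer in closed form. It only needs a uniform bound on $\mathcal{KL}(\rho_{x_0}\,\|\,\mu)$ over comparators. This is the standard comparator-smoothing PAC-Bayes argument, and it adapts to priors whose Gibbs posterior is not explicit. It also shows where each term comes from: $\lambda/2$ from the mean part of the KL, the logarithms from the optimized covariance. The paper's argument follows its template \eqref{eq:pbaim}, with all the slack located in the relaxation $\sup_{\cE_a}\le\sup_{\R^d}$. It is also the version that carries over to the $g$-dependent prior $\mu_{\operatorname{loc}}$ in \Cref{cor:localellipsoid}. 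There, a comparator-centered KL would depend on $g$ and $x_0$ and could no longer be bounded by a deterministic constant.
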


Note that \Cref{cor:ellipsoid} bounds a larger quantity: via \eqref{eq:willsbound}, it yields an upper bound on $\log W(\cE_a/\sqrt{2\pi})$ (up to the factor $1/\sqrt{2\pi}$), rather than directly bounding the first term in \cref{eqn:projection-decomposition-sig-pos}. Moreover, by the lower bound in \cite[Proposition~7.1]{mourtada2025universal}, this estimate is optimal up to multiplicative constants for the logarithm of the Wills functional of the ellipsoid.

\begin{remark}[Width of the ellipsoid]
\label{rem:width-of-the-ellipse}
Observe that by a rescaling argument, the bound in~\Cref{cor:ellipsoid} is already enough to recover the Gaussian width of the ellipsoid. Indeed, in the context of \Cref{thm:decomp-of-gauss-width} we consider
\[
w\big(\cE_a \cap \ChatFixed(\sigma) B^d_2\big) - \frac{\ChatFixed(\sigma)^2}{2\sigma}
= \sigma \left(w\left(\frac{\cE_a}{\sigma} \cap \frac{\ChatFixed(\sigma)}{\sigma} B^d_2\right) - \frac{\ChatFixed(\sigma)^2}{2\sigma^2}\right)
\le \sigma\,\E\sup_{x \in \cE_{a/\sigma}}\left(\langle x, g\rangle - \frac{\|x\|^2}{2}\right).
\]
Applying
\Cref{cor:ellipsoid} to $\cE_{a/\sigma}$ then yields
\[
\lim_{\sigma \to \infty}\sigma\,\E\sup_{x \in \cE_{a/\sigma}}\left(\langle x, g\rangle - \frac{\|x\|^2}{2}\right)
\le \lim_{\sigma \to \infty}\inf_{\lambda > 0}\left(\frac{\lambda\sigma}{2} + \frac{\sigma}{2}\sum_{i = 1}^d\log\left(1 + \frac{a_i^2}{\lambda\sigma^2}\right)\right)
= \sqrt{\sum_{i = 1}^d a_i^2},
\]
which is a sharp upper bound on the Gaussian width of the ellipsoid.
\end{remark}

Although sufficient to bound the Gaussian width of the ellipsoid, the estimate in \Cref{prop:simplepb} is suboptimal. We now introduce another tool that at least for the ellipsoid yields a sharp bound on the first term in \cref{eqn:projection-decomposition-sig-pos}.
Rather than directly invoking chaining arguments, we again employ the variational approach, but now with localized priors, introduced in the statistical framework by Catoni \cite{catoni2007pac} (see also \cite{mourtada2023local}).

\begin{theorem}
\label{thm:localpb}
Let $\mu$ be a distribution on $\R^d$ and let $g\sim \Normal{0}{I_d}$. Define the localized ($g$-dependent) distribution
\[
\mu_{\operatorname{loc}}(\ud x)=
\frac{\exp(-\|x-g\|^2/8)\,\mu(\ud x)}
{\int \exp(-\|x'-g\|^2/8)\,\mu(\ud x')}.
\]
Then
\[
\E\Bigl[\sup_{\rho\ll \mu}\Bigl(\E_{x'\sim \rho}\bigl[\langle x',g\rangle-\|x'\|^2/2\bigr]-2\,\mathcal{KL}(\rho\,\|\,\mu_{\operatorname{loc}})\Bigr)\Bigr] \le 0.
\]
\end{theorem}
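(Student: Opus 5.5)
The plan is to reduce the statement, via the Donsker--Varadhan formula, to a Gaussian computation for a ratio of two integrals against $\mu$, and then to remove the random normalizer of $\mu_{\operatorname{loc}}$.

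\emph{Step 1 (reduction).} Write $f(x)=\langle x,g\rangle-\|x\|^2/2$. Apply the Donsker--Varadhan variational formula (as in the proof of \Cref{prop:simplepb}) to $f/2$, with reference measure $\mu_{\operatorname{loc}}$. Since $\mu_{\operatorname{loc}}$ and $\mu$ are mutually absolutely continuous, the constraint $\rho\ll\mu$ is the same as $\rho\ll\mu_{\operatorname{loc}}$. Pointwise in $g$ this gives
\[
\sup_{\rho\ll\mu}\Bigl(\E_{\rho} f-2\,\mathcal{KL}(\rho\,\|\,\mu_{\operatorname{loc}})\Bigr)=2\log\int e^{f(x)/2}\,\mu_{\operatorname{loc}}(\ud x).
\]
Completing the square gives $\tfrac12\langle x,g\rangle-\tfrac14\|x\|^2=\tfrac14\|g\|^2-\tfrac14\|x-g\|^2$. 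Hence, with $a(x)=\|x-g\|^2$,
\[
\int e^{f/2}\,\ud\mu_{\operatorname{loc}}=e^{\|g\|^2/4}\,\frac{N}{D},\qquad N=\int e^{-3a/8}\,\ud\mu,\quad D=\int e^{-a/8}\,\ud\mu .
\]
The statement is therefore equivalent to $\E\bigl[\log\bigl(e^{\|g\|^2/2}N^2/D^2\bigr)\bigr]\le 0$.

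\emph{Step 2 (killing the denominator).} Apply Cauchy--Schwarz to $e^{-3a/8}=e^{-a/16}\cdot e^{-5a/16}$, which gives $N^2\le D\int e^{-5a/8}\,\ud\mu$. The quantity to control becomes
\[
\E\log\Bigl(e^{\|g\|^2/2}\,\frac{\int e^{-5a/8}\,\ud\mu}{D}\Bigr)=\E\log\E_{x\sim\mu_{\operatorname{loc}}}\bigl[e^{\|g\|^2/2-\|x-g\|^2/2}\bigr].
\]
The natural route is then Jensen ($\E\log\le\log\E$) followed by Fubini, which would reduce everything to the identity $\E_g\,e^{\|g\|^2/2-c\|x-g\|^2}=(2c)^{-d/2}$, valid for each fixed $x$ and any $c>0$. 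This is at most $1$ exactly when $c\ge 1/2$, which is why the weight $1/8$ in $\mu_{\operatorname{loc}}$ and the factor $2$ in front of $\mathcal{KL}$ must be tuned so that an exponent $c\ge1/2$ appears.

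\emph{Main obstacle.} The difficulty is that $\mu_{\operatorname{loc}}$ depends on $g$, so Fubini cannot be applied directly to the last display. What I would try first is to choose the Hölder exponents in Step 2 so that the power of $D$ that remains appears with a favorable sign, for instance $D^{-1}\ge1$ being dominated using $e^{-a/8}\ge e^{-3a/8}$. Failing that, I would combine the two logarithms as $\log N-\log D$ and compare $\E\log N$ with $\E\log D$ through a Gaussian change of variables $g\mapsto$ (rescaled $g$), exploiting that $N$ and $D$ are the same functional evaluated at different bandwidths. After that, Jensen plus Fubini with the Gaussian identity above should close the argument. I expect this bookkeeping of exponents — ensuring the final Gaussian integral has $c\ge 1/2$ with no leftover normalizer — to be the crux of the proof.
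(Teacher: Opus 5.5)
Your Step 1 is correct and is the paper's reduction. Write $F(u)=\E_{x\sim\mu}\exp(\tfrac34\langle x,u\rangle-\tfrac38\|x\|^2)$ and $G(u)=\E_{x\sim\mu}\exp(\tfrac14\langle x,u\rangle-\tfrac18\|x\|^2)$. Then $e^{\|g\|^2/4}N/D=F(g)/G(g)$, so the claim is exactly $\E\log F(g)\le\E\log G(g)$.

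The gap is Step 2: it reduces the problem to a false statement. Your Cauchy--Schwarz bound is precisely Jensen's inequality $(\E_{\mu_{\operatorname{loc}}}e^{f/2})^2\le\E_{\mu_{\operatorname{loc}}}e^{f}$. In effect it replaces $2\,\mathcal{KL}$ by $\mathcal{KL}$ in the theorem, and that version fails. Take $\mu=\Normal{0}{vI_d}$, so that $\mu_{\operatorname{loc}}=\Normal{\tfrac{v}{4+v}g}{\tfrac{4v}{4+v}I_d}$. A direct Gaussian computation gives
\[
\E\log\E_{x\sim\mu_{\operatorname{loc}}}e^{\langle x,g\rangle-\|x\|^2/2}
=d\Bigl[-\tfrac12\log\tfrac{4+5v}{4+v}+\tfrac{25v}{8(4+5v)}-\tfrac{v}{8(4+v)}\Bigr]=\tfrac{dv}{4}+O(v^2)>0
\]
for small $v$. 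By contrast, the quantity in the theorem equals $d\bigl[-\log\tfrac{4+3v}{4+v}+\tfrac{9v}{4(4+3v)}-\tfrac{v}{4(4+v)}\bigr]=-\tfrac{5dv^2}{32}+O(v^3)$. The statement is therefore sharp to first order in $v$. There is no room for a step that, like Cauchy--Schwarz or a nontrivial H\"older splitting of $N$, loses a variance-type term of order $v$. Your fallback of dominating $D^{-1}$ via $D\ge N$ only gives $N/D\le 1$, hence the useless bound $d/2$.

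So the crux is not the bookkeeping of exponents. It is comparing $\E\log F(g)$ with $\E\log G(g)$ despite the $g$-dependent normalizer, and this must be done on the Step 1 quantity itself. Once that comparison is made nothing remains: no Fubini and no identity $\E_g e^{\|g\|^2/2-c\|x-g\|^2}=(2c)^{-d/2}$ are needed.

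The paper's key observation is that $G$ is the Ornstein--Uhlenbeck semigroup applied to $F$ at time $\log 3$. With $Z\sim\Normal{0}{I_d}$ independent of $g$,
\[
G(u)=\E_Z\,F\Bigl(\tfrac{u}{3}+\tfrac{2\sqrt2}{3}Z\Bigr),
\]
because $\E_Z e^{\frac{\sqrt2}{2}\langle x,Z\rangle}=e^{\|x\|^2/4}$ turns $-\tfrac38\|x\|^2$ into $-\tfrac18\|x\|^2$. Concavity of $\log$ gives $\log G(g)\ge\E_Z\log F(\tfrac{g}{3}+\tfrac{2\sqrt2}{3}Z)$. Since $\tfrac{g}{3}+\tfrac{2\sqrt2}{3}Z\sim\Normal{0}{I_d}$, taking expectations yields $\E\log G(g)\ge\E\log F(g)$.

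Your idea of comparing $N$ and $D$ through a rescaling of $g$ is in this spirit, but it does not close the argument. A pure rescaling changes the law of $g$. The heat-kernel relation $D(u)=3^{d/2}\E_Z N(u+\sqrt{8/3}\,Z)$ only relates $\log N$ under $\Normal{0}{I_d}$ and under $\Normal{0}{\tfrac{11}{3}I_d}$. What keeps the Gaussian law invariant is the combination of two things: absorbing the tilts $e^{3\|u\|^2/8}$ and $e^{\|u\|^2/8}$ into $F$ and $G$, and using contraction plus independent noise rather than contraction alone.
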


Unlike \Cref{prop:simplepb}, whose proof only uses the moment generating function of \(g\) and extends to sub-Gaussian \(g\), the proof of \Cref{thm:localpb}, deferred to \Cref{sec:proofoflocalpb}, critically exploits Gaussianity through an Ornstein-Uhlenbeck semigroup argument.
We now apply \Cref{thm:localpb} in a style resembling the proof of \Cref{cor:ellipsoid}.

\begin{corollary}[Local width of the ellipsoid via the variational approach]
\label{cor:localellipsoid}
Fix $\lambda > 0$. For the ellipsoid $\cE_a$, and with the choice of measure
$
\mu = \Normal{0}{\sum_{i=1}^d \frac{a_i^{2}}{\lambda}\, e_i e_i^\top},
$
it holds that
\[
\E\left[\sup_{x \in \cE_a}\left(\langle x, g\rangle -\|x\|^2/2\right)\right]
\le 2\lambda + \frac{1}{2}\sum_{i = 1}^d\frac{4a_i^2}{a_i^2 + 4\lambda}.
\]
\end{corollary}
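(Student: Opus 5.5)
We would apply \Cref{thm:localpb} with the Gaussian prior $\mu=\Normal{0}{\Sigma}$, $\Sigma=\mathrm{diag}(a_i^2/\lambda)$. The key point is that the localized measure $\mu_{\operatorname{loc}}$ is again Gaussian. Completing the square in $\exp(-\|x-g\|^2/8)\exp(-\tfrac12 x^\top\Sigma^{-1}x)$ gives
\[
\mu_{\operatorname{loc}}=\Normal{m(g)}{S},\qquad S=\bigl(\Sigma^{-1}+\tfrac14 I\bigr)^{-1}=\mathrm{diag}\Bigl(\frac{4a_i^2}{a_i^2+4\lambda}\Bigr),\qquad m(g)=\tfrac14 S g.
\]
The mean depends on $g$, while the covariance does not. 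The term $\sum_i 4a_i^2/(a_i^2+4\lambda)$ in the claimed bound is exactly $\operatorname{tr}S$, which supports this choice.

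For each realization of $g$, let $x^\star\in\cE_a$ be a maximizer of $\langle x,g\rangle-\|x\|^2/2$. We take the test distribution $\rho=\Normal{x^\star}{S}$, which is absolutely continuous with respect to $\mu$. Since $\rho$ and $\mu_{\operatorname{loc}}$ share the covariance $S$,
\[
\mathcal{KL}(\rho\,\|\,\mu_{\operatorname{loc}})=\tfrac12 (x^\star-m(g))^\top S^{-1}(x^\star-m(g)).
\]
The linear-quadratic objective averaged under $\rho$ equals $\langle x^\star,g\rangle-\|x^\star\|^2/2-\tfrac12\operatorname{tr}S$. Inserting this $\rho$ into \Cref{thm:localpb} yields
\[
\E\sup_{x\in\cE_a}\Bigl(\langle x,g\rangle-\tfrac{\|x\|^2}{2}\Bigr)\le \tfrac12\operatorname{tr}S+\E\,(x^\star-m)^\top S^{-1}(x^\star-m).
\]
It then remains to bound the last expectation by $2\lambda$ plus at most a constant multiple of $\operatorname{tr}S$. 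We plan to absorb this constant by adjusting the choice of test measure rather than the stated bound.

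The main obstacle is controlling $(x^\star-m)^\top S^{-1}(x^\star-m)$. Expanding, we have $S^{-1}=\Sigma^{-1}+\tfrac14 I$ and $S^{-1}m=g/4$, so
\[
(x^\star-m)^\top S^{-1}(x^\star-m)=x^{\star\top}\Sigma^{-1}x^\star+\tfrac14\|x^\star\|^2-\tfrac12\langle x^\star,g\rangle+\tfrac1{16}g^\top S g .
\]
The first term equals $\lambda\sum_i x_i^{\star2}/a_i^2\le\lambda$ because $x^\star\in\cE_a$. In expectation, $\tfrac1{16}g^\top Sg$ equals $\tfrac1{16}\operatorname{tr}S$. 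The middle terms combine to $-\tfrac12\bigl(\langle x^\star,g\rangle-\tfrac12\|x^\star\|^2\bigr)$, which is nonpositive whenever the supremum is nonnegative, and it is nonnegative since $0\in\cE_a$. More importantly, this negative multiple of the very quantity being bounded can be moved to the left-hand side.

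Moving it gives $\tfrac32\,\E\sup\le \lambda+\tfrac12\operatorname{tr}S+\tfrac1{16}\operatorname{tr}S+\text{(lower-order)}$. Up to bookkeeping, this yields the claimed $2\lambda+\tfrac12\operatorname{tr}S$. If the constants do not match exactly, we would rescale the covariance of $\rho$ (replacing $S$ by $tS$ and optimizing over $t$), which changes only the trace terms.
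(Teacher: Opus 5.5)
Your proposal is correct, and it takes a different route from the paper.

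\textbf{The paper's route.} The paper uses $\rho=\mu_{\operatorname{loc}}$ itself, so the KL term in \Cref{thm:localpb} vanishes. It then relaxes the constraint: on $\cE_a$ one has $-2\lambda\le-2\lambda\sum_i x_i^2/a_i^2$. The unconstrained maximizer of $\langle x,g\rangle-\|x\|^2/2-2\lambda\sum_i x_i^2/a_i^2$ is exactly the mean $m(g)$ of $\mu_{\operatorname{loc}}$. Hence the penalized supremum is at most $\E_{x'\sim\mu_{\operatorname{loc}}}[\langle x',g\rangle-\|x'\|^2/2]+\tfrac12\operatorname{tr}S$, which gives $2\lambda+\tfrac12\operatorname{tr}S$ in a few lines.

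\textbf{Your route.} You instead center $\rho$ at the constrained maximizer $x^\star=\Pi_{\cE_a}(g)$. This is a continuous function of $g$, so it is a legitimate $g$-dependent choice. You then pay the KL cost and exploit the self-bounding identity: that cost contains $-\tfrac12F$, where $F=\sup_{x\in\cE_a}(\langle x,g\rangle-\|x\|^2/2)$.

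\textbf{Finish the arithmetic.} Your bookkeeping is exact: there are no lower-order terms, and no rescaling of the covariance of $\rho$ is needed. You have
\[
\E F\le\tfrac12\operatorname{tr}S+\lambda-\tfrac12\E F+\tfrac1{16}\operatorname{tr}S .
\]
Since $0\le F\le\|g\|^2/2$, moving $\tfrac12\E F$ across is legitimate. This gives $\E F\le\tfrac23\lambda+\tfrac38\operatorname{tr}S$, which is stronger than the stated $2\lambda+\tfrac12\operatorname{tr}S$. You should state this conclusion rather than hedge.

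\textbf{The absorption step is essential.} Your earlier sentence, that it ``remains to bound'' $\E(x^\star-m)^\top S^{-1}(x^\star-m)$ by $2\lambda$ plus a multiple of $\operatorname{tr}S$, misdescribes what closes the argument. Simply discarding $-\tfrac12F$ using $F\ge0$ would only give $\lambda+\tfrac9{16}\operatorname{tr}S$. That does not imply the claim when $\lambda<\tfrac1{16}\operatorname{tr}S$.

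\textbf{Comparison.} The paper's argument is shorter and yields a pointwise comparison before taking expectations, via an explicit penalized problem. Yours does not need the coincidence between the penalized maximizer and the mean of $\mu_{\operatorname{loc}}$, and it produces better constants.
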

Again, optimizing over $\lambda>0$ in~\Cref{cor:localellipsoid} yields an optimal bound directly for the quantity
$\E \, \sup_{x \in \cE_a}(\langle x, g\rangle -\|x\|^2/2)$. Moreover, when combined with \Cref{prop:equivalentfixedpoints} it also sharply characterizes the fixed points as calculated in \cite{mendelson2003performance} and \cite[Proposition~3.3]{koltchinskii2011oracle}.

\subsection{Remaining proofs of \Cref{sec:info-and-stat}}
\subsubsection{Proof of ~\Cref{thm:upper-bound-minimax}}
\label{sec:proof-of-prop-upper-minimax}

To begin with, we make use of a statistical interpretation of the radius $r(\sigma)$, which originally appeared in the work of Chatterjee~\cite{Cha14}.
It relates the radius to the risk of the least squares estimate (LSE), or equivalently, the Euclidean projection onto $T$, which is given by
\[
\Pi_T(y) \defn \argmin_{\vartheta \in T} 
\|y - \vartheta\|_2^2.
\]

To control $r(\sigma)$ we first need the following upper bound on the worst-case variance of the LSE, which is proved in~\Cref{sec:proof-of-variance-bound} via a minor modification of the ideas in the paper~\cite{KurEtal23}.

\begin{lemma}[Worst-case variance of the LSE]
\label{lem:variance-bound-for-erm}
For any nonempty, closed convex set $T \subset \R^\dimension$, it holds that 
\begin{equation}\label{ineq:worst-case-variance-bound}
\sup_{\theta \in T} \E_{Y \sim \Normal{\theta}{\sigma^2 I_\dimension}}\Big[
\big\|\Pi_T(Y) - \E[\Pi_T(Y)]\big\|_2^2\Big]
\lesssim  \eps_\star^2(\sigma),
\end{equation}
for any $\sigma > 0$.
\end{lemma}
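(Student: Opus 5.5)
The plan is to follow the strategy of~\cite{KurEtal23}. We bound the variance by the expected squared distance between two independent copies of the LSE, localize this distance to a ball of radius comparable to $\eps_\star(\sigma)$, and use \Cref{pr:metric-characterization-of-stat-rate} to control the local entropy at that scale. We fix $\theta\in T$ and write $Y=\theta+\sigma g$ and $Y'=\theta+\sigma g'$ with $g,g'$ independent. By the usual symmetrization identity,
\[
\E\big\|\Pi_T(Y)-\E\Pi_T(Y)\big\|_2^2=\tfrac12\,\E\big\|\Pi_T(Y)-\Pi_T(Y')\big\|_2^2 .
\]
We represent the projection variationally, $\Pi_T(Y)=\argmax_{t\in T}\Phi_g(t)$ with $\Phi_g(t)=\sigma\langle g,t-\theta\rangle-\tfrac12\|t-\theta\|_2^2$. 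The function $\Phi_g$ is $1$-strongly concave, so for every $t\in T$,
\[
\tfrac12\|t-\Pi_T(Y)\|_2^2\le \Phi_g(\Pi_T(Y))-\Phi_g(t).
\]
Setting $V(g)=\max_{t\in T}\Phi_g(t)$ and taking $t=\Pi_T(Y')$ gives
\[
\tfrac12\|\Pi_T(Y)-\Pi_T(Y')\|_2^2\le V(g)-V(g')+\sigma\langle g'-g,\Pi_T(Y')-\theta\rangle .
\]
Thus the squared distance is controlled by the fluctuation of $V$ plus a cross term.

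The fluctuation term is handled by Gaussian concentration. The map $g\mapsto V(g)$ is Lipschitz with constant $\sigma\sup\|\Pi_T(Y)-\theta\|$, and its gradient is $\sigma(\Pi_T(Y)-\theta)$ by the envelope argument of \Cref{lem:smoothed-support-function}. The Gaussian Poincaré inequality then gives
\[
\Var V\le \sigma^2\,\E\|\Pi_T(Y)-\theta\|_2^2 .
\]
This is too crude if used alone, since it involves the risk at $\theta$ rather than $\eps_\star$. So I will not use it in isolation but combine it with localization.

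The key step is to show that, with overwhelming probability, $\Pi_T(Y)$ and $\Pi_T(Y')$ both lie in a common ball $B(m,C\eps)$ with $\eps\asymp\eps_\star(\sigma)$ and a deterministic center $m$ depending on $\theta$. To find this ball, I take $\eps\ge\eps_\star(\sigma)$ at the fixed point of \Cref{pr:metric-characterization-of-stat-rate}, so that $\log M^{\rm loc}_T(\eps)\lesssim\eps^2/\sigma^2$. I then cover $T\cap B(\theta,R)$, peeling in dyadic shells $R=2^k\eps$, by a chain of local $\eps$-nets obtained from \Cref{lem:dyadic-local-entropy-ineq}.

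For each net point $x$, the quantity $\max_{t\in T\cap B(x,2\eps)}\Phi_g(t)$ is a supremum of a Gaussian process whose oscillation over the cell is at most $\sigma\eps\sqrt{\log M^{\rm loc}}\lesssim\eps^2$ after a union bound. Since the number of cells is $e^{O(\eps^2/\sigma^2)}$ per scale and the Gaussian tails are at level $\eps^2$, a union bound shows the following. The near-maximizer set $\{t:\Phi_g(t)\ge V(g)-c\eps^2\}$ is, with probability $1-e^{-c\eps^2/\sigma^2}$, confined to the cell around the deterministic maximizer $m=\argmax_t\E\,\Phi_g(t)$ localized at scale $\eps$. On that event, strong concavity gives $\|\Pi_T(Y)-\Pi_T(Y')\|\lesssim\eps$. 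On the complementary event, I use the trivial bound $\|\Pi_T(Y)-\Pi_T(Y')\|\le\sigma\|g-g'\|$ together with Cauchy--Schwarz. The contribution is then $\sigma^2 d\,e^{-c\eps^2/\sigma^2}$, which I will need to absorb. Here I would use $\eps_\star(\sigma)\gtrsim\min\{\sigma\sqrt d,\rad(T)\}$-type comparisons, or restrict the peeling to the bounded region.

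I expect this localization to be the main obstacle. In particular, it is delicate to show that the maximizer of $\Phi_g$ does not jump between far-apart local cells with comparable values. This is exactly where the fluctuation of $V$ must be shown to be $\lesssim\eps^2$ rather than $\sigma\,\rad(T)$. If the direct union bound fails, I would first try the alternative from~\cite{KurEtal23}: compare $\Pi_T(Y)$ with the projection of $Y$ onto the convex hull of a single local cell. The excess objective is bounded via the local Dudley-free estimate $\sigma\eps\sqrt{\log M^{\rm loc}_T(\eps)}\lesssim\eps^2$, which avoids controlling entropy at scales below $\eps$. Taking the supremum over $\theta\in T$ then gives~\eqref{ineq:worst-case-variance-bound}.
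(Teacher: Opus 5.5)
Your preliminary steps (the symmetrization identity, the strong-concavity inequality for $\Phi_g$, and Poincar\'e for $V$) are correct. The step that carries all the weight, however, cannot work as stated.

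\textbf{The localization claim is false in general.} Since $\E\,\Phi_g(t)=-\tfrac12\|t-\theta\|_2^2$, your center is $m=\theta$. So the claim says that $\|\Pi_T(Y)-\theta\|_2\lesssim\eps_\star(\sigma)$ with high probability for every $\theta\in T$, i.e.\ that the LSE is minimax rate-optimal. This fails in general: \cite{Cha14} exhibits a convex $T$ on which the LSE is rate-suboptimal, and its error concentrates around Chatterjee's fixed point, so it is not small even with high probability. The lemma holds precisely because it bounds the variance rather than the risk. Any proof must therefore be blind to the bias $\E\Pi_T(Y)-\theta$.

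\textbf{The mechanism also fails on its own terms.} The oscillation of $t\mapsto\sigma\langle g,t-x\rangle$ over a cell $T\cap B(x,2\eps)$ has mean $\sigma\,w\bigl(T\cap B(x,2\eps)-x\bigr)$. For this quantity, $\sigma\eps\sqrt{\log M^{\rm loc}_T(\eps)}$ is a Sudakov-type \emph{lower} bound, not an upper bound. Your peeling/union-bound sketch is essentially the proof of \Cref{prop:erm-upper}, which is valid for least squares over a finite $\eps$-net, where the supremum runs over finitely many points. For $\Pi_T$ over all of $T$, controlling the within-cell fluctuation needs entropy at every scale below $\eps$. That is the local-width quantity governing the LSE's risk, and it can exceed $\eps_\star(\sigma)$ by a large factor. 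Your convex-hull fallback rests on the same inequality. Separately, the tail term $\sigma^2 d\,e^{-c\eps^2/\sigma^2}$ is not absorbable when $\eps_\star(\sigma)\asymp\sigma$ and $d$ is large, e.g.\ for a long segment.

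\textbf{The missing idea} (from \cite{KurEtal23}, and used in the paper) is to avoid suprema of $\Phi_g$ altogether and use only that $\Pi_T$ is nonexpansive.
Let $\mu=\E\Pi_T(Y)$, let $\delta^2$ be the variance, and set $h_\eta(g)=\|\Pi_T(\theta+\sigma g)-\eta\|_2$. For fixed $\eta$, $h_\eta$ is $\sigma$-Lipschitz in $g$. Hence $\Var(h_\eta)\le\sigma^2$, and $h_\eta\ge\E h_\eta-\sigma\sqrt{2\log(2N)}$ with probability at least $1-1/(2N)$.
Take a maximal $(\delta/\sqrt8)$-packing of $T\cap B(\mu,2\delta)$, of size $N$. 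By Markov, $\Pi_T(Y)\in B(\mu,2\delta)$ with probability at least $3/4$. By pigeonhole, some packing point $\eta^\star$ has $h_{\eta^\star}\le\delta/\sqrt8$ with probability at least $3/(4N)$. Intersecting with the deviation event gives $\E h_{\eta^\star}\le\delta/\sqrt8+\sigma\sqrt{2\log(2N)}$.
Since $\delta^2=\min_\eta\E h_\eta^2\le(\E h_{\eta^\star})^2+\sigma^2$, this is impossible unless $\log N\ge\delta^2/(16\sigma^2)$, once $\delta\gtrsim\sigma$. So the variance itself forces large local packing entropy at its own scale. Fano (\Cref{lem:equivalent-versions-of-the-minimax-rate}) then gives $\delta\lesssim\eps_\star(\sigma)$. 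The regime $\delta\lesssim\sigma$ follows from \Cref{lem:trivial-lower-bound} together with $\delta\le\diam(T)$.
At no point is $\Pi_T(Y)$ located relative to $\theta$, which is why the bias never enters.
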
 

Now, to bound the variational radii $r(\sigma)$, we know from Chatterjee's work~\cite{Cha14} that there is $c_1 > 1$ sufficiently large that if $r(\sigma) \geq c_1 \sigma$, then $r(\sigma)$ controls the error of the LSE at $\theta = 0$:
\begin{equation}    \label{eq:rsigmatoprojection}
    r(\sigma)^2 \leq c_2 \E_{g \sim \Normal{0}{\sigma^2 I_\dimension}} \Big[\big\|\Pi_T(g) - \E[\Pi_T(g)]\big\|_2^2\Big] \leq 
    c_3 \eps_\star^2(\sigma).
    \end{equation}
    Above, the final inequality follows from \Cref{lem:variance-bound-for-erm}; $c_2, c_3 > 0$ are universal constants. 
    On the other hand, if  $r(\sigma) \leq c_1 \sigma$, then since $T$ contains the origin, we have $r(\sigma) \leq \rad(T)$, and it follows that $r(\sigma) \leq c_1 (\twomin{\sigma}{\rad(T)}) \leq c_4\eps_\star(\sigma)$; see \Cref{lem:trivial-lower-bound} in \Cref{app:GSM}. Hence, $r(\sigma) \leq C \eps_\star(\sigma)$ with $C \defn \twomax{\sqrt{c_3}}{c_4}$, and the first inequality in~\cref{eqn:results-via-minimax-rates} follows. For the second inequality, we simply use~\Cref{lem:variance-bound-for-erm}, and note that $\E \Pi_T(Y) = 0$ when $Y \sim \Normal{0}{\sigma^2 I_d}$ by central symmetry. The claim follows. \qed

\subsubsection{Proof of \Cref{lem:variance-bound-for-erm}}
\label{sec:proof-of-variance-bound}

The argument closely follows~\cite[Theorem 1]{KurEtal23}. Let $g \sim \Normal{0}{I_\dimension}$, and define
\[
\hat \mu(g) \defn \Pi_T(\theta + \sigma g), 
\quad 
h_{\eta}(g) \defn 
\|\hat \mu(g) - \eta\|_2, 
\quad 
\mbox{and} \quad 
\mu \defn \E \hat \mu(g).
\]
The variance is $\delta^2 \defn \E \|\hat \mu(g) - \mu\|_2^2$. 
Our goal is to establish:
\begin{equation}\label{ineq:desired-bound-gil}
\delta^2 \leq 128991 \, \eps^2_\star(\sigma).
\end{equation}
Let $\cM_\delta$ denote 
a maximal $(\delta/\sqrt{8})$-packing of $T \cap B(\mu, 2\delta)$. 
First, suppose that 
$\delta^2 \leq 4(4 \log(2) + 1) \sigma^2$. In this case, \Cref{lem:trivial-lower-bound} yields
\begin{subequations}
\begin{equation}\label{eqn:case-delta-small}
\delta^2 \leq 4(4 \log(2) + 1)  \min\{\sigma^2, \diam(T)^2\} \leq 242 \, \eps_\star^2(\sigma).
\end{equation}
Now suppose that $\delta^2 > 4(4 \log(2) + 1) \sigma^2$. 
For the sake of contradiction, suppose that 
$\sigma^2 \log |\cM_\delta| < \tfrac{\delta^2}{16}$. 
Note that for any $\eta \in \cM_\delta$, the map $h_\eta$ is $\sigma$-Lipschitz, so 
\[
\P\Big\{ \E h_\eta(g) - h_\eta(g) >   \sigma \sqrt{2 \log(2 |\GenPackSet{\delta}|)} \Big\} \leq \frac{1}{2 |\GenPackSet{\delta}|}.
\]
Markov's inequality yields
\[
\P\Big\{ \hat \mu(g) \in T \cap (\mu + 2\delta B^d_2)\Big\} \geq 1 - \P\Big\{\|\hat \mu(g) - \mu\|_2^2 > 4 \delta^2\Big\} \geq \frac{3}{4}.
\]
Therefore, for some $\etastar \in \cM_\delta$, we have 
that $\hat \mu(g) \in \etastar + \tfrac{\delta}{\sqrt{8}} B^d_2$ with probability at least $\tfrac{3}{4|\GenPackSet{\delta}|}$.
A union bound then gives us that 
\[
\P\Big\{ \E h_{\etastar}(g) -h_{\etastar}(g) \leq  \sigma \sqrt{2 \log(2 |\GenPackSet{\delta}|)},~ 
\mbox{and}~h_{\etastar}(g) 
\leq \frac{\delta}{\sqrt{8}} \Big\} 
\geq 
\frac{1}{4 |\GenPackSet{\delta}|} > 0.
\]
Hence we have 
\[
\Big(\E h_{\etastar}(g) \Big)^2 
\leq \Big( \sigma \sqrt{2 \log(2 |\GenPackSet{\delta}|)} + \frac{\delta}{\sqrt{8}}\Big)^2 
\leq \frac{\delta^2}{4} + 
(4 \log 2) \sigma^2 
+ 4 \sigma^2 \log |\GenPackSet{\delta}|,
\] 
where we used the scalar inequality $(x+y)^2 \leq 2(x^2 + y^2)$. We can now derive a contradiction:
\[
\delta^2 = \inf_{\eta \in \R^\dimension} 
\E \big[h_{\eta}(g)^2\big] \leq
(\E h_\etastar(g))^2 + \Var(h_\etastar) \leq 
\frac{\delta^2}{4} + 
(4\log 2 +1)\sigma^2 
+ 
4 \sigma^2 \log |\GenPackSet{\delta}| \leq \frac{3\delta^2}{4} < \delta^2,
\]
where we used the Gaussian Poincaré inequality~\cite[Proposition 4.1.1]{BakGenLed14} to conclude 
$\Var(h_\etastar) \leq \sigma^2$. 
Consequently, we have 
\[
\frac{\delta^2}{16 \sigma^2} \leq 
\log |\GenPackSet{\delta}| = 
\log M\Big(T \cap (\mu + 2\delta B^d_2), \frac{\delta}{\sqrt{8}} B^d_2) 
\leq 
\sup_{\mu \in T}
\log M\Big(T \cap (\mu + 2\delta B^d_2), \frac{\delta}{\sqrt{8}} B^d_2\Big).
\]
Hence, by \Cref{lem:equivalent-versions-of-the-minimax-rate} with $c_1 = 16, c_2 = \sqrt{8}, c_3 = 2$, we have 
\begin{equation}\label{eqn:case-delta-large}
\delta^2 \leq 128991 \, \eps_\star^2(\sigma).
\end{equation}
\end{subequations}
Combining the cases~\eqref{eqn:case-delta-small} and~\eqref{eqn:case-delta-large} completes the proof and establishes the desired inequality~\eqref{ineq:desired-bound-gil}.

\subsubsection{Proof of \Cref{prop:entropy-integral-comparison}}
\label{sec:proof-of-entropy-characterization}

\paragraph{Proof of equation~\eqref{eqn:relations-for-series}:} We use the shorthand notation
\begin{equation}
\begin{gathered}
h(\eps) \defn \log M^{\rm loc}_T(\eps), 
\quad 
f(\eps) \defn \frac{\eps}{\sqrt{h(\eps)}} \ind{h(\eps) \neq 0},\\
\eps(\sigma) \defn \sup\{\eps > 0, h(\eps) \neq 0 : f(\eps) \leq \sigma \}, 
\quad \mbox{and,} \quad 
\SeriesFixedPoint(\sigma) \defn 
\int_\sigma^\infty \frac{\eps(\nu)^2}{\nu^2} \, \ud \nu.
\end{gathered}
\end{equation}
Throughout we assume that for some $\eps > 0$, $h(\eps) > 0$; otherwise, the claim is trivial. For any $\tau > 0$ and $\delta >0$, we define the following truncated integrals, 
\[
I_{\tau}^{\geq}(\delta) 
\defn 
\int_{\tau}^\infty 
\ind{\eps(\nu) \geq \delta} 
\, \frac{1}{\nu^2} \, \ud \nu, 
\quad \mbox{and} \quad 
I_\tau^{>}(\delta) 
\defn 
\int_{\tau}^\infty 
\ind{\eps(\nu) > \delta} 
\, \frac{1}{\nu^2} \, \ud \nu. 
\]
These integrals satisfy the following simple relations.
\begin{lemma} 
\label{lem:truncated-integral-bounds}
For every $\tau > 0$ and every $\delta > 0$, we have 
\[
I^{\geq}_\tau(\delta) 
\geq \min\Big\{
\frac{1}{\tau}, \frac{\sqrt{h(\delta)}}{\delta}\Big\} 
\geq 
I^{>}_\tau(\delta).
\]
\end{lemma}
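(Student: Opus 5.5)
The plan is to reduce both inequalities to the structure of the level set $\{\nu \ge \tau : \eps(\nu) \ge \delta\}$. This set is an interval $[\max\{\tau,\nu_\delta\},\infty)$, up to whether its left endpoint is included. Once it is identified, each integral is just the reciprocal of that endpoint.

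First I will record that $\nu \mapsto \eps(\nu)$ is nondecreasing. This holds because $\eps(\nu)$ is the supremum of the set $\{\eps: h(\eps)\neq 0,\ f(\eps)\le\nu\}$, and that set grows with $\nu$. Consequently $\{\nu : \eps(\nu)\ge\delta\}$ and $\{\nu:\eps(\nu)>\delta\}$ are both up-rays. Set $\nu_\delta \defn \inf\{\nu>0 : \eps(\nu)\ge\delta\}$. Then
\[
I^{\geq}_\tau(\delta) \ge \int_{\max\{\tau,\nu_\delta\}}^\infty \nu^{-2}\,\ud\nu = \min\{1/\tau, 1/\nu_\delta\},
\]
where the endpoint is a single point and has measure zero. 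Similarly, with $\nu'_\delta \defn \inf\{\nu:\eps(\nu)>\delta\}$, we have $I^{>}_\tau(\delta) = \min\{1/\tau,1/\nu'_\delta\}$. It therefore suffices to show
\[
\nu_\delta \le \frac{\delta}{\sqrt{h(\delta)}} \le \nu'_\delta,
\]
with the conventions $\delta/\sqrt{h(\delta)}=\infty$ when $h(\delta)=0$ and $1/\infty = 0$.

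For the left inequality, assume $h(\delta)>0$; otherwise $\sqrt{h(\delta)}/\delta = 0$ and the bound $I^{\ge}_\tau(\delta)\ge 0$ is trivial. Then $f(\delta)=\delta/\sqrt{h(\delta)}$, so for $\nu = f(\delta)$ the point $\eps=\delta$ is admissible in the definition of $\eps(\nu)$. Hence $\eps(\nu)\ge\delta$, which gives $\nu_\delta\le f(\delta)$.

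For the right inequality, take any $\nu$ with $\eps(\nu)>\delta$. By the definition of the supremum there is an $\eps>\delta$ with $h(\eps)\neq0$ and $\eps/\sqrt{h(\eps)}\le\nu$. The key monotonicity input is that $h$ is nonincreasing. This holds because local packing numbers of a convex set are monotone in the scale, since the paper notes that for convex $T$ the supremum over $\delta\ge\eps$ in $\LocEnt_T$ is attained at $\delta = \eps$. So $h(\eps)\le h(\delta)$, and in particular $h(\delta)>0$. Then
\[
\nu \ge \frac{\eps}{\sqrt{h(\eps)}} \ge \frac{\eps}{\sqrt{h(\delta)}} > \frac{\delta}{\sqrt{h(\delta)}}.
\]
Taking the infimum over such $\nu$ yields $\nu'_\delta\ge \delta/\sqrt{h(\delta)}$. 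If $h(\delta)=0$, the same argument shows that no such $\nu$ exists, so $I^>_\tau(\delta)=0$ and the bound holds.

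The main point needing care is justifying that $h$ is nonincreasing for the definition \eqref{def:local-packing-entropy-convex}, where the neighbourhood radius $2\eps$ also shrinks. I would invoke the paper's remark equating $M^{\rm loc}_T$ with $\LocEnt_T$, which is manifestly nonincreasing by its $\sup_{\delta\ge\eps}$. The remaining work is routine bookkeeping of endpoints and the degenerate case $h=0$.
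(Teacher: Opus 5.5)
Your proposal is correct and follows essentially the same route as the paper. Both establish the inclusions $\{\nu \ge \delta/\sqrt{h(\delta)}\} \subset \{\nu : \eps(\nu)\ge\delta\}$ (because $\delta$ is admissible in the definition of $\eps(\nu)$) and $\{\nu:\eps(\nu)>\delta\}\subset\{\nu > \delta/\sqrt{h(\delta)}\}$ (using that $h$ is nonincreasing), then integrate $\nu^{-2}$ and treat $h(\delta)=0$ separately. Your detour through the monotonicity of $\nu\mapsto\eps(\nu)$ and the thresholds $\nu_\delta,\nu'_\delta$ is harmless but unnecessary, since the paper reads both inclusions directly off the definition.
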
 
The elementary identity 
\(\eps^2 = \int_0^\eps 2\delta \, \ud \delta\) and Fubini's theorem give
\begin{equation}\label{eqn:fubini-identity}
\int_{\tau}^\infty \frac{\eps^2(\nu)}{\nu^2} \, \ud \nu = \int_0^{\diam(T)} 2\delta I_\tau^{\geq}(\delta) \, \ud \delta = 
\int_0^{\diam(T)} 2\delta I_\tau^{>}(\delta) \, \ud \delta,\quad \mbox{for all}~\tau > 0.
\end{equation}
Combining the identities~\eqref{eqn:fubini-identity} with \Cref{lem:truncated-integral-bounds} gives for each $\tau > 0$, 
\[
\cS(\tau) = \half \int_{\tau}^\infty \frac{\eps^2(\nu)}{\nu^2} \, \ud \nu = 
\int_0^{\diam(T)} 
\min\Big\{\frac{\delta }{\tau}, \sqrt{h(\delta)}\Big\} \, \ud \delta 
= 
 \cJ_{\eps(\tau)}^{\rm loc}(T) + 
 \frac{\eps^2(\tau)}{2\tau}.
\]
By~\Cref{pr:metric-characterization-of-stat-rate}, 
there is a constant $c > 0$ such that $c^{-1} \, \eps(\sigma) \leq \eps_\star(\sigma) \leq c \eps(\sigma)$ for each $\sigma > 0$.

\paragraph{Proof of \Cref{lem:truncated-integral-bounds}:}

If $h(\delta) \neq 0$ and $f(\delta) > \nu$, then 
$\eps(\nu) \leq \delta$. Therefore, taking the contrapositive, we have
\[
\{\nu \mid \eps(\nu) > \delta\}
\subset 
\{\nu \mid  \nu > \delta/\sqrt{h(\delta)}\}.
\]
On the other hand, if $h(\delta) = 0$, then since $h$ is nonincreasing, we have $\eps(\nu) \leq \delta$ for all $\nu > 0$, and thus 
\[
\{\nu \mid  \nu > \delta/\sqrt{h(\delta)}\}
= \emptyset.
\]
Therefore
\begin{subequations}
\begin{equation}
\label{ineq:trunc-integral-strict}
I^{>}_\tau(\delta) 
\leq \ind{h(\delta) \neq 0} 
\int_{\nu > \max\{\tau, \delta/\sqrt{h(\delta)}\}}
\frac{1}{\nu^2} \, \ud \nu = 
\min\Big\{\frac{1}{\tau}, \frac{\sqrt{h(\delta)}}{\delta}\Big\}.
\end{equation}
Similarly, if $h(\delta) \neq 0$, and $f(\delta) \leq \nu$, then $\eps(\nu) \geq \delta$. 
Therefore, 
\[
\{\nu \mid \eps(\nu) \geq \delta\}
\supset 
\{\nu \mid  \nu \geq \delta/\sqrt{h(\delta)}\}.
\]
This implies 
\begin{equation}
\label{ineq:trunc-integral-geq}
I^{\geq}_\tau(\delta) 
\geq \ind{h(\delta) \neq 0} 
\int_{\nu \geq \max\{\tau, \delta/\sqrt{h(\delta)}\}}
\frac{1}{\nu^2} \,\ud \nu= 
\min\Big\{\frac{1}{\tau}, \frac{\sqrt{h(\delta)}}{\delta}\Big\}.
\end{equation}
\end{subequations}
Combining inequalities~\eqref{ineq:trunc-integral-strict} and~\eqref{ineq:trunc-integral-geq} yields the claim. \qed

\subsubsection{Proof of \Cref{thm:localpb}}
\label{sec:proofoflocalpb}
Let $\beta \defn 1/8$. First, we verify the identity
\[
\mathcal{KL}(\rho\,\|\,\mu_{\operatorname{loc}})
=\mathcal{KL}(\rho\,\|\,\mu)
+\beta \E_{x\sim \rho}\bigl(-2\langle g,x\rangle+\|x\|^2\bigr)
+\log\E_{x\sim \mu}\exp\bigl(-\beta(-2\langle g,x\rangle+\|x\|^2)\bigr).
\]
Denote
\[
\Delta' \defn \log\E_{x\sim \mu}\exp\bigl(-\beta(-2\langle g,x\rangle+\|x\|^2)\bigr).
\]
Note that $\Delta'$ does not depend on $\rho$. Therefore,
\begin{align}
&\E\left[\sup_{\rho \ll \mu}\left(\E_{x\sim \rho}\left[\langle x,g\rangle-\|x\|^2/2\right]-2\mathcal{KL}(\rho\,\|\,\mu_{\operatorname{loc}})\right)\right] \\
&\quad = \E\left[\sup_{\rho \ll \mu}\left(\E_{x\sim \rho}\left[\langle x,g\rangle-\|x\|^2/2\right]-2\mathcal{KL}(\rho\,\|\,\mu)
- 2\beta\E_{x\sim \rho}\left[-2\langle g, x \rangle + \|x\|^2\right]\right)\right] - 2\E\Delta' \\
&\quad = 2\E\left[\sup_{\rho \ll \mu}\left(\frac{1 + 4\beta}{2}\E_{x\sim \rho}\left[\langle x,g\rangle-\|x\|^2/2\right]-\mathcal{KL}(\rho\,\|\,\mu) \right)\right] - 2\E\Delta' \\
&\quad =
2\E\left[\log\left(\E_{x\sim \mu}\exp\Bigl(\tfrac{1 + 4\beta}{2}\bigl(\langle g,x\rangle-\|x\|^2/2\bigr)\Bigr)\right)\right] - 2\E\Delta',
\end{align}
where in the last line we used the Donsker-Varadhan variational formula.
Substituting $\beta=1/8$ and the definition of $\Delta'$ gives
\begin{equation}\label{eq:localpb-ratio}
\E\Bigl[\sup_{\rho\ll \mu}\Bigl(\E_{x'\sim \rho}\bigl[\langle x',g\rangle-\|x'\|^2/2\bigr]-2\,\mathcal{KL}(\rho\,\|\,\mu_{\operatorname{loc}})\Bigr)\Bigr]
=
2\E\log\left(\frac{\E_{x\sim \mu}\exp\left(\tfrac{3}{4} \, \langle x, g\rangle - \tfrac{3}{8}\|x\|^2\right)}{\E_{x\sim \mu}\exp\left(\tfrac{1}{4} \, \langle x, g\rangle - \tfrac{1}{8}\|x\|^2\right)}\right).
\end{equation}

By leveraging Gaussianity, we now show that the right-hand side is less than $0$. 
Recall the Ornstein-Uhlenbeck semigroup consists of the operators $\{P_t\}_{t \geq 0}$, defined for integrable functions $f$ by
\[
(P_t f)(u)\defn \E_{Z}\, f\bigl(e^{-t}u+\sqrt{1-e^{-2t}}\,Z\bigr),\; \textrm{where}\;
Z\sim\Normal{0}{I_d}.
\]
We define
\[
F(u)\defn \E_{x\sim\mu}\exp\Bigl(\tfrac34\langle x,u\rangle-\tfrac38\|x\|^2\Bigr),
\qquad
G(u)\defn \E_{x\sim\mu}\exp\Bigl(\tfrac14\langle x,u\rangle-\tfrac18\|x\|^2\Bigr).
\]
Using the explicit form of the  Gaussian moment generating function, it is straightforward to check that $G = P_{\log 3} F$. 
The invariance of the Gaussian under the Ornstein-Uhlenbeck semigroup yields
\[
\E\log G(g)
=\E\log\bigl(P_{\log(3)}F(g)\bigr)
\ge \E\bigl[P_{\log(3)}(\log F)(g)\bigr]
= \E\log F(g),
\]
and thus~\eqref{eq:localpb-ratio} is nonpositive, as needed.
Above, the inequality arose due to the concavity of the logarithm. \qed

\subsubsection{Proof of \Cref{cor:ellipsoid}}

Fix $\lambda > 0$, and choose
$
\mu = \Normal{0}{\sum_{i=1}^d \frac{a_i^{2}}{\lambda}\, e_i e_i^\top }.
$
Set
$
\alpha_i = \frac{a_i^2}{a_i^2+\lambda},
$
for $i \in [d]$.
Conditioning on $g$, choose
$
\widetilde{\rho}
=
\Normal{
\sum_{i=1}^d \alpha_i g_i e_i}{
\sum_{i=1}^d \alpha_i\, e_i e_i^\top}
$. Under $\widetilde{\rho}$, we have
\begin{align}
\E_{x' \sim \widetilde{\rho}}
\left[\langle x', g\rangle - \frac{\|x'\|^2}{2}\right]
&=
\sum_{i=1}^d
\left(
\alpha_i g_i^2
-
\frac{1}{2}\E_{x' \sim \widetilde{\rho}}[(x_i')^2]
\right)
\\
&=
\sum_{i=1}^d
\left(
\alpha_i g_i^2
-
\frac{1}{2}\big(\alpha_i + \alpha_i^2 g_i^2\big)
\right).
\label{eq:ellipsoid-posterior-energy}
\end{align}
Next, using the standard formula for the relative entropy between Gaussians, we obtain
\begin{align}
\mathcal{KL}(\widetilde{\rho}\,\|\,\mu)
&=
\frac{1}{2}\sum_{i=1}^d
\left[
\frac{\lambda}{a_i^{2}+\lambda}
+ \frac{\lambda a_i^{2}}{(a_i^{2}+\lambda)^{2}}\, g_i^{2}
- 1
+ \log\left(1+\frac{a_i^{2}}{\lambda}\right)
\right]
\\
&=
\frac{1}{2}\sum_{i=1}^d
\left[
-\alpha_i + \alpha_i(1-\alpha_i)g_i^2
+ \log\left(1+\frac{a_i^{2}}{\lambda}\right)
\right].
\label{eq:ellipsoid-kl}
\end{align}
Subtracting \eqref{eq:ellipsoid-kl} from \eqref{eq:ellipsoid-posterior-energy}, we find
\begin{equation}
\label{eq:ellipsoid-gibbs-value}
\E_{x' \sim \widetilde{\rho}}
\left[\langle x', g\rangle - \frac{\|x'\|^2}{2}\right]
-
\mathcal{KL}(\widetilde{\rho}\,\|\,\mu)
=
\frac{1}{2}\sum_{i=1}^d \alpha_i g_i^2
-
\frac{1}{2}\sum_{i=1}^d
\log\left(1+\frac{a_i^{2}}{\lambda}\right).
\end{equation}

We now compare the supremum over $\cE_a$ with the corresponding penalized quadratic problem on $\R^d$. Since $x \in \cE_a$ implies $\sum_{i=1}^d \frac{x_i^2}{a_i^2} \le 1$, we have
\begin{align}
\sup_{x \in \cE_a}\left(\langle x, g\rangle - \frac{\|x\|^2}{2} - \frac{\lambda}{2}\right)
&\le
\sup_{x \in \cE_a}\left(\langle x, g\rangle - \frac{\|x\|^2}{2} - \frac{\lambda}{2}\sum_{i=1}^d\frac{x_i^2}{a_i^2}\right)
\\
&\le
\sup_{x \in \R^d}\left(\langle x, g\rangle - \frac{\|x\|^2}{2} - \frac{\lambda}{2}\sum_{i=1}^d\frac{x_i^2}{a_i^2}\right)
\\
&=
\sum_{i=1}^d
\sup_{x_i \in \R}
\left\{
x_i g_i - \frac{1}{2}\left(1+\frac{\lambda}{a_i^2}\right)x_i^2
\right\}
\\
&=
\frac{1}{2}\sum_{i=1}^d \alpha_i g_i^2.
\label{eq:ellipsoid-penalized-sup}
\end{align}
Combining \eqref{eq:ellipsoid-gibbs-value} and \eqref{eq:ellipsoid-penalized-sup}, we obtain the pointwise inequality
\[
\sup_{x \in \cE_a}\left(\langle x, g\rangle - \frac{\|x\|^2}{2}\right)
\le
\frac{\lambda}{2}
+
\frac{1}{2}\sum_{i=1}^d \log\left(1+\frac{a_i^2}{\lambda}\right)
+
\E_{x' \sim \widetilde{\rho}}
\left[\langle x', g\rangle - \frac{\|x'\|^2}{2}\right]
-
\mathcal{KL}(\widetilde{\rho}\,\|\,\mu).
\]
Exponentiating both sides and taking expectations with respect to $g$, and using
that, by the proof of \Cref{prop:simplepb}, $\E\exp\left(\E_{x' \sim \widetilde{\rho}}
\left[\langle x', g\rangle - \frac{\|x'\|^2}{2}\right]
-
\mathcal{KL}(\widetilde{\rho}\,\|\,\mu)\right) \le 1$, yields the second inequality in the statement.
The first inequality follows from Jensen's inequality. \qed

\subsubsection{Proof of \Cref{cor:localellipsoid}}

First, a direct computation shows that
\[
\mu_{\operatorname{loc}} = \Normal{\sum_{i=1}^{d}
        \frac{a_i^{2}g_i}{a_i^{2}+4\lambda}e_i}{\sum_{i=1}^{d}
        \frac{4a_i^{2}}{a_i^{2}+4\lambda}\,e_i e_i^{\top}}.
\]
Using that the expectation of $\mu_{\operatorname{loc}}$ is exactly the global maximizer of $\langle x, g\rangle - \frac{\|x\|^2}{2} - 2\lambda\sum\limits_{i = 1}^d\frac{x_i^2}{a_i^2}$, we have
\begin{align}
\E\sup\limits_{x \in \cE_a} \left(\langle x, g\rangle - \frac{\|x\|^2}{2} - 2\lambda\right) &\le \E\sup\limits_{x \in \cE_a}\left(\langle x, g\rangle - \frac{\|x\|^2}{2} - 2\lambda\sum\limits_{i = 1}^d\frac{x_i^2}{a_i^2}\right) 
\\
&\le \E\sup\limits_{x \in \R^d}\left(\langle x, g\rangle - \frac{\|x\|^2}{2} - 2\lambda\sum\limits_{i = 1}^d\frac{x_i^2}{a_i^2}\right)
\\
&\le\E\E_{x^\prime \sim \mu_{\operatorname{loc}}}\left[\langle x^\prime, g\rangle - \frac{\|x^\prime\|^2}{2}\right] + \frac{1}{2}\sum\limits_{i = 1}^d\frac{4a_i^2}{a_i^2 + 4\lambda}
\le  \frac{1}{2}\sum\limits_{i = 1}^d\frac{4a_i^2}{a_i^2 + 4\lambda},
\end{align}
where the last line is due to \Cref{thm:localpb} with $\rho = \mu_{\operatorname{loc}}$. The claim follows. \qed

\section{Example: Gaussian width of the crosspolytope in $\R^d$}
\label{sec:examples}
In this section we focus on a single illustrative example: the Gaussian width of the crosspolytope
\[
B_1^d=\{x\in\R^d:\ \|x\|_1\le 1\}.
\]
Of course, one can bound $w(B_1^d)$ by the elementary identity
\begin{equation}
\label{eq:shortcut}
w(B_1^d)=\E\sup_{x\in B_1^d} \, \langle x,g\rangle
=\E\max_{i\in[d]}|g_i|
\le \sqrt{2\log(2d)}.
\end{equation}
This example is instructive because it highlights the limitations of entropy-based chaining bounds: Dudley's integral incurs its largest possible gap for sets in $\R^d$ and yields an upper bound of order $\log^{3/2}(d)$ for $B_1^d$.
Sharp bounds can be obtained by constructing an optimal admissible sequence (see, e.g., \cite[Section~4.4]{nelson2016chaining}), and related work develops more explicit geometric relaxations of generic chaining for convex bodies \cite{van2018chaining} recovering the correct bound on $w(B_1^d)$.

Below we present three different methods to bound $w(B_1^d)$ for the crosspolytope that avoid both the coordinatewise maximum argument and explicit generic chaining constructions (and their relaxations). The purpose is not to compete with the shortcut \eqref{eq:shortcut}, but to illustrate how our decomposition viewpoint leads to sharp bounds through several complementary mechanisms.

\subsection{A bound based on the variational approach}
\label{sec:pacbayesellone}
 Since our goal is to avoid the explicit admissible sequence constructions and their relaxations, we first use \Cref{prop:simplepb} with a specifically chosen prior, almost identical to one introduced recently in the information theory literature \cite{miyaguchi2019adaptive} in the context of proving asymptotic regret bounds with respect to logarithmic loss, namely
\begin{equation}
\label{eq:sparseprior}
\mu(\ud x) \propto \exp(-\lambda\|x\|_1)\bigotimes_{i = 1}^d\left(\delta_{0}(\ud x_i) + \frac{\exp(\lambda^2/2)}{\lambda^2}\ind{|x_i| \ge \lambda}\ud x_i\right),
\end{equation}
where $\delta_0$ is the Dirac measure at zero. As will be clear from our computations below, the logic of this prior lies in mimicking the typical solution of the $\ell_1$-regularized minimization problem: the Dirac mass is responsible for possible sparse solutions, namely a coordinate being equal to zero. A straightforward computation shows that the normalization constant $Z_0$ for this distribution is equal to
\[
Z_0 = \left(1+\frac{2}{\lambda^3}\exp(-\lambda^{2}/2)\right)^{d}.
\]
We are ready to state the result of this section.

\begin{corollary}
\label{cor:elloneball}
Let $d\ge 2$. Then, if in \Cref{prop:simplepb} we choose the prior $\mu$ according to \eqref{eq:sparseprior} with $\lambda = \sqrt{2\log(d)}$, we obtain
\[
\E\left[\sup\limits_{x \in B_1^d}\left(\langle x, g\rangle -\|x\|^2/2\right)\right]
\le \sqrt{2\log(d)} + \frac{3\log(2\log(d))}{2\sqrt{\pi}\sqrt{\log(d)}} + \frac{c}{{\log^{3/2}(d)}},
\]
where $c = \frac{1}{\sqrt{2}} + \frac{1}{2\sqrt{\pi}}$.
\end{corollary}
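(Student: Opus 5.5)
The plan follows the template of \Cref{cor:ellipsoid}. First I establish a pointwise inequality of the form \eqref{eq:pbaim}: for each realization of $g$, exhibit a posterior $\widetilde{\rho}$ (absolutely continuous with respect to the prior $\mu$ from \eqref{eq:sparseprior}) such that
\[
\sup_{x\in B_1^d}\Big(\langle x,g\rangle-\tfrac{\|x\|^2}{2}\Big)
\le
\E_{x'\sim\widetilde\rho}\Big[\langle x',g\rangle-\tfrac{\|x'\|^2}{2}\Big]-\mathcal{KL}(\widetilde\rho\,\|\,\mu)+\lambda+\Delta(g).
\]
Taking expectations and applying \Cref{prop:simplepb} then gives the bound $\lambda+\E\Delta(g)$. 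With $\lambda=\sqrt{2\log d}$, this produces the leading term, and the remaining work is to show that $\E\Delta(g)$ is the stated lower-order correction.

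\textbf{Step 1 (penalization).} Since $\|x\|_1\le 1$ on $B_1^d$, I bound the supremum by
\[
\lambda+\sup_{x\in\R^d}\Big(\langle x,g\rangle-\tfrac{\|x\|^2}{2}-\lambda\|x\|_1\Big)
=\lambda+\tfrac12\sum_i(|g_i|-\lambda)_+^2.
\]
This is attained at the soft-thresholded vector $\hat x_i=\sign(g_i)(|g_i|-\lambda)_+$. The choice of prior reflects this structure: the atom at $0$ handles coordinates with $|g_i|\le\lambda$, and the continuous part on $\{|x_i|\ge\lambda\}$ handles the rest.

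\textbf{Step 2 (choice of $\widetilde\rho$).} I take $\widetilde\rho$ to be the Gibbs posterior, which is the optimal choice. Because both $\mu$ and the exponent factorize over coordinates, the Donsker--Varadhan value equals
\[
\log\E_{\mu}\exp\big(\langle x,g\rangle-\|x\|^2/2\big)=\sum_i\log\frac{1+\frac{e^{\lambda^2/2}}{\lambda^2}\int_{|t|\ge\lambda}e^{tg_i-t^2/2-\lambda|t|}\,\ud t}{1+\frac{2}{\lambda^3}e^{-\lambda^2/2}}.
\]
Thus $\Delta(g)=\sum_i\big[\tfrac12(|g_i|-\lambda)_+^2-\log(\cdots)\big]$, and the task reduces to a one-dimensional computation. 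For $|g_i|>\lambda$, completing the square gives
\[
e^{tg_i-t^2/2-\lambda|t|}=e^{(|g_i|-\lambda)^2/2}\,e^{-(t-\hat x_i)^2/2}
\]
for $t$ of the same sign as $g_i$. I then lower-bound the integral restricted to $\{|t|\ge\lambda\}$ by a Gaussian-tail integral of order $\sqrt{2\pi}\,\Phi(|g_i|-2\lambda)$. The prefactor $e^{\lambda^2/2}/\lambda^2$ is what compensates for the $e^{-\lambda^2/2}$-type terms that arise when $|g_i|$ is near $\lambda$. For $|g_i|\le\lambda$, the numerator is at least $1$, so the $i$-th summand is at most $\log Z_0^{1/d}\le \frac{2}{\lambda^3}e^{-\lambda^2/2}$.

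\textbf{Step 3 (expectation; main obstacle).} Summing over the $d$ coordinates, the $Z_0$ contribution is $d\cdot\frac{2}{\lambda^3}e^{-\lambda^2/2}=\frac{2}{\lambda^3}$. This is part of the $c/\log^{3/2}d$ term. The delicate part is the coordinates with $|g_i|>\lambda$. There the summand is roughly
\[
-\log\Big(\tfrac{e^{\lambda^2/2}}{\lambda^2}\sqrt{2\pi}\,\Phi(|g_i|-2\lambda)\Big)\wedge\tfrac12(|g_i|-\lambda)^2,
\]
and it must be integrated against the Gaussian tail density on $\{|g|>\lambda\}$, which has total mass $\approx 2\phi(\lambda)/\lambda=\frac{2}{\sqrt{2\pi}\,d\lambda}$. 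Multiplying by $d$, the dominant contribution becomes $\frac{2}{\sqrt{2\pi}\lambda}\cdot\frac32\log(\lambda^2)$ up to lower-order terms, which matches $\frac{3\log(2\log d)}{2\sqrt\pi\sqrt{\log d}}$. I expect this step to be the main obstacle, for two reasons. First, the case split near $|g_i|\approx\lambda$ to $2\lambda$ must be tracked with explicit constants to obtain exactly $c=\tfrac1{\sqrt2}+\tfrac1{2\sqrt\pi}$. Second, the Mills-ratio bounds $\P(|g|>\lambda)\le 2\phi(\lambda)/\lambda$ and $\E[(|g|-\lambda)_+^2]\le 4\phi(\lambda)/\lambda^3$ must be applied carefully, since they produce the residual $\lambda^{-3}$ terms.
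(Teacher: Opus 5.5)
Your proposal is correct and follows the paper's proof. The shared steps are: take the Gibbs posterior, so the Donsker--Varadhan value becomes $\log\E_{x\sim\mu}\exp(\langle x,g\rangle-\|x\|^2/2)$; factor over coordinates with $\log Z_0\le 2\lambda^{-3}$; use the atom at $0$ when $|g_i|\le\lambda$; and complete the square to lower-bound the continuous part by $\sqrt{2\pi}\,e^{(|g_i|-\lambda)^2/2}\,\Phi(|g_i|-2\lambda)$ when $|g_i|>\lambda$. Your Step~3 is simpler than you expect: the paper uses $\Phi(|g_i|-2\lambda)\ge 1-\Phi(\lambda)$ and $\log(1-\Phi(\lambda))\ge -\lambda^2/2-\log\lambda-\tfrac12\log(2\pi)-\lambda^{-2}$ to get a $g$-independent penalty $3\log\lambda+\lambda^{-2}$ on $\{|g_i|\ge\lambda\}$, so only $d\,\P\{|g_1|\ge\lambda\}\le 2/(\sqrt{2\pi}\,\lambda)$ is needed and $c=\tfrac{1}{\sqrt2}+\tfrac{1}{2\sqrt\pi}$ comes out exactly, with no case split and no bound on $\E(|g|-\lambda)_+^2$.
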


Note that the leading term $\sqrt{2\log(d)}$ is sharp for the Gaussian width of $B_1^d$. Moreover, our \Cref{thm:decomp-of-gauss-width} with $\sigma = 1$ immediately implies the correct bound on $w(B_1^d)$ with the correct leading term $\sqrt{2\log(d)}$. Indeed, in the second term of the decomposition we may use that for the crosspolytope $r(\sigma) \le 1$ and therefore, for the same $c = \frac{1}{\sqrt{2}} + \frac{1}{2\sqrt{\pi}}$,
\[
w(T) \le \sqrt{2\log(d)} + \frac{3\log(2\log(d))}{2\sqrt{\pi}\sqrt{\log(d)}} + \frac{1}{2} + \frac{c}{{\log^{3/2}(d)}}.
\]

\begin{proof}
First, by \Cref{prop:simplepb} and the Donsker--Varadhan variational formula we have
\begin{equation}
\label{eq:dualitywritten}
0 \ge \E\sup\limits_{\rho \ll \mu}\left(\E_{x^\prime \sim \rho}\left[\langle x^\prime, g\rangle - \|x^\prime\|^2/2\right] - \mathcal{KL}(\rho\|\mu)\right)
= \E\log\left(\E_{x^\prime \sim \mu}\left[\exp(\langle x^\prime, g\rangle - \|x^\prime\|^2/2)\right]\right).
\end{equation}
We can write $\log\left(\E_{x^\prime \sim \mu}\left[\exp(\langle x^\prime, g\rangle - \|x^\prime\|^2/2)\right]\right)$ as
\begin{align}
&-\log(Z_0) + \log\left(\int_{\R^d}\exp\left(\langle x, g\rangle - \frac{\|x\|^2}{2} - \lambda\|x\|_1\right)\bigotimes_{i = 1}^d\left(\delta_{0}(\ud x_i) + \frac{\exp(\lambda^2/2)}{\lambda^2}\ind{|x_i| \ge \lambda}\,\ud x_i\right)\right)
\\
&= -\log(Z_0) + \sum\limits_{i = 1}^d\log\left(\int_{\R}\exp\left(x g_i - x^2/2 - \lambda|x|\right)\left(\delta_{0}(\ud x) + \frac{\exp(\lambda^2/2)}{\lambda^2}\ind{|x| \ge \lambda}\,\ud x\right)\right).
\end{align}
Observe that
\[
\sup\limits_{x \in \R}(x g_i - x^2/2 - \lambda|x|)
= \begin{cases}
  0, & |g_i| \le \lambda,\\[6pt]
  \dfrac12\bigl(|g_i|-\lambda\bigr)^{2}, & |g_i| > \lambda.
\end{cases}
\]
First, consider the case where $|g_i| \le \lambda$. In this case, using the monotonicity of the logarithm, we have for the individual summand
\begin{align}
&\log\left(\int_{\R}\exp\left(x g_i - x^2/2 - \lambda|x|\right)\left(\delta_{0}(\ud x) + \frac{\exp(\lambda^2/2)}{\lambda^2}\ind{|x| \ge \lambda}\,\ud x\right)\right)
\\
&\ge \log\left(\int_{\R}\exp\left(x g_i - x^2/2 - \lambda|x|\right)\delta_{0}(\ud x)\right)
= \sup\limits_{x \in \R}(x g_i - x^2/2 - \lambda|x|)
= 0.
\end{align}
Otherwise, if $|g_i| \ge \lambda$, by the exact computation of the integral we have
\begin{align}
&\log\left(\int_{\R}\exp\left(x g_i - x^2/2 - \lambda|x|\right)\left(\delta_{0}(\ud x) + \frac{\exp(\lambda^2/2)}{\lambda^2}\ind{|x| \ge \lambda}\,\ud x\right)\right)
\\
&\ge \log\left(\int_{\R}\exp\left(x g_i - x^2/2 - \lambda|x|\right)\frac{\exp(\lambda^2/2)}{\lambda^2}\ind{|x| \ge \lambda}\,\ud x\right)
\\
&= \log\left(\frac{\exp(\lambda^2/2)}{\lambda^2}\left(\int_{-\infty}^{-\lambda}\exp\left(x g_i - x^2/2 + \lambda x\right)\ud x + \int_{\lambda}^{+\infty}\exp\left(x g_i - x^2/2 - \lambda x\right)\ud x\right)\right)
\\
&=\log\left(\frac{\exp(\lambda^2/2)}{\lambda^2}\left(\sqrt{2\pi}\,\Bigl[
   \exp(\tfrac{(g_i+\lambda)^{2}}{2})\,
   \Phi(-g_i-2\lambda)
   +
   \exp(\tfrac{(g_i-\lambda)^{2}}{2})\,
   \bigl(1-\Phi(2\lambda-g_i)\bigr)
\Bigr]\right)\right)
\\
&\ge \frac{1}{2}\bigl(|g_i|-\lambda\bigr)^{2} + \frac{\lambda^{2}}{2}
                  -2\log\lambda
                  +\frac12\log(2\pi)
                  +\log(1-\Phi(\lambda))
\\
&\ge \sup\limits_{x \in \R}(x g_i - x^2/2 - \lambda|x|) - \frac{1}{\lambda^2} - 3\log(\lambda),
\end{align}
where in the last line we used $\log(1 - \Phi(\lambda)) \ge -\frac{\lambda^2}{2} - \log(\lambda) - \frac{1}{2}\log(2\pi) - \frac{1}{\lambda^2}$. Therefore, combining the above inequalities we have
\begin{align}
&-\log(Z_0) + \sum\limits_{i = 1}^d\log\left(\int_{\R}\exp\left(x g_i - x^2/2 - \lambda|x|\right)\left(\delta_{0}(\ud x) + \frac{\exp(\lambda^2/2)}{\lambda^2}\ind{|x| \ge \lambda}\,\ud x\right)\right) \nonumber
\\
&\ge-\log(Z_0) + \sup\limits_{x \in \R^d}\left(\langle x, g\rangle - \frac{\|x\|^2}{2} - \lambda\|x\|_1\right) - \sum\limits_{i = 1}^d\ind{|g_i| \ge \lambda}\left(3\log(\lambda) + \frac{1}{\lambda^2}\right).
\label{eq:zoequation}
\end{align}
Finally, we have
\begin{align}
\sup\limits_{x \in B_1^d} \left(\langle x, g\rangle - \frac{\|x\|^2}{2} - \lambda\right)
&\le \sup\limits_{x \in B_1^d}\left(\langle x, g\rangle - \frac{\|x\|^2}{2} - \lambda\|x\|_1\right)
\\
&\le \sup\limits_{x \in \R^d}\left(\langle x, g\rangle - \frac{\|x\|^2}{2} - \lambda\|x\|_1\right).
\end{align}
Combining this with \eqref{eq:dualitywritten} and \eqref{eq:zoequation} we obtain
\[
\E\sup\limits_{x \in B_1^d} \left(\langle x, g\rangle - \frac{\|x\|^2}{2}\right)
\le \lambda + \log(Z_0) + d \, \Pr\{|g_1| \ge\lambda\} \, \Bigl(\frac{1}{\lambda^2} + 3\log(\lambda)\Bigr).
\]
We choose $\lambda = \sqrt{2\log(d)}$ (note that $\lambda \ge 1$ for $d \ge 2$) and obtain
\begin{align}
\E\sup\limits_{x \in B_1^d} \left(\langle x, g\rangle - \frac{\|x\|^2}{2}\right)
\le \sqrt{2\log(d)} + \frac{1}{\sqrt{2}\;{\log^{3/2}(d)}} + \frac{1}{2\sqrt{\pi}\log^{3/2}(d)} + \frac{3\log(2\log(d))}{2\sqrt{\pi}\sqrt{\log(d)}}.
\end{align}
The claim follows.
\end{proof}

\subsection{A bound based on the profile of intrinsic volumes}
Recall from \Cref{thm:indexstarbound} that, since $\diam(B_1^d)=2$, we have
\[
w(B_1^d) \asymp i^\star\,\diam(B_1^d),
\]
where
\[
i^\star \in \argmax_{i \in \{1,\ldots,d\}}\left\{ V_i\left(B_1^d/2\right)\right\}.
\]
Thus, to upper bound $w(B_1^d)$ it suffices to control the location of $i^\star$. By unimodality of the intrinsic volume sequence, this reduces to understanding the ratios
\[
\frac{V_{i}(B_1^d/2)}{V_{i+1}(B_1^d/2)} = 2\,\frac{V_{i}(B_1^d)}{V_{i+1}(B_1^d)}.
\]
The next two lemmas provide lower bounds on these ratios that are sharp enough to yield the optimal order for $w(B_1^d)$.
We remark that our argument uses explicit formulas for intrinsic volume ratios of $B_1^d$ for $i\ge 1$. While this is a strong input, once the zeroth intrinsic volume is excluded it is still scale-free, and it is not immediate from the ratios alone how to recover the Gaussian width without the structural connection provided by \Cref{thm:indexstarbound}.

\begin{lemma}
\label{lem:volumeratio}
For $1 \le i \le d-2$ set $m = d - i - 1$ and denote
\[
Z_0=\int_{0}^{\infty} \exp(-(i+1)x^2)\big(\operatorname{erf}(x)\big)^{m}\ud x,
\qquad
\ud\nu(x)=\frac{1}{Z_0}\exp(-(i+1)x^2)\big(\operatorname{erf}(x)\big)^{m}\ud x,
\]
where $\operatorname{erf}(x) = \frac{2}{\sqrt{\pi}}\int_{0}^x\exp(-y^2)\ud y$.
Let $Y$ be a non-negative random variable distributed according to $\nu$. Then
\[
\frac{V_{i}(B_1^d)}{V_{i+1}(B_1^d)} = \frac{i+1}{2\sqrt{\pi}\,\E[Y]}.
\]
\end{lemma}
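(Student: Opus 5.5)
We will prove the ratio identity by computing both intrinsic volumes of the crosspolytope explicitly, using the classical face decomposition
\[
V_k(P)=\sum_{F\in\mathcal F_k(P)} \operatorname{Vol}_k(F)\,\gamma(F,P),
\]
where $\mathcal F_k(P)$ is the set of $k$-faces of the polytope $P$ and $\gamma(F,P)$ is the external angle of $P$ at $F$. This is the normalized solid angle of the normal cone $N(F,P)$, measured inside its own linear span.

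\textbf{Step 1: faces of $B_1^d$.} For $k\le d-1$, every $k$-face of $B_1^d$ is a regular simplex $\operatorname{conv}\{\pm e_{j_0},\dots,\pm e_{j_k}\}$, with a choice of $k+1$ coordinates and of signs. There are $2^{k+1}\binom{d}{k+1}$ such faces. Each has edge length $\sqrt2$, hence
\[
\operatorname{Vol}_k(F)=\frac{\sqrt{k+1}}{k!}.
\]
By symmetry all $k$-faces have the same external angle $\gamma_k$.

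\textbf{Step 2: the external angle.} For the face with vertices $e_1,\dots,e_{k+1}$, the normal cone is
\[
N=\{\,t(\mathbf 1_{k+1},u):t\ge0,\ u\in[-1,1]^{d-k-1}\,\}.
\]
We compute its angle by the standard Gaussian trick, $\gamma(F,P)=\P(\Pi_{\operatorname{span}N}\,G\in N)$ for $G$ a standard Gaussian in $\operatorname{span}N$, or equivalently by integrating $e^{-\|x\|^2}$ over $N$ and normalizing by $\pi^{\dim N/2}$. The span of $N$ has dimension $d-k$. Use coordinates $s\ge0$ along the unit vector $\mathbf 1_{k+1}/\sqrt{k+1}$ and $v\in\R^{d-k-1}$ with $|v_j|\le s/\sqrt{k+1}$. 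This gives
\[
\gamma_k=\pi^{-(d-k)/2}\int_0^\infty e^{-s^2}\Big(\sqrt\pi\operatorname{erf}\big(s/\sqrt{k+1}\big)\Big)^{d-k-1}\ud s .
\]
Substituting $s=\sqrt{k+1}\,x$ yields
\[
\gamma_k=\frac{\sqrt{k+1}}{\sqrt\pi}\int_0^\infty e^{-(k+1)x^2}\operatorname{erf}(x)^{d-k-1}\ud x .
\]

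\textbf{Step 3: the ratio.} Take $k=i$ and $k=i+1$, so that $m=d-i-1$. The $V_i$ integral is $\int_0^\infty e^{-(i+1)x^2}\operatorname{erf}(x)^m\,\ud x=Z_0$. The $V_{i+1}$ integral is $\int_0^\infty e^{-(i+2)x^2}\operatorname{erf}(x)^{m-1}\,\ud x$, which we rewrite through $\E[Y]$. Integrate by parts in $Z_0\E[Y]=\int_0^\infty x\,e^{-(i+1)x^2}\operatorname{erf}(x)^m\,\ud x$, using $x e^{-(i+1)x^2}=-\frac{\ud}{\ud x}\frac{e^{-(i+1)x^2}}{2(i+1)}$ and $\operatorname{erf}'(x)=\frac{2}{\sqrt\pi}e^{-x^2}$. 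The boundary terms vanish because $m\ge1$ (this is where $i\le d-2$ is used), and we get
\[
Z_0\,\E[Y]=\frac{m}{(i+1)\sqrt\pi}\int_0^\infty e^{-(i+2)x^2}\operatorname{erf}(x)^{m-1}\ud x .
\]
Combining the face counts $2^{k+1}\binom d{k+1}$, the volumes $\sqrt{k+1}/k!$, and the prefactors $\sqrt{k+1}/\sqrt\pi$ from Step 2, the combinatorial factors collapse to $\frac{2(i+1)}{m}$. The factor $m$ cancels against the one from the integration by parts, and the result should be
\[
\frac{V_i}{V_{i+1}}=\frac{i+1}{2\sqrt\pi\,\E[Y]}.
\]

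The main obstacle is Step 2: getting the external angle normalization exactly right. This means the correct dimension of the normal cone, the Jacobian from the nonorthonormal parametrization, and the erf versus $\Phi$ conventions. Any slip there changes the constant $2\sqrt\pi$. I would sanity-check the formula at $d=2$, where $B_1^2$ is a square with $V_1$ equal to half its perimeter, and against Betke–Henk's known formula for the intrinsic volumes of the crosspolytope.
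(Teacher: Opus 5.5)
Your route is essentially the paper's: the paper takes the Betke--Henk ratio formula (their Theorem~2.1) as a black box and then applies exactly your integration by parts, whereas you re-derive that formula from the face/external-angle decomposition. Steps~1--2 and the integration by parts are correct. The external-angle normalization you were worried about is fine; for example, it gives $\gamma_{d-1}=\tfrac12$ for facets, as it should.

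The actual error is in Step~3, the one computation you did not carry out: the prefactors do not collapse to $\frac{2(i+1)}{m}$. Steps~1--2 give
\[
V_k(B_1^d)=2^{k+1}\binom{d}{k+1}\frac{k+1}{k!\,\sqrt{\pi}}\int_0^\infty e^{-(k+1)x^2}\operatorname{erf}(x)^{d-k-1}\,\ud x .
\]
So the ratio of prefactors at $k=i$ and $k=i+1$ is
\[
\tfrac12\cdot\tfrac{i+2}{m}\cdot\tfrac{(i+1)^2}{i+2}=\tfrac{(i+1)^2}{2m},
\]
which is precisely the constant in the Betke--Henk identity quoted in the paper. Combining this with your integration-by-parts identity gives
\[
\frac{(i+1)^2}{2m}\cdot\frac{m}{(i+1)\sqrt{\pi}\,\E[Y]}=\frac{i+1}{2\sqrt{\pi}\,\E[Y]},
\]
as required.

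With your stated factor, the same cancellation of $m$ would instead give $\frac{2}{\sqrt{\pi}\,\E[Y]}$, which is off by the factor $\frac{4}{i+1}$. For the octahedron ($d=3$, $i=1$) it would double the true value $V_1/V_2=\frac{4\sqrt3\,\arctan(1/\sqrt2)}{\sqrt2\,\pi}\approx 0.96$. So as written, your final ``should be'' does not follow from your intermediate claim. Once you redo this bookkeeping, your argument is a complete, self-contained proof.
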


\begin{proof}
First, the identity in \cite[Theorem~2.1]{betke1993intrinsic} gives for $1 \le i \le d - 2$,
\[
\frac{V_{i}(B_1^d)}{V_{i+1}(B_1^d)}  = \frac{(i+1)^2}{2m}\frac{\int_{0}^\infty\exp(-(i+1)x^2)(\operatorname{erf}(x))^{m}\ud x}{\int_{0}^\infty\exp(-(i+2)x^2)(\operatorname{erf}(x))^{m - 1}\ud x}.
\]
This is equivalent to
\begin{equation}
\frac{V_i(B_1^d)}{V_{i+1}(B_1^d)}
=\frac{(i+1)^2}{2m}
\frac{1}{\E\big[\exp(-Y^2)/\operatorname{erf}(Y)\big]}.
\label{eq:ratio-mid}
\end{equation}
Set
$
g(x)=\exp(-(i+1)x^2)\big(\operatorname{erf}(x)\big)^{m}.
$
Since $m\ge1$, we have $g(0)=0$ and $g(x)\to0$ as $x\to\infty$. Observe that
\[
g'(x)
=-2(i+1)x\exp(-(i+1)x^2)\big(\operatorname{erf}(x)\big)^{m}
+\frac{2m}{\sqrt{\pi}}\exp(-(i+2)x^2)\big(\operatorname{erf}(x)\big)^{m-1}.
\]
We divide the identity $\int_{0}^{\infty}g'(x)\ud x=0$ by $Z_0$ and obtain
\[
-2(i+1)\,\E[Y]
+\frac{2m}{\sqrt{\pi}}\,
\E\left[\frac{\exp(-Y^2)}{\operatorname{erf}(Y)}\right]=0.
\]
Plugging this into \eqref{eq:ratio-mid} completes the proof.
\end{proof}

To complete the derivation of the upper bound on $w(B_1^d)$, note that if for some $j$ we have
$\frac{j+1}{\sqrt{\pi}\,\E[Y]} > 1$, then by unimodality we may use $j$ as an upper bound on the maximizer $i^\star$ above.
Thus it suffices to lower bound $\frac{j+1}{\sqrt{\pi}\,\E[Y]}$, which reduces to upper bounding $\E[Y]$.
The following lemma provides the required estimate.

\begin{lemma}
\label{lem:ybound}
For the random variable $Y$ defined in \Cref{lem:volumeratio} we have
\[
\E Y \le \sqrt{\log\left(\frac{d}{i+1}\right)} + \frac{1}{\sqrt{\pi(i+1)}}.
\]
\end{lemma}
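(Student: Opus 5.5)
We want to bound $\E Y$ for the density $\propto e^{-(i+1)x^2}\operatorname{erf}(x)^m$ on $(0,\infty)$, with $m=d-i-1$. The plan is to split at the threshold $t=\sqrt{\log(d/(i+1))}$ and use the layer-cake formula:
\[
\E Y \le t + \int_t^\infty \nu\bigl((s,\infty)\bigr)\,\ud s .
\]
So it suffices to show that the tail mass beyond $t$ integrates to at most $1/\sqrt{\pi(i+1)}$.

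To control $\nu((s,\infty))$, the plan is to compare the weight $e^{-(i+1)x^2}\operatorname{erf}(x)^m$ on $(s,\infty)$ with its weight on a region near the mode. The natural move is a change of measure that removes the erf factor in the tail. Since $\operatorname{erf}\le 1$, the numerator is
\[
\int_s^\infty e^{-(i+1)x^2}\operatorname{erf}(x)^m\,\ud x \;\le\; \int_s^\infty e^{-(i+1)x^2}\,\ud x .
\]
For the normalizer $Z_0$, I restrict to $x\ge t$. There I use $1-\operatorname{erf}(x)\le e^{-x^2}$ and $(1-u)^m\ge 1-mu$, which give
\[
\operatorname{erf}(x)^m \ge 1-m e^{-x^2} \ge 1-\frac{m(i+1)}{d}.
\]
This lower bound is not bounded away from zero when $i$ is small, so the crude comparison fails. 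I therefore switch to a cleaner device: the monotone likelihood ratio.

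The key observation is that $\operatorname{erf}^m$ is increasing. Hence $\nu$ is, up to normalization, the half-Gaussian $\gamma$ with density $\propto e^{-(i+1)x^2}$ on $(0,\infty)$, tilted by an increasing function. Such a tilt increases the mean, so it cannot give an upper bound by itself; the control must come from how slowly erf grows. I would instead write the density as $\propto \exp(-\phi(x))$ with
\[
\phi(x) = (i+1)x^2 - m\log\operatorname{erf}(x).
\]
This $\phi$ is convex, because $\log\operatorname{erf}$ is concave. For $x\ge t$,
\[
\phi'(x) \ge 2(i+1)x - \frac{2m}{\sqrt{\pi}}\,\frac{e^{-x^2}}{\operatorname{erf}(x)},
\]
and the second term is small once $e^{-x^2}\le (i+1)/d$.

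To finish, I would use the identity from the proof of \Cref{lem:volumeratio}:
\[
(i+1)\,\E Y = \frac{m}{\sqrt{\pi}}\,\E\!\left[\frac{e^{-Y^2}}{\operatorname{erf}(Y)}\right].
\]
Split the right-hand expectation on $\{Y\le t\}$ and $\{Y>t\}$.
\begin{itemize}
\item On $\{Y>t\}$ the integrand is at most $\frac{i+1}{d}\cdot\frac{1}{\operatorname{erf}(t)}$, so this part contributes a constant times $(i+1)/d$.
\item On $\{Y\le t\}$ the trivial estimate $\E Y\le t+\E[(Y-t)_+]$ bounds $\E Y$ directly. For the excess $\E[(Y-t)_+]$, the log-concavity of $\nu$ together with $\phi'(x)\ge 2(i+1)(x-t)+\text{const}$ beyond $t$ makes the tail beyond $t$ sub-Gaussian with variance proxy $1/(2(i+1))$. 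This yields $\E[(Y-t)_+]\le \int_0^\infty e^{-(i+1)u^2}\,\ud u = \tfrac12\sqrt{\pi/(i+1)}$, and some care with constants should reduce this to the stated $1/\sqrt{\pi(i+1)}$.
\end{itemize}

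The main obstacle is pinning down the exact additive constant $1/\sqrt{\pi(i+1)}$ and handling small $t$, where $\operatorname{erf}(t)$ is small. I would treat $t\lesssim 1$ separately, by comparing $\nu$ directly with the half-Gaussian via the Stein-type identity above.
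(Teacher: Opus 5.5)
There is a genuine gap. The proposal never actually bounds $\E[(Y-t)_+]$ by $1/\sqrt{\pi(i+1)}$, and two ingredients are missing.

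The first missing ingredient is the location of the mode. Everything hinges on your threshold lying at or beyond the mode of $g(x)=e^{-(i+1)x^2}\operatorname{erf}(x)^m$. Equivalently, the ``const'' in your bound $\phi'(x)\ge 2(i+1)(x-t)+\text{const}$ must be nonnegative; otherwise the sub-Gaussian decay does not start at $t$. Saying the erf term is ``small'' once $e^{-x^2}\le (i+1)/d$ does not give $\phi'(t)\ge 0$. The small-$t$ regime you leave open is exactly where this needs proof. The paper sidesteps the issue by taking $t$ to be the mode itself. Then $g'(t)=0$ reads $e^{t^2}\operatorname{erf}(t)=\frac{m}{\sqrt{\pi}(i+1)t}$, and the bound $1-\operatorname{erf}(x)\le e^{-x^2}/(\sqrt{\pi}x)$ gives $te^{t^2}\le \frac{1}{\sqrt{\pi}}(1+\frac{m}{i+1})$. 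This settles the case $t\ge 1$, while $\operatorname{erf}(t)\ge \frac{2}{\sqrt{\pi}}te^{-t^2}$ settles $t<1$. Together they give $t\le\sqrt{\log(1+\frac{m}{i+1})}=\sqrt{\log(d/(i+1))}$. The Stein-identity split in your first bullet contributes nothing here. The $\{Y\le t\}$ part of $\E[e^{-Y^2}/\operatorname{erf}(Y)]$ is never controlled (and $1/\operatorname{erf}(y)$ blows up as $y\to 0$), so you fall back on $\E Y\le t+\E[(Y-t)_+]$ anyway.

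The second missing ingredient is the constant. Integrating the tail bound $e^{-(i+1)(s-t)^2}$ by layer cake gives $\frac12\sqrt{\pi/(i+1)}$, which is $\pi/2$ times the target; ``some care with constants'' will not close that. Moreover, converting the pointwise bound $g(x)\le g(t)e^{-(i+1)(x-t)^2}$ into a bound on the normalized tail $\nu((s,\infty))$ needs a lower bound on $Z_0$, which you do not have.

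The missing device is the monotone-likelihood-ratio idea you discarded, applied in the other direction. Use $Z_0\ge \int_t^\infty g$ to get
\[
\E[(Y-t)_+]\le \frac{\int_0^\infty u\,g(t+u)\,\ud u}{\int_0^\infty g(t+u)\,\ud u}=\frac{\E[U\,r(U)]}{\E[r(U)]},\qquad r(u)=g(t+u)\,e^{(i+1)u^2},
\]
where $U$ is half-Gaussian with density $\propto e^{-(i+1)u^2}$ on $[0,\infty)$. We have $(\log r)'(u)=-2(i+1)t+\frac{2m}{\sqrt{\pi}}\frac{e^{-(t+u)^2}}{\operatorname{erf}(t+u)}$, and $x\mapsto e^{-x^2}/\operatorname{erf}(x)$ is decreasing. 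Hence $r$ is nonincreasing exactly when $t$ is at or past the mode. Chebyshev's association inequality then yields $\E[(Y-t)_+]\le \E U = 1/\sqrt{\pi(i+1)}$, which is the exact constant.

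So the tilting idea is right, but it must be a \emph{decreasing} tilt of a half-Gaussian anchored at (or beyond) the mode. Your version was the increasing tilt $\operatorname{erf}^m$ of a half-Gaussian anchored at $0$. With this fix, splitting at your threshold $\sqrt{\log(d/(i+1))}$ would also work, once you show it dominates the mode.
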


\begin{proof}
As above, let $g(x)=\exp(-(i+1)x^2)\big(\operatorname{erf}(x)\big)^{m}$, so that $\ud\nu(x) = \frac{g(x)}{Z_0}\ud x$.
Let $t$ be a maximizer of $g$. Using $g^\prime(t) = 0$, we have
\begin{equation}\label{eq:mode-eq}
-2(i+1)t+\frac{2m}{\sqrt{\pi}}\frac{\exp(-t^2)}{\operatorname{erf}(t)}=0,
\quad\textrm{which implies}\quad
\exp(t^2)\,\operatorname{erf}(t)=\frac{m}{\sqrt{\pi}(i+1)\,t}.
\end{equation}
Using the bound $1-\operatorname{erf}(x)\le \dfrac{\exp(-x^2)}{\sqrt{\pi}\,x}$ for $x>0$,
we have
$
\exp(t^2)\,\operatorname{erf}(t)\ge\exp(t^2)-\frac{1}{\sqrt{\pi}\,t}.
$
Combining this with \eqref{eq:mode-eq}, we obtain
$
t\exp(t^2) \le \frac{1}{\sqrt{\pi}}\Big(1+\frac{m}{i+1}\Big).
$
If $t\ge 1$, then $\exp(t^2)\le t\exp(t^2)$ and hence $t^2\le \log\bigl(1+\frac{m}{i+1}\bigr)$.
If $t<1$ and $\log\bigl(1+\frac{m}{i+1}\bigr)<1$, then $\frac{m}{i+1}\le e-1$ and \eqref{eq:mode-eq} together with
$\operatorname{erf}(t)\ge \frac{2}{\sqrt{\pi}}t\,\exp(-t^2)$ yields $t^2\le \frac{m}{2(i+1)}\le \log\bigl(1+\frac{m}{i+1}\bigr)$.
Thus,
\begin{equation}
\label{eq:boundont}
t\le\sqrt{\log\left(1+\frac{m}{i+1}\right)}.
\end{equation}

Given this bound on $t$ it remains to bound $\E(Y - t)_{+}$, where $(x)_+ = \max\{x, 0\}$. Let $h(x)=\log g(x)=-(i+1)x^2 + m\log(\operatorname{erf}x)$. Since for any $x > 0$,
\[
h''(x)= -2(i+1) + m\big(\log(\operatorname{erf}x)\big)'' \le -2(i+1),
\]
and $h'(t)=0$, we have for all $x\ge t$,
\begin{equation}
\label{eq:ratioofintegrals}
h(x)\le h(t) - (i+1)(x-t)^2,\quad \textrm{which implies} \quad
\frac{g(x)}{g(t)}\le \exp\big(-(i+1)(x-t)^2\big).
\end{equation}
Finally, using this inequality we have
\[
\E\big[(Y-t)_+\big]
=\frac{\int_t^\infty (x-t)\,g(x)\ud x}{\int_0^\infty g(x)\ud x}
\le
\frac{\int_t^\infty (x-t)\,g(x)\ud x}{\int_t^\infty g(x)\ud x}
=\frac{\int_0^\infty u\,g(t+u)\ud u}{\int_0^\infty g(t+u)\ud u}.
\]
We now bound the last ratio. We introduce the random variable $U$, with density proportional to $\exp(-(i+1)u^2)$ on $[0,\infty)$, so that
\[
\frac{\int_0^\infty u\,g(t+u)\ud u}{\int_0^\infty g(t+u)\ud u}
= \frac{\E\big[U\,r(U)\big]}{\E\big[r(U)\big]},
\]
with $r(u) = \frac{g(t + u)}{\exp(-(i + 1)u^2)}$. Note that the function $r$ is non-increasing. Indeed, recalling that
$h'(x)=-2(i+1)x+\frac{2m}{\sqrt{\pi}}\frac{\exp(-x^2)}{\operatorname{erf}(x)}$ and that $t$ maximizes $g$, we have $h'(t)=0$, that is,
\begin{equation}\label{eq:mode}
\frac{2m}{\sqrt{\pi}}\frac{\exp(-t^2)}{\operatorname{erf}(t)}=2(i+1)t.
\end{equation}
Define $\phi(x)=\frac{2}{\sqrt{\pi}}\frac{\exp(-x^2)}{\operatorname{erf}(x)}$ for $x>0$.
A direct derivative calculation shows that $\phi$ is strictly decreasing. Now,
$\log(r(u))=h(t+u)+(i+1)u^2$, and hence
\[
\frac{\ud}{\ud u}\log(r(u))
=h'(t+u)+2(i+1)u
=-2(i+1)t+m\,\phi(t+u).
\]
Because $\phi$ is decreasing and $t+u\ge t$, we have $\phi(t+u)\le \phi(t)$, and thus, by \eqref{eq:mode},
\[
\frac{\ud}{\ud u}\log(r(u))\le -2(i+1)t+m\,\phi(t)=0,
\]
so $r$ is non-increasing. Therefore, by Chebyshev's association inequality \cite[Theorem~2.14]{boucheron2013concentration} we have
\[
\frac{\E\big[U\,r(U)\big]}{\E\big[r(U)\big]} \le \E U
=\frac{\int_0^\infty u\exp({-(i+1)u^2})\ud u}{\int_0^\infty \exp({-(i+1)u^2})\ud u}
=\frac{1}{\sqrt{\pi(i+1)}}.
\]
This combined with \eqref{eq:boundont} proves the claim.
\end{proof}

We are ready to provide the desired upper bound on $w(B_{1}^d)$. By \Cref{lem:volumeratio} and \Cref{lem:ybound}, for any $1 \le i \le d -2$,
\[
\frac{V_i(B_1^d/2)}{V_{i+1}(B_1^d/2)}
=\frac{i + 1}{\sqrt{\pi}\E[Y]}
\ge \frac{i + 1}{\sqrt{\pi}\left(\sqrt{\log\left(\frac{d}{i+1}\right)} + \frac{1}{\sqrt{\pi(i+1)}}\right)}.
\]
Since $\log\left(\frac{d}{i+1}\right)\le \log d$, the right-hand side is at least one as soon as $i \gtrsim \sqrt{\log d}$.
By unimodality this implies $i^\star \lesssim \sqrt{\log d}$, and therefore $w(B_1^d)\lesssim \sqrt{\log d}$, as desired.

\subsection{A bound based on the analysis of metric projections}

In this section, we demonstrate how to bound the Gaussian width using
\Cref{thm:decomp-via-projections} and bounds on the metric projection 
onto $B^d_1$. 
Let us introduce the function 
\[
R(\sigma, d) 
\defn 
\begin{cases}
    \sigma^2 d & \mbox{if}~\sigma \leq \tfrac{1}{d}, \\ 
    \sigma [\log(\e d \sigma)]^{-1/2} & \mbox{if}~\sigma \in (\tfrac{1}{d}, \sqrt{\log(\e d)}), \\ 
    1 & \mbox{if}~\sigma \geq \sqrt{\log(\e d)}.
\end{cases}
\]

\begin{proposition} 
\label{prop:l1-metric-projection-upper}
The projection onto the $\ell_1$ ball satisfies the following bounds
\[
\E_{\NoiseVec \sim \Normal{0}{I_\dimension}}\Big[ \|\Pi_{B^d_1}(\sigma \NoiseVec)\|_2^2\Big] 
\lesssim R(\sigma, d), 
\]
for every $\sigma > 0$ and any $d \geq 1$.
\end{proposition}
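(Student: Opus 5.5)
We bound $\E\|\Pi_{B^d_1}(\sigma \NoiseVec)\|_2^2$ separately in the three regimes defining $R(\sigma,d)$.

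\emph{Small and large $\sigma$.} The first regime, $\sigma\le 1/d$, follows from the trivial bound $\E\|\Pi_T(\sigma g)\|^2\le \sigma^2 d$ in \Cref{prop:balltype-small-sigma}(i). The third regime, $\sigma \ge \sqrt{\log(\e d)}$, follows from $\|\Pi_{B^d_1}(y)\|_2\le \|\Pi_{B^d_1}(y)\|_1\le 1$. Only the intermediate regime $\sigma\in(1/d,\sqrt{\log(\e d)})$ needs work.

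\emph{Intermediate regime: structure of the projection.} We use the explicit soft-thresholding form of the projection. For $y\notin B^d_1$,
\[
\Pi_{B^d_1}(y)_i=\sign(y_i)(|y_i|-\tau)_+ ,
\]
where $\tau=\tau(y)>0$ solves $\sum_i(|y_i|-\tau)_+=1$. Set $y=\sigma\NoiseVec$ and let $k$ be the number of nonzero coordinates of the projection. Then $\|\Pi(y)\|_1=1$ and $\Pi(y)$ is $k$-sparse. Two deterministic bounds follow:
\[
\|\Pi(y)\|_2^2\le \max_i(|y_i|-\tau)_+\cdot \|\Pi(y)\|_1 = (\|y\|_\infty-\tau)_+ ,
\qquad\text{and}\qquad
\|\Pi(y)\|_2^2\le \|\Pi(y)\|_1^2 =1 .
\]
The real task is to show that $\tau$ is close to the top of the order statistics. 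Define the deterministic level $t$ by
\[
d\,\P\{\sigma|\NoiseVec_1|>t\}\cdot \E[(\sigma|\NoiseVec_1|-t)_+ \mid \sigma|\NoiseVec_1|>t]\approx 1,
\]
that is, $d\,\E(\sigma|\NoiseVec_1|-t)_+ = 1$. Using Gaussian tail asymptotics, $d\,\sigma\,\phi(t/\sigma)(\sigma/t)^2\asymp 1$. This gives $t\approx \sigma\sqrt{2\log(d\sigma)}$ up to lower-order corrections, and the mean excess above $t$ is about $\sigma^2/t\asymp \sigma/\sqrt{\log(\e d\sigma)}$.

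\emph{Intermediate regime: concentration and the final bound.} The function $\tau'\mapsto\sum_i(\sigma|\NoiseVec_i|-\tau')_+$ is a sum of independent bounded-increment terms. Bernstein/Poisson concentration then shows $\tau\ge t-O(\sigma^2/t)$ with high probability. Heuristically, the expected number of coordinates above $t$ is about $t/\sigma^2\cdot$(const), namely $\sqrt{\log(d\sigma)}/\sigma$, which is large in this regime. On this event,
\[
\|\Pi(y)\|_2^2 = \sum_i(|y_i|-\tau)_+^2 \lesssim \sum_i (|y_i|-t')_+^2
\]
with $t'=t-C\sigma^2/t$. In expectation this is
\[
d\,\E(\sigma|\NoiseVec_1|-t')_+^2\asymp (\sigma^2/t)\cdot d\,\E(\sigma|\NoiseVec_1|-t')_+\lesssim \sigma^2/t\asymp \sigma[\log(\e d\sigma)]^{-1/2}.
\]
On the complementary event we use the bound $1$, multiplied by a small probability.

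\emph{Main obstacle.} The hard step is the lower bound on $\tau$ near the edges of the intermediate regime. When $d\sigma$ is close to $1$, few coordinates are active and the Poisson concentration is weak. When $\sigma\approx\sqrt{\log d}$, the bad-event probability must be shown $\lesssim \sigma/\sqrt{\log(\e d\sigma)}$. Near these edges I would instead fall back on interpolation: the monotonicity of $\sigma\mapsto \E\|\Pi(\sigma\NoiseVec)\|^2$ and of $\sigma\mapsto\E\|\Pi(\sigma\NoiseVec)\|^2/\sigma^2$ (the latter by convexity, since $T/\sigma$ shrinks) lets us transfer bounds from interior points to the edges at constant cost.
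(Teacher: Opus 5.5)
Your proposal is correct and is essentially the paper's proof. Both use soft-thresholding with a data-driven threshold, and your population level $t=\sigma\lambda^\star$ defined by $d\,\E(\sigma|\xi_1|-t)_+=1$ is the paper's $S_1(\lambda^\star)=1/(\sigma d)$. Both then use a lower-tail Chernoff bound showing the empirical threshold exceeds $t-\sigma^2/(2t)$ (the paper's shift $\delta=1/(2\lambda^\star)$), bound the expected excess $\sigma^2 d\,S_2(\lambda^\star-\delta)\lesssim\sigma/\sqrt{\log(\e d\sigma)}$ on the good event, and use the trivial bound $1$ on the bad event.

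The edge obstacle you flag does not actually arise, so no interpolation is needed. The Chernoff exponent is of order $\lambda^\star/\sigma$, which is your expected number of active coordinates, so $\e^{-x}\le 1/x$ bounds the bad-event contribution by $O(\sigma/\lambda^\star)$ uniformly in $\sigma$. The range $d\sigma=O(1)$ is covered directly by $\sigma^2 d\lesssim \sigma/\sqrt{\log(\e d\sigma)}$; note that there a constant fraction of all $d$ coordinates is active, not few as you suggest.
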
 
\noindent The proof of this result is presented in \Cref{app:proof-of-l1-metric-projection-upper}.
 Indeed, a short calculation using $\rad(B^d_1) = 1$, \Cref{thm:decomp-via-projections} and \Cref{prop:l1-metric-projection-upper} yields that
\begin{equation}
w(B^d_1) \lesssim 1 + \int_0^\infty \frac{R(\sigma, d)}{\sigma^2} \, \ud \sigma  \lesssim \sqrt{\log(\e d)},
\label{eqn:upper-on-the-width-from-projections}
\end{equation}
which is the correct order for the Gaussian width of the crosspolytope.

\subsubsection{Proof of \Cref{prop:l1-metric-projection-upper}}
\label{app:proof-of-l1-metric-projection-upper}

To prove the claim, we need different arguments for different ``ranges'' of the noise level. 
The most technical aspect is the following result for ``moderate'' noise levels. 
\begin{lemma} [Bound for moderate $\sigma$]
\label{lem:medium-sigma-bound}
If $d \geq 1$ and $\sigma \geq 1/d$, then 
\[
\E \|\BallProj(\sigma \NoiseVec)\|_2^2 \leq 
 205584 \frac{\sigma}{\sqrt{\log(\e d \sigma)}}.
\]
\end{lemma}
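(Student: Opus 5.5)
We need to bound $\E\|\Pi_{B_1^d}(\sigma\xi)\|_2^2 \lesssim \sigma/\sqrt{\log(\e d\sigma)}$ for $\sigma\ge 1/d$. The plan is to use the explicit soft-thresholding form of the projection. For $y\in\R^d$ with $\|y\|_1>1$, we have $\Pi_{B_1^d}(y)=\eta_\lambda(y)$, where $\eta_\lambda(y)_i=\sign(y_i)(|y_i|-\lambda)_+$ and $\lambda=\lambda(y)>0$ solves $\sum_i(|y_i|-\lambda)_+=1$. Since $\|\Pi(y)\|_2^2\le\|\Pi(y)\|_1\le 1$, we may assume $\log(\e d\sigma)$ is large; otherwise the bound is trivial, because $\sigma\ge1/d$ forces the right-hand side to be $\gtrsim 1$ once $\sigma\gtrsim 1$. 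The same observation disposes of the regime where $\sigma$ is large enough that $\sigma/\sqrt{\log(\e d\sigma)}\gtrsim 1$. The real content is therefore $1/d\le\sigma\lesssim\sqrt{\log(\e d)}$.

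\textbf{Step 1 (deterministic bound).} For any fixed threshold $t\ge 0$, I will prove
\[
\|\Pi(y)\|_2^2\le \max_i\,(|y_i|-\lambda)_+\cdot \sum_i(|y_i|-\lambda)_+\le \max\Big\{t,\ \sum_i(|y_i|-t)_+^2\Big\}.
\]
The first inequality holds because the $\ell_1$ norm of $\Pi(y)$ equals $1$. For the second, split on whether $\lambda\ge t$. If $\lambda\ge t$, then $\|\Pi(y)\|_2^2\le\sum_i(|y_i|-t)_+^2$ by monotonicity of soft-thresholding. If $\lambda<t$, I use a more careful estimate: $\|\Pi\|_2^2\le\|\Pi\|_\infty\|\Pi\|_1$. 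A cleaner route is the inequality $\|\eta_\lambda(y)\|_2^2\le \lambda'+\|\eta_{\lambda'}(y)\|_2^2$ for any $\lambda'\ge\lambda$, which follows from $(a-\lambda)^2\le (a-\lambda')^2+2(\lambda'-\lambda)(a-\lambda)$ summed over the support together with $\sum(a-\lambda)=1$. This gives $\|\Pi(y)\|_2^2\le 2t+\|\eta_t(y)\|_2^2$ in all cases.

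\textbf{Step 2 (choice of $t$ and Gaussian tail).} Take $y=\sigma\xi$ and $t=c\,\sigma/\sqrt{\log(\e d\sigma)}$. This choice is not obviously the right scale, so I will instead optimize: use $t=\sigma\tau$ and compute
\[
\E\|\eta_{\sigma\tau}(\sigma\xi)\|_2^2=\sigma^2 d\,\E(|\xi_1|-\tau)_+^2\lesssim \sigma^2 d\,\tau^{-3}e^{-\tau^2/2}.
\]
The total bound is then $2\sigma\tau+\sigma^2 d\,\tau^{-3}e^{-\tau^2/2}$. Choosing $\tau\approx\sqrt{2\log(d\sigma)}$ makes the second term $\lesssim \sigma\,\tau^{-3}$, but leaves a leading term of order $\sigma\sqrt{\log}$ rather than $\sigma/\sqrt{\log}$. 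So the naive Step 1 loses a factor $\log(\e d\sigma)$, and this is where the main obstacle lies.

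\textbf{Step 3 (the refinement, main obstacle).} The loss comes from bounding $\lambda'-\lambda$ crudely. The true $\lambda$ is itself near $\sigma\tau_0$, with $\tau_0$ solving $d\sigma\,\E(|\xi_1|-\tau_0)_+\approx 1$, that is $d\sigma\,\tau_0^{-2}e^{-\tau_0^2/2}\approx 1$. At that level, $\|\eta_\lambda\|_2^2\approx d\sigma^2\E(|\xi_1|-\tau_0)_+^2\approx d\sigma^2\tau_0^{-3}e^{-\tau_0^2/2}\approx\sigma/\tau_0$, which is exactly the target $\sigma/\sqrt{\log(\e d\sigma)}$. I will therefore argue with high-probability concentration of $\lambda(\sigma\xi)$. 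Set $\lambda_\pm=\sigma\tau_0(1\mp\epsilon)$ with $\epsilon\asymp 1/\tau_0^2$. Define $S(\lambda)=\sum_i(\sigma|\xi_i|-\lambda)_+$, a $\sqrt d\sigma$-Lipschitz sum of independent terms with small variance. Bernstein or Poisson-type bounds then give $S(\lambda_+)<1<S(\lambda_-)$ with high probability, so $\lambda\ge\lambda_+$ on that event. On this good event, $\|\Pi\|_2^2\le\|\eta_{\lambda_+}(\sigma\xi)\|_2^2$, whose expectation is $\lesssim \sigma/\tau_0$ by the tail computation, since shifting $\tau_0$ by $O(1/\tau_0)$ changes $e^{-\tau^2/2}$ only by a constant factor. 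On the bad event, use $\|\Pi\|_2^2\le 1$ times its probability, which must be shown to be $\lesssim\sigma/\tau_0$. The delicate point is obtaining this tail probability when the expected number of active coordinates, $d\,\P(|\xi_1|>\tau_0)\approx\tau_0/\sigma$, is only moderate. In that case I would use a multiplicative Chernoff bound on the count together with the independent sizes of the exceedances, accepting a large absolute constant as in the stated $205584$.
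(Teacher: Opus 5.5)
Your Step~3 follows the paper's argument, and Steps~1--2 are a detour. The paper fixes the population threshold $\lambda^\star$ with $S_1(\lambda^\star)=1/(\sigma d)$ and lowers it by $\delta=1/(2\lambda^\star)$, which is your $\epsilon\asymp\tau_0^{-2}$. On the event $\{\hat S_1(\lambda^\star-\delta)>1/(\sigma d)\}$ it uses monotonicity of soft-thresholding to get $\E\|\BallProj(\sigma\NoiseVec)\|_2^2\le\sigma^2 d\,S_2(\lambda^\star-\delta)+\P(\text{bad event})$, and it bounds $\phi(\lambda^\star-\delta)/\phi(\lambda^\star)$ by a constant. Your displayed event is reversed: since $\lambda_+<\lambda_-$, you always have $S(\lambda_+)\ge S(\lambda_-)$. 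What you need, and all you need, is the one-sided event $S(\lambda_+)>1$.

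The real gap is that the proposal stops at the step that carries the proof. You never show $\P(\text{bad event})\lesssim\sigma/\tau_0$, and the tools you mention don't do it. Gaussian concentration with the global Lipschitz constant $\sigma\sqrt d$ gives only $\exp(-c/(\sigma^2 d))$. That is useless once $\sigma\gtrsim d^{-1/2}$, where the target is far below $1$. A count-based Chernoff argument is not needed either.

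The missing idea is that only a \emph{lower} tail is required. For a sum of independent \emph{nonnegative} variables, the lower tail is sub-Gaussian with variance proxy $\sum_i\E X_i^2$, from $e^{-x}\le 1-x+x^2/2$ for $x\ge0$. The paper gets an equivalent estimate by an MGF computation in \Cref{lem:tail-bound-for-first-moment}. Concretely:
\begin{itemize}
\item Take $X_i=\sigma(|\NoiseVec_i|-\lambda^\star+\delta)_+$.
\item The mean of $\sum_i X_i$ exceeds $1$ by at least $2\sigma d\,\delta\,\Psi(\lambda^\star)\ge\tfrac14$, using $\sigma d\,\phi(\lambda^\star)\ge(\lambda^\star)^2/2$.
\item The variance proxy is $\sum_i\E X_i^2=\sigma^2 d\,S_2(\lambda^\star-\delta)\lesssim\sigma/\lambda^\star$.
\item Hence $\P(\text{bad event})\le\exp(-c\lambda^\star/\sigma)\le\sigma/(c\lambda^\star)$, by $e^{-x}\le1/x$, which holds for all $x>0$.
\end{itemize}
This also removes your worry about a moderate number of active coordinates. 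Up to constants, $\lambda^\star/\sigma$ is the expected number of active coordinates, and when it is $O(1)$ the target $\sigma/\lambda^\star$ is already $\gtrsim1$.

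Your preliminary reduction is also misjustified. If $d\sigma=O(1)$ but $\sigma$ is small (say $\sigma=2/d$ with $d$ large), the right-hand side is of order $\sigma$, not $\gtrsim1$, so $\|\BallProj(y)\|_2\le1$ does not suffice. Use $\E\|\BallProj(\sigma\NoiseVec)\|_2^2\le\sigma^2 d=\sigma\cdot(d\sigma)$ instead, as the paper does for $d\sigma\le1/S_1(1)$. That case split is also what guarantees $\lambda^\star\ge1$, which the Mills-ratio estimates need.
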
 
\noindent The proof of \Cref{lem:medium-sigma-bound} is given in the next section. \Cref{lem:medium-sigma-bound}  completes the proof of the claim: for $\sigma \leq 1/d$, the nonexpansiveness of projections yields
\[
\E \|\BallProj(\sigma \NoiseVec)\|_2^2 \leq \sigma^2 \E \|\NoiseVec\|_2^2 = \sigma^2 d.
\]
For $\sigma \geq \sqrt{\log(\e d)}$, the claim follows from the inclusion $B^d_1 \subset B^d_2$. 

\subsubsection{Proof of \Cref{lem:medium-sigma-bound}}

Let $Z \sim \Normal{0}{1}$. We use the notation 
\[\Psi(z) \defn \P\{Z \geq z\}, \quad 
\Phi(z) \defn \P\{Z \leq z\}, \quad \mbox{and} \quad 
\phi(z) = \frac{1}{\sqrt{2\pi}} \e^{-z^2/2}.
\]
Of course, $\Psi(z) = 1 - \Phi(z)$ and $\Phi(-z) = \Psi(z)$. We also repeatedly use bounds on Mills' ratio, 
\[
M(z) \defn \frac{\Psi(z)}{\phi(z)}.
\]
We use the shorthand $\alpha \defn \sigma d$. 
We define the functions
\[
S_1(\lambda) \defn 
\E [(|Z| - \lambda)_+] 
\quad \mbox{and,} \quad 
S_2(\lambda) \defn 
\E [(|Z| - \lambda)_+^2].
\]
We also require their random counterparts, 
\[
\hat S_1(\lambda) \defn 
\frac{1}{d} \sum_{i=1}^d (|\NoiseVec_i| - \lambda)_+ 
\quad \mbox{and,} \quad 
\hat S_2(\lambda) \defn 
\frac{1}{d} \sum_{i=1}^d 
(|\NoiseVec_i| - \lambda)_+^2. 
\]
We define the two fixed points, 
\begin{equation}\label{eqn:fixed-points}
\lambdastar 
\defn 
\inf\big\{\lambda \geq 0 : S_1(\lambda) \leq \tfrac{1}{\alpha}\,\big\}, 
\quad \mbox{and,} \quad 
\hat\lambda 
\defn 
\inf\big\{\lambda \geq 0 : \hat S_1(\lambda) \leq \tfrac{1}{\alpha}\,\big\}.
\end{equation}

\subsubsection{Auxiliary results}

The following claim follows easily from convex duality; we omit the proof, but note that it uses the same argument as in Lemma 1 in~\cite{AolJorPatUli25}.  
\begin{lemma} [Characterization of the projection onto the $\ell_1$ ball]
\label{lem:characterization-of-projection}
For any $d \geq 1$, we have for any $\NoiseVec \in \R^\dimension, \sigma > 0$ that
\[
\Big[\BallProj(\sigma \NoiseVec)\Big]_i = 
\sigma \sign(\NoiseVec_i) \big(|\NoiseVec_i| - \hat \lambda\big)_+
\]
where $\hat\lambda$ is defined as in display~\eqref{eqn:fixed-points}.
\end{lemma}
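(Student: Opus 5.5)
The plan is to derive the formula from the Karush--Kuhn--Tucker conditions of the projection problem. By scaling, $\Pi_{B^d_1}(\sigma\xi)=\sigma\,\Pi_{B^d_1/\sigma}(\xi)$. Hence it suffices to show that the minimizer of
\[
\tfrac12\|\xi-u\|_2^2 \quad \mbox{subject to} \quad \|u\|_1\le 1/\sigma
\]
is $u_i=\sign(\xi_i)(|\xi_i|-\hat\lambda)_+$. We will show this with $\hat\lambda=\inf\{\lambda\ge0:\hat S_1(\lambda)\le 1/\alpha\}$, where $\alpha=\sigma d$. The constraint $\sum_i (|\xi_i|-\lambda)_+\le 1/\sigma$ is exactly $\hat S_1(\lambda)\le 1/(\sigma d)=1/\alpha$, which is why this $\hat\lambda$ appears.

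\textbf{Step 1: Lagrangian dual.} For a multiplier $\lambda\ge0$, minimize
\[
\tfrac12\|\xi-u\|^2+\lambda\big(\|u\|_1-1/\sigma\big)
\]
over $u\in\R^d$. This problem separates across coordinates, and its minimizer is the soft-thresholding $u_i(\lambda)=\sign(\xi_i)(|\xi_i|-\lambda)_+$. Slater's condition holds since $u=0$ is strictly feasible. Strong duality therefore applies, and the primal solution is $u(\lambda^\circ)$ for any $\lambda^\circ$ satisfying complementary slackness: $\lambda^\circ\ge0$, $\|u(\lambda^\circ)\|_1\le1/\sigma$, and $\lambda^\circ(\|u(\lambda^\circ)\|_1-1/\sigma)=0$.

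\textbf{Step 2: verify complementary slackness at $\hat\lambda$.} The map $\lambda\mapsto \hat S_1(\lambda)$ is continuous and nonincreasing. It is strictly decreasing wherever it is positive, and it tends to $0$ as $\lambda\to\infty$, so the infimum defining $\hat\lambda$ is attained and $\hat S_1(\hat\lambda)\le 1/\alpha$, which is primal feasibility. There are two cases.
\begin{itemize}
\item If $\hat\lambda=0$, slackness holds trivially, and the formula returns $u=\xi$. This is correct because $\|\xi\|_1\le1/\sigma$ in this case.
\item If $\hat\lambda>0$, continuity together with the definition of the infimum forces $\hat S_1(\hat\lambda)=1/\alpha$. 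That is, $\|u(\hat\lambda)\|_1=1/\sigma$, so the constraint is active and slackness holds.
\end{itemize}

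\textbf{Step 3: conclude.} By uniqueness of the metric projection onto a closed convex set, $\Pi_{B^d_1/\sigma}(\xi)=u(\hat\lambda)$. Multiplying by $\sigma$ gives the stated formula. As a cross-check on the KKT step, one can instead verify the variational inequality $\langle \xi-u,v-u\rangle\le0$ for all $v\in B^d_1/\sigma$ directly. Write $\xi-u=\hat\lambda s$, where $s$ is a subgradient of $\|\cdot\|_1$ at $u$. Then
\[
\langle \xi-u,v-u\rangle=\hat\lambda\big(\langle s,v\rangle-\|u\|_1\big)\le \hat\lambda\big(\|v\|_1-\|u\|_1\big)\le 0,
\]
using $\|u\|_1=1/\sigma$ when $\hat\lambda>0$.

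The main point requiring care is Step 2, namely the attainment and continuity argument that pins down $\hat S_1(\hat\lambda)=1/\alpha$ exactly when $\hat\lambda>0$. This includes the degenerate case where $\xi$ already lies in the scaled ball. Everything else is routine.
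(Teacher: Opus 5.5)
Your proof is correct and follows the route the paper indicates. The paper omits the argument, saying only that it follows from convex duality as in Lemma 1 of \cite{AolJorPatUli25}; your soft-thresholding Lagrangian argument, together with the attainment-and-continuity step forcing $\hat S_1(\hat\lambda)=1/\alpha$ when $\hat\lambda>0$, is exactly that argument written out. Slater's condition is not actually needed, since the sufficiency direction of KKT that you use is elementary, and your variational-inequality cross-check already gives a self-contained proof.
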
 

\begin{lemma} [Bounds on the Mills' ratio]
\label{lem:Mills-ratio-bounds}
The Mills' ratio 
satisfies the following bounds: 
\[
\frac{x^2}{1+x^2} \leq \frac{x^2 + \tfrac{1}{5x^2}}{1+x^2} \leq x M(x) \leq \frac{x^2 + 2}{x^2 + 3} \leq 1,
\]
where the upper bounds hold for $x \geq 0$, and the lower bounds hold for $x \geq 1$.
\end{lemma}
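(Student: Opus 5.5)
The plan is to prove all four inequalities with one comparison device for the Gaussian tail $\Psi$. The two outer inequalities are immediate. The first, $\frac{x^2}{1+x^2}\le \frac{x^2+1/(5x^2)}{1+x^2}$, holds because $1/(5x^2)>0$. The last, $\frac{x^2+2}{x^2+3}\le 1$, is trivial. So the content lies in the two middle bounds. For each, I write the claimed bound on $xM(x)$ as a bound $\Psi(x)\lessgtr \phi(x)q(x)$, where $q(x)=\rho(x)/x$ and $\rho$ is the corresponding rational function. I then study
\[
D(x)\defn \Psi(x)-\phi(x)q(x).
\]
Using $\phi'(x)=-x\phi(x)$ and $\Psi'=-\phi$, one gets
\[
D'(x)=\phi(x)\bigl(-1+xq(x)-q'(x)\bigr).
\]
Since $q$ is rational with at most polynomial growth, $D(x)\to 0$ as $x\to\infty$. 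Hence it suffices to determine the sign of the rational function $E_q(x)\defn -1+xq(x)-q'(x)$ on the relevant range. If $E_q>0$ there, then $D$ increases to $0$, so $D\le 0$ (an upper bound). If $E_q\le 0$ there, then $D$ decreases to $0$, so $D\ge 0$ (a lower bound).

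\textbf{Upper bound, $x\ge 0$.} At $x=0$ we have $xM(x)=0\le 2/3$, so only $x>0$ needs work. Take $q(x)=\frac{x^2+2}{x(x^2+3)}$, so that $xq=1-\frac{1}{x^2+3}$. A direct computation gives
\[
q'(x)=-\frac{x^4+3x^2+6}{x^2(x^2+3)^2},
\]
and therefore
\[
E_q(x)=\frac{-x^2(x^2+3)+x^4+3x^2+6}{x^2(x^2+3)^2}=\frac{6}{x^2(x^2+3)^2}>0 .
\]
By the dichotomy above, $D\le 0$, which is $xM(x)\le \frac{x^2+2}{x^2+3}$ on $(0,\infty)$.

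\textbf{Lower bound, $x\ge 1$.} Take
\[
q(x)=\frac{x^4+1/5}{x^3(1+x^2)}=\frac{5x^4+1}{5(x^5+x^3)}.
\]
I will compute $xq-1$ and $q'$ over the common denominator $5x^4(x^2+1)^2$ and show that the numerator polynomial is $\le 0$ for $x\ge 1$. Two checks suggest this sign is correct. First, an asymptotic expansion gives $xq=1-x^{-2}+\tfrac65x^{-4}-\dots$, whereas $xM(x)=1-x^{-2}+3x^{-4}-\dots$, so the bound has the right sign at infinity. Second, the constant $\tfrac15$ is small enough to make the polynomial inequality plausible down to $x=1$. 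The conclusion is then $D\ge 0$ on $[1,\infty)$, i.e.\ $xM(x)\ge \frac{x^2+1/(5x^2)}{1+x^2}$ for $x\ge 1$.

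\textbf{Main obstacle.} The hardest step is this polynomial sign verification for the lower bound. I expect the numerator to reduce to a low-degree polynomial in $y=x^2$. I would first substitute $y=x^2\ge 1$ and then check nonpositivity either by exhibiting its coefficients with all relevant terms negative after shifting $y=1+s$, or by bounding it via $y\ge 1$ term by term. If the $\tfrac15$ makes this fail near $x=1$, the fallback is to verify the range $[1,x_0]$ numerically using the known value $M(1)\approx 0.6557$ together with the monotonicity of both sides.
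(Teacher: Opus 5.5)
Your argument is correct, and it takes a genuinely different route from the paper's. The paper proves nothing about $M$ directly. It imports the continued-fraction bounds from Lemma~5 of Gasull--Utzet, namely $xM(x)\le \frac{x^2+2}{x^2+3}$ and $xM(x)\ge \frac{x^4+5x^2}{x^4+6x^2+3}$. It then only checks algebraically that, for $x\ge 1$, the latter dominates $\frac{x^2+1/(5x^2)}{1+x^2}$, and that comparison reduces to $9x^4-6x^2-3=3(3x^2+1)(x^2-1)\ge 0$.

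You instead prove both bounds from scratch, using the sign of $D'=\phi\,(-1+xq-q')$ together with $D(x)\to 0$. Your upper-bound computation $E_q=6/(x^2(x^2+3)^2)$ is right. The step you left open also goes through cleanly, with no numerical fallback needed. Over your common denominator,
\[
xq-1=\frac{1-5x^2}{5x^2(x^2+1)},\qquad q'(x)=\frac{-5x^6+5x^4-5x^2-3}{5x^4(x^2+1)^2},
\]
so
\[
E_q(x)=\frac{-9x^4+6x^2+3}{5x^4(x^2+1)^2}=-\frac{3(3x^2+1)(x^2-1)}{5x^4(x^2+1)^2}.
\]
This is $\le 0$ exactly on $[1,\infty)$ and vanishes at $x=1$. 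Hence $D$ is nonincreasing there and $D\ge 0$, which is the claimed lower bound.

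If you replace $\tfrac15$ by a general $c$, the numerator over $x^4(x^2+1)^2$ becomes $(c-2)x^4+6cx^2+3c$, whose value at $x=1$ is $10c-2$. So $c=\tfrac15$ is exactly the largest constant for which this derivative argument covers all of $[1,\infty)$. This gives a natural explanation of both the constant and the threshold in the statement, and it is the same polynomial $(3x^2+1)(x^2-1)$ that governs the paper's comparison step.

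As for the trade-off: the paper's proof is two lines given the citation and passes through a sharper intermediate lower bound. Yours is self-contained and makes transparent why the lemma is stated with $\tfrac15$ and $x\ge 1$.
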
 
\begin{proof}
By Lemma 5 in~\cite{GasUtz14}, we have, in their notation, for all $x \geq 0$,
\[
xM(x) \leq \frac{x Q_3(x)}{P_3(x)} = 
\frac{x(x^2+2)}{x^3 + 3x} = \frac{x^2 + 2}{x^2 + 3}.
\]
Additionally, for $x \geq 1$, we have 
\[
x M(x) \geq \frac{xQ_4(x)}{P_4(x)} = 
\frac{x^4 + 5x^2}{x^4 + 6x^2 + 3} = 
1 - \frac{1}{1 + x^2} \Big(1 - \frac{2x^2}{x^4 + 6x^2 + 3}\Big) \geq 
1 - \frac{1}{1 + x^2} \Big(1 - \frac{1}{5x^2}\Big),
\]
where we used $x \geq 1$ for the final inequality.
\end{proof}

\subsubsection{Proof outline of \Cref{lem:medium-sigma-bound}}

By definition of $\hat \lambda, \lambdastar$ (see eqn.~\eqref{eqn:fixed-points}) we have 
$\|\BallProj(\sigma \NoiseVec)\|_2^2 = \sigma^2 d \hat S_2(\hat\lambda)$.
We would expect that
\begin{equation}\label{eqn:emp-to-pop}
\hat \lambda \approx \lambdastar, 
\quad \mbox{and, hence,} \quad 
\E \|\BallProj(\sigma \NoiseVec)\|_2^2 \approx 
\sigma^2 d \hat S_2(\hat \lambda) 
\approx \sigma^2 d S_2(\lambdastar).
\end{equation}
by definition of $\alpha$ and by \Cref{lem:bounds-on-lambda-star}. 
The first part of the argument, presented below in \Cref{sec:population-quantities}, shows that $S_2(\lambdastar) \asymp [\sigma d \sqrt{\log (\e d \sigma)}]^{-1}$. The second part of the argument, presented below in \Cref{sec:emp-to-pop}, makes the approximations above~\eqref{eqn:emp-to-pop} rigorous and will yield the claim.
We combine the first two parts of the argument and complete the proof in \Cref{sec:complete-the-proof}.

\subsubsection{Bounds on population quantities}
\label{sec:population-quantities}

\begin{lemma} 
\label{lem:first-and-second-moment-decomps}
Let $E(\lambda) \defn 1 - \lambda M(\lambda)$. 
Then, we have 
\[
S_1(\lambda) =
2 \phi(\lambda) E(\lambda)
\quad\mbox{and} \quad 
S_2(\lambda) = 
2 \frac{\phi(\lambda)}{\lambda}
[1-(\lambda^2 + 1)E(\lambda)].
\]
\end{lemma}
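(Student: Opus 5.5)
The plan is to prove both identities of \Cref{lem:first-and-second-moment-decomps} by direct one-dimensional Gaussian integration. By symmetry of $Z$, we have $S_k(\lambda) = 2\E[(Z-\lambda)_+^k]$ for $k=1,2$, so it suffices to compute $\int_\lambda^\infty (z-\lambda)^k \phi(z)\,\ud z$. Throughout I will use only the identity $\phi'(z) = -z\phi(z)$ and the definition $\Psi(\lambda) = \phi(\lambda)M(\lambda)$. I assume $\lambda > 0$ so that division by $\lambda$ in the second formula makes sense. The case $\lambda = 0$ can be read by continuity, or simply noted separately.

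For the first moment, I will use $\int_\lambda^\infty z\phi(z)\,\ud z = \phi(\lambda)$. This gives
\[
\int_\lambda^\infty (z-\lambda)\phi(z)\,\ud z = \phi(\lambda) - \lambda\Psi(\lambda) = \phi(\lambda)\big(1-\lambda M(\lambda)\big) = \phi(\lambda)E(\lambda),
\]
and doubling yields $S_1(\lambda) = 2\phi(\lambda)E(\lambda)$.

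For the second moment, I expand $(z-\lambda)^2 = z^2 - 2\lambda z + \lambda^2$. I will use $\int_\lambda^\infty z^2\phi(z)\,\ud z = \lambda\phi(\lambda) + \Psi(\lambda)$, which follows by integrating $z\cdot z\phi(z)$ by parts. This gives
\[
\int_\lambda^\infty (z-\lambda)^2\phi(z)\,\ud z = \lambda\phi(\lambda) + \Psi(\lambda) - 2\lambda\phi(\lambda) + \lambda^2\Psi(\lambda) = (1+\lambda^2)\Psi(\lambda) - \lambda\phi(\lambda).
\]
The only step needing care is rewriting this in the stated form. Using $\Psi(\lambda) = \phi(\lambda)M(\lambda) = \tfrac{\phi(\lambda)}{\lambda}\big(1-E(\lambda)\big)$, the expression becomes
\[
\frac{\phi(\lambda)}{\lambda}\Big[(1+\lambda^2)\big(1-E(\lambda)\big) - \lambda^2\Big] = \frac{\phi(\lambda)}{\lambda}\Big[1-(\lambda^2+1)E(\lambda)\Big].
\]
Doubling gives the claimed formula for $S_2(\lambda)$. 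There is no real obstacle here; the computation is routine calculus plus this algebraic rearrangement.
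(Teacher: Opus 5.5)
Your proof is correct and follows the paper's argument: reduce to $2\int_\lambda^\infty (z-\lambda)^k\phi(z)\,\ud z$ by symmetry, compute via $\phi'(z)=-z\phi(z)$ (the paper phrases the second-moment step as $(z^2-1)\phi(z) = (-z\phi(z))'$, which is your integration by parts), and rewrite $(\lambda^2+1)\Psi(\lambda)-\lambda\phi(\lambda)$ using $\lambda M(\lambda) = 1-E(\lambda)$. Your explicit restriction to $\lambda>0$ (with $\lambda=0$ by continuity) is also what the paper's computation implicitly requires.
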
 
\begin{proof}
First note that 
\[
S_1(\lambda) = 
2 \int_\lambda^\infty (z - \lambda) \phi(z) \, \ud z = 2[\phi(\lambda) - \lambda \Psi(\lambda)]
= 
2\phi(\lambda)[1 - \lambda M(\lambda)] 
= 2 \phi(\lambda) E(\lambda)
\]
Similarly, we have 
\begin{multline}\label{eqn:second-moment}
S_2(\lambda) = 
2 \int_\lambda^\infty (z^2 - 2\lambda z + \lambda^2) \, \phi(z) \, \ud z = 2[\lambda \phi(\lambda) -2\lambda \phi(\lambda) +(\lambda^2 + 1)\Psi(\lambda)] \\= 
2[(\lambda^2 + 1)\Psi(\lambda) - \lambda \phi(\lambda)] = 2 
\frac{\phi(\lambda)}{\lambda}
[(\lambda^2 + 1)\lambda M(\lambda) - \lambda^2]
\end{multline}
where we used $(z^2 - 1) \phi(z) = (-z\phi(z))'$. To conclude, we simply recall that $\lambda M(\lambda) = 1 - E(\lambda)$.
\end{proof}

\begin{lemma} [Bounds on $S_1$ and $S_2$]
\label{lem:rational-bounds-moments}
For $\lambda \geq 1$, we have 
\[
\frac{1}{2} \, 
\frac{\varphi(\lambda)}{\lambda^2} 
\leq S_1(\lambda) \leq 
2 \, \frac{\varphi(\lambda)}{\lambda^2}, 
\quad \mbox{and,} \quad 
\frac{2}{5} \frac{\phi(\lambda)}{\lambda^3} 
\leq S_2(\lambda) \leq 4 
\frac{\phi(\lambda)}{\lambda^3}.
\]
\end{lemma}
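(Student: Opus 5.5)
We prove both two-sided estimates by working from the closed forms in \Cref{lem:first-and-second-moment-decomps}: $S_1(\lambda)=2\phi(\lambda)E(\lambda)$ and $S_2(\lambda)=2\frac{\phi(\lambda)}{\lambda}[1-(\lambda^2+1)E(\lambda)]$, where $E(\lambda)=1-\lambda M(\lambda)$. Each bound then reduces to a two-sided estimate on $E(\lambda)$, or on $1-(\lambda^2+1)E(\lambda)$, for $\lambda\ge 1$. Those estimates come from the Mills' ratio bounds in \Cref{lem:Mills-ratio-bounds}, which hold for $x\ge 1$.

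\emph{Bounds on $S_1$.} The upper bound on $xM(x)$ gives $\lambda M(\lambda)\le \frac{\lambda^2+2}{\lambda^2+3}$. Hence
\[
E(\lambda)\ge \frac{1}{\lambda^2+3}\ge \frac{1}{4\lambda^2}
\]
when $\lambda\ge1$, and therefore $S_1(\lambda)\ge \frac{\phi(\lambda)}{2\lambda^2}$. The lower bound $\lambda M(\lambda)\ge \frac{\lambda^2}{1+\lambda^2}$ gives $E(\lambda)\le \frac{1}{1+\lambda^2}\le \frac{1}{\lambda^2}$, and therefore $S_1(\lambda)\le \frac{2\phi(\lambda)}{\lambda^2}$. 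Both constants match the statement.

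\emph{Bounds on $S_2$.} Write $D(\lambda)=1-(\lambda^2+1)E(\lambda)$, so that $S_2=2\phi D/\lambda$. It suffices to show
\[
\frac{1}{5\lambda^2}\le D(\lambda)\le \frac{2}{\lambda^2}.
\]
For the lower bound, use the sharper Mills lower bound $\lambda M(\lambda)\ge \frac{\lambda^2+\frac{1}{5\lambda^2}}{1+\lambda^2}$. This gives
\[
(\lambda^2+1)E(\lambda)\le 1-\frac{1}{5\lambda^2},
\]
so $D\ge \frac{1}{5\lambda^2}$ and $S_2\ge \frac{2}{5}\frac{\phi}{\lambda^3}$.

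For the upper bound, the Mills upper bound gives $(\lambda^2+1)E(\lambda)\ge \frac{\lambda^2+1}{\lambda^2+3}$. Hence
\[
D\le \frac{2}{\lambda^2+3}\le \frac{2}{\lambda^2},
\]
and so $S_2\le \frac{4\phi}{\lambda^3}$.

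\emph{Main obstacle.} The only delicate step is the lower bound on $S_2$. Here $D$ comes from a cancellation between $1$ and $(\lambda^2+1)E(\lambda)$, and the crude bound $xM(x)\ge \frac{x^2}{1+x^2}$ only yields $D\ge 0$. This is exactly why the refined term $\frac{1}{5x^2}$ in \Cref{lem:Mills-ratio-bounds} is needed, and why the result is restricted to $\lambda\ge1$. Once that bound is used, the rest is algebra.
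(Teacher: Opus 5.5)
Your proposal is correct and matches the paper's proof essentially line by line. Both bound $E(\lambda)=1-\lambda M(\lambda)$ on each side via \Cref{lem:Mills-ratio-bounds}, using $\lambda^2+3\le 4\lambda^2$ for the lower bound on $S_1$ and the refined $\frac{1}{5x^2}$ term for the lower bound on $S_2$, and then substitute into the closed forms of \Cref{lem:first-and-second-moment-decomps}.
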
 
\begin{proof}
    From \Cref{lem:Mills-ratio-bounds}, it follows that, for $\lambda \geq 1$, we have for $E(\lambda) \defn 1 - \lambda M(\lambda)$ that
    \[
    \frac{1}{4} \frac{1}{\lambda^2}
    \leq 
    \frac{1}{3 + \lambda^2} 
    \leq E(\lambda) \leq 
    \frac{1 - 1/(5\lambda^2)}{\lambda^2 + 1}
    \leq 
    \frac{1}{1+\lambda^2} \leq \frac{1}{\lambda^2}.
    \]
    The bounds on $S_1(\lambda)$ now follow immediately from \Cref{lem:first-and-second-moment-decomps}. Using the bounds in \Cref{lem:Mills-ratio-bounds} again, we have
    \[
    S_2(\lambda) = 2 \frac{\phi(\lambda)}{\lambda} [1 - (\lambda^2+1)E(\lambda)] \leq \frac{\phi(\lambda)}{\lambda} 
    \frac{4}{\lambda^2 + 3} \leq 4 \frac{\phi(\lambda)}{\lambda^3}.
    \]
    And, on the other hand, we have 
    \[
    S_2(\lambda) = 2 \frac{\phi(\lambda)}{\lambda} [1 - (\lambda^2+1)E(\lambda)] \geq \frac{2}{5} \frac{\phi(\lambda)}{\lambda^3}. \qedhere 
    \]
\end{proof}

\begin{lemma} \label{lem:bounds-on-lambda-star}
If $\alpha \geq \tfrac{1}{S_1(1)}$, then 
\[
\sqrt{\frac{1}{7} \log(\e \alpha)}
\leq \lambdastar \leq 
\sqrt{2 \log(\e \alpha)}, 
\quad \mbox{and,} \quad 
 \frac{1}{2 \sqrt{2}} \frac{1}{\alpha \sqrt{\log(\e \alpha)}}
\leq S_2(\lambdastar) \leq 
 12\sqrt{7} \frac{1}{\alpha \sqrt{\log(\e \alpha)}}.
\]
\end{lemma}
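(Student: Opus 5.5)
We have to prove \Cref{lem:bounds-on-lambda-star}, which gives bounds on $\lambdastar$ and on $S_2(\lambdastar)$ when $\alpha \geq 1/S_1(1)$. The plan uses only \Cref{lem:rational-bounds-moments} and monotonicity. Since $S_1(\lambda)=\E(|Z|-\lambda)_+$ is continuous and strictly decreasing from $S_1(0)=\E|Z|$ to $0$, the hypothesis $\alpha \ge 1/S_1(1)$ gives $S_1(1)\ge 1/\alpha$. Hence $\lambdastar \ge 1$, and by continuity $S_1(\lambdastar)=1/\alpha$. The rational bounds of \Cref{lem:rational-bounds-moments} are therefore valid at $\lambdastar$, and they yield
\[
\frac{1}{2}\frac{\phi(\lambdastar)}{(\lambdastar)^2}\le \frac{1}{\alpha}\le 2\frac{\phi(\lambdastar)}{(\lambdastar)^2}.
\]

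\textbf{Bounds on $\lambdastar$.} For the upper bound, I would argue by monotonicity rather than inverting the display. Set $\lambda_1=\sqrt{2\log(\e\alpha)}$; note that $\alpha \geq 1/S_1(1) > 1$, so $\lambda_1 \ge \sqrt2 > 1$. Then
\[
S_1(\lambda_1)\le 2\phi(\lambda_1)/\lambda_1^2=\frac{2}{\sqrt{2\pi}\,\e\alpha\,\lambda_1^2}\le \frac1\alpha,
\]
so $\lambdastar\le\lambda_1$. For the lower bound, use $1/\alpha\le 2\phi(\lambdastar)/(\lambdastar)^2\le 2\phi(\lambdastar)$. This gives $e^{-(\lambdastar)^2/2}\ge \sqrt{2\pi}/(2\alpha)$, i.e.
\[
(\lambdastar)^2\le 2\log\bigl(2\alpha/\sqrt{2\pi}\bigr),
\]
which is the wrong direction. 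Instead, use the other side: $1/\alpha \ge \tfrac12\phi(\lambdastar)/(\lambdastar)^2$. With the upper bound $(\lambdastar)^2 \le 2\log(\e\alpha)$ just proved, this gives
\[
e^{-(\lambdastar)^2/2}\le \frac{2\sqrt{2\pi}\,(\lambdastar)^2}{\alpha}\le \frac{4\sqrt{2\pi}\log(\e\alpha)}{\alpha}.
\]
Taking logarithms,
\[
(\lambdastar)^2\ge 2\log\alpha-2\log\bigl(4\sqrt{2\pi}\log(\e\alpha)\bigr).
\]
The remaining step is to check that the right side is at least $\tfrac17\log(\e\alpha)$, and this is the main obstacle. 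It holds for large $\alpha$. For moderate $\alpha$ near the threshold $1/S_1(1)$, I would instead use $\lambdastar \geq 1$ and verify numerically that $\tfrac17\log(\e\alpha)\le 1$ on that range. Here $1/S_1(1)\approx 6$, and $\log(\e\alpha)\le 7$ exactly when $\alpha\le \e^6\approx 403$. Above $\alpha \approx 403$, the logarithmic inequality should hold, which is a one-variable check.

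\textbf{Bounds on $S_2(\lambdastar)$.} By \Cref{lem:rational-bounds-moments}, $S_2(\lambda)\asymp \phi(\lambda)/\lambda^3$ for $\lambda \geq 1$. Dividing by the relation for $S_1$ at $\lambdastar$ gives
\[
\frac{S_2(\lambdastar)}{S_1(\lambdastar)}\in\Bigl[\frac{1}{5\lambdastar},\frac{8}{\lambdastar}\Bigr],
\qquad\text{i.e.}\qquad
S_2(\lambdastar)\in\Bigl[\frac{1}{5\alpha\lambdastar},\frac{8}{\alpha\lambdastar}\Bigr].
\]
Substituting $\sqrt{\tfrac17\log(\e\alpha)}\le\lambdastar\le\sqrt{2\log(\e\alpha)}$ gives bounds of the form $c/(\alpha\sqrt{\log(\e\alpha)})$ with constants $1/(5\sqrt2)$ and $8\sqrt7$. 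These are the same order as the stated constants $\frac{1}{2\sqrt2}$ and $12\sqrt7$, but they do not literally match the statement. To reproduce those exact constants, I would sharpen the ratio $S_2/S_1$ using the finer form of \Cref{lem:first-and-second-moment-decomps} and the bounds on $E(\lambda)$ from the proof of \Cref{lem:rational-bounds-moments}, rather than combining the two crude one-sided estimates.
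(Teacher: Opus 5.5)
\textbf{The gap: the lower bound on $S_2(\lambdastar)$.} The statement asks for $S_2(\lambdastar)\ge\frac{1}{2\sqrt2}\frac{1}{\alpha\sqrt{\log(\e\alpha)}}$. Your ratio argument gives only the constant $\frac{1}{5\sqrt2}$, and the repair you sketch would not close the gap.

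By \Cref{lem:first-and-second-moment-decomps}, $\lambda S_2(\lambda)/S_1(\lambda)=1/E(\lambda)-(\lambda^2+1)$. Given your bound $\lambdastar\le\sqrt{2\log(\e\alpha)}$, what you need is $1/E(\lambda)-(\lambda^2+1)\ge\frac12$ for $\lambda\ge1$. The sharpest bound on $E$ in the proof of \Cref{lem:rational-bounds-moments} is $E(\lambda)\le\frac{1-1/(5\lambda^2)}{1+\lambda^2}$. It only yields $1/E(\lambda)-(\lambda^2+1)\ge\frac{\lambda^2+1}{5\lambda^2-1}$, which is below $\frac12$ for every $\lambda>1$ and tends to $\frac15$.

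The missing ingredient is the unweakened Gasull--Utzet bound $\lambda M(\lambda)\ge\frac{\lambda^4+5\lambda^2}{\lambda^4+6\lambda^2+3}$, which appears inside the proof of \Cref{lem:Mills-ratio-bounds}. It gives $E(\lambda)\le\frac{\lambda^2+3}{\lambda^4+6\lambda^2+3}$, hence $1/E(\lambda)-(\lambda^2+1)\ge\frac{2\lambda^2}{\lambda^2+3}\ge\frac12$ for $\lambda\ge1$. Therefore $S_2(\lambdastar)\ge\frac{1}{2\alpha\lambdastar}\ge\frac{1}{2\sqrt2\,\alpha\sqrt{\log(\e\alpha)}}$.

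The paper's printed chain $\frac{1}{2\alpha\lambdastar}\le\frac{\phi(\lambdastar)}{2(\lambdastar)^3}\le S_2(\lambdastar)$ skips exactly this point. Its first link is equivalent to $S_1(\lambdastar)\le\phi(\lambdastar)/(\lambdastar)^2$, which is false, e.g.\ $S_1(2)\approx0.0170>\phi(2)/4\approx0.0135$. Its second link is not supplied by the factor $\frac25$ in \Cref{lem:rational-bounds-moments}. Only the composite inequality $\lambda S_2(\lambda)\ge\frac12 S_1(\lambda)$ is needed, and the argument above supplies it.

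\textbf{The rest is sound.} Your $S_2$ upper bound already proves the statement, since $8\sqrt7\le12\sqrt7$; no sharpening is needed there. Your upper bound on $\lambdastar$, checking $S_1(\sqrt{2\log(\e\alpha)})\le1/\alpha$, is essentially the paper's direct inversion.

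Your lower bound on $\lambdastar$ also works. You bootstrap from the upper bound and split at $\alpha=\e^6$. For $u=\log\alpha\ge6$, the function $2u-2\log(4\sqrt{2\pi}(1+u))-\frac{1+u}{7}$ is increasing and about $2.5$ at $u=6$.

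The paper avoids both the bootstrap and the case split. It applies $\log(\e x)\le x$ with $x=\frac52\sqrt{2\pi}(\lambdastar)^2$ to the lower half of the two-sided display, obtaining $\log(\e\alpha)\le\frac{1+5\sqrt{2\pi}}{2}(\lambdastar)^2$ for every admissible $\alpha$. Since $\frac{2}{1+5\sqrt{2\pi}}>\frac17$, the stated lower bound follows in one line.
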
 
\begin{proof}
    Note that $S_1$ is continuous and strictly decreasing, and $S_1(0) = \sqrt{2/\pi}$. Hence, if $\alpha \geq 1/S_1(1)$, then $\lambdastar \geq 1$ and $S_1(\lambdastar) = 1/\alpha$.
    Hence, by the bounds in \Cref{lem:rational-bounds-moments} we have 
    \begin{equation}\label{eqn:ineq-on-lambda}
    \frac{2}{5\sqrt{2\pi}} \frac{\e^{-(\lambdastar)^2/2}}{(\lambdastar)^2} \leq \frac{1}{\alpha} \leq \frac{2}{\sqrt{2\pi}} \frac{\e^{-(\lambdastar)^2/2}}{(\lambdastar)^2}
    \end{equation}
    Equivalently, 
    \[
    \log(\e \alpha) - \frac{5}{2}\sqrt{2\pi} (\lambdastar)^2 
    \leq 
    \log(\e\alpha)  - \log ((5/2)\e\sqrt{2\pi} (\lambdastar)^2) \leq \frac{(\lambdastar)^2}{2} \leq \log(\e\alpha)  - \log \frac{\e\sqrt{2\pi} (\lambdastar)^2}{2} \leq \log(\e \alpha).
    \]
    Rearranging establishes the desired bounds on $\lambdastar$; we also used $\tfrac{2}{1 + 5 \sqrt{2\pi}} > \tfrac{1}{7}$. Now, using the bounds on $S_2$ from \Cref{lem:rational-bounds-moments} as well as the inequalities~\eqref{eqn:ineq-on-lambda}, we find 
    \[
    \frac{1}{2\sqrt{2}} \frac{1}{\alpha \sqrt{\log(\e \alpha)}}
    \leq 
    \frac{1}{2\alpha \lambdastar}
    \leq 
    \frac{1}{2\sqrt{2\pi}} \frac{\e^{-(\lambdastar)^2/2}}{(\lambdastar)^3}
    \leq S_2(\lambdastar) \leq 
    \frac{6}{\sqrt{2\pi}} \frac{\e^{-(\lambdastar)^2/2}}{(\lambdastar)^3}
    \leq \frac{12}{\alpha \lambdastar} 
    \leq \frac{12\sqrt{7}}{\alpha \sqrt{\log(\e \alpha)}},
    \]
    as claimed.
\end{proof}

\subsubsection{Bounds on empirical quantities}
\label{sec:emp-to-pop}

\begin{lemma} \label{lem:tail-bound-for-first-moment}
For any $u > 0$ and $\lambda > 0$, we have 
\[
\P\Big\{\hat S_1(\lambda) - S_1(\lambda) \leq -u\Big\} 
\leq \exp\Big\{-u^2 \frac{d \lambda^3}{32 \phi(\lambda)}\Big\}
\]
\end{lemma}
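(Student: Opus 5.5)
The statement is a one-sided lower-tail bound for the empirical mean $\hat S_1(\lambda)=\frac1d\sum_{i=1}^d X_i$ of the i.i.d.\ nonnegative variables $X_i \defn (|\NoiseVec_i|-\lambda)_+$, whose common mean is $S_1(\lambda)$. Since only the lower tail is required, I will use the standard Bernstein-type bound for sums of nonnegative variables, which needs only the second moment. For $s\ge 0$ and $x\ge 0$ we have $\e^{-sx}\le 1-sx+\tfrac{s^2x^2}{2}$. Taking expectations and using $\log(1+y)\le y$ gives
\[
\log \E \e^{-s(X_i - \E X_i)} \le \frac{s^2}{2}\E X_i^2 = \frac{s^2}{2} S_2(\lambda).
\]
By independence and Chernoff's method,
\[
\P\{\hat S_1(\lambda)-S_1(\lambda)\le -u\}\le \exp\Big(-sdu+\frac{s^2 d}{2}S_2(\lambda)\Big).
\]
Optimizing at $s=u/S_2(\lambda)$ then yields the bound $\exp\{-du^2/(2S_2(\lambda))\}$.

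It remains to show $S_2(\lambda)\le 16\,\phi(\lambda)/\lambda^3$ for every $\lambda>0$, since this gives exactly the stated exponent $u^2 d\lambda^3/(32\phi(\lambda))$. By \Cref{lem:first-and-second-moment-decomps},
\[
S_2(\lambda)=\frac{2\phi(\lambda)}{\lambda}\bigl[1-(\lambda^2+1)E(\lambda)\bigr].
\]
The upper Mills-ratio bound in \Cref{lem:Mills-ratio-bounds}, $\lambda M(\lambda)\le \frac{\lambda^2+2}{\lambda^2+3}$, is valid for all $\lambda\ge 0$. It gives $E(\lambda)\ge \frac{1}{\lambda^2+3}$, and hence
\[
1-(\lambda^2+1)E(\lambda)\le \frac{2}{\lambda^2+3}.
\]
Therefore $S_2(\lambda)\le \frac{4\phi(\lambda)}{\lambda(\lambda^2+3)}\le \frac{4\phi(\lambda)}{\lambda^3}$ for all $\lambda>0$. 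This is the same computation as in \Cref{lem:rational-bounds-moments}, but it does not need the restriction $\lambda\ge 1$.

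With this estimate the exponent becomes $du^2\lambda^3/(8\phi(\lambda))$, which is stronger than the claimed exponent with constant $32$. The only point needing care is checking that no step uses $\lambda\ge 1$, and the upper Mills bound indeed holds on $[0,\infty)$. The argument does not reduce to a sub-Gaussian bound with parameter given by the range, because $X_i$ is unbounded; working from the second moment avoids this. I therefore expect no step to be a real obstacle.
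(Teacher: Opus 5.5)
Your proof is correct, and it follows a different route from the paper's.

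The paper works directly with the cumulant generating function of $X_\lambda-\E X_\lambda$. It writes $\E(\e^{\gamma X_\lambda}-1-\gamma X_\lambda)$ as an integral against the Gaussian tail $\Psi(\lambda+z)$ and applies the Mills-ratio bound $\Psi(\lambda+z)\le \phi(\lambda+z)/(\lambda+z)$ inside the integral. This gives the sub-gamma estimate $2\frac{\phi(\lambda)}{\lambda}\frac{\gamma^2}{(\lambda-\gamma)^2}$. That estimate forces the restriction $\gamma<\lambda$, hence $\tau<\lambda d/2$, and the resulting loss in constants yields the $32$. The displayed computation is carried out for $\gamma>0$, which is the side governing the \emph{upper} tail, where the unbounded right tail of $X_\lambda$ really matters. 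The lower-tail claim is then asserted from Chernoff.

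You instead use that the statement is a \emph{lower}-tail bound for nonnegative summands. The inequality $\e^{-y}\le 1-y+y^2/2$ for $y\ge 0$ then reduces everything to the second moment $S_2(\lambda)$, with no constraint on the Chernoff parameter. You bound $S_2(\lambda)\le 4\phi(\lambda)/\lambda^3$ for all $\lambda>0$ via \Cref{lem:first-and-second-moment-decomps} and the upper Mills bound, which, as you correctly note, holds on all of $[0,\infty)$.

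What your route buys:
\begin{itemize}
\item the constant $8$ instead of $32$;
\item a self-contained justification of exactly the negative-exponent estimate that the direction used in \Cref{lem:delta-tail-bound} requires.
\end{itemize}
What the paper's route buys: its tail-integral computation also produces a sub-gamma \emph{upper}-tail bound, which your second-moment argument does not give.
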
 
\begin{proof}
Denote $X_\lambda \defn (|\NoiseVec_1| - \lambda)_+$. 
Fix $\gamma > 0$. From $\log(1+u) \leq u$, we have for any $\gamma > 0$,
\begin{multline}\label{eqn:mgf-bound-for-each-summand}
\log \E \e^{\gamma (X_\lambda - \E X_\lambda)}
\leq 
\E(\e^{\gamma X_\lambda} - 1 - \gamma X_\lambda)\\
\stackrel{{\rm(i)}}{=} 2\gamma
\int_0^\infty 
(\e^{\gamma z} - 1) \Psi(\lambda+z) \, \ud z 
\stackrel{{\rm(ii)}}{\leq} 
2\gamma^2 
\int_0^\infty 
z\e^{\gamma z} \Psi(\lambda+z) \, \ud z
\stackrel{{\rm(iii)}}{\leq} 2 \gamma^2 \int_0^\infty 
z \e^{\gamma z} \frac{\phi(\lambda + z)}{z + \lambda} \, \ud z \\
\stackrel{{\rm(iv)}}{\leq} 
2 \gamma^2 \frac{\phi(\lambda)}{\lambda} \int_0^\infty z \e^{-(\lambda - \gamma)z} \, \ud z  \stackrel{{\rm(v)}}{=} 
2 \frac{\phi(\lambda)}{\lambda}
\frac{\gamma^2}{(\lambda -\gamma)^2}.
\end{multline}
Above, inequality (i) used the identity 
$\E f(Z) = \int_0^\infty f'(z) \P\{Z \geq z\}\, \ud z$ for a differentiable function $f$ with $f(0) = 0$; here we took $f(z) = \e^{\gamma z} - 1 - \gamma z$. In (ii) we used the inequality $\e^x - 1 \leq x \e^x$.
Inequality (iii) applied the Mills' ratio bounds from \Cref{lem:Mills-ratio-bounds}; note that the inequality $\Psi(\lambda) \leq \tfrac{\phi(\lambda)}{\lambda}$ holds for any $\lambda > 0$. Inequality (iv) used $z + \lambda \geq \lambda, e^{-z^2/2} \leq 1$, and (v) is an explicit computation. 

Recognizing $\hat S_1(\lambda)$ as an i.i.d. sum of $X_\lambda$, the log moment generating function satisfies
\[
\log \E\e^{\tau (\hat S_1(\lambda) - S_1(\lambda))}
= 
d \log \E \exp(\tfrac{\tau}{d} (X_\lambda - \E X_\lambda))
\leq 
 8 \tau^2  \frac{1}{d} \frac{\phi(\lambda)}{\lambda^3}, 
 \quad \mbox{for all}~\tau < \frac{\lambda d}{2}.
\]
Above, we applied inequality~\eqref{eqn:mgf-bound-for-each-summand} with $\gamma = \tfrac{\tau}{d}$, noting that by the assumption $\tau < \frac{\lambda d}{2}$, we have $\tfrac{1}{(\lambda - \gamma)^2} \leq \frac{8}{\lambda^2}$. 
The claimed inequality on the lower tail now follows from the standard Chernoff inequality. 
\end{proof}

We now bound the probability that $\hat S_1(\lambdastar - \delta) \leq 1/\alpha$ for small $\delta$. We use the auxiliary function 
\[
H(\lambdastar, \delta) = 
\frac{\phi(\lambdastar - \delta)} {\phi(\lambdastar)}\frac{(\lambdastar)^3}{(\lambdastar - \delta)^3}.
\]
\begin{lemma} \label{lem:delta-tail-bound}
Assume $\lambdastar \geq 1$, fix $\delta \in (0, \lambdastar)$, and assume $\alpha \geq \tfrac{1}{S_1(1)}$. We have 
\[
\P\Big\{\hat S_1(\lambdastar - \delta) \leq 
\tfrac{1}{\alpha}\Big\} \leq 
\exp\Big\{-\frac{1}{384\sqrt{14}} \frac{(\lambdastar)^3}{\sigma} \frac{\delta^2}{H(\lambdastar, \delta)}\Big\}.
\]
\end{lemma}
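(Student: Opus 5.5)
The plan is to apply \Cref{lem:tail-bound-for-first-moment} at $\lambda = \lambdastar - \delta$ with deviation $u = S_1(\lambdastar-\delta) - \tfrac{1}{\alpha}$. Since $S_1$ is strictly decreasing and $S_1(\lambdastar) = 1/\alpha$ (as $\alpha \ge 1/S_1(1)$ forces $\lambdastar\ge 1$; see the proof of \Cref{lem:bounds-on-lambda-star}), this $u$ is positive. Moreover,
\[
\Big\{\hat S_1(\lambdastar-\delta)\le \tfrac1\alpha\Big\} = \Big\{\hat S_1(\lambdastar-\delta) - S_1(\lambdastar-\delta) \le -u\Big\}.
\]
Hence the probability in question is at most
\[
\exp\Big\{-u^2\, \frac{d(\lambdastar-\delta)^3}{32\,\phi(\lambdastar-\delta)}\Big\}.
\]
Everything then reduces to lower bounding $u$.

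For the lower bound on $u$, I use $S_1'(\lambda) = -2\Psi(\lambda)$, so that
\[
u = S_1(\lambdastar-\delta)-S_1(\lambdastar) = 2\int_{\lambdastar-\delta}^{\lambdastar}\Psi(s)\,\ud s \ge 2\delta\,\Psi(\lambdastar).
\]
Monotonicity of $\Psi$ gives the inequality. Then, by \Cref{lem:Mills-ratio-bounds} (with $\lambdastar\ge1$), $\Psi(\lambdastar)\ge \tfrac{\lambdastar}{1+(\lambdastar)^2}\phi(\lambdastar)\ge \tfrac{\phi(\lambdastar)}{2\lambdastar}$. Thus $u \ge \delta\,\phi(\lambdastar)/\lambdastar$, and $u^2 \ge \delta^2\phi(\lambdastar)^2/(\lambdastar)^2$.

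Substituting, the exponent becomes at least
\[
\frac{d\,\delta^2}{32}\cdot \frac{\phi(\lambdastar)^2 (\lambdastar-\delta)^3}{(\lambdastar)^2\phi(\lambdastar-\delta)}
= \frac{d\,\delta^2}{32}\cdot\frac{\phi(\lambdastar)\,\lambdastar}{H(\lambdastar,\delta)}.
\]
This step just uses the definition of $H$. It remains to convert $d\,\phi(\lambdastar)\lambdastar$ into $(\lambdastar)^3/\sigma$ up to constants. From \eqref{eqn:ineq-on-lambda} we have $\phi(\lambdastar)\ge (\lambdastar)^2/(2\alpha)$, where the $\sqrt{2\pi}$ bookkeeping needs checking. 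With $\alpha = \sigma d$ this gives $d\,\phi(\lambdastar)\lambdastar \gtrsim (\lambdastar)^3/\sigma$, which is the target form.

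The main obstacle is only the constant bookkeeping: matching $1/(384\sqrt{14})$. The factor $\sqrt{14}$ suggests the authors went through $\lambdastar \le \sqrt{2\log(\e\alpha)}$ and $\lambdastar\ge\sqrt{\log(\e\alpha)/7}$ rather than directly through \eqref{eqn:ineq-on-lambda}. If my route yields a better constant, it suffices as is. One also has to ensure the MGF restriction $\tau<\lambda d/2$ in \Cref{lem:tail-bound-for-first-moment} does not bind. As stated, that lemma already gives a Gaussian-type tail for all $u$, so I take it as given. Otherwise, one would need to check that the optimal $\tau = u\,d\lambda^3/(16\phi(\lambda))$ is admissible, which holds because $u\le S_1(\lambda)\le 2\phi(\lambda)/\lambda^2$.
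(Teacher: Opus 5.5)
Your proposal is correct and follows the paper's proof. Like the paper, you apply \Cref{lem:tail-bound-for-first-moment} at $\lambda=\lambdastar-\delta$, lower bound the deviation by $2\delta\Psi(\lambdastar)\ge\delta\phi(\lambdastar)/\lambdastar$ using the Mills-ratio bounds, and absorb the resulting ratio into $H(\lambdastar,\delta)$. The only difference is the last step: the paper gets $\sigma d\,\phi(\lambdastar)\ge(\lambdastar)^2/(12\sqrt{14})$ by passing through $S_2(\lambdastar)$ and $\lambdastar\ge\sqrt{\log(\e\alpha)/7}$, which confirms your guess about where $\sqrt{14}$ comes from. Your direct use of $1/\alpha=S_1(\lambdastar)\le 2\phi(\lambdastar)/(\lambdastar)^2$ gives $\sigma d\,\phi(\lambdastar)\ge(\lambdastar)^2/2$, hence the stronger constant $1/64$ in place of $1/(384\sqrt{14})$, which implies the stated bound.
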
 
\begin{proof}
We have $S_1'(\lambda) = -2 \Psi(\lambda)$. Therefore, for any $\delta > 0, \lambda \geq 1$ we have 
\[
S_1(\lambdastar - \delta) - 
S_1(\lambdastar) = 2 \int_{\lambdastar - \delta}^{\lambdastar} \Psi(x) \, \ud x  \geq 2 \delta \Psi(\lambdastar) \geq \delta \frac{\phi(\lambdastar)}{\lambdastar},
\]
where we applied \Cref{lem:Mills-ratio-bounds} for the final inequality.
From the tail bounds in \Cref{lem:tail-bound-for-first-moment}, applied with $\lambda = \lambdastar-\delta$ and $u = \delta \tfrac{\phi(\lambdastar)}{\lambdastar}$, 
it holds that 
\[
\P\Big\{\hat S_1(\lambdastar - \delta) \leq \frac{1}{\alpha}\Big\} \le 
\P\Big\{\hat S_1(\lambdastar - \delta) -  S_1(\lambdastar- \delta) \leq -\delta \frac{\phi(\lambdastar)}{\lambdastar}\Big\}
\leq 
\exp\Big\{- \frac{\lambdastar\delta^2 }{32\sigma H(\lambdastar, \delta)} 
\phi(\lambdastar) \sigma d\Big\}.
\]
Now note that we have 
\[
\sigma d \phi(\lambdastar) 
\stackrel{{\rm(i)}}{\geq} \frac{\sigma d}{6} (\lambdastar)^3 S_2(\lambdastar) 
\stackrel{{\rm(ii)}}{\geq} 
\frac{(\lambdastar)^3}{12\sqrt{2} \sqrt{\log(\e d \sigma)}} 
\stackrel{{\rm(iii)}}{\geq} 
\frac{(\lambdastar)^2}{12\sqrt{14}}.
\]
Combining the previous two displays yields the claim. Above, inequality (i) follows from the inequality for $S_2$ in \Cref{lem:rational-bounds-moments} and inequalities (ii) and (iii) follow from \Cref{lem:bounds-on-lambda-star}, where we recall $\alpha = \sigma d$.
\end{proof}

\subsubsection{Putting the pieces together}
\label{sec:complete-the-proof}

To conclude, we first establish a bound under the assumption that $\alpha \geq 1/S_1(1)$. 
\begin{lemma} \label{lem:bound-in-the-large-alpha-case}
Suppose $\alpha \geq 1/S_1(1)$. Then we have  
\[
\E \|\BallProj(\sigma \NoiseVec)\|_2^2 
\leq 
 205584 \frac{\sigma}{\sqrt{\log(\e d \sigma)}},
\]
for all $\sigma \geq 0, d \geq 1$.
\end{lemma}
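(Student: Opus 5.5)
We will prove \Cref{lem:bound-in-the-large-alpha-case} by making the heuristic \eqref{eqn:emp-to-pop} rigorous. By \Cref{lem:characterization-of-projection}, $\|\BallProj(\sigma\NoiseVec)\|_2^2=\sigma^2 d\,\hat S_2(\hat\lambda)=\sigma\alpha\,\hat S_2(\hat\lambda)$. Since $\lambda\mapsto\hat S_2(\lambda)$ is nonincreasing, on the event $\{\hat\lambda\ge\lambdastar-\delta\}$ we have $\hat S_2(\hat\lambda)\le\hat S_2(\lambdastar-\delta)$. By the definition of $\hat\lambda$ as an infimum and monotonicity of $\hat S_1$, the complement satisfies
\[
\{\hat\lambda<\lambdastar-\delta\}\subset\{\hat S_1(\lambdastar-\delta)\le 1/\alpha\},
\]
whose probability is controlled by \Cref{lem:delta-tail-bound}. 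On that complement we fall back on the trivial bound $\|\BallProj(\sigma\NoiseVec)\|_2^2\le 1$, since $B_1^d\subset B_2^d$. This gives
\[
\E\|\BallProj(\sigma\NoiseVec)\|_2^2\le \sigma\alpha\,S_2(\lambdastar-\delta)+\P\{\hat S_1(\lambdastar-\delta)\le 1/\alpha\}.
\]

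The main work is choosing $\delta$. I would take $\delta=c/\lambdastar$ with a small constant $c$, say $c=1/2$. Then $\phi(\lambdastar-\delta)/\phi(\lambdastar)=\e^{\lambdastar\delta-\delta^2/2}\le\e^{c}$. Since $\lambdastar\ge1$, the ratio $(\lambdastar)^3/(\lambdastar-\delta)^3$ is at most $8$, so $H(\lambdastar,\delta)$ is bounded by an absolute constant.

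For the first term, \Cref{lem:rational-bounds-moments} (valid since $\lambdastar-\delta\ge 1/2$; if needed, use the exact formula from \Cref{lem:first-and-second-moment-decomps} near $1/2$) gives
\[
S_2(\lambdastar-\delta)\lesssim H(\lambdastar,\delta)\,\frac{\phi(\lambdastar)}{(\lambdastar)^3}\lesssim S_2(\lambdastar).
\]
By \Cref{lem:bounds-on-lambda-star}, $\sigma\alpha S_2(\lambdastar)\lesssim\sigma/\sqrt{\log(\e\alpha)}$, which is exactly the target order.

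For the second term, \Cref{lem:delta-tail-bound} yields a bound of the form
\[
\exp\bigl(-c'(\lambdastar)^3\delta^2/\sigma\bigr)=\exp(-c''\lambdastar/\sigma).
\]
We must show this is $\lesssim\sigma/\sqrt{\log(\e d\sigma)}$. I expect this to be the main obstacle, since it needs a case split on $\sigma$.

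If $\sigma\ge\lambdastar/C$, then $\sigma/\sqrt{\log(\e\alpha)}\gtrsim\lambdastar/\sqrt{\log(\e\alpha)}\gtrsim 1$ by the lower bound on $\lambdastar$ in \Cref{lem:bounds-on-lambda-star}. The trivial bound $\E\|\BallProj(\sigma\NoiseVec)\|_2^2\le1$ then already suffices, so this case can be dispatched at the outset.

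If $\sigma\le\lambdastar/C$, we use $\e^{-x}\le 1/x$ to get $\exp(-c''\lambdastar/\sigma)\lesssim\sigma/\lambdastar\lesssim\sigma/\sqrt{\log(\e\alpha)}$. This again uses $\lambdastar\ge\sqrt{\log(\e\alpha)/7}$.

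Collecting the constants gives the stated numerical bound. The remaining care is in verifying the hypotheses of \Cref{lem:delta-tail-bound}, namely $\delta\in(0,\lambdastar)$ and $\lambdastar\ge1$, the latter following from $\alpha\ge1/S_1(1)$. Finally, \Cref{lem:medium-sigma-bound} follows: when $1/d\le\sigma$ but $\alpha<1/S_1(1)$, we have $\alpha\asymp1$, and the bound $\sigma^2 d=\sigma\alpha\lesssim\sigma/\sqrt{\log(\e\alpha)}$ handles that case.
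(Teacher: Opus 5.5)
Your proposal is correct and matches the paper's proof essentially step for step. It uses the same event $\{\hat S_1(\lambdastar-\delta)>1/\alpha\}$ with the trivial bound $\|\BallProj(\sigma\NoiseVec)\|_2\le 1$ on its complement, the same choice $\delta=1/(2\lambdastar)$ giving $H(\lambdastar,\delta)\le 8\sqrt{\e}$, and the same use of $\e^{-x}\le 1/x$ with $\lambdastar\ge\sqrt{\log(\e\alpha)/7}$ for the tail term. Your case split on $\sigma$ is superfluous, since your second case never uses $\sigma\le\lambdastar/C$ and $\e^{-x}\le 1/x$ holds for all $x>0$. Likewise, no special care is needed near $\lambdastar-\delta=1/2$, because $S_2(\lambda)\le 4\phi(\lambda)/\lambda^3$ uses only the Mills-ratio upper bound and holds for every $\lambda>0$.
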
 
\begin{proof}
Let us set 
\begin{equation}\label{eqn:final-choices}
\delta = \frac{1}{2\lambdastar} \quad 
\mbox{and} \quad 
\cE = \Big\{\hat S_1(\lambdastar - \delta) > \frac{1}{\alpha}\Big\}.
\end{equation}
Note that on $\cE$, we have from the monotonicity of $\hat S_1, \hat S_2$ and \Cref{lem:characterization-of-projection} that $\hat \lambda \geq \lambdastar - \delta$, and consequently $\hat S_2(\hat \lambda) \leq \hat S_2(\lambdastar - \delta)$. 
Thus, we have, using on the complement that $\|\BallProj(\sigma \NoiseVec)\|_2 \leq 1$ deterministically, that
\begin{equation}\label{eqn:final-decomposition}
    \E \|\BallProj(\sigma \NoiseVec)\|_2^2 = 
    \E[\sigma^2 d \hat S_2(\hat \lambda) \ind{\cE}] + \P(\cE^c) \leq 
    \sigma^2 d S_2(\lambdastar - \delta) + \P(\cE^c) \defn T_1 + T_2.
\end{equation}
We now control each term. First, we note that 
\begin{subequations}\label{eqn:term-bounds}
\begin{equation}\label{eqn:bound-on-term-one}
T_1 = \sigma^2 d S_2(\lambdastar) \cdot \frac{S_2(\lambdastar - \delta)}{S_2(\lambdastar)}
\stackrel{{\rm (i)}}\leq 12 \sqrt{7} \frac{\sigma}{\sqrt{\log(\e d \sigma)}} 
\cdot \frac{S_2(\lambdastar - \delta)}{S_2(\lambdastar)} 
\stackrel{{\rm(ii)}}{\leq} 
144 \sqrt{7} 
\frac{\sigma}{\sqrt{\log(\e d \sigma)}} 
H(\lambdastar, \delta).
\end{equation}
Above inequality (i) follows from \Cref{lem:bounds-on-lambda-star}, and inequality (ii) follows from \Cref{lem:rational-bounds-moments}.
To control the second term, we apply \Cref{lem:delta-tail-bound} with our choice of $\delta$, given in display~\eqref{eqn:final-choices}. Using $\e^{-x}\leq \tfrac{1}{x}$, we have 
\begin{equation}\label{eqn:bound-on-term-2}
T_2 \leq 1536\sqrt{14}  \frac{\sigma}{\lambdastar} H(\lambdastar, \delta) \leq
10752 \sqrt{2} \frac{\sigma}{\sqrt{\log(\e d \sigma)}} H(\lambdastar, \delta).
\end{equation}
where the final inequality makes use of \Cref{lem:bounds-on-lambda-star}. 
Finally, since $\lambdastar \geq 1$ under our assumption that $\alpha\geq 1/S_1(1)$, we have 
\begin{equation}\label{ineq:bound-on-H}
H(\lambdastar, \delta) = \sqrt{\e} \frac{1}{(1-\tfrac{1}{2(\lambdastar)^2})^3} 
\e^{-\delta^2/2} \leq 8 \sqrt{\e}.
\end{equation}
\end{subequations}
Combining the decomposition~\eqref{eqn:final-decomposition} with the term-wise bounds~\eqref{eqn:term-bounds} yields
\[
 \E \|\BallProj(\sigma \NoiseVec)\|_2^2
 \leq (144 \sqrt{7} + 10752\sqrt{2}) 8 \sqrt{\e} \frac{\sigma}{\sqrt{\log(\e d \sigma)}}
 < 
 205584 \frac{\sigma}{\sqrt{\log(\e d \sigma)}},
\]
as claimed. 
\end{proof}

To complete the proof of \Cref{lem:medium-sigma-bound}, we need to control the projection when $\sigma d \in [1, \frac{1}{S_1(1)}]$.
However, we can simply use the naïve bound, 
\begin{equation}\label{ineq:upper-in-the-alpha-small-case}
\E \|\BallProj(\sigma \NoiseVec)\|_2^2 
\leq \sigma^2 \E \|\NoiseVec\|_2^2 =
\sigma^2 d
\leq 
\frac{\sigma}{\sqrt{\log(\e d \sigma)}}\,
\sup_{\alpha \in [1, \tfrac{1}{S_1(1)}]}
\alpha \sqrt{\log(\e \alpha)}
\le
11 \frac{\sigma}{\sqrt{\log(\e d \sigma)}},
\end{equation}
where we used the fact that $x^2 \log(\e x)$ is increasing on $[1, \infty)$ and 
\[
S_1(1) = 2 (\phi(1) - \Psi(1)), \quad \mbox{and,} \quad 
\frac{1}{S_1(1)} \sqrt{1 + \log(1/S_1(1))} 
< 11. 
\]
Combining \Cref{lem:bound-in-the-large-alpha-case} with inequality~\eqref{ineq:upper-in-the-alpha-small-case} yields the claim. \qed

\subsubsection*{Acknowledgements}

Reese Pathak gratefully acknowledges support from the National Science Foundation, under grant DMS-2503579.

{\footnotesize
\bibliographystyle{abbrv}
\bibliography{references}

@misc{Zad26,
      title={A Bayesian Proof and Interpretation of Talagrand's Majorizing Measure Theorem}, 
      author={Ilias Zadik},
      year={2026},
      eprint={2605.30321},
      archivePrefix={arXiv},
      primaryClass={math.PR},
      url={https://arxiv.org/abs/2605.30321}, 
}

@misc{PatZhi26,
      title={A remark on the majorizing measures theorem for general processes},
      author={Reese Pathak and Nikita Zhivotovskiy},
      year={2026},
      eprint={2606.03973},
      archivePrefix={arXiv},
      primaryClass={math.PR},
      url={https://arxiv.org/abs/2606.03973},
}

@incollection {SchSch95,
    AUTHOR = {Schechtman, G. and Schmuckenschl\"{a}ger, M.},
     TITLE = {A concentration inequality for harmonic measures on the
              sphere},
 BOOKTITLE = {Geometric Aspects of Functional Analysis ({I}srael,
              1992--1994)},
    SERIES = {Oper. Theory Adv. Appl.},
    VOLUME = {77},
     PAGES = {255--273},
 PUBLISHER = {Birkh\"{a}user, Basel},
      YEAR = {1995},
      ISBN = {3-7643-5207-8},
   MRCLASS = {60J65 (31B99)},
  MRNUMBER = {1353465},
MRREVIEWER = {Jacques\ Vauthier},
}

@misc{AolJorPatUli25,
      title={Revisiting mean estimation over $\ell_p$ balls: Is the {MLE} optimal?}, 
      author={Liviu Aolaritei and Michael I. Jordan and Reese Pathak and Annie Ulichney},
      year={2025},
      eprint={2506.10354},
      archivePrefix={arXiv},
      primaryClass={math.ST},
      url={https://arxiv.org/abs/2506.10354}, 
}

@article {GasUtz14,
    AUTHOR = {Gasull, Armengol and Utzet, Frederic},
     TITLE = {Approximating {M}ills ratio},
   JOURNAL = {J. Math. Anal. Appl.},
  FJOURNAL = {Journal of Mathematical Analysis and Applications},
    VOLUME = {420},
      YEAR = {2014},
    NUMBER = {2},
     PAGES = {1832--1853},
      ISSN = {0022-247X},
   MRCLASS = {60E05},
  MRNUMBER = {3240110},
MRREVIEWER = {C. Satheesh Kumar},
       DOI = {10.1016/j.jmaa.2014.05.034},
       URL = {https://doi.org/10.1016/j.jmaa.2014.05.034},
}

@article {Ney23,
    AUTHOR = {Neykov, Matey},
     TITLE = {On the minimax rate of the {G}aussian sequence model under
              bounded convex constraints},
   JOURNAL = {IEEE Trans. Inform. Theory},
  FJOURNAL = {Institute of Electrical and Electronics Engineers.
              Transactions on Information Theory},
    VOLUME = {69},
      YEAR = {2023},
    NUMBER = {2},
     PAGES = {1244--1260},
      ISSN = {0018-9448},
   MRCLASS = {62C20 (62F12 62H12)},
  MRNUMBER = {4564653},
MRREVIEWER = {Alicja Jokiel-Rokita},
       DOI = {10.1109/tit.2022.3213141},
       URL = {https://doi.org/10.1109/tit.2022.3213141},
}

@article{mendelson2003performance,
  title={On the performance of kernel classes},
  author={Mendelson, Shahar},
  journal={Journal of Machine Learning Research},
  volume={4},
  number={Oct},
  pages={759--771},
  year={2003}
}

@book{boucheron2013concentration,
  title={Concentration Inequalities: A Nonasymptotic Theory of Independence},
  author={Boucheron, St{\'e}phane and Lugosi, G{\'a}bor and Massart, Pascal},
  year={2013},
  publisher={Oxford University Press}
}

@article {BreHub79,
    AUTHOR = {Bretagnolle, J. and Huber, C.},
     TITLE = {Estimation des densit\'{e}s: risque minimax},
   JOURNAL = {Z. Wahrsch. Verw. Gebiete},
  FJOURNAL = {Zeitschrift f\"{u}r Wahrscheinlichkeitstheorie und Verwandte
              Gebiete},
    VOLUME = {47},
      YEAR = {1979},
    NUMBER = {2},
     PAGES = {119--137},
      ISSN = {0044-3719},
   MRCLASS = {62G05},
  MRNUMBER = {523165},
MRREVIEWER = {Luc P. Devroye},
       DOI = {10.1007/BF00535278},
       URL = {https://doi.org/10.1007/BF00535278},
}

@article{mendelson2016upper,
  title={Upper bounds on product and multiplier empirical processes},
  author={Mendelson, Shahar},
  journal={Stochastic Processes and their Applications},
  volume={126},
  number={12},
  pages={3652--3680},
  year={2016},
  publisher={Elsevier}
}

@article{dirksen2015tail,
  title={Tail bounds via generic chaining},
  author={Dirksen, Sjoerd},
  journal={Electronic Journal of Probability},
  volume={20},
  pages={1--29},
  year={2015},
  publisher={The Institute of Mathematical Statistics and the Bernoulli Society},
  doi={10.1214/EJP.v20-3760}
}

@book {Wai19,
    AUTHOR = {Wainwright, M. J.},
     TITLE = {High-dimensional statistics},
    SERIES = {Cambridge Series in Statistical and Probabilistic Mathematics},
    VOLUME = {48},
      NOTE = {A non-asymptotic viewpoint},
 PUBLISHER = {Cambridge University Press, Cambridge},
      YEAR = {2019},
     PAGES = {xvii+552},
      ISBN = {978-1-108-49802-9},
   MRCLASS = {62-01 (60B20 60E15 60Fxx 62Gxx 62Hxx 62Jxx)},
  MRNUMBER = {3967104},
MRREVIEWER = {Pierre Alquier},
       DOI = {10.1017/9781108627771},
       URL = {https://doi.org/10.1017/9781108627771},
}

@article{zhivotovskiy2024dimension,
  title={Dimension-free bounds for sums of independent matrices and simple tensors via the variational principle},
  author={Zhivotovskiy, Nikita},
  journal={Electronic Journal of Probability},
  volume={29},
  pages={1--28},
  year={2024},
  publisher={The Institute of Mathematical Statistics and the Bernoulli Society}
}

@article{AravindaMarsigliettiMelbourne2022ULC,
  author  = {Aravinda, Heshan and Marsiglietti, Arnaud and Melbourne, James},
  title   = {Concentration inequalities for ultra log-concave distributions},
  journal = {Studia Mathematica},
  year    = {2022},
  volume  = {265},
  number  = {1},
  pages   = {111--120}
}

@article{barron1999risk,
  title={Risk bounds for model selection via penalization},
  author={Barron, Andrew and Birg{\'e}, Lucien and Massart, Pascal},
  journal={Probability Theory and Related Fields},
  volume={113},
  number={3},
  pages={301--413},
  year={1999},
  publisher={Springer}
}

@book{koltchinskii2011oracle,
	author = {Koltchinskii, Vladimir},
	publisher = {Springer Science \& Business Media},
	title = {Oracle Inequalities in Empirical Risk Minimization and Sparse Recovery Problems: Ecole d'Et{\'e} de Probabilit{\'e}s de Saint-Flour XXXVIII-2008},
	volume = {2033},
	year = {2011}}

@inproceedings{mourtada2023local,
  title={Local risk bounds for statistical aggregation},
  author={Mourtada, Jaouad and Va{\v{s}}kevi{\v{c}}ius, Tomas and Zhivotovskiy, Nikita},
  booktitle={The Thirty Sixth Annual Conference on Learning Theory},
  pages={5697--5698},
  year={2023}
}

@article{audibert2011robust,
	author = {Audibert, Jean-Yves and Catoni, Olivier},
	journal = {The Annals of Statistics},
	number = {5},
	pages = {2766--2794},
	title = {Robust linear least squares regression},
	volume = {39},
	year = {2011}}

@article{catoni2017dimension,
	author = {Catoni, Olivier and Giulini, Ilaria},
	journal = {arXiv preprint arXiv:1712.02747},
	title = {Dimension-free {PAC-Bayesian} bounds for matrices, vectors, and linear least squares regression},
	year = {2017}}

@book{catoni2007pac,
  title={PAC-Bayesian Supervised Classification: The Thermodynamics of Statistical Learning},
  author={Catoni, Olivier},
  year={2007},
  publisher={Institute of Mathematical Statistics},
  series={Lecture Notes - Monograph Series},
  volume={56},
  isbn={0-940600-72-2}
}

@article{alexander1987rates,
  title={Rates of growth and sample moduli for weighted empirical processes indexed by sets},
  author={Alexander, Kenneth S},
  journal={Probability Theory and Related Fields},
  volume={75},
  number={3},
  pages={379--423},
  year={1987},
  publisher={Springer}
}

@article{amelunxen2014living,
  title={Living on the edge: Phase transitions in convex programs with random data},
  author={Amelunxen, Dennis and Lotz, Martin and McCoy, Michael B and Tropp, Joel A},
  journal={Information and Inference: A Journal of the IMA},
  volume={3},
  number={3},
  pages={224--298},
  year={2014},
  publisher={Oxford University Press}
}

@inproceedings{lotz2020concentration,
  title={Concentration of the intrinsic volumes of a convex body},
  author={Lotz, Martin and McCoy, Michael B and Nourdin, Ivan and Peccati, Giovanni and Tropp, Joel A},
  booktitle={Geometric Aspects of Functional Analysis: Israel Seminar (GAFA) 2017-2019 Volume II},
  pages={139--167},
  year={2020},
  organization={Springer}
}

@article{han2023noisy,
  title={Noisy linear inverse problems under convex constraints: Exact risk asymptotics in high dimensions},
  author={Han, Qiyang},
  journal={The Annals of Statistics},
  volume={51},
  number={4},
  pages={1611--1638},
  year={2023},
  publisher={Institute of Mathematical Statistics}
}

@article{Sudakov1976,
  author={Sudakov, V. N.},
  title={Geometric problems of the theory of infinite-dimensional probability distributions},
  journal={Trudy Mat. Inst. Steklov},
  volume={141},
  pages={3--191},
  year={1976},
}

@article{massart2000some,
  title={Some applications of concentration inequalities to statistics},
  author={Massart, Pascal},
  journal={Annales de la Facult{\'e} des sciences de Toulouse: Math{\'e}matiques},
  volume={9},
  number={S2},
  pages={245--303},
  year={2000},
  publisher={Universit{\'e} Paul Sabatier}
}

@book{vandegeer2000empirical,
  title={Empirical Processes in M-Estimation},
  author={van de Geer, Sara A},
  year={2000},
  publisher={Cambridge University Press},
  series={Cambridge Series in Statistical and Probabilistic Mathematics},
  volume={6},
  isbn={9780521650021}
}

@article{birge1993rates,
  title={Rates of convergence for minimum contrast estimators},
  author={Birg{\'e}, Lucien and Massart, Pascal},
  journal={Probability Theory and Related Fields},
  volume={97},
  number={1-2},
  pages={113--150},
  year={1993},
  publisher={Springer}
}

@article{liu2025simple,
  title={Simple and Sharp Generalization Bounds via Lifting},
  author={Liu, Jingbo},
  journal={arXiv preprint arXiv:2508.18682},
  year={2025}
}

@article{giannopoulos2018inequalities,
  title={Inequalities for the surface area of projections of convex bodies},
  author={Giannopoulos, Apostolos and Koldobsky, Alexander and Valettas, Petros},
  journal={Canadian Journal of Mathematics},
  volume={70},
  number={4},
  pages={804--823},
  year={2018},
  publisher={Cambridge University Press}
}

@article{liu2022minoration,
  title={Minoration via mixed volumes and {C}over’s problem for general channels},
  author={Liu, Jingbo},
  journal={Probability Theory and Related Fields},
  volume={183},
  number={1},
  pages={315--357},
  year={2022},
  publisher={Springer}
}

@book{Talagrand2021,
  author    = {Talagrand, Michel},
  title     = {Upper and Lower Bounds for Stochastic Processes: Decomposition Theorems},
  year      = {2021},
  publisher = {Springer},
  volume    = {60}
}

@article{AHY21,
  author  = {David Alonso-Guti{\'e}rrez and Mar{\'i}a A. Hern{\'a}ndez Cifre and Jes{\'u}s Yepes Nicol{\'a}s},
  title   = {Further inequalities for the (generalized) {W}ills functional},
  journal = {Communications in Contemporary Mathematics},
  volume  = {23},
  number  = {3},
  pages   = {2050011},
  year    = {2021},
  doi     = {10.1142/S021919972050011X}
}

@article{bellec2018sharp,
  title={Sharp oracle inequalities for least squares estimators in shape restricted regression},
  author={Bellec, Pierre C},
  journal={The Annals of Statistics},
  volume={46},
  number={2},
  pages={745--780},
  year={2018},
  publisher={JSTOR}
}

@article{shtar1987universal,
  title={Universal sequential coding of single messages},
  author={Shtar'kov, Yurii Mikhailovich},
  journal={Problemy Peredachi Informatsii},
  volume={23},
  number={3},
  pages={3--17},
  year={1987},
  publisher={Russian Academy of Sciences, Branch of Informatics, Computer Equipment and~…}
}

@article{mcmullen1991inequalities,
  title={Inequalities between intrinsic volumes},
  author={McMullen, Peter},
  journal={Monatshefte f{\"u}r Mathematik},
  volume={111},
  number={1},
  pages={47--53},
  year={1991},
  publisher={Springer}
}

@article{vitale1996wills,
  title={The {W}ills functional and {G}aussian processes},
  author={Vitale, Richard A},
  journal={The Annals of Probability},
  volume={24},
  number={4},
  pages={2172--2178},
  year={1996},
  publisher={Institute of Mathematical Statistics}
}

@article{betke1993intrinsic,
  title={Intrinsic volumes and lattice points of crosspolytopes},
  author={Betke, Ulrich and Henk, Martin},
  journal={Monatshefte f{\"u}r Mathematik},
  volume={115},
  number={1},
  pages={27--33},
  year={1993},
  publisher={Springer}
}

@article{artstein2004duality,
  title={Duality of metric entropy},
  author={Artstein, Shirit and Milman, Vitali and Szarek, Stanis{\l}aw J},
  journal={Annals of Mathematics},
  pages={1313--1328},
  year={2004},
  publisher={JSTOR}
}

@misc{vershynin_gfa_notes,
  author    = {Vershynin, Roman},
  title     = {Lectures in Geometric Functional Analysis},
  year      = {2011},
  howpublished = {Lecture notes},
  url       = {https://www.math.uci.edu/~rvershyn/papers/GFA-book.pdf}
}

@misc{prasadan2025informationtheoreticlimitsrobust,
      title={Information theoretic limits of robust sub-{G}aussian mean estimation under star-shaped constraints}, 
      author={Akshay Prasadan and Matey Neykov},
      year={2025},
      eprint={2412.03832},
      archivePrefix={arXiv},
      primaryClass={math.ST},
      url={https://arxiv.org/abs/2412.03832}, 
}

@article{hadwiger1975will,
  title={Das {W}ill'sche {F}unktional},
  author={Hadwiger, Hugo},
  journal={Monatshefte f{\"u}r Mathematik},
  volume={79},
  number={3},
  pages={213--221},
  year={1975},
  publisher={Springer}
}

@article{wills1973gitterpunktanzahl,
  title={Zur {G}itterpunktanzahl {K}onvexer {M}engen.},
  author={Wills, J{\"o}rg M},
  journal={Elemente der Mathematik},
  volume={28},
  pages={57--63},
  year={1973}
}

@article{mourtada2025universal,
  title={Universal coding, intrinsic volumes, and metric complexity},
  author={Mourtada, Jaouad},
  journal={Journal of the European Mathematical Society},
  year={2025}
}

@inproceedings{miyaguchi2019adaptive,
  title={Adaptive minimax regret against smooth logarithmic losses over high-dimensional $\ell_1$-balls via envelope complexity},
  author={Miyaguchi, Kohei and Yamanishi, Kenji},
  booktitle={The 22nd International Conference on Artificial Intelligence and Statistics},
  pages={3440--3448},
  year={2019},
  organization={PMLR}
}

@article{van2018chainingtwo,
  title={Chaining, interpolation and convexity {II}: The contraction principle},
  author={{v}an Handel, Ramon},
  journal={The Annals of Probability},
  volume={46},
  number={3},
  pages={1764--1805},
  year={2018},
  publisher={JSTOR}
}

@article{prasadan2025some,
  title={Some facts about the optimality of the {LSE} in the {G}aussian sequence model with convex constraint},
  author={Prasadan, Akshay and Neykov, Matey},
  journal={IEEE Transactions on Information Theory},
  year={2025},
  publisher={IEEE}
}

@article{van2018chaining,
  title={Chaining, interpolation, and convexity},
  author={{v}an Handel, Ramon},
  journal={Journal of the European Mathematical Society},
  volume={20},
  number={10},
  pages={2413--2435},
  year={2018}
}

@article{nelson2016chaining,
  author       = {Nelson, Jelani},
  title        = {Chaining {I}ntroduction With Some {C}omputer {S}cience {A}pplications},
  journal      = {Bulletin of the European Association for Theoretical Computer Science (EATCS)},
  year         = {2016},
  note         = {Bulletin of EATCS, No.\ 120}
}

@article{fernandez2025convex,
  title={On Convex Functions of {G}aussian Variables},
  author={Fern{\'a}ndez-Unzueta, Maite and Melbourne, James and Palafox-Castillo, Gerardo},
  journal={arXiv preprint arXiv:2510.06676},
  year={2025}
}

@book{ledoux2013probability,
	author = {Ledoux, Michel and Talagrand, Michel},
	publisher = {Springer Science \& Business Media},
	title = {Probability in Banach Spaces: {I}soperimetry and Processes},
	year = {2013}}

@book{polyanskiy2025information,
  title     = {Information Theory: From Coding to Learning},
  author    = {Polyanskiy, Yury and Wu, Yihong},
  year      = {2025},
  publisher = {Cambridge University Press},
  address   = {Cambridge, United Kingdom and New York, NY, USA},
  isbn      = {9781108832908},
  doi       = {10.1017/9781108966351}
}

@article{bshouty2009using,
  title={Using the doubling dimension to analyze the generalization of learning algorithms},
  author={Bshouty, Nader H and Li, Yi and Long, Philip M},
  journal={Journal of Computer and System Sciences},
  volume={75},
  number={6},
  pages={323--335},
  year={2009},
  publisher={Elsevier}
}

@incollection{bednorz2023sudakov,
  title={Sudakov Minoration for Products of Radial-Type Log-Concave Measures},
  author={Bednorz, Witold},
  booktitle={High Dimensional Probability IX: The Ethereal Volume},
  pages={257--295},
  year={2023},
  publisher={Springer}
}

@article{stein1981estimation,
  title={Estimation of the mean of a multivariate normal distribution},
  author={Stein, Charles M},
  journal={The Annals of Statistics},
  pages={1135--1151},
  year={1981},
  publisher={JSTOR}
}

@article{meyer2000degrees,
  title={On the degrees of freedom in shape-restricted regression},
  author={Meyer, Mary and Woodroofe, Michael},
  journal={The Annals of Statistics},
  volume={28},
  number={4},
  pages={1083--1104},
  year={2000},
  publisher={Institute of Mathematical Statistics}
}

@article{kato2009degrees,
  title={On the degrees of freedom in shrinkage estimation},
  author={Kato, Kengo},
  journal={Journal of Multivariate Analysis},
  volume={100},
  number={7},
  pages={1338--1352},
  year={2009},
  publisher={Elsevier}
}

@article{Latala2014Sudakov,
  author    = {Rafa{\l} Lata{\l}a},
  title     = {Sudakov-type minoration for log-concave vectors},
  journal   = {Studia Mathematica},
  volume    = {223},
  number    = {3},
  pages     = {251--274},
  year      = {2014}
}

@inproceedings{KurEtal23,
 author = {Kur, G. and Putterman, E. and Rakhlin, A.},
 booktitle = {Advances in Neural Information Processing Systems},
 editor = {A. Oh and T. Naumann and A. Globerson and K. Saenko and M. Hardt and S. Levine},
 pages = {37527--37539},
 publisher = {Curran Associates, Inc.},
 title = {On the Variance, Admissibility, and Stability of Empirical Risk Minimization},
 url = {https://proceedings.neurips.cc/paper_files/paper/2023/file/7644353d580a9e027e0069d6480d971b-Paper-Conference.pdf},
 volume = {36},
 year = {2023}
}

@article{zhivotovskiy2018localization,
  title={Localization of {VC} classes: {B}eyond local {R}ademacher complexities},
  author={Zhivotovskiy, Nikita and Hanneke, Steve},
  journal={Theoretical Computer Science},
  volume={742},
  pages={27--49},
  year={2018},
  publisher={Elsevier}
}

@article{yang1999information,
  title={Information-theoretic determination of minimax rates of convergence},
  author={Yang, Yuhong and Barron, Andrew},
  journal={Annals of Statistics},
  pages={1564--1599},
  year={1999},
  publisher={JSTOR}
}

@book{vershynin2018high,
  title={High-Dimensional Probability: An Introduction with Applications in Data Science},
  author={Vershynin, Roman},
  volume={47},
  year={2018},
  publisher={Cambridge University Press}
}

@article{Mendelson2017LocalGlobal,
  author    = {Shahar Mendelson},
  title     = {“{L}ocal” vs. “global” parameters — breaking the {G}aussian complexity barrier},
  journal   = {Annals of Statistics},
  year      = {2017},
  volume    = {45},
  number    = {5},
  pages     = {2265--2299},
  doi       = {10.1214/16-AOS1510},
  url       = {https://projecteuclid.org/euclid.aos/1493730318},
}

@article{mmn2019,
  title={Generalized dual {S}udakov minoration via dimension-reduction--a program},
  author={Mendelson, S. and Milman, E. and Netzer, N.},
  journal={Studia Mathematica},
  volume={244},
  number={2},
  pages={159--202},
  year={2019}
}

@book {BakGenLed14,
    AUTHOR = {Bakry, D. and Gentil, I. and Ledoux, M.},
     TITLE = {Analysis and geometry of {M}arkov diffusion operators},
    SERIES = {Grundlehren der mathematischen Wissenschaften [Fundamental
              Principles of Mathematical Sciences]},
    VOLUME = {348},
 PUBLISHER = {Springer, Cham},
      YEAR = {2014},
     PAGES = {xx+552},
      ISBN = {978-3-319-00226-2; 978-3-319-00227-9},
   MRCLASS = {60J25 (58J65 60J35 60J60)},
  MRNUMBER = {3155209},
MRREVIEWER = {Ming Liao},
       DOI = {10.1007/978-3-319-00227-9},
       URL = {https://doi.org/10.1007/978-3-319-00227-9},
}

@article{yi2025nonparametric,
  title={Nonparametric Exponential Family Regression Under Star-Shaped Constraints},
  author={Yi, Guanghong and Neykov, Matey},
  journal={arXiv preprint arXiv:2503.10794},
  year={2025}
}

@book {RocWets98,
    AUTHOR = {Rockafellar, R. Tyrrell and Wets, Roger J.-B.},
     TITLE = {Variational Analysis},
    SERIES = {Grundlehren der Mathematischen Wissenschaften [Fundamental
              Principles of Mathematical Sciences]},
 PUBLISHER = {Springer-Verlag, Berlin},
      YEAR = {1998},
     PAGES = {xiv+733},
      ISBN = {3-540-62772-3},
   MRCLASS = {49-02 (46N10 47N10 49J52 49K40 90C30)},
  MRNUMBER = {1491362},
MRREVIEWER = {Francis\ H.\ Clarke},
       DOI = {10.1007/978-3-642-02431-3},
       URL = {https://doi.org/10.1007/978-3-642-02431-3},
}

@article {Cha14,
    AUTHOR = {Chatterjee, S.},
     TITLE = {A new perspective on least squares under convex constraint},
   JOURNAL = {Ann. Statist.},
  FJOURNAL = {The Annals of Statistics},
    VOLUME = {42},
      YEAR = {2014},
    NUMBER = {6},
     PAGES = {2340--2381},
      ISSN = {0090-5364},
   MRCLASS = {62F10 (62F12 62F30 62G08)},
  MRNUMBER = {3269982},
MRREVIEWER = {Zakhar Kabluchko},
       DOI = {10.1214/14-AOS1254},
       URL = {https://doi.org/10.1214/14-AOS1254},
}

@article{gine2006concentration,
  author       = {Gin{\'e}, Evarist and Koltchinskii, Vladimir},
  title        = {Concentration inequalities and asymptotic results for ratio type empirical processes},
  journal      = {The Annals of Probability},
  volume       = {34},
  number       = {3},
  pages        = {1143--1216},
  year         = {2006}
}
}

\appendix 
\section{Proof of \Cref{pr:metric-characterization-of-stat-rate}}
\label{app:GSM}

 Throughout this section we make use of the following notation as a shorthand for the $\eps$-local packing entropy of the set $A \subset \R^\dimension$,
\begin{equation}\label{defn:local-packing-entropy-general}
M^{\rm loc}_A(\eps) 
\defn 
\sup_{\delta \geq \eps} 
\sup_{x \in A}
M\big(A \cap (x + 2\delta B^d_2), \delta B^d_2\big).
\end{equation}
In the case that $A \subset \R^\dimension$ is convex, note that the monotonicity of the local packing entropy implies the outer supremum is achieved at $\delta = \eps$, and hence definitions~\eqref{def:local-packing-entropy-convex} and~\eqref{defn:local-packing-entropy-general} coincide in this case. 
Finally, if $A$ is also centrally symmetric, the supremum over $x \in A$ is achieved at $x = 0$.
Let us also introduce the fixed point
\[
\MetricChar(\sigma)  
\defn
\sup\bigg\{\, \eps > 0 \mid \log M^{\rm loc}_T(\eps) \geq \frac{\eps^2}{\sigma^2} \,\bigg\}.
\]
In order to prove the claim, we will show that there exist universal constants $c_\ell, c_u > 0$ such that 
\begin{equation}\label{ineq:desired-bounds}
c_\ell \, \MetricChar(\sigma)
\stackrel{{\rm (i)}}{\leq}
\eps_\star(\sigma) 
\stackrel{{\rm (ii)}}{\leq}
c_u \, \MetricChar(\sigma).
\end{equation}

\paragraph{Proof of lower bound, inequality~\eqref{ineq:desired-bounds}(i):}
Fix an $\eps > 0$ such that 
\[
\frac{\eps^2}{\sigma^2} \leq \log M^{\rm loc}_T(\eps).
\]
Then, \Cref{lem:equivalent-versions-of-the-minimax-rate} applied with $c_1 = 1, c_2 = 1, c_3 = 2$ yields a constant $c_5 = 1/c_4(c_1, c_2, c_3) \approx 0.0315$ such that 
\[
c_5 \, \eps \leq \eps_\star(\sigma).
\]
Taking the supremum over all such $\eps > 0$ yields inequality~\eqref{ineq:desired-bounds}(i) with $c_\ell = c_5$.  \qed

\paragraph{Proof of upper bound, inequality~\eqref{ineq:desired-bounds}(ii):}

The next result shows that by computing the least squares estimate \emph{over a suitably chosen net of the underlying body $T$}, we can obtain an essentially matching upper estimate.
% \footnote{As discussed in~\cite[Section~4]{Ney23}, a potential benefit of iterative schemes, such as those proposed in~\cite{Ney23, prasadan2025informationtheoreticlimitsrobust, yi2025nonparametric}, is that they may adapt to the local entropy at the given $\thetastar$, as opposed to the worst-case target in $T$.}

\begin{proposition}
\label{prop:erm-upper}
Let $T \subset \R^\dimension$. Let $\eps > 0$ be such that the inequality 
\[
\log M^{\rm loc}_T(\eps) \leq \frac{\eps^2}{\sigma^2},
\]
holds. 
Let $\cM_\eps$ denote a maximal $\eps$-(global) packing of $T$ with respect to $\ell^d_2$. The estimator
\[
\hat \theta(Y) \defn \argmin_{\eta \in \cM_\eps} \|\eta - Y\|_2^2
\]
enjoys the following risk bound
\[
\sup_{\thetastar \in T} 
\E_{Y \sim \Normal{\thetastar}{\sigma^2 I_\dimension}} 
\|\hat \theta(Y) - \thetastar\|_2^2 
\lesssim \eps^2. 
\]
\end{proposition}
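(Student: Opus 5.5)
We will bound the risk of the nearest-point estimator over the packing $\cM_\eps$ by a peeling argument around the true parameter. Fix $\thetastar \in T$. Since $\cM_\eps$ is a maximal $\eps$-packing, it is also an $\eps$-cover, so there is $\eta_0 \in \cM_\eps$ with $\|\eta_0 - \thetastar\|_2 \le \eps$. By the triangle inequality it suffices to bound $\E\|\hat\theta(Y) - \eta_0\|_2^2 \lesssim \eps^2$. Write $Y = \thetastar + \sigma g$.

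The first step is a pairwise comparison. For any $\eta \in \cM_\eps$ with $\|\eta - \thetastar\|_2 = \rho \ge 4\eps$, we have $\|\eta - \eta_0\|_2 \ge \rho - \eps$. The event $\|\eta - Y\|_2 \le \|\eta_0 - Y\|_2$ is a half-space event, so
\[
\P\{\|\eta - Y\|_2 \le \|\eta_0 - Y\|_2\} \le \exp\Big(-\frac{(\|\eta-\thetastar\|_2^2 - \|\eta_0 - \thetastar\|_2^2)^2}{8\sigma^2\|\eta-\eta_0\|_2^2}\Big) \le \exp\big(-c\rho^2/\sigma^2\big).
\]
Indeed, the event requires $2\sigma\langle g, \eta - \eta_0\rangle \ge \|\eta-\thetastar\|^2 - \|\eta_0-\thetastar\|^2 \ge \rho^2 - \eps^2 \gtrsim \rho(\rho + \eps) \gtrsim \rho\,\|\eta - \eta_0\|_2$.

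The second step peels into shells $A_k = \{\eta \in \cM_\eps : 2^{k}\eps < \|\eta-\thetastar\|_2 \le 2^{k+1}\eps\}$ for $k \ge 2$. The key counting claim is
\[
|A_k| \le |\cM_\eps \cap (\thetastar + 2^{k+1}\eps B^d_2)| \le \exp\big(C k\,\eps^2/\sigma^2\big)\cdot e^{Ck}.
\]
To prove it, iterate the local entropy bound (as in \Cref{lem:dyadic-local-entropy-ineq}). A ball of radius $2\delta$ intersected with $T$ is covered by $M^{\rm loc}_T(\delta) \le e^{\eps^2/\sigma^2}$ balls of radius $\delta$, for every $\delta \ge \eps$, using that $M^{\rm loc}$ is nonincreasing in the general definition \eqref{defn:local-packing-entropy-general}. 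Going from radius $2^{k+1}\eps$ down to radius $\eps/2$ takes $k+2$ halvings. Each ball of radius $\eps/2$ contains at most one point of the $\eps$-separated set, up to a constant factor, which gives $|A_k| \le \exp((k+2)\eps^2/\sigma^2)$ up to constant factors. The centers must be adjusted to lie in $T$; this costs a factor-2 change in radii, absorbed by using $M^{\rm loc}$ at scale $\delta/2$.

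The final step is a union bound. We get
\[
\P\{\hat\theta \in A_k\} \le \exp\big(C(k+2)\eps^2/\sigma^2 - c\,4^k\eps^2/\sigma^2\big).
\]
Then
\[
\E\|\hat\theta - \thetastar\|_2^2 \le 16\eps^2 + \sum_{k\ge2} 4^{k+1}\eps^2\,\P\{\hat\theta\in A_k\}.
\]
When $\eps^2/\sigma^2 \gtrsim 1$, the exponent is $\lesssim -c'4^k$ for large $k$, and the series is $O(\eps^2)$.

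The main obstacle is the regime $\eps \ll \sigma$, where the exponents give no decay. Here I would instead replace $\eps$ by $\eps' = \max\{\eps, C_0\sigma\}$ in the shell analysis, while keeping the net at scale $\eps$. The packing count at scales $\delta \ge \eps'$ still uses $M^{\rm loc}_T(\delta) \le e^{\eps^2/\sigma^2} \le e$. This costs a bound of order $\max\{\eps,\sigma\}^2$. To recover $\eps^2$ when $\sigma \gg \eps$, note that $\log M^{\rm loc}_T(\eps) \le \eps^2/\sigma^2 < \log 2$ forces $M^{\rm loc}_T(\eps) = 1$. At that scale, local balls of radius $2\eps$ contain no two $\eps$-separated points, so $\diam(T) \lesssim \eps$ by convexity/connectedness. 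The risk is then trivially $\lesssim \eps^2$. Combining the two regimes yields the claim.
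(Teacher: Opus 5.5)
Your argument is correct in substance and shares the paper's skeleton, but its probabilistic core is different. The shared parts are: a nearest net point, dyadic shells around it, the shell count $|\cM_\eps\cap(\thetastar+2^{k+1}\eps B^d_2)|\le M^{\rm loc}_T(\eps)^{k+1}$ (exactly \Cref{lem:packing}), and the dichotomy between $M^{\rm loc}_T(\eps)=1$ (tiny diameter) and $\eps^2/\sigma^2\ge\log 2$. The paper uses the basic inequality and Young's inequality to reduce to the expected supremum $\E\sup_{\Delta\in\cD}\{\langle\xi,\Delta\rangle-\tfrac18\|\Delta\|_2^2\}$ over $\cD=\cM_\eps-\thetastar_\eps$. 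It then controls that supremum with the MGF-based peeling bound of \Cref{lem:peeling}. You instead bound $\P\{\hat\theta\in A_k\}$ by a union bound over the pairwise half-space events $\{\|\eta-Y\|_2\le\|\eta_0-Y\|_2\}$, each a one-dimensional Gaussian tail, and then sum $4^{k+1}\eps^2\,\P\{\hat\theta\in A_k\}$. Your route is more elementary and gives exponential tail bounds on $\|\hat\theta-\thetastar\|_2$, not just a bound in expectation. The paper's route packages the estimate into a reusable peeling lemma, valid for any sub-Gaussian noise, with explicit constants.

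Three points need repair; none is serious.

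\textbf{Diameter step.} The proposition does not assume $T$ convex, so $\diam(T)\le\eps$ cannot be obtained from convexity or connectedness. It follows instead from the supremum over $\delta\ge\eps$ in \eqref{defn:local-packing-entropy-general}. If $\|x-y\|_2>\eps$, put $\delta=\max\{\eps,\|x-y\|_2/2\}$; then $\{x,y\}$ is a $\delta$-packing of $T\cap(y+2\delta B^d_2)$, so $M^{\rm loc}_T(\eps)\ge 2$.

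\textbf{Shell count.} Your count should not descend to radius-$\eps/2$ balls. That last halving is a covering step at scale $\eps/2<\eps$, which the hypothesis does not control, and stopping at radius $\eps$ would leave a dimension-dependent factor. Stop instead at the sets $T\cap(y+2\eps B^d_2)$ with $y\in T$. If the centres are taken from maximal packings, no adjustment is needed. Then bound the number of $\eps$-separated points in each such set by $M^{\rm loc}_T(\eps)$ directly from its definition.

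\textbf{Small-$\eps$ regime.} The detour through $\eps'=\max\{\eps,C_0\sigma\}$ is unnecessary, because the cases $\eps^2/\sigma^2\ge\log 2$ and $\eps^2/\sigma^2<\log 2$ are exhaustive. In the first case, your exponent $\tfrac{\eps^2}{\sigma^2}\bigl[(k+1)-c\,4^k\bigr]$ is at most $-\tfrac{c\log 2}{2}\,4^k$ once $k$ exceeds a universal threshold. So the series is $O(\eps^2)$ with a universal constant, exactly as in the paper's final step.
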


Note that \Cref{prop:erm-upper} does \emph{not} require $T$ to be convex; however if $T$ is not convex, then the supremum over radii $\delta \geq \eps$ in the definition of the local entropy $M^{\rm loc}_T(\eps)$ may not be achieved with $\delta = \eps$. We also explain how this result implies inequality~\eqref{ineq:desired-bounds}(ii). 
Fix any $\eps' > \MetricChar(\sigma) > 0$. By definition, the inequality $\log M^{\rm loc}_T(\eps') \leq (\eps')^2/\sigma^2$ must hold,
and hence we may apply \Cref{prop:erm-upper} with $\eps = \eps'$. We obtain the guarantee 
\begin{equation}\label{ineq:minimax-upper}
\eps_\star^2(\sigma) 
\leq 
\sup_{\thetastar \in T} 
\E_{Y \sim \Normal{\thetastar}{\sigma^2 I_d}}
\Big[\|\hat \theta_{\eps'}(Y) - \thetastar\|_2^2\Big]
\leq C \; (\eps')^2, 
\end{equation}
where $C > 0$ is the constant implicit in \Cref{prop:erm-upper} and $\hat \theta_{\eps'}$ denotes the projection onto a 
$(\eps')$-global packing of $T$ (with respect to $\ell^d_2$). Since this holds for every $\eps' > \overline{\eps}(\sigma)$, passing to the infimum in display~\eqref{ineq:minimax-upper}
yields inequality~\eqref{ineq:desired-bounds}(ii) with $c_u = \sqrt{C}$. 

\paragraph{Proof of \Cref{prop:erm-upper}:}
We first claim that we may assume that $M^{\rm loc}_T(\eps) \geq 2$. Indeed, suppose that $\diam(T) > \eps$. 
In this case, there exist $x, y \in T$ such that $\|x - y\|_2 > \eps$. Define $\delta = \max\{\eps, \|x-y\|_2/2\}$. Clearly, $x \in y + 2\delta B^d_2$, and hence the set $\{x,y\}$ is a $\delta$-packing of $T \cap (y + 2\delta B^d_2)$, and hence $M^{\rm loc}_T(\eps) \geq 2$. Taking the contrapositive, we see that if $M^{\rm loc}_T(\eps) \leq 1$, then $\eps \geq \diam(T)$, and hence we immediately have 
\[
\sup_{\thetastar \in T} \E_{Y \sim \Normal{\thetastar}{\sigma^2 I_d}} \|\hat \theta(Y) - \thetastar\|_2^2 \leq 4 \diam(T)^2 \leq 4 \eps^2.
\]
Consequently, for the remainder for the argument we assume $M^{\rm loc}_T(\eps) \geq 2$.

Now, let $\thetastar_\eps = \argmin_{\vartheta \in \cM_\eps} \|\vartheta - \thetastar\|_2^2$; we use the shorthand notation $\hat \theta \equiv \hat \theta(Y)$. 
The triangle inequality gives us 
\begin{equation}\label{ineq:triangle-to-net}
\|\hat \theta - \thetastar\|_2^2 \leq 2 \|\hat \theta - \thetastar_\eps\|_2^2 + 2\eps^2.
\end{equation} 
From the basic inequality $\|\hat \theta - Y\|_2^2 \leq \|\thetastar_\eps - Y\|_2^2$ we conclude from Young's inequality
\begin{align}
\|\hat \theta - \thetastar_\eps\|_2^2 &\leq 2  \langle \xi, \hat \theta 
- \thetastar_\eps \rangle +  2 \langle  \thetastar - \thetastar_\eps, \hat \theta - \thetastar_\eps \rangle \\
&\leq 
2  \langle \xi, \hat \theta 
- \thetastar_\eps \rangle + \frac{1}{2} \|\hat \theta - \thetastar_\eps\|_2^2 
+ 2 \eps^2
\end{align}
Rearranging, we get 
\begin{equation}\label{ineq:error-on-net}
\|\hat \theta - \thetastar_\eps\|_2^2  \leq 8 \eps^2 +  8 \Big( \langle \xi, \hat \theta - \thetastar_\eps\rangle - \frac{1}{8} \|\hat \theta- \thetastar_\eps\|_2^2 \Big). 
\end{equation}
Define the sets, for an integer $\ell \geq 0$,
\[
\cD \defn \cM_\eps - \thetastar_\eps, 
\quad 
\cD_0 = \cD \cap 2^{\ell + 1}\eps B^d_2 
\quad \mbox{and} \quad 
\cD_k \defn 
(\cD \cap  2^{k + \ell + 1}\eps B^d_2) \setminus 
    (\cD \cap  2^{k + \ell}\eps B^d_2), \quad \mbox{for}~k\geq 1.
\]
Combining inequalities~\eqref{ineq:triangle-to-net}
and~\eqref{ineq:error-on-net} yields
\begin{align}
\E \|\hat \theta - \thetastar\|_2^2 
&\leq 18 \eps^2 + 
16 \E\bigg[ \sup_{\Delta \in \cD} \, \Big\{ \langle \xi, \Delta\rangle 
- \frac{1}{8}\|\Delta\|_2^2\Big\}\bigg] \\
&\stackrel{{\rm(i)}}{\leq}
18 \eps^2 + 512 \sigma^2 
\log \Big(|\cD_0| \e^{\tfrac{4^{\ell + 1} \eps^2}{2048\sigma^2}} + \sum_{k = 1}^\infty 
     |\cD_k| \e^{-\tfrac{4^{k + \ell + 1} \eps^2}{2048 \sigma^2}}\Big) \\ 
&\stackrel{{\rm (ii)}}{\leq} 
18 \eps^2 + 512 \sigma^2 
\log \Big( \e^{ \tfrac{\eps^2}{\sigma^2} (\tfrac{4^{\ell + 1}}{2048} + \ell + 1)}  + \sum_{j = 2 + \ell}^\infty 
     \e^{-\tfrac{\eps^2}{\sigma^2}[\tfrac{4^{j}}{2048} - j]}\Big).
\end{align}
Above, inequality~(i) follows via a standard peeling argument (e.g., \Cref{lem:peeling}, with $T = \cD, r = \tfrac{1}{8}, \delta = 2^{\ell+1} \eps$), and inequality (ii) follows from the containment $\thetastar_\eps + \cD_k\subset T \cap B(\thetastar_\eps, 2^{k+\ell +1} \eps)$, along with the fact that $\cD_k$ is an $\eps$-separated packing set in $\ell^d_2$ and hence a standard packing argument (\Cref{lem:packing}) yields
\[
|\cD_k| \leq 
 M\Big(T \cap B(\thetastar_\eps, 2^{k + \ell + 1}\eps), \eps B^d_2\Big) 
\leq 
M^{\rm loc}_T(\eps)^{k + \ell + 1} 
\leq \exp\Big((k + \ell + 1)\frac{\eps^2}{\sigma^2}\Big).
\]
Set $\ell = 10$, and note $4^j/2048 - j \geq 2^j$ for $j \geq 12$. We obtain 
\[
\E \|\hat \theta - \thetastar\|_2^2  
\leq 18 \eps^2 + 
512 \sigma^2 
\log \Big( \e^{\tfrac{2^{12} \eps^2}{\sigma^2}}  + \sum_{j = 0}^\infty 
     \e^{-\tfrac{2^{12} \eps^2}{\sigma^2} 2^j}\Big) 
\leq C \, \eps^2.
\]
Above, we may take $C = 2,097,682$. The final inequality used the fact that $\eps^2/\sigma^2 \geq \log 2$, as follows from $\eps > 0$ and $\eps^2/\sigma^2 \geq \log M^{\rm loc}_T(\eps)$. \qed

\subsection{Supporting lemmas for \Cref{pr:metric-characterization-of-stat-rate}}

We recall that $\xi$ is $\sigma$-sub-Gaussian if it holds that 
\[
\log \E \exp(\lambda \langle \xi, \theta \rangle) 
\leq \frac{\lambda^2 \sigma^2 \|\theta\|_2^2}{2} \quad \mbox{for}~\lambda \in \R, \theta \in \R^\dimension.
\]

\begin{lemma} [Standard peeling]
\label{lem:peeling}
Fix $T \subset \R^\dimension$ and $r > 0$. Consider the process 
$X_t \defn \langle \xi, t \rangle  - r\|t\|_2^2$. 
For $\delta > 0$, define the annuli 
\[
T_0(\delta) \defn T \cap \delta B^d_2 
\qquad \mbox{and} \quad 
    T_k(\delta) \defn (T \cap 2^k \delta B^d_2) \setminus 
    (T \cap 2^{k-1} \delta B^d_2), \quad \mbox{for}~k\geq 1.
\]
If $\xi$
is a $\sigma$-sub-Gaussian random vector, then 
\[
\E \Big[\sup_{t \in T} X_t \Big]\leq 
    \frac{4\sigma^2}{r} \;
\inf_{\delta > 0}
    \log \Big( |T_0(\delta)| \e^{\tfrac{\delta^2 r^2}{32 \sigma^2}} + \sum_{k = 1}^\infty 
    |T_k(\delta)| \e^{-\tfrac{4^k \delta^2 r^2}{32 \sigma^2}}\Big).
\]
\end{lemma}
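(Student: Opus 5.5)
The plan is the classical peeling argument with a union bound over shells, combined with the sub-Gaussian maximal inequality in its log-moment-generating-function form. Fix $\delta>0$ and write $\sup_{t\in T}X_t=\max_{k\ge0}\sup_{t\in T_k(\delta)}X_t$. The statement is only meaningful when the shells are finite, so I may assume this: otherwise the right-hand side is $+\infty$ and there is nothing to prove.

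The first step is a Jensen/log-sum-exp bound. For any $\lambda>0$,
\[
\E\sup_{t\in T}X_t\le\frac1\lambda\log\E\exp\Big(\lambda\sup_t X_t\Big)\le\frac1\lambda\log\sum_{k\ge0}\sum_{t\in T_k(\delta)}\E\,e^{\lambda X_t}.
\]

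The second step bounds each summand using sub-Gaussianity:
\[
\E e^{\lambda X_t}\le\exp\Big(\frac{\lambda^2\sigma^2\|t\|_2^2}{2}-\lambda r\|t\|_2^2\Big).
\]
I choose $\lambda=r/\sigma^2$, which turns the exponent into $-\tfrac{r^2}{2\sigma^2}\|t\|_2^2\le0$. This gives the prefactor $\sigma^2/r$, which is better than $4\sigma^2/r$. I will nevertheless track a choice such as $\lambda=r/(4\sigma^2)$ to match the stated constants. With that choice the exponent is $\lambda\|t\|^2(\lambda\sigma^2/2-r)=-\tfrac{7r^2}{32\sigma^2}\|t\|^2$, and the prefactor becomes $4\sigma^2/r$.

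The third step evaluates the exponent shell by shell. On $T_k(\delta)$ with $k\ge1$ we have $\|t\|_2>2^{k-1}\delta$. Hence the exponent is at most $-\tfrac{7r^2}{32\sigma^2}4^{k-1}\delta^2\le-\tfrac{4^k\delta^2r^2}{32\sigma^2}$, since $7/4\ge1$. On $T_0(\delta)$ the exponent is at most $0\le\tfrac{\delta^2r^2}{32\sigma^2}$. Summing over the shells yields exactly the expression inside the logarithm, and taking the infimum over $\delta$ finishes the proof.

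I expect no genuine obstacle here. The only care needed is in the constant bookkeeping (matching $4^{k-1}$ versus $4^k$ via the choice of $\lambda$) and in the trivial reduction when some shell is infinite.
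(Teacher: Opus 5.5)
Your proposal is correct and follows essentially the same route as the paper. Both use Jensen's inequality to pass to $\frac{1}{\gamma}\log \E \sup_t e^{\gamma X_t}$, a union bound over the points of each shell, the sub-Gaussian moment generating function bound, and the choice $\gamma = r/(4\sigma^2)$. The only difference is bookkeeping: you merge the two quadratic terms into $-\tfrac{7r^2}{32\sigma^2}\|t\|_2^2$ and use only the inner radius $2^{k-1}\delta$, whereas the paper bounds the positive term with the outer radius $2^k\delta$ and the negative term with the inner radius, and both give the stated exponents.
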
 
\begin{proof}
    Fix $\delta > 0$; we use the shorthand $T_k \defn T_k(\delta)$.
    For the $k$th term, by sub-Gaussianity 
    and the definition of $T_k$, we obtain for $k \geq 1$
    \begin{equation}\label{ineq:termwise-bound}
    \E \, \sup_{t \in T_k} \e^{\gamma X_t} 
    % \leq |T_k| \max_{t \in T_k} \E \e^{\gamma X_t} 
    \leq 
    |T_k| \max_{t \in T_k} 
    \exp\Big( \frac{\gamma^2 \sigma^2}{2} \|t\|_2^2 - r \gamma \|t\|_2^2\Big)
    \leq 
    |T_k| \exp\Big( 4^{k} \delta^2 \big[\tfrac{\gamma^2 \sigma^2}{2} - \tfrac{r\gamma}{4}\big]\Big).
    \end{equation}
    To combine the annuli, note the concavity of the logarithm yields 
    \begin{equation}\label{ineq:peeling-bound}
    \E\Big[ \sup_{t \in T} X_t\Big]
    \leq 
    \inf_{\gamma > 0} \; \Big\{
    \frac{1}{\gamma} 
    \log \E \, \sup_{t \in T} \e^{\gamma X_t}\Big\}
    \leq 
    \inf_{\gamma > 0} \; \bigg\{
    \frac{1}{\gamma} 
    \log 
    \sum_{k = 0}^\infty 
     \E\Big[ \sup_{t \in T_k} \e^{\gamma X_t} \Big]\bigg\}.
    \end{equation}
    Combining inequalities~\eqref{ineq:termwise-bound} and~\eqref{ineq:peeling-bound} and taking $\gamma =  \tfrac{r}{4\sigma^2}$, we find
    \[
    \E\Big[ \sup_{t \in T} X_t\Big]
    \leq 
    \frac{4\sigma^2}{r} 
    \log \Big( \E \, \sup_{t \in T_0} \e^{\gamma X_t} + \sum_{k = 1}^\infty 
    |T_k| \e^{-\tfrac{4^k \delta^2 r^2}{32 \sigma^2}}\Big) 
    \leq 
    \frac{4 \sigma^2}{r}
        \log \Big( |T_0| \e^{\tfrac{\delta^2 r^2}{32 \sigma^2}} + \sum_{k = 1}^\infty 
    |T_k| \e^{-\tfrac{4^k \delta^2 r^2}{32 \sigma^2}}\Big).
    \]
    This bound holds for all $\delta > 0$. 
    Passing to the infimum over $\delta > 0$ yields the claim.
\end{proof}

The next result is standard, and relates local packing numbers on different scales.

\begin{lemma} [Standard packing]
\label{lem:packing}
For any $A \subset \R^\dimension$, any $x \in A$, and any $\gamma \geq \eps$, we have 
\[
\log M\Big(A \cap (x + 2 \gamma B_2^\dimension), \eps B_2^d\Big) 
\leq \ceil{\log_2(2\gamma/\eps)}
\; 
\log 
M^{\rm loc}_A(\eps).
\]
\end{lemma}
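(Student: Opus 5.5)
The plan is a covering argument at dyadic scales between $\eps$ and $2\gamma$, using only the definition of $M^{\rm loc}_A$. Write $k = \ceil{\log_2(2\gamma/\eps)}$, so that $2\gamma \le 2^k \eps$. We will show by induction on $j = 0,1,\ldots,k-1$ that for every $y \in A$,
\[
M\big(A \cap (y + 2^{j+1}\eps B_2^d), \eps B_2^d\big) \le \big(M^{\rm loc}_A(\eps)\big)^{j+1}.
\]
Taking $j = k-1$ and $y = x$, and using monotonicity of packing numbers under inclusion together with $2\gamma \le 2^k\eps$, gives the claim after taking logarithms.

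The base case $j=0$ is immediate: $M(A\cap(y+2\eps B_2^d),\eps B_2^d) \le M^{\rm loc}_A(\eps)$ by definition (take $\delta = \eps$ in \eqref{defn:local-packing-entropy-general}).

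For the inductive step, set $\rho = 2^{j}\eps$ and take an $\eps$-separated set $P \subset A \cap (y + 4\rho B_2^d)$.
\begin{enumerate}
\item Let $Q$ be a maximal $\rho$-packing of $A\cap(y+4\rho B_2^d)$. Since $\rho = 2^j\eps \ge \eps$, this is a legitimate choice of $\delta=\rho$ in the definition of $M^{\rm loc}_A(\eps)$, so $|Q| \le M^{\rm loc}_A(\eps)$.
\item By maximality, $Q$ is a $\rho$-net of $A\cap(y+4\rho B_2^d)$. Hence every point of $P$ lies in $A\cap(q+\rho B_2^d) \subset A\cap(q + 2\rho B_2^d)$ for some $q\in Q \subset A$.
\item For each $q$, the points of $P$ assigned to $q$ form an $\eps$-separated subset of $A\cap(q+2^{j}\eps B_2^d)$. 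By the inductive hypothesis at level $j-1$ (for $j\ge1$), there are at most $(M^{\rm loc}_A(\eps))^{j}$ of them.
\item Summing over $q\in Q$ gives $|P| \le (M^{\rm loc}_A(\eps))^{j+1}$.
\end{enumerate}

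The one delicate point is the radius bookkeeping: the net must be taken at scale $\rho$ rather than $2\rho$. This is exactly what makes the inductive radius halve at each step, yielding one factor of $M^{\rm loc}_A(\eps)$ per dyadic level. It also requires the outer supremum over $\delta\ge\eps$ in \eqref{defn:local-packing-entropy-general}, which is why no convexity of $A$ is needed.

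Finally, the count of levels needs one check. Going from radius $2\eps$ at $j=0$ up to radius $2^{k}\eps \ge 2\gamma$ at $j=k-1$ uses exactly $k$ levels, so the exponent is $k = \ceil{\log_2(2\gamma/\eps)}$, as claimed. The edge case $\gamma=\eps$ gives $k=1$, which is the base case.
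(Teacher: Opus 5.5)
Step 1 of your proposal fails as written. With $\rho=2^j\eps$ you take $P$ and the net $Q$ inside $A\cap(y+4\rho B_2^d)$, and you assert $|Q|\le M^{\rm loc}_A(\eps)$ by taking $\delta=\rho$ in the definition. But $\delta=\rho$ only controls $\rho$-separated subsets of balls $A\cap(x+2\rho B_2^d)$. The definition of $M^{\rm loc}_A$ always pairs a ball of radius $2\delta$ with separation $\delta$, whereas your $Q$ is a $\rho$-packing of a ball of radius $4\rho$.

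This is not harmless slack. Take a long interval $A=[-L,L]\subset\R$. Then $M^{\rm loc}_A(\eps)=4$, since an interval of length $4\delta$ holds at most $4$ points with gaps $>\delta$. Yet a maximal $\rho$-packing of $A\cap[-4\rho,4\rho]$ can have $8$ points.

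The radius bookkeeping is also inconsistent with your own induction. The level-$j$ statement concerns radius $2^{j+1}\eps=2\rho$, not $4\rho$. As written, the step would derive $M\bigl(A\cap(y+2^{j+2}\eps B_2^d),\eps B_2^d\bigr)\le (M^{\rm loc}_A(\eps))^{j+1}$ from the level-$(j-1)$ bound at radius $2^{j}\eps$. That shrinks the radius by a factor $4$ per factor of $M^{\rm loc}_A(\eps)$, rather than halving it as your closing paragraph says.

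The repair is to use $2\rho$ in place of $4\rho$ throughout. Take $P\subset A\cap(y+2\rho B_2^d)$ and let $Q$ be a maximal $\rho$-packing of that set.
\begin{itemize}
\item Now $|Q|\le M^{\rm loc}_A(\eps)$ genuinely is the definition with $\delta=\rho\ge\eps$.
\item Every point of $P$ lies within $\rho=2^j\eps$ of some $q\in Q\subset A$.
\item The level-$(j-1)$ hypothesis bounds each cell by $(M^{\rm loc}_A(\eps))^{j}$, giving $|P|\le (M^{\rm loc}_A(\eps))^{j+1}$.
\end{itemize}
Your base case, the monotonicity step, and the level count $k=\lceil\log_2(2\gamma/\eps)\rceil$ are fine.

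With this correction your argument is exactly the paper's. The paper covers $A\cap B(x,2^{k+1}\delta)$ by a maximal $2^k\delta$-packing, so the net sits at half the radius. It bounds the number of centers by $M^{\rm loc}_A(\eps)$ via the supremum over $\delta\ge\eps$, and applies the induction hypothesis on the balls of radius $2^k\delta$. The paper does this for general $\delta\ge\eps$, though only $\delta=\eps$ is needed. Stating the hypothesis for all centers $y\in A$ simultaneously, as you do, is if anything the cleaner way to set up that induction.
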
 
\begin{proof}
Throughout the proof we set $B = B_2^\dimension$ and $B(x, \gamma) = x + \gamma B_2^d$.
Suppose for the moment that for any $x \in A$ and any $\delta \geq \eps > 0$ it holds that
\begin{equation}\label{eqn:main-ineq}
M\Big(A \cap B(x, 2^{k} \delta), \delta B\Big) 
\leq 
\Big[M^{\rm loc}_A(\eps)\Big]^k 
\quad \mbox{for all integers}~k \geq 1.
\end{equation}
Then, for $k = \ceil{\log_2(2\gamma/\eps)}$ we have by~\eqref{eqn:main-ineq} that
\[
M\Big(A \cap B(x, 2\gamma), \eps B\Big) 
\leq 
M\Big(A \cap B(x, 2^{k}\eps), \eps B\Big) 
\leq 
\Big[M^{\rm loc}_A(\eps)\Big]^k,
\]
which proves the claim. 
We now establish~\eqref{eqn:main-ineq}, for fixed $x \in A, \delta > 0$, by induction. The claim obviously holds with $k = 1$. Let 
$A_0$ denote a maximal 
$2^k \delta$-packing of $A \cap B(x, 2^{k+1} \delta)$. By the maximality of this packing, we have
\[
A \cap B(x, 2^{k+1} \delta) 
\subset \bigcup_{y \in A_0} 
B(y, 2^k\delta).
\]
Therefore, the maximal packing satisfies
\[
M(A \cap B(x, 2^{k+1} \delta), \delta B) 
\leq 
|A_0| \sup_{y \in A_0} 
M(A \cap B(y, 2^k \delta), \delta B)
\leq 
\Big[M^{\rm loc}_A(\eps)\Big]^{k+1},
\]
as desired. Note that above, we used the induction hypothesis to conclude 
\[
\sup_{y \in A_0} M(A \cap B(y, 2^k \delta), \delta B) 
\leq 
M^{\rm loc}_A(\eps)^k,~~\mbox{and}~~
|A_0| = 
M(A \cap B(x, 2^{k+1} \delta), 2^k \delta B)
\leq 
M^{\rm loc}_A(\eps).\qedhere
\]
\end{proof}

\begin{lemma}
\label{lem:trivial-lower-bound}
For every $\sigma > 0$, and any convex, bounded $T \subset \R^\dimension$, we have 
\[
\eps_\star(\sigma) \geq \frac{1}{4} \big(\twomin{\sigma}{\diam(T)}\big).
\]
\end{lemma}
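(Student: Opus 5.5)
The plan is to prove the lemma by a two-point lower bound (Le Cam's method), splitting into the two cases according to whether $\sigma \le \diam(T)$ or not. Set $\delta \defn \twomin{\sigma}{\diam(T)}$. The claim is trivial if $\diam(T)=0$, so assume $\diam(T)>0$. Since $T$ is bounded and convex, pick $x,y\in T$ with $\|x-y\|_2$ arbitrarily close to $\diam(T)$. By convexity, the segment $[x,y]\subset T$. Along this segment I will select two points $\theta_0,\theta_1\in T$ with $\|\theta_0-\theta_1\|_2 = s$ for any prescribed $s\le \|x-y\|_2$. Convexity is used only here: it lets us realize any separation up to the diameter.

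The core step is the standard reduction of estimation to testing. For any estimator $\hat\theta$,
\[
\sup_{\theta\in T}\E_\theta\|\hat\theta-\theta\|_2^2 \;\ge\; \frac{s^2}{4}\cdot \frac{1}{2}\Big(P_{\theta_0}(\|\hat\theta-\theta_0\|\ge s/2)+P_{\theta_1}(\|\hat\theta-\theta_1\|\ge s/2)\Big) \;\ge\; \frac{s^2}{8}\big(1-\mathrm{TV}(P_{\theta_0},P_{\theta_1})\big).
\]
Here $P_\theta=\Normal{\theta}{\sigma^2 I_d}$. The inequality holds because the events $\{\|\hat\theta-\theta_0\|<s/2\}$ and $\{\|\hat\theta-\theta_1\|<s/2\}$ are disjoint by the triangle inequality.

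The total variation between the two Gaussians is $2\Phi(s/(2\sigma))-1$, and this is at most $s/(\sigma\sqrt{2\pi})$. Choosing $s=\delta'$ with $\delta'\le\sigma$ close to $\delta$ gives $\mathrm{TV}\le 1/\sqrt{2\pi}<1/2$. Hence $\eps_\star(\sigma)^2\ge \delta'^2/16$, and letting $\delta'\uparrow\delta$ yields $\eps_\star(\sigma)\ge \delta/4$.

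The only step needing care is the constant. The factor $1/4$ requires $(1-\mathrm{TV})/8\ge 1/16$, i.e.\ $\mathrm{TV}\le 1/2$, which holds since $1/\sqrt{2\pi}\approx0.399$. If instead one prefers to avoid the total variation computation, Pinsker's inequality gives $\mathrm{TV}\le \sqrt{\mathcal{KL}/2}=s/(2\sigma)\le 1/2$, which is also enough. I therefore expect no real obstacle; the argument is routine once the two points are placed on a diameter-realizing segment.
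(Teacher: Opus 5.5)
Your proposal is correct and follows the paper's proof: the paper also places two points at separation $\sigma \wedge \diam(T)$ on a (near-)diameter-realizing segment of $T$, applies Le Cam's two-point bound, and uses Pinsker's inequality (your stated alternative) to get $\mathrm{TV}\le 1/2$ and hence $\eps_\star^2(\sigma)\ge (\sigma\wedge\diam(T))^2/16$. Your exact Gaussian total-variation computation is a valid, slightly sharper way to reach the same bound $\mathrm{TV}\le 1/2$.
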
 
\begin{proof}
Resorting to an approximating sequence if necessary, we assume that $t, s \in T$ satisfy $\|t - s\|_2 = \diam(T)$. For $\alpha \defn \twomin{\sigma}{\|t-s\|_2}$, we can construct two points $t_{\alpha}, s_{\alpha}$ on the line segment connecting $t, s$ such that $\|t_\alpha - s_\alpha\|_2 = \alpha$. Given that $t_\alpha, s_\alpha \in T$, we may apply Le Cam's method and Pinsker's inequality~\cite[Chapter 15]{Wai19} to these two points, yielding
\[
\eps_\star^2(\sigma)\geq \frac{\|t_\alpha - s_\alpha\|_2^2}{8}
\Big\{1 - \sqrt{\frac{1}{4 \sigma^2} \|t_\alpha - s_\alpha\|_2^2}\Big\} 
\geq \frac{1}{16} \alpha^2 = 
\frac{1}{16} \min\{\sigma^2,\diam(T)^2\}.\qedhere
\]
\end{proof}

\begin{lemma} 
\label{lem:equivalent-versions-of-the-minimax-rate}
Let $\sigma > 0$ and fix a convex set $T \subset \R^\dimension$. Suppose $\delta > 0$ is such that for some constants $c_1, c_2, c_3 \geq 1$ we have 
\[
\frac{\delta^2}{c_1 \sigma^2} 
\leq \sup_{\mu \in T} 
\log M\Big(
T \cap (\mu + c_3 \delta B^d_2), \frac{\delta}{c_2} B^d_2\Big). 
\]
Then there is a constant $c_4 = c_4(c_1, c_2, c_3) > 0$ such that the inequality $\delta \leq c_4 \; \eps_\star(\sigma)$ holds.
\end{lemma}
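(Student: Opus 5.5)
The plan is to run a Fano-type lower bound on a local packing. The statement asks us to show that a local packing of size $\exp(\delta^2/(c_1\sigma^2))$ at scale $\delta/c_2$, all inside a ball of radius $c_3\delta$, forces $\eps_\star(\sigma) \gtrsim \delta$. Write $N$ for the local packing number in the hypothesis. Fix $\mu\in T$ (nearly) attaining the supremum, so that $\log N \ge \delta^2/(2c_1\sigma^2)$. Take a maximal $(\delta/c_2)$-packing $\{\theta_1,\dots,\theta_N\}\subset T\cap(\mu+c_3\delta B^d_2)$.

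The obstacle is that the KL radius of this packing, roughly $\max_{i,j}\|\theta_i-\theta_j\|^2/(2\sigma^2)\le 2c_3^2\delta^2/\sigma^2$, may exceed $\log N$, because $c_1$ could be larger than $1/(4c_3^2)$. I will fix this by shrinking the configuration toward $\mu$ using convexity. For $\lambda\in(0,1]$ set $\theta_i^\lambda=\mu+\lambda(\theta_i-\theta_j)$-style contractions, namely $\theta_i^\lambda=\mu+\lambda(\theta_i-\mu)$. These lie in $T$ because $T$ is convex and $\mu\in T$. They form a $(\lambda\delta/c_2)$-packing of size $N$ contained in $\mu+\lambda c_3\delta B^d_2$.

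Fano's inequality with mixture $Q=\frac1N\sum_j P_{\theta_j^\lambda}$ then gives
\[
\eps_\star(\sigma)^2\ \ge\ \frac{\lambda^2\delta^2}{4c_2^2}\Big(1-\frac{2\lambda^2c_3^2\delta^2/\sigma^2+\log 2}{\log N}\Big).
\]
I choose $\lambda=\min\{1,(16c_1c_3^2)^{-1/2}\}$. Then $2\lambda^2c_3^2\delta^2/\sigma^2\le \tfrac{1}{8c_1}\delta^2/\sigma^2\le \tfrac14\log N$, so the KL term contributes at most $1/4$.

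Two cases remain.
\begin{itemize}
\item If $\log N\ge 4\log 2$, the bracket is at least $1/2$, and we get $\delta\le c\,\eps_\star(\sigma)$ with $c=2\sqrt2\,c_2/\lambda$.
\item If $\log N<4\log 2$, the hypothesis gives $\delta^2\le 2c_1\sigma^2\log N\le 8\log 2\,c_1\sigma^2$, so $\delta\lesssim\sigma$. Separately, $N\ge 2$ (the case $N=1$ contradicts $\log N>0$), so the packing has two points at distance greater than $\delta/c_2$, and hence $\diam(T)\ge\delta/c_2$. Therefore $\delta\lesssim\min\{\sigma,\diam(T)\}\le 4\eps_\star(\sigma)$ by \Cref{lem:trivial-lower-bound}.
\end{itemize}
Taking the maximum of the two constants gives $c_4(c_1,c_2,c_3)$.

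The one delicate step is the contraction. I am relying on convexity through $\mu\in T$, and on the fact that contracting rescales separation and radius by the same factor $\lambda$. Everything else is the standard Fano bookkeeping, including handling the supremum via a near-maximizer at the cost of a factor $2$.
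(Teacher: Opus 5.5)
Your proof is correct and follows essentially the paper's route. The paper notes that, for convex $T$, $h(\delta)=\sup_{\mu\in T}\log M\bigl(T\cap(\mu+c_3\delta B^d_2),\tfrac{\delta}{c_2}B^d_2\bigr)$ is nonincreasing, which is exactly your contraction toward $\mu$; it rescales by $1/(c_3\sqrt{c_1})$ and applies Fano once the packing has at least $5$ points. The only difference is the small-packing case. There the paper applies Le Cam's two-point bound with Bretagnolle--Huber directly to two packing points, while you reduce to \Cref{lem:trivial-lower-bound} via $\delta\lesssim\min\{\sigma,\diam(T)\}$. That works, with one small caveat: the lemma is stated for bounded $T$, but for unbounded $T$ the same two-point argument gives $\eps_\star(\sigma)\ge\sigma/4$.
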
 
\begin{proof}
     Define 
    \[
    h(\delta) \defn \sup_{\mu \in T} 
\log M\Big(
T \cap (\mu + c_3 \delta B^d_2), \frac{\delta}{c_2} B^d_2\Big).
    \]
   Since $T$ is convex, we easily see that $h$ is nonincreasing. Therefore,
    \[
    \frac{\delta^2}{c_1 \sigma^2} \leq h(\delta) 
    \leq 
    h\Big(\frac{\delta}{c_3 \sqrt{c_1}}\Big) = 
    \sup_{\mu \in T}
    \log M\Big( 
    T \cap (\mu + \tfrac{\delta}{\sqrt{c_1}} B^d_2), 
   \frac{\delta}{c_2 c_3 \sqrt{c_1}} B^d_2\Big) 
    \defn \log M_\delta.
    \]

    We may assume that $M_\delta \geq 2$, otherwise the claim is trivial.
    Suppose first that $M_\delta \geq 5$. Then, Fano's inequality (e.g., see \cite[Proposition 15.12]{Wai19}), applied to the packing set implied by $M_\delta$ above, gives us 
    \begin{equation}\label{ineq:lower-bound-one}
    \eps_\star^2(\sigma)
    \geq 
    \frac{\delta^2}{4 c_2^2 c_3^2 c_1 }
    \Big\{1 - \frac{\log 2}{\log 5} - 
    \frac{1}{2} \frac{\delta^2/(c_1 \sigma^2)}{\log M_\delta}\Big\}
    \geq \underbrace{\frac{1}{4 c_2^2 c_3^2 c_1} 
    \Big(\half - \frac{\log 2}{\log 5}\Big)}_{\defn C_1} \, \delta^2
    = C_1 \delta^2. 
    \end{equation}
    Now suppose that instead we have $2 \leq M_\delta < 5$. 
    Then we can apply Le Cam's two-point lower bound (e.g., see \cite[Equation 15.14]{Wai19}) to 
    any pair of distinct points (say $\eta, \eta'$) in the packing.
    Denoting by $\|P - Q\|_{\rm TV}$ the total variation distance between probability measures $P, Q$ on the same space, this will give us
    \begin{multline}
    \eps_\star^2(\sigma)
    \geq \frac{\delta^2}{8 c_2^2 c_3^2 c_1}
    \Big\{1 - \|\Normal{\eta}{\sigma^2 I_\dimension} - 
    \Normal{\eta'}{\sigma^2 I_\dimension}\|_{\rm TV}
    \Big\} 
    \geq 
    \frac{\delta^2}{8 c_2^2 c_3^2 c_1}
    \Big\{1 - 
    \sqrt{1 - \e^{- \|\eta - \eta'\|_2^2/(2\sigma^2)}}\Big\} \\
    \geq 
    \frac{\delta^2}{8 c_2^2 c_3^2 c_1}
    \Big\{1 - 
    \sqrt{1 - \frac{1}{ M_\delta^2}}\Big\} 
    \geq\underbrace{\frac{1}{8 c_2^2 c_3^2 c_1}
    \Big\{1 - 
    \sqrt{\frac{15}{16}}\Big\}}_{\defn C_2} \delta^2
    = C_2 \delta^2.\label{ineq:lower-bound-two}
    \end{multline}
    Above, we bounded the total variation distance from above in terms of relative entropy, using the 
    Bretagnolle-Huber inequality~\cite[Lemma 2.1]{BreHub79}.
    By combining bounds~\eqref{ineq:lower-bound-one} and~\eqref{ineq:lower-bound-two}, we obtain
    with $C_3 = \twomin{C_1}{C_2}$ that
    \[
    \eps_\star^2(\sigma) \geq C_3 \, \delta^2.
    \]
    Taking $c_4 = \sqrt{1/C_3}$ completes the proof.
\end{proof}

\end{document}